\documentclass[11pt]{article} 
\textwidth 174mm
\textheight 222mm
\topmargin -10mm
\oddsidemargin -3 mm

\usepackage{float}
\usepackage{marginnote}
\usepackage{amsmath, amscd, amssymb,amsfonts,amsthm,amscd,graphicx,psfrag,epsfig}
\usepackage{youngtab}
\usepackage{graphicx}
\usepackage[all]{xy}
\usepackage{array}
\usepackage{amsthm}
\usepackage{stmaryrd}
\usepackage[titletoc,toc]{appendix}
\usepackage{stmaryrd}
\usepackage{mathtools}

\usepackage[
colorlinks=true,%
linkcolor=blue,%
citecolor=blue,%
filecolor=blue,%
menucolor=blue,%
urlcolor=blue]{hyperref}

\usepackage{tikz}
\usepackage{tikz-cd}

\usetikzlibrary{arrows,chains,shapes,matrix,positioning,scopes} 
\usetikzlibrary{decorations.markings}

\tikzstyle arrowstyle=[scale=1.1]
\tikzstyle directed=[postaction={decorate,decoration={markings,
    mark=at position 1 with {\arrow[arrowstyle]{latex}}}}]
\tikzstyle reverse directed=[postaction={decorate,decoration={markings,
    mark=at position .45 with {\arrowreversed[arrowstyle]{latex};}}}]
    
\tikzstyle arrowstyle=[scale=1.1]
\tikzstyle directed=[postaction={decorate,decoration={markings,
    mark=at position 1 with {\arrow[arrowstyle]{latex}}}}]
\tikzstyle reverse directed=[postaction={decorate,decoration={markings,
    mark=at position .45 with {\arrowreversed[arrowstyle]{latex};}}}]
    
 \usetikzlibrary{decorations.pathreplacing,decorations.markings,decorations.pathreplacing,arrows,shapes}   
\tikzset{
  -z>/.style={
    decoration={
      show path construction,
      lineto code={
        \path (\tikzinputsegmentfirst) -- (\tikzinputsegmentlast) coordinate[pos=#1] (mid);
        \draw (\tikzinputsegmentfirst) -- (mid);
        \draw[double distance=1.5pt, arrows = {- Latex[length=0pt 2 0]}] (mid) -- (\tikzinputsegmentlast);      }
    },decorate
  }, -z>/.default=.5,
  z->/.style={
    decoration={
      show path construction,
      lineto code={
          \path (\tikzinputsegmentfirst) -- (\tikzinputsegmentlast) coordinate[pos=#1] (mid);
                \draw[double distance=1.5pt] (\tikzinputsegmentfirst) -- (mid);
                \draw[decoration={markings, mark=at position 1 with {\arrow[scale=1.2]{latex}}},postaction={decorate}] (mid) -- (\tikzinputsegmentlast);
      }
    },decorate
  }, z->/.default=.5,
  --z>/.style={
    decoration={
      show path construction,
      lineto code={
        \path (\tikzinputsegmentfirst) -- (\tikzinputsegmentlast) coordinate[pos=#1] (mid);
        \draw [dashed](\tikzinputsegmentfirst) -- (mid);
        \draw [double distance=1.5pt, dashed, arrows = {- Latex[length=0pt 2 0]}] (mid) -- (\tikzinputsegmentlast);      }
    },decorate
  }, --z>/.default=.5,
    z-->/.style={
    decoration={
      show path construction,
      lineto code={
          \path (\tikzinputsegmentfirst) -- (\tikzinputsegmentlast) coordinate[pos=#1] (mid);
                \draw[double distance=1.5pt, dashed] (\tikzinputsegmentfirst) -- (mid);
                \draw[dashed, decoration={markings, mark=at position 1 with {\arrow[scale=1.2]{latex}}},postaction={decorate}] (mid) -- (\tikzinputsegmentlast);
      }
    },decorate
  }, z-->/.default=.5,
   x->/.style={
    decoration={
      show path construction,
      lineto code={
          \path (\tikzinputsegmentfirst) -- (\tikzinputsegmentlast) coordinate[pos=#1] (mid);
                \draw[double distance=2pt] (\tikzinputsegmentfirst) -- (mid);
                \draw[decoration={markings, mark=at position 1 with {\arrow[scale=1.2]{latex}}},postaction={decorate}] (\tikzinputsegmentfirst) -- (\tikzinputsegmentlast);
      }
    },decorate
  }, x->/.default=.5,
  -x>/.style={
    decoration={
      show path construction,
      lineto code={
        \path (\tikzinputsegmentfirst) -- (\tikzinputsegmentlast) coordinate[pos=#1] (mid);
        \draw[double distance=2pt, shorten >=5pt] (mid) -- (\tikzinputsegmentlast); 
        \draw[decoration={markings, mark=at position 1 with {\arrow[scale=1.5]{latex}}},postaction={decorate}] (\tikzinputsegmentfirst) -- (\tikzinputsegmentlast);}
    },decorate
  }, -x>/.default=.5,
   x-->/.style={
    decoration={
      show path construction,
      lineto code={
          \path (\tikzinputsegmentfirst) -- (\tikzinputsegmentlast) coordinate[pos=#1] (mid);
                \draw[double distance=2pt, dashed] (\tikzinputsegmentfirst) -- (mid);
                \draw[dashed, decoration={markings, mark=at position 1 with {\arrow[scale=1.2]{latex}}},postaction={decorate}] (\tikzinputsegmentfirst) -- (\tikzinputsegmentlast);
      }
    },decorate
  }, x-->/.default=.5,
  --x>/.style={
    decoration={
      show path construction,
      lineto code={
        \path (\tikzinputsegmentfirst) -- (\tikzinputsegmentlast) coordinate[pos=#1] (mid);
        \draw[double distance=2pt, dashed, shorten >=5pt] (mid) -- (\tikzinputsegmentlast); 
        \draw[dashed, decoration={markings, mark=at position 1 with {\arrow[scale=1.4]{latex}}},postaction={decorate}] (\tikzinputsegmentfirst) -- (\tikzinputsegmentlast);}
    },decorate
  }, --x>/.default=.5,
}

\tikzset{
    partial ellipse/.style args={#1:#2:#3}{
        insert path={+ (#1:#3) arc (#1:#2:#3)}
    }
}

\newtheorem{theorem}{Theorem}[section]

\newtheorem{lemma}[theorem]{Lemma}
\newtheorem{proposition}[theorem]{Proposition}
\newtheorem{corollary}[theorem]{Corollary}

\newtheorem{problem}[theorem]{Problem}
\newtheorem{conjecture}[theorem]{Conjecture}
\newtheorem{theorem-definition}[theorem]{Theorem-Definition}
\newtheorem{theorem-construction}[theorem]{Theorem-Construction}
\newtheorem{lemma-definition}[theorem]{Lemma--Definition}
\newtheorem{lemma-construction}[theorem]{Lemma--construction}
\newtheorem{definition}[theorem]{Definition}

\theoremstyle{definition}

\newtheorem{remark}[theorem]{Remark}
\newtheorem{example}[theorem]{Example}

\newcommand{\bg}{\begin{equation}\begin{gathered}}
\newcommand{\eg}{\end{gathered}\end{equation}}

\newcommand{\F}{\mathbb{F}}

\newcommand{\R}{\mathbb{R}}

\newcommand{\C}{\mathbb{C}}

\newcommand{\Z}{\mathbb{Z}}

\newcommand{\old}[1]{}

\newcommand{\Q}{{\mathbb Q}}

\newcommand{\G}{{\rm G}}

\newcommand{\lms}{\longmapsto}
\newcommand{\lra}{\longrightarrow}
\newcommand{\hra}{\hookrightarrow}

\newcommand{\be}{\begin{equation}}
\newcommand{\ee}{\end{equation}}
\newcommand{\bt}{\begin{theorem}}
\newcommand{\et}{\end{theorem}}
\newcommand{\bd}{\begin{definition}}
\newcommand{\ed}{\end{definition}}
\newcommand{\bp}{\begin{proposition}}
\newcommand{\ep}{\end{proposition}}

\newcommand{\bl}{\begin{lemma}}
\newcommand{\el}{\end{lemma}}
\newcommand{\bc}{\begin{corollary}}
\newcommand{\ec}{\end{corollary}}
\newcommand{\bcon}{\begin{conjecture}}
\newcommand{\econ}{\end{conjecture}}
\newcommand{\la}{\label}


\input xy
\xyoption{all}

\begin{document}

\begin{titlepage}
\title{Motivic fundamental group of ${\Bbb G}_m - \mu_N$ and 
 modular manifolds}
\author{A.B. Goncharov }
\date{}
\end{titlepage}
\stepcounter{page}
\maketitle

\tableofcontents

\begin{abstract} Let $\mu_N$ be the group of $N$-th roots of unity. 
We investigatie   geometric and combinatorial aspects 
of the mysterious relationship between the action of the motivic Galois group on 
the motivic fundamental group 
$\pi_1^{\cal M}({\Bbb G}_m - \mu_N, v_0)$, and the geometry of 
modular manifolds
\be \la{1MM}
Y_1(m;N) := \Gamma_1(m;N)\backslash {\rm GL_m}(\R)/{\rm O_m} \cdot \R^*_+.
\ee
Here   $\Gamma_1(m; N)$ is the subgroup of ${\rm GL}_m(\Z)$   fixing   $(0, \ldots, 0, 1)$ mod $N$.   

To achieve this, we consider  a canonical collection of   elements in the image of the motivic Galois Lie algebra,  spanning it over $\Q$. 
These elements are  the motivic correlators at $N-$th roots of unity.
We assign to   them chains in the complex computing cohomology of certain local systems on the space $Y_1(m;N)$. 
Their geometric  properties  reflect, in a  mysteriois way,   properties of  motivic correlators. 

This construction allows to  establish the relationship with high precision for $m \leq 4$. 
The $m=2, 3$ cases were investigated in the Hodge set-up in \cite{G97},  \cite{G98}, and in the Galois set-up in \cite{G00a}. 
\end{abstract}
 
  \section{Introduction}
  
 \subsection{Galois action on motivic fundamental groups and cyclotomic Lie algebras}
  

Denote by  $\mu_N$ the group of  $N-$th roots of unity. 
Let $\pi_1^{\cal M}({\Bbb G}_m - \mu_N, v_0)$ be the motivic fundamental group of ${\Bbb G}_m - \mu_N$ with the tangential base point  $v_0 = \partial / \partial t$ at $0$ \cite{DG}. 
It carries a depth filtration, provided  by the embedding ${\Bbb G}_m - \mu_N \hra {\Bbb G}_m$. 
 We study the action of the motivic Galois group on   
$\pi_1^{\cal M}({\Bbb G}_m - \mu_N, v_0)$,   equipped with  the depth filtration.

Before we go to the motivic set-up, let us briefly recall the   Galois point of view, where many of the structures are immediately visible.

\paragraph{The $l-$adic version.} The $l-$adic realization of the motivic fundamental group $\pi_1^{\cal M}({\Bbb G}_m - \mu_N, v_0)$ is given by the 
pro$-l$ completion $\pi^{(l)}_1({\Bbb G}_m - \mu_N, v_0)$ of the classical fundamental group, 
equipped with the action of the absolute Galois group  ${\rm Gal}(\overline \Q/\Q)$ by its automorphisms.
\vskip 2mm

 Maltsev's theory \cite[Chapter 9]{D} assigns to   $\pi^{(l)}_1({\Bbb G}_m - \mu_N, v_0)$  an $l$-adic 
pro-Lie algebra ${\Bbb L}^{(l)}_N$. 
 It    is   a free pronilpotent Lie algebra over $\Q_l$ with 
$N+1$ generators.  
The   Galois group   acts by  its  automorphisms. 

\vskip 2mm

The  Lie algebra ${\Bbb L}^{(l)}_N$ carries the canonical {\it weight}  filtration $W_\bullet$,  
preserved by the Galois action, indexed by negative integers. 
It  coincides with the lower central series of the Lie algebra 
${\Bbb L}^{(l)}_N$. So
$$
{\rm Gr}^W_{-1}{\Bbb L}^{(l)}_N = {\rm H}_1({\Bbb G}_m - \mu_N; \Q_l), ~~~~{\rm Gr}^W_{-2}{\Bbb L}^{(l)}_N = \Lambda^2{\rm H}_1({\Bbb G}_m - \mu_N; \Q_l), ~~~~\ldots 
$$

For each integer $m \geq 0$, we consider the subgroup of the Galois group 
preserving each of the amplitude $m$ subquotients $W_{\leq -k}{\Bbb L}^{(l)}_N/W_{\leq -k-m}{\Bbb L}^{(l)}_N$, where $k =-1, -2, \ldots$. 
We denote by  ${\cal F}_m\subset \overline \Q$ the subfield stabilized by this subgroup. 
 So we get a tower of fields, where ${\cal F}_0 = \Q(\zeta_{l^\infty N})$:
$$
\Q \subset  {\cal F}_0 \subset  {\cal F}_1 \subset  {\cal F}_2 \subset \ldots .
$$
The Galois groups ${\rm G}_w:= {\rm Gal}({\cal F}_w / {\cal F}_{w-1})$ are abelian $l-$adic groups. The commutators in the Galois group 
 give rise, after passing to the quotients,  to the $l-$adic graded Galois Lie algebra
\be \la{GLA}
 {\rm Lie} ~{\rm G}_\bullet:= \oplus_{w=1}^\infty {\rm Lie} ~{\rm G}_w.
\ee
Here are two examples showing how to think about this Lie algebra. 

\begin{enumerate}

\item The dual to the ${\rm Gal} (\overline{\Bbb Q}/ {\Bbb Q}(\zeta_{l^{\infty}N}))-$module ${\rm Lie} ~{\rm G}_1$ is described by the cyclotomic $N-$units
\be \la{PPP}
\left({\rm Lie} ~{\rm G}_1\right)^* = {\cal O}^*({\rm S_N}) \otimes \Q_l(1), ~~~~~~{\rm S_N} = {\rm Spec}~\Z[\zeta_N][\frac{1}{N}].
\ee
The isomorphism is given by   Kummer's theory. The   group ${\rm Gal} ({\Bbb Q}(\zeta_{N})/\Q)$ acts  on ${\rm S_N}$. 

\item The description of the Lie algebra of the Galois group ${\rm Gal} ({\cal F}_2/ {\Bbb Q})$ is just equivalent to the description of the Lie algebra 
structure on the graded ${\rm Gal} ({\Bbb Q}(\zeta_{l^\infty N})/\Q)-$module 
$$
{\rm Lie} ~{\rm G}_1 \oplus {\rm Lie} ~{\rm G}_2.
$$
The latter is given by the commutator map 
$$
[\ast, \ast]:  {\rm Lie} ~{\rm G}_1 \wedge {\rm Lie} ~{\rm G}_1 \lra {\rm Lie} ~{\rm G}_2.
$$
Just as in (\ref{PPP}),  it is more natural to describe the dual map: 
$$
 \delta:  \left({\rm Lie} ~{\rm G}_2\right)^* \lra \left({\rm Lie} ~{\rm G}_1\right)^* \wedge \left({\rm Lie} ~{\rm G}_1\right)^*.
$$
\end{enumerate}

\vskip 2mm  
Alternatively, this Galois Lie algebra can be described as follows. 
Restricting the Galois action on the Lie algebra ${\Bbb L}^{(l)}_N$ to the subgroup 
${\rm Gal} (\overline{\Bbb Q}/ {\Bbb Q}(\zeta_{l^{\infty}N}))$, we get the map 
$$
\varphi_N^{(l)}: {\rm Gal} \Bigl(\overline{\Bbb Q}/ {\Bbb Q}(\zeta_{l^{\infty}N})\Bigr) \lra {\rm Aut}~{\Bbb L}^{(l)}_N.
$$
Taking the   Lie algebra of the image of this map, we get the $l-$adic pro-Lie subalgebra
\begin{equation} \label{8.5.00.1}
{\rm Lie}~ {\rm Im} (\varphi_N^{(l)} )\subset {\rm Der} ~{\Bbb L}^{(l)}_N.
\end{equation}
The Galois Lie algebra (\ref{GLA}) is the associate graded of the pro-Lie algebra (\ref{8.5.00.1}) for the weight filtration. 

\vskip 2mm
 The  pro-Lie algebra ${\Bbb L}^{(l)}_N$ carries an addition {\it depth}  filtration,  
preserved by the Galois action. 
Namely, the inclusion
$
{\Bbb G}_m-\mu_N \hra {\Bbb G}_m
$
induces a map of the corresponding fundamental Lie algebras. The depth filtration  is given by the lower central series of 
the codimension one ideal
$$
{\cal I}^{(l)}:=  {\rm Ker}\Bigl( {\Bbb L}^{(l)}({\Bbb G}_m-\mu_N, v_{\infty})
 \lra {\Bbb L}^{(l)}({\Bbb G}_m, v_{\infty}) = \Q_l(1) \Bigr).
$$
It is numbered by the negative integers, starting from $-1$. 

\vskip 2mm
The weight and depth filtrations induce filtrations on 
${\rm Der} ~{\Bbb L}^{(l)}_N$, and hence,  on the Lie subalgebra  (\ref{8.5.00.1}). The induced filtrations are compatible. 
The associated graded ${\rm C}^{(l)}_{\bullet, \bullet}(\mu_N)$ for the weight and depth filtrations on  the Lie subalgebra  (\ref{8.5.00.1}) is a Lie algebra   bigraded by negative 
integers $-w$ and $-m$, where $w \geq m \geq 1$. We call it the {\it $l-$adic level $N$  cyclotomic Lie algebra}. 
 
 \vskip 2mm
Let us now turn to the motivic variant. Unlike the Galois framework, it takes much more effort to set it up, since it requires to have the abelian category of mixed Tate motives 
in hands, in the form predicted by Beilinson \cite{Be}.  The development of the latter  uses many  different ideas, such as  Borel's calculation of algebraic K-theory of number fields \cite{B}, 
 Bloch's higher Chow group complexes, and  Voevodsky's triangulated category of motives \cite{Vo}. 

In a sharp contrast with the absolute Galois group, the motivic Galois group itself appears only indirectly, via the Tannakian formalism -- following the  
insight of Grothendieck. We  do not have  a set-up where the motivic Galois group symmetries are present {\it a priori}. 

\vskip 2mm

There are several  benefits of the motivic point of view, justifying the effort:

\begin{itemize}

\item The motivic objects are defined over $\Q$ rather then $\Q_l$, and therefore do not depend on $l$. 

\item  Beilinson's formula for the Ext's, see (\ref{BEXT}) below,  imposes strong restrictions on the size of the  motivic Galois group, and therefore on the size of the Galois Lie algebra. 

\item  We can pass from the motivic set-up not only to l-adic, but also to the Hodge one. So combing this with the canonical real period map \cite{G08}, we can perceive 
the story on the level of real numbers. 

\end{itemize}

Let us proceed now to the motivic  cyclotomic Lie algebras.


\paragraph{Mixed Tate motives.} Let $\F$ be a number field, and ${\cal O}_{\F, S}$ the ring of $S-$integers in $\F$. Then  there is a mixed Tate category ${\cal M}_T({\cal O}_{\F, S})$ 
 of mixed Tate motives over ${\cal O}_{\F, S}$ \cite{DG} with all the   expected properties, including Beilinson's formula for the Ext's between the Tate motives $\Q(n)$:
\be  \la{BEXT}
\begin{split}
&  {\rm Hom}_{{\cal M}_T({\cal O}_{\F, S})}(\Q(0), \Q(n)) = \left\{ \begin{array}{ll}
0&: 
\quad \mbox{if $i=0, n \not = 0$}, \\ 
\Q&: \quad \mbox{if $i=0, n=0$}; \\
 \end{array}\right. \\
& {\rm Ext}_{{\cal M}_T({\cal O}_{\F, S})}^1(\Q(0), \Q(n)) = K_{2n-1}({\cal O}_{\F, S})\otimes \Q; \\
& {\rm Ext}_{{\cal M}_T({\cal O}_{\F, S})}^{>1}(\Q(0), \Q(n)) = 0. \\ 
\end{split}
\ee
The category ${\cal M}_T({\cal O}_{\F, S})$ comes with the canonical fiber functor $\omega$:
\be \la{FFW}
\omega: {\cal M}_T({\cal O}_{\F, S}) \lra {\rm Vect}, ~~~~X \lms \oplus_{n\in \Z} {\rm Hom}(\Q(-n), {\rm gr}^W_{2n}X).
\ee
Derivations of this  functor   compatible with the tensor structure   provide the fundamental Tate Lie algebra: 
$$
{\rm L}_\bullet({\cal O}_{\F, S}):= {\rm Der}^{\otimes}(\omega).
$$
It is a graded Lie algebra, with the grading by negative integers. The  functor $\omega$  induces an equivalence of tensor categories
 \be  \la{EQV}
\omega: {\cal M}_T({\cal O}_{\F, S}) \stackrel{\sim}{\lra}  \mbox{finite dimensional graded ${\rm L}_\bullet({\cal O}_{\F, S})-$modules}.
 \ee  

\paragraph{The motivic fundamental group $\pi_1^{\cal M}({\Bbb G}_m - \mu_N, v_0)$.} It was defined in  \cite{DG}.  We  discuss it in more detail in Section \ref{SEC8}. 
It is a Lie algebra pro--object in the category 
of mixed Tate motives over the scheme 
\be \la{SN}
{\rm S_N}= {\rm Spec}~\Z[\zeta_N][\frac{1}{N}].
\ee
Therefore  there is a canonical 
map of Lie algebras
\be \la{CANM}
\varphi_N: {\rm L}_\bullet({\rm S_N}) \lra {\rm Der}~\omega(\pi_1^{\cal M}({\Bbb G}_m - \mu_N, v_0)).
\ee 

 \paragraph{The cyclotomic Lie (co)algebras.} The map 
${\Bbb G}_m - \mu_N \hra {\Bbb G}_m$ provides a short exact sequence 
$$
0\lra {\cal I} \lra \pi_1^{\cal M}({\Bbb G}_m - \mu_N, v_0) \lra \pi_1^{\cal M}({\Bbb G}_m, v_0) = \Q(1) \lra 0.
$$
 The  {\it depth filtration} ${\cal F}_\bullet$ on the Lie algebra $\pi_1^{\cal M}({\Bbb G}_m - \mu_N, v_0)$ is given by the lower central series  of the  ideal ${\cal I}$. 
 It induces a depth filtration on the Lie algebra  ${\rm Der}~\omega(\pi_1^{\cal M}({\Bbb G}_m - \mu_N, v_0))$, and hence on its subalgebra  
 $\varphi_N ({\rm L}_\bullet({\rm S_N}))$, see (\ref{CANM}). 
 
 \bd The {\it cyclotomic Lie algebra} ${\rm C}_{\bullet, \bullet}(\mu_N) $ is the associate graded for the depth filtration on 
  $\varphi_N ({\rm L}_\bullet({\rm S_N}))$,  graded by the weight: 
  \be \nonumber
    \begin{split}
&{\rm C}_{-w, -m}(\mu_N):= {\rm gr}_{{\cal F}}^{-m}\varphi_N ({\rm L}_{-w}({\rm S_N}) ),\\
&{\rm C}_{\bullet, \bullet}(\mu_N) := \bigoplus_{w,m\geq 1} {\rm C}_{-w, - m}(\mu_N).\\
 \end{split}
   \ee
   \ed
   
The cyclotomic Lie algebra is bigraded by the weight $-w$ and the depth $-m$. 
  Its graded dual is the {\it level $N$ cyclotomic Lie coalgebra}:
    \be \nonumber   \begin{split}
  &{\cal C}_{w,m}(\mu_N):=  {\rm C}_{-w, -m}(\mu_N)^\vee,\\
  &  {\cal C}_{\bullet, \bullet}(\mu_N) := \bigoplus_{w,m\geq 1} {\cal C}_{w,m}(\mu_N).\\
  \end{split}  \ee  
  
It follows immediately from the properties of the motivic classical polylogarithms at roots of unity \cite{G02b}, or the unpublished 
work of Beilinson-Deligne \cite{BD}, or \cite{DG}, that the depth $1$ part of the cyclotomic Lie coalgebra is given by 
\be \la{BT}
 {\cal C}_{w, 1}(\mu_N) = K_{2w-1}({\rm S_N}) \otimes \Q.
 \ee
We   review the computation of  $K_{2w-1}({\rm S_N}) \otimes \Q$ little later. Meanwhile we just note the isomorphism
 \be \la{muiso}
 {\cal C}_{1, 1}(\mu_N) = {\cal O}^*({\rm S_N}) \otimes \Q.
 \ee

 \paragraph{The extended cyclotomic Lie coalgebra.} It turned out to be convenient to enlarge   ${\cal C}_{\bullet, \bullet}(\mu_N)$ by adding   a 
 one dimensional space of degree $1$, in a sense corresponding    
to Euler's $\gamma-$constant: 
\be \la{mmi}
\begin{split}
&\widehat {\cal C}_{\bullet, \bullet}(\mu_p):=  \widehat {\cal C}_{1,1}(\mu_N) \bigoplus \bigoplus_{w+m>1} {\cal C}_{w,m}(\mu_N);\\
&\widehat {\cal C}_{1,1}(\mu_N):= {\cal C}_{1,1}(\mu_N) \oplus \gamma^{\cal M}\cdot \Q, ~~~~~~\delta \gamma^{\cal M}:= 0.\\
   \end{split}
   \ee
 So we get a graded Lie coalgebra $\widehat {\cal C}_{\bullet, \bullet}(\mu_N)$.     The isomorphism (\ref{mmi}) reads now as follows
  \be \la{muiso}
 \widehat {\cal C}_{1, 1}(\mu_N) = \widehat {\cal O}^*({\rm S_N}) \otimes \Q:= {\cal O}^*({\rm S_N}) \otimes \Q ~\oplus ~\gamma^{\cal M}\cdot \Q.
 \ee

  \paragraph{The diagonal cyclotomic Lie (co)algebras.}  Our next step, partially motivated by the desire to simplify the exposition, is to reduce for a while our considerations by the "weight = depth" 
  quotient Lie coalgebra 
of  ${\cal C}_{\bullet, \bullet}(\mu_N)$.
 Namely, we define the diagonal cyclotomic Lie (co)algebras as follows:
  \be \nonumber
    \begin{split}
  &{\rm C}_{\bullet}(\mu_N) := \bigoplus_{w\geq 1} {\rm C}_{-w, - w}(\mu_N).\\
   &{\cal C}_{\bullet}(\mu_N) := \bigoplus_{w\geq 1} {\cal C}_{-w, - w}(\mu_N).\\
   \end{split}
   \ee
The diagonal part of the extended cyclotomic Lie coalgebra is denoted by $\widehat {\cal C}_{\bullet}(\mu_N)$. 
Since it is graded by the poisitive integers,  the weight $m$ part of its cohomology are concentrated in the degrees $[1, \ldots , m]$:
\be \la{est}
{\rm H}_{(m)}^i(\widehat {\cal C}_\bullet(\mu_N)) = 0~~ \mbox{unless $1 \leq i\leq m$}. 
\ee
   Evidently, 
$$
   {\rm H}^1_{(1)}(\widehat {{\cal C}}_{\bullet}(\mu_N))    = \widehat {{\cal C}}_{1}(\mu_N).
$$
      
\paragraph{The bigraded cohomology groups.} Since the Lie coalgebra  $\widehat {{\cal C}}_{w,m}(\mu_N)$ is bigraded, its cohomology inherit the bigrading. We denote by 
${\rm H}^\ast_{(w,m)}(\widehat {{\cal C}}_{\bullet, \bullet}(\mu_N))$ the   degree $(w,m)$ part of the cohomology. 
Since  $\widehat {{\cal C}}_{w,m}(\mu_N)=0$ for $w<m$, we get   
$$
{\rm H}^\ast_{(w,m)}(\widehat {{\cal C}}_{\bullet, \bullet}(\mu_N)) = 0 ~~~~\mbox{if $w<m$}.
$$
In the diagonal case $w=m$   we have:
\be \la{GFG}
{\rm H}^i_{(m,m)}(\widehat {{\cal C}}_{\bullet, \bullet}(\mu_N)) = {\rm H}^i_{(m)}(\widehat {{\cal C}}_{\bullet}(\mu_N)).
 \ee
 
          \subsection{  Cyclotomic Lie algebras of level $p$ and cohomology of modular manifolds} \la{ss1.2}
    

In Section \ref{ss1.2} we present the results for the diagonal cyclotomic Lie algebra  $\widehat {\cal C}_{\bullet}(\mu_p)$ in the case when $N=p$ is  prime number. 
Below we denote by  $\varepsilon: {\rm GL_m}(\Z) \lra \{\pm 1\}$   the determinant character. 

\paragraph{The weight $m$   cohomology of $\widehat {\cal C}_{\bullet}(\mu_p)$.} 
 Let
\be \la{dm}
d_m:={\rm dim} (
{\rm GL_m}(\R)/{\rm O}_m \cdot \R^*_+) = (m-1)(m+2)/2.
\ee
It is well known that for any discrete subgroup $\Gamma \subset {\rm GL}_m(\R)$ and any representation $V$ of ${\rm GL}_m$, 
\be \la{GV0}
H^i(\Gamma, V) =0 ~~~\mbox{if $i > d_m- (m-1)$}.
\ee

Theorem \ref{ALLM1} relates the top dimensional cohomology of the cyclotomic Lie algebra, see (\ref{est}),  with the top dimensional cohomology of $\Gamma_1(m;p)$,  see (\ref{GV0}), for all positive integers $m$.

\bt \la{ALLM1} For any integer $m \geq 2$, and any prime $p$, there are canonical isomorphisms
 \be \la{ALLM}
 {\rm H}^m_{(m,m)}(\widehat {{\cal C}}_{\bullet, \bullet}(\mu_p)) \stackrel{(\ref{GFG})}{=}
 {\rm H}^m_{(m)}({\widehat {\cal C}}_{\bullet}(\mu_p))  \stackrel{\sim}{\lra}  {\rm H}^{d_m- m+1}(\Gamma_1(m;p), \varepsilon^{m-1}).
  \ee
  \et
  
 Theorem \ref{ALLM1}   holds for $m=1$ if one understood the right hand side of (\ref{ALLM}) properly, see (\ref{muiso}),  (\ref{7.8.00.3a}). 
 
 \vskip 2mm
 The isomorphism (\ref{ALLM}) is defined in Section \ref{SECT1.4} by using modular symbols, see (\ref{qwe}).  It is "elementary" in the sense that it does not require any motivic technology.
 
 \vskip 2mm

 Theorem \ref{CMR1y}   tells that the weight $m$  cohomology of  the Lie coalgebra $\widehat {\cal C}_{\bullet}(\mu_p)$ are given by the cohomology of the   group $\Gamma_1(m; p)$ for $m=2,3,4$. 
    \bt \la{CMR1y} Let $p$ be a prime. The   weight $m\leq 4$ cohomology ${\rm H}_{(m)}^i(\widehat {\cal C}_\bullet(\mu_p))$ of the  diagonal  cyclotomic Lie coalgebra, see   Table \ref{table1}, are:
\begin{equation} \label{BR2aa}
\begin{split}
 &{\rm H}^i_{(2)}(\widehat {{\cal C}}_{\bullet}(\mu_p)) =  
{\rm H}^{i-1} (\Gamma_1(2;p), \varepsilon),~~~~~~i=1,2;\\
&{\rm H}^i_{(3)}(\widehat {{\cal C}}_{\bullet}(\mu_p)) =  
{\rm H}^{i}(\Gamma_1(3;p), \Q),~~~~~~~~i=1,2,3;\\
&H^i_{(4)}(\widehat {{\cal C}}_{\bullet}(\mu_p)) =   \left\{ \begin{array}{ll}
\mbox{\rm a quotient of}\\
 {\rm H}^{i+2}(\Gamma_1(4;p),  \varepsilon),& 
 \mbox{$i=1$}, \\ 
 {\rm H}^{i+2}(\Gamma_1(4;p), \varepsilon),& 
 \mbox{$i=2, 3, 4$}.\\ 
 \end{array}\right.\\
  \end{split}
  \end{equation}
\et
By (\ref{est}),  the diagonal  groups ${\rm H}_{(m)}^m(\widehat {\cal C}_\bullet(\mu_p))$     on Table \ref{table1} are the top dimensional cohomology. 

The cohomology groups in  the cuspidal range  are in red. 
\begin{table}
  \begin{center}
   \caption{The weight $m$ cohomology ${\rm H}_{(m)}^i(\widehat {\cal C}_\bullet(\mu_p))$ for $m=1,2,3,4$.}
    \label{table1}
    \begin{tabular}{c||c|c|c|c|}
     & i=1 & i=2 & i=3 & i=4\\   
      \hline \hline
       m=1 & ${\cal O}^*({\rm S_p})\otimes \Q $& 0  & 0  & 0  \\   
      \hline
      m=2 & ${\rm H}^{0} (\Gamma_1(2;p), \varepsilon)$ & \begin{color}{red}   ${\rm H}^{1} (\Gamma_1(2;p), \varepsilon)$\end{color}& 0&0\\ 
    \hline
     m=3 &  ${\rm H}^{1}(\Gamma_1(3;p), \Q)$ & \begin{color}{red} ${\rm H}^{2}(\Gamma_1(3;p), \Q)$\end{color} & \begin{color}{red}${\rm H}^{3}(\Gamma_1(3;p), \Q)$\end{color}&0\\ 
 \hline
   m=4 &  ${\rm H}^{3}(\Gamma_1(4;p),  \varepsilon)/* $& \begin{color}{red} $ {\rm H}^{4}(\Gamma_1(4;p),  \varepsilon) $\end{color}  & \begin{color}{red}  ${\rm H}^{5}(\Gamma_1(4;p),  \varepsilon) $\end{color} & ${\rm H}^{6}(\Gamma_1(4;p),  \varepsilon)$.\\
   \hline
    \end{tabular}
  \end{center}
\end{table}
  Few comments are in order.

\paragraph{1.} 
 Thanks to (\ref{GFG}), Theorem \ref{CMR1y} tells  us the  diagonal part  ${\rm H}^i_{(m,m)}(\widehat {{\cal C}}_{\bullet, \bullet}(\mu_p))$ of the cohomology of the cyclotomic Lie algebra 
 ${\cal C}_{\bullet, \bullet}(\mu_p)$ for prime $p$ and $m=2,3,4$. 
  
\paragraph{2.} For any discrete subgroup $\Gamma \subset {\rm GL}_m(\Z)$, and any ${\rm GL}_m-$module $V$, the most interesting part of 
the cohomology ${\rm H}^i(\Gamma, V)$ are the   cuspidal cohomology  ${\rm H}^i_{\rm cusp}(\Gamma, V) \subset {\rm H}^i(\Gamma, V)$.

 It is known   \cite{VZ}, \cite{BW} that  ${\rm H}^i_{\rm cusp}(\Gamma, V)=0$ unless 
$V$ is   self dual, and $i$ is in the segment  of  
$[\frac{m+1}{2}]$ consequitive  integers centered at the point $d_m/2$, where $d_m$ is given by (\ref{dm}). 

 In particular, for $m \leq 4$ the cuspidal range is the following, see also  Table \ref{table1}: 
$$
m=2: ~i=1; ~~~~~~ m=3:~i=2,3; ~~~~~~ m=4:~i= 4,5.  
$$
So by  Theorem \ref{CMR1y},   ${\rm H}_{(m)}^i(\widehat {{\cal C}}_{\bullet}(\mu_p))   $ for $m \leq 4$ catch   all the cuspidal cohomology of $\Gamma_1(m;p)$.  


\paragraph{3.}   It is well known that 
\be \la{p2-1}
{\rm dim} ~{\rm H}^1_{\rm cusp}(\Gamma_1(2;p), \varepsilon)= 1 + \frac{p^2-1}{24} - \frac{p-1}{2}.
\ee
One might wonder whether  the isomorphism 
$
{\rm H}^2_{(2)}(\widehat {{\cal C}}_{\bullet}(\mu_p)) =  
{\rm H}^{1}  (\Gamma_1(2;p), \varepsilon)
$ 
 from Theorem \ref{CMR1y} is just an accident. 
However, in contrast with (\ref{p2-1}),   the \underline{cuspidal} cohomology of 
$\Gamma_1(m;p)$ for $m>2$ are    sporadic: there is   no  rule predicting their size.  
 So   isomorphisms (\ref{BR2aa})   are not  accidental. 
 
\paragraph{4.}  We do not know   why the   weight $m$ cohomology of the Lie coalgebras ${\widehat {\cal C}}_{\bullet}(\mu_p)$ could   possibly be related to the   cohomology of $ \Gamma_1(m;p)$ for $m>2$. 

Yet for $m=2$, there is an "explanation"   via   
{\it Euler complexes} on the modular curve $Y(N)$  and their  specialization to the cusp given by the scheme ${\rm S}_N$ \cite{G05}. 
We hope that a similar picture exists for all $m$. Note   that modular manifolds $Y(m; N)$ are not algebraic  
for $m>2$. 

\vskip 2mm

Note that one evidently has canonical isomorphisms 
$$
 {\rm H}^1_{(m)}({{\cal C}}^\circ_{\bullet}(\mu_p))  =  
{\rm H}^1_{(m)}({{\cal C}}_{\bullet}(\mu_p))  ={\rm H}^1_{(m)}(\widehat {{\cal C}}_{\bullet}(\mu_p)), ~~~~\forall m>1.
$$ 

\bcon  There is a canonical isomorphism
  \be \la{JU30.19}
 {\rm H}^1_{(4)}({{\cal C}}_{\bullet}(\mu_p))  \stackrel{\sim}{\lra}  {\rm H}^{1}(\Gamma_1(2;p), S^2V_2 \otimes \varepsilon).
  \ee
\econ

\subsection{Modular complexes, modular manifolds,   and cyclotomic Lie algebras} \la{SECT1.3}

Let us now discuss the general case of the cyclotomic Lie coalgebra ${\cal C}_{\bullet, \bullet}(\mu_N)$.

 \vskip 2mm

  Let us go back to the $m=1$ case. 
  The right hand side of (\ref{BT}) was calculated by  Borel  \cite{B}.  
The result can be stated as follows:
\begin{equation} \nonumber
  \begin{split}
  K_{2w-1}({\rm S_N})\otimes \Q  =  &
H^0({\rm S_N}(\C), {\cal L}_{S^{w-1}V_1})^+\\
=& H^0(\{{\rm S_N}(\C), c\},  {\cal L}_{S^{w-1}V_1}), ~~~~  w >  1.
 \end{split} 
\end{equation}
Here ${\cal L}_{S^{w-1}V_1}$ is the local 
system on  ${\rm S_N}(\C)$ corresponding to the ${\rm GL_1}$-module $S^{w-1}V_1$. 
The $+$ means invariants of the  involution acting on ${\rm S_N}(\C)$ as the complex conjugation $c$, and on the local system via  $v \lms -v$ on $V_1$. 
In the second isomorphism   the pair 
 $\{{\rm S_N}(\C), c\}$ is a stack and ${\cal L}_{S^{w-1}V_1}$ 
as a local system on it. 
 We conclude that 
 \begin{equation} \label{7.8.00.3a} 
  \begin{split}
  {\cal C}_{w, 1}(\mu_N)   =   H^0(\{{\rm S_N}(\C), c\},  {\cal L}_{S^{w-1}V_1}), ~~~~  w >  1.
 \end{split} 
\end{equation}

Generalising (\ref{7.8.00.3a}),  we    relate the cyclotomic Lie coalgebra ${\cal C}_{\bullet, \bullet}(\mu_N)$ to the  properties of local systems on  the following modular stacks, 
 for $m=1,2,3,4, \ldots$:
   \begin{equation} \label{7.8.00.2}
Y_1(m;N):=   {\rm GL_m}(\Q) \backslash {\rm GL_m}({\Bbb A}_{\Q})/ K_1(m;N) 
\cdot \R_+^* \cdot O_m.
 \end{equation} 
Here ${\Bbb A}_{\Q}$ is the ring of adels of $\Q$. The subgroup 
$K_1(m;N) \subset \prod_p {\rm GL_m}(\Z_p)$ is defined 
by  congruence conditions at 
the primes $p|N$: if $N = \prod p^{v_p(N)}$ then its $p$-component consists of the elements of ${\rm GL_m}(\Z_p)$ stabilizing  
$(0, ..., 0, 1)$ mod $p^{v_p(N)}$. 
So
$
\Gamma_1(m;N) :=  K_1(m;N) \cap {\rm GL}_m(\Z).
$

When $m=1$, the stack
(\ref{7.8.00.2}) is  isomorphic to the stack $(S_N(\C), c)$. 

When $m>1$, the 
stack (\ref{7.8.00.2}) coincides with the locally symmetric space: 
\be \label{7.19.00.1}
Y_1(m;N)    =   \Gamma_1(m;N) \backslash {\rm GL_m}(\R)/ {\rm O(m)} \cdot \R_+^*= \Gamma_1(m;N) \backslash {\Bbb H}_{\rm L_m}.
\ee 
It  parametrizes triples $({\rm L}; v, g)$, where ${\rm L}$ is a rank $m$ lattice,    $v \in {\rm L}/ N{\rm L}$ 
is an element of order $N$, and $g$ is a positively definite quadratic form on ${\rm L}\otimes \R$. 
 
\vskip 2mm
Below we pick a rank $m$ lattice 
${\rm L_m}$. 
A choice of a basis in the lattice ${\rm L_m}$ induces an isomorphism ${\rm Aut}({\rm L_m}) =   {\rm GL_m}(\Z)$. 
Let $V_m$ be the dual to the vector space ${\rm L_m} \otimes \R$. 

\vskip 2mm
The key  tool    we use to relate the cyclotomic Lie coalgebras to local systems on the modular stacks (\ref{7.8.00.2}) for $m>1$ 
 is the  {\it modular complex} ${\rm M}_{(m)}^{\bullet}$  defined in \cite{G98}. It
 is a complex of ${\rm Aut}({\rm L_m})$-modules of amplitude  $[1,m]$. We recall its definition in Section \ref{Sec2}. Below we assume that $m>1$.

The weight $2$ 
 {modular complex} ${\rm M}_{(2)}^{\bullet}$    coincides with  the chain  
complex of the classical modular triangulation of the  
hyperbolic plane on geodesic triangles,  see Figure \ref{mod} (the  cusps are excluded). 
\begin{figure}[ht]
\centerline{\epsfbox{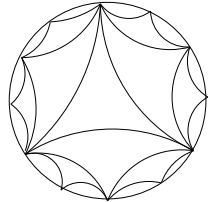}}
\caption{The modular triangulation of the hyperbolic plane.}
\label{mod}
\end{figure}
This triangulation is obtained by the action of  ${\rm GL_2}(\Z)$ on  
the geodesic triangle with  vertices at $\infty, 0, 1$. An element $\left 
(\begin{matrix} a&b \\ c& d \end{matrix}\right ) \in {\rm GL_2}(\R)$ acts  on the upper half plane by 
$$
z \lms \frac{az+b}{cz+d} \quad \mbox{if} \quad det\left (\begin{matrix} a&b \\ c& d\end{matrix} \right ) =1, 
\qquad z \lms \frac{a \overline z+b}{c\overline z+d} \quad \mbox{if} \quad det\left (\begin{matrix} a&b 
\\  c& d\end{matrix} \right ) =-1. 
$$
Note that   
${\rm GL_2}(\R)/\R_+^* \cdot {\rm SO(2)} = \C - \R$ and ${\rm O(2)/SO(2)}$ 
acts as the complex conjugation.


Then given an integer $N\geq 1$,  we consider the following   space, bigraded by the  $(m, w)$:  
 \be \la{MSMS00n}
{\rm M}_N^\ast:=  \bigoplus_{w\geq m \geq 1} {\rm M}^\ast_{(m)}\otimes_{\Gamma_1(m;N)} S^{w-m}V_m. 
\ee
   
The differential $\partial$ is  induced by the one on ${\rm M}^\ast_{(m)}$. 
The  bigrading by    $(m,w)$  is preserved by $\partial$.

\bt \la{THEOREM1.4} There is an algebra structure on  the space (\ref{MSMS00n}), 
 which is graded commutative for the cohomological grading $\ast$, and commutative for the weight and depth gradings. 
 \et
 
 We prove Theoerem \ref{THEOREM1.4} in the end of Section \ref{Sec2aa}. 
 So we arrive at a commutative  dg-algebra ${\rm M}_N^\ast$, equipped with the two extra gradings. We call it the {\it level $N$ modular dg-algebra}.

  \vskip 2mm
On the other hand, any Lie coalgebra ${\cal L}$ gives rise a  commutative dg-algebra, given by 
the exterior algebra 
 $\Lambda^*  {\cal L}$ with the differential induced via the Leibniz rule by the coproduct map $\delta: {\cal L} \lra \Lambda^2{\cal L}$. 
 It is usually presented as the standard cochain complex:
$$
  {\cal L} \lra \Lambda^2  {\cal L} \lra  \Lambda^3  {\cal L} \lra \ldots .
$$
 If the Lie coalgebra ${\cal L}$ is graded, its standard cochain complex inherits the grading. 
 
 In particular, the   cochain complex of the Lie coalgebra $\widehat {\cal C}_{\bullet, \bullet}(\mu_N)$ is a commutative dg-algebra.


\begin{theorem} \label{4.26.01.3a} 
Pick a primitive $N-$th root of unity $\zeta_N$. Then there is a canonical surjective map of commutative dg-algebras
\be \la{MGA11}
 \gamma: {\rm M}_N^\ast ~~\lra ~~\Lambda^*\Bigl(\widehat {\cal C}_{\bullet, \bullet}(\mu_N)\Bigr).
\ee
\end{theorem}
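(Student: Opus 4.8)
The plan is to construct the map $\gamma$ in two stages: first on the level of individual tensor factors $\mathrm{M}^\ast_{(m)} \otimes_{\Gamma_1(m;N)} S^{w-m}V_m$, and then check that the resulting assignment is compatible with the algebra structures from Theorem \ref{THEOREM1.4} on the source and from the Leibniz rule on the target. The starting point is the $m=1$ case, which is essentially \eqref{7.8.00.3a}: the summand with $m=1$, weight $w$, is $S^{w-1}V_1 \otimes_{\Gamma_1(1;N)}(\text{point})$, and this maps isomorphically (after the choice of $\zeta_N$) onto $\mathcal{C}_{w,1}(\mu_N) = \widehat{\mathcal{C}}_{w,1}(\mu_N)$ for $w>1$, while for $w=1$ it hits $\widehat{\mathcal{C}}_{1,1}(\mu_N) = \mathcal{O}^*(\mathrm{S}_N)\otimes\Q \oplus \gamma^{\mathcal{M}}\cdot\Q$; this accounts for the $\widehat{\cdot}$ (the extra $\gamma^{\mathcal{M}}$ line) and fixes the normalization. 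For general $m$, I would use the defining property of the modular complex $\mathrm{M}^\bullet_{(m)}$ from \cite{G98}: its generators in degree $1$ are indexed by suitable configurations/cosets, and I would send such a generator, tensored with an element of $S^{w-m}V_m$, to the corresponding \emph{motivic correlator} at $N$-th roots of unity, viewed as an element of $\mathcal{C}_{w,m}(\mu_N)\subset\widehat{\mathcal{C}}_{\bullet,\bullet}(\mu_N)$ (the elements the introduction advertises as spanning the image of the motivic Galois Lie algebra).

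The key steps, in order: (1) Define $\gamma$ on degree-$1$ generators of $\mathrm{M}^\bullet_{(m)}\otimes S^{w-m}V_m$ via motivic correlators, and check it is well defined on the $\Gamma_1(m;N)$-coinvariants — i.e. that the correlator associated to a generator is invariant under the stabilizer, using the equivariance of motivic correlators under the relevant automorphisms of $\mu_N$ and of the lattice. (2) Check the dihedral/shuffle-type relations imposed in the definition of $\mathrm{M}^\bullet_{(m)}$ are satisfied by the target elements; this is where the combinatorial relations among motivic correlators (their behaviour under the relevant symmetries and the "first shuffle"/cyclic relations) must be matched with the relations defining the modular complex. (3) Verify that $\gamma$ is a chain map: the differential $\partial$ on $\mathrm{M}^\bullet_{(m)}$ must correspond, under $\gamma$, to the cobracket $\delta$ of $\widehat{\mathcal{C}}_{\bullet,\bullet}(\mu_N)$ extended by the Leibniz rule — this is the compatibility of the boundary in the modular complex with the coproduct formula for motivic correlators. (4) Check multiplicativity: the product on $\mathrm{M}_N^\ast$ from Theorem \ref{THEOREM1.4} goes to the wedge product on $\Lambda^*\widehat{\mathcal{C}}_{\bullet,\bullet}(\mu_N)$; since both products are built so as to be graded-commutative and to interact with $\partial$, resp.\ $\delta$, correctly, this should reduce to a comparison on generators plus the fact that $\gamma$ is already a chain map. (5) Prove surjectivity: it suffices to show $\gamma$ is onto in degree $1$ onto $\widehat{\mathcal{C}}_{\bullet,\bullet}(\mu_N)$, because a dg-algebra map hitting a set of algebra generators of the target is surjective, and $\Lambda^*\widehat{\mathcal{C}}$ is generated in degree $1$. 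Degree-$1$ surjectivity is the statement that every motivic correlator at $N$-th roots of unity lies in the span of those attached to modular-complex generators; for $m=1$ this is \eqref{7.8.00.3a}/Borel, and for $m\ge 2$ it follows from the spanning property of motivic correlators together with the fact that the depth-$m$, weight-$w$ correlators are exhausted by the configurations parametrized by $\mathrm{M}^1_{(m)}$.

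The main obstacle I expect is step (2)–(3): matching the defining relations and the differential of the combinatorially-defined modular complex $\mathrm{M}^\bullet_{(m)}$ with the algebraic relations and the coproduct of motivic correlators. The cobracket of a motivic correlator is given by an explicit sum over ways of cutting a cyclic word, and one must see this cutting rule as precisely the boundary map of the modular complex after passing to $S^{w-m}V_m$-coefficients; the combinatorics of the modular complex (inversion, shuffle, and the geometric boundary relations, illustrated for $m=2$ by the modular triangulation in Figure \ref{mod}) has to be reconciled with the "shuffle + cyclic + distribution" relations satisfied by correlators at roots of unity. Once that dictionary is in place, well-definedness, the chain-map property, multiplicativity, and surjectivity follow fairly formally, with surjectivity ultimately resting on the already-quoted depth-$1$ computation \eqref{7.8.00.3a} and the general spanning property of motivic correlators.
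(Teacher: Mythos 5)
Your overall strategy coincides with the paper's: define $\gamma$ on degree-one generators by sending them to motivic correlators at $N$-th roots of unity, match relations and differentials, extend multiplicatively, and get surjectivity from the spanning property. But there is a genuine gap at your step (2), and it is exactly the point where the whole difficulty of the theorem sits. The modular complex ${\rm M}^\bullet_{(m)}$ (as opposed to the relaxed complex $\widehat{\rm M}^\bullet_{(m)}$) is defined by imposing \emph{both} shuffle relations (\ref{sshh1}) and (\ref{sshh3}). You only invoke the ``first shuffle''/cyclic relations of correlators; these indeed come for free from the shuffle relations (\ref{SRI}) built into the cyclic Lie coalgebra ${\cal C}{\cal L}ie_X$, and they take care of (\ref{sshh3}). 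The other relation, (\ref{sshh1}), does \emph{not} follow from any formal property of correlators: it requires the ``difficult'' second shuffle relations for motivic correlators, which hold only modulo lower-depth terms and are a nontrivial theorem (Theorem \ref{MSR}, due to Malkin). Moreover, the map interchanges the roles: the ``easy'' relation on the modular side corresponds to the ``difficult'' one on the correlator side and vice versa, so there is no way to reduce the verification to the standard cyclic/shuffle symmetries, and the claim that the dictionary ``follows fairly formally'' is where the argument breaks.

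Two further ingredients, absent from your outline, are needed even to state the matching. First, to accommodate the coefficients $S^{w-m}V_m$ (i.e.\ $w>m$) one must repackage correlators as $\ast$-correlators ${\rm Cor}^*_{\rm Mot}(a_0|n_0,\ldots,a_m|n_m)$ obtained by inserting strings of $0$'s between the arguments and passing to generating series in the $t$-variables, after the change of variables $a_i=b_{i+1}/b_i$. Second, everything must be done in the associated graded for the depth filtration: both the second shuffle relations of Theorem \ref{MSR} and the compatibility of the coproduct (\ref{wt2}) with the differential (\ref{FD}) (Lemma \ref{LCFMC}) hold only modulo lower depth, and the latter also uses that the would-be extra coproduct terms are of the form ${\rm Log}^{\cal M}(x)$ with $x^N=1$ and hence vanish. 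Without working modulo depth and at roots of unity, your step (3) (chain-map property) fails as stated, and well-definedness on ${\rm M}^\bullet_{(m)}$ cannot be established. Your treatment of $m=1$, the role of $\gamma^{\cal M}$, multiplicativity, and surjectivity are fine once these points are supplied.
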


The proof of Theorem \ref{4.26.01.3a} is given in Section \ref{4.26.01.3aX}. 
To define   the map (\ref{MGA11}) we use motivic correlators at roots of unity,  and the second shuffle relations for   motivic correlators proved in  \cite{Ma19b}.  
A very similar  proof of Theorem \ref{4.26.01.3a}  
  uses   motivic multiple  polylogarithms at roots of unity defined in \cite{G02b}.  


\begin{theorem} \label{MTTHHa}
i) Let  $\Gamma$ be a  finite index subgroup of ${\rm GL_m}(\Z)$ and $V$   is 
a   ${\rm GL_m}-$module. Then  
\begin{equation} \label{3more1} 
 {\rm H}^i \left({\rm M}^{\ast}_{(m)} \otimes_{\Gamma}V\right)= ~ 
\left\{ \begin{array}{ll}
 {\rm H}^{i-1}(\Gamma, V \otimes \varepsilon) & 
 \mbox{for} \quad 1 \leq i \leq 2, \quad m=2,\\ 
{\rm H}^{i}(\Gamma, V) &   \mbox{for} \quad 1 \leq i \leq 3, \quad m=3,\\ 
{\rm H}^{i+2}(\Gamma, V\otimes \varepsilon) &  \mbox{for} \quad 1 \leq i \leq 4, \quad m=4.\\
 \end{array}\right.
\end{equation}

ii) For any $m \geq 2$, there is a canonical  isomorphism:
\be \la{gmmm}
\mu_{m}^m: {\rm H}^m \left({\rm M}^{\ast}_{(m)} \otimes_{\Gamma}V\right) \stackrel{\sim}{\lra}  
 {\rm H}^{\frac{m(m-1)}{2}}(\Gamma, V\otimes \varepsilon^{m-1}).\\ 
\ee

\end{theorem}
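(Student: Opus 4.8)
\textbf{Proof proposal for Theorem \ref{MTTHHa}.}

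The plan is to reduce the cohomology of the complex ${\rm M}^{\ast}_{(m)}\otimes_\Gamma V$ to group cohomology of $\Gamma$ by identifying the modular complex ${\rm M}^{\bullet}_{(m)}$, up to a shift and a twist by $\varepsilon$, with a resolution that computes the cohomology of the locally symmetric space $Y_1(m;N)$ with coefficients in the local system attached to $V$. Recall from Section \ref{Sec2} that ${\rm M}^{\bullet}_{(m)}$ is a complex of ${\rm Aut}({\rm L}_m) = {\rm GL}_m(\Z)$-modules concentrated in degrees $[1,m]$, and for $m=2$ it is the chain complex of the modular triangulation of the hyperbolic plane (cusps removed). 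The first step is to record the structure of ${\rm M}^{\bullet}_{(m)}$ as a complex of $\Gamma$-modules: in each degree the term is induced from a parabolic-type subgroup acting on a cell, so that $\otimes_\Gamma V$ and taking $\Gamma$-coinvariants is exact on each term. Then $H^i({\rm M}^{\ast}_{(m)}\otimes_\Gamma V)$ is computed by a spectral sequence whose input involves the homology of the stabilizers; since we are working over $\Q$ and the relevant stabilizers have cohomology concentrated appropriately (Borel--Serre type vanishing, or direct computation for small $m$), the spectral sequence degenerates and identifies the answer with a shift of $H^\ast(\Gamma, V\otimes\varepsilon^{\delta})$, where the sign twist $\varepsilon^{\delta}$ records the action of the orientation-reversing elements of ${\rm O}(m)$ on the top cell.

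For part (i), I would treat $m=2,3,4$ separately, since the precise degree shift and $\varepsilon$-twist depend on the geometry of the Borel--Serre boundary of $Y_1(m;N)$ and on which homology groups of the cell stabilizers survive. For $m=2$: ${\rm M}^{\bullet}_{(2)}$ is the chain complex of the modular triangulation, so $H^i({\rm M}^\ast_{(2)}\otimes_\Gamma V)$ computes the Borel--Moore homology / compactly supported cohomology of the modular curve $Y_1(2;N)$ with coefficients in $V$, which by Poincar\'e duality on the $1$-dimensional orbifold and the fact that ${\rm O}(2)/{\rm SO}(2)$ acts as complex conjugation gives $H^{i-1}(\Gamma, V\otimes\varepsilon)$ for $i=1,2$ (the $\varepsilon$ coming exactly from the antiholomorphic involution twisting the local system). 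For $m=3$: here $d_3 = 5$ and the modular complex has length $3$; the identification $H^i({\rm M}^\ast_{(3)}\otimes_\Gamma V) = H^i(\Gamma, V)$ follows because the relevant dual complex computes ordinary cohomology with no $\varepsilon$-twist (the sign $\varepsilon^{m-1} = \varepsilon^2$ is trivial). For $m=4$: $d_4 = 9$, the complex has length $4$, $\varepsilon^{m-1} = \varepsilon$, and the shift is $i \mapsto i+2$; this case requires knowing enough about the rational homology of $Y_1(4;N)$ and its boundary to see that only the expected range contributes. Part (ii) is the clean "top degree" statement: $H^m$ of the complex is the top cohomology of the length-$m$ modular complex, and this always matches the top nonvanishing group cohomology $H^{m(m-1)/2}(\Gamma, V\otimes\varepsilon^{m-1})$ via the fundamental class / modular symbol pairing --- this is the general-$m$ statement underlying Theorem \ref{ALLM1}, and I would prove it uniformly by exhibiting the modular symbol map and checking it is an isomorphism onto the top cohomology (using (\ref{GV0}) to locate the target) rather than by case analysis.

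The main obstacle will be part (i) for $m=4$: establishing that the hypercohomology spectral sequence of ${\rm M}^\ast_{(4)}\otimes_\Gamma V$ collapses to give exactly $H^{i+2}(\Gamma, V\otimes\varepsilon)$ in the range $1\le i\le 4$ requires controlling the contributions of the cell stabilizers in the modular complex ${\rm M}^{\bullet}_{(4)}$ and the boundary strata of the Borel--Serre compactification of $Y_1(4;N)$. Unlike $m\le 3$, the space $Y_1(4;N)$ is genuinely higher-dimensional and non-algebraic, so one cannot lean on Hodge theory; instead one must use the explicit cellular description of ${\rm M}^{\bullet}_{(4)}$ from \cite{G98} together with vanishing results for the cohomology of the arithmetic subgroups appearing as stabilizers. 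I expect the degree shift "$+2$" and the $\varepsilon$-twist to emerge from a Poincar\'e--Lefschetz duality argument on $Y_1(4;N)$ relative to its boundary, with the $\varepsilon$ tracking the non-orientability contribution of ${\rm O}(4)/{\rm SO}(4)$, exactly parallel to but more delicate than the $m=2$ computation.
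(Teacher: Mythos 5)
Your proposal rests on a premise that fails: you treat ${\rm M}^{\bullet}_{(m)}$ as if it were (term by term) a complex of induced modules coming from cell stabilizers of a cell decomposition of the symmetric space, so that $H^i({\rm M}^{\ast}_{(m)}\otimes_{\Gamma}V)$ could be computed by a stabilizer spectral sequence and Poincar\'e--Lefschetz duality on $Y_1(m;N)$. But the modular complex is a purely combinatorial complex whose degree-one term is generated by extended bases subject to the \emph{double shuffle relations} (\ref{sshh1})--(\ref{sshh3}), and it is not the cellular chain complex of any $\Gamma$-equivariant decomposition: the geometric realization $\psi[e_1,\dots,e_m]$ (sum over plane trivalent trees) kills the dihedral and first shuffle relations, but the second shuffle relations do \emph{not} vanish geometrically. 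That the theorem is nevertheless true is exactly the content you would have to prove, and your $m=3$ and $m=4$ cases amount to asserting it ("the relevant dual complex computes ordinary cohomology", "the spectral sequence degenerates") without a mechanism for handling these relations. The paper's route is different and is forced by this issue: one maps the relaxed modular complex to the Voronoi complex, shows that the image of each second shuffle relation is the boundary of an explicit Voronoi cell (for $m=3$ the boundary of the $5$-simplex $\varphi(e_1,e_2,e_3,e_4,f_{12},f_{23})$, Theorem \ref{thc3}; for $m=4$ explicit $7$-cells, formulas (\ref{m6a})--(\ref{m6aa}) inside Theorem \ref{1more}), and then proves that the induced map to the (truncated) Voronoi complex is a quasiisomorphism by a cell-by-cell classification of Voronoi cells for ${\rm GL}_3$ and ${\rm GL}_4$, injectivity arguments, and acyclicity of the cokernel (Corollary \ref{kker}, Lemmas \ref{101}--\ref{3.25.01.17}, Proposition \ref{a106}, plus the Appendix); only after that does one invoke the standard identification of Voronoi homology with $H^{d_m-i}(\Gamma, V\otimes\varepsilon^{m})$-type groups (Theorem \ref{3.29.01.1}), which is where the degree shifts and $\varepsilon$-twists in (\ref{3more1}) actually come from (note in particular that for $m=4$ one needs the auxiliary complex ${\cal M}^{(4)}_{\bullet}$ of Lemma \ref{il}, not just ${\rm M}^{(4)}_{\bullet}$).

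A secondary gap concerns part ii). Your plan to "exhibit the modular symbol map and check it is an isomorphism onto the top cohomology" using only the vanishing (\ref{GV0}) does not by itself control the cokernel of $\partial\colon{\rm M}^{m-1}_{(m)}\to{\rm M}^{m}_{(m)}$ after tensoring with $V$ over $\Gamma$: one must know that the images of the last two terms of the modular complex realize the top-dimensional and codimension-one cells of the Voronoi (equivalently, the full set of relations among Mazur's modular symbols), which again requires the geometric realization and Voronoi's description of the top cells rather than a general duality argument. Your $m=2$ paragraph is fine and coincides with the paper's Theorem \ref{thc2}, but for $m\geq 3$ the proposal as written would not close.
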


Theorem \ref{MTTHHa} is proved in Section \ref{SEC5}. Its $m=2$ case is given by Theorem \ref{thc2}. The $m=3$ case follows from Theorem \ref{thc3}. The $m=4$ case follows from Theorem \ref{1more} and Corollory \ref{1more}.

By  Theorem \ref{MTTHHa} 
the   cohomology (\ref{3more1}) catch   the cuspidal cohomology.  
The cuspidal cohomology can be non-zero only if $V$ is self-dual. So if  
$V=S^{w-m}V_m$,  they vanish unless $w=m$. 

\vskip 2mm
Note that for a finite index subgroup $\Gamma  \in {\rm GL_m}(\Z)$, when $m$ grows,  the possible non-zero ranges of the   cohomology groups ${\rm H}^i  ({\rm M}^{\ast}_{(m)} \otimes_{\Gamma}V )$ and ${\rm H}^i(\Gamma, V)$ are quite different;  
the first is linear in $m$, while the second is quadratic: 
\be  \la{EST}
\begin{split}
&{\rm H}^i \left({\rm M}^{\ast}_{(m)} \otimes_{\Gamma}V\right) = 0 \quad \mbox{if $i \not \in \{1, ..., m\}$};\\
& {\rm H}^i(\Gamma, V) = 0 \quad \mbox{if $i \not \in \{0, ..., \frac{m(m-1)}{2}\}$}.\\
\end{split}
\ee
 Theorem \ref{MTTHHa}~ii) tells that the top cohomology groups in the non-zero range in (\ref{EST}) coincide.


\paragraph{Conclusion.} Combining Theorems \ref{4.26.01.3a} and \ref{MTTHHa} we relate the level $N$ cyclotomic Lie coalgebra describing the structure of the motivic fundamental group 
$\pi_1^{\cal M}({\Bbb G}_m - \mu_N, v_0)$ to the geometry of locally symmetric spaces for the congruence subgroup $\Gamma_1(m, N) \subset {\rm GL_m}(\Z)$ for $m=1,2,3,4.$ 

\vskip 2mm
This relationship in the depth $m=2$ and weight $w=2$ was discovered in \cite{G97}. The  general 
depth $m\leq 3$ case was worked out in \cite{G98} in the Hodge set-up,  and in \cite{G00a} in the set-up of $l-$adic Galois representations. See also   survey \cite{G00b}.  
Our new results   are in the depth $m=4$. 
\vskip 2mm

In this paper  we work in the motivic set-up, 
using  the  motivic fundamental groups $\pi_1^{\cal M}({\Bbb G}_m - \mu_N, v_0)$. This  makes the story  more transparent.   
We use  either  motivic multiple polylogarithms  or   motivic correlators. 
Motivic correlators make   the connection more natural: 

\begin{itemize} 
\item 
Motivic correlators  live in the motivic Lie coalgebra, 
while  motivic multiple polylogarithms live in the motivic Hopf algebra. This is   why we prefer the former to the latter. 
\item Motivic correlators satisfy the shuffle and dihedral symmetry relations on the nose. 

\item 
The coproduct of motivic correlators at $N-$roots of unity  match, after a change of variables,   
the boundary of  the defined in Section \ref{SECT1.4} $(2m-2)-$cell $\psi(\alpha_0, \ldots , \alpha_m)$, $\alpha_i\in \Z/N\Z$, as well as the differential in the modular complex.  

\end{itemize} 

\paragraph{Elaborating the map $\gamma$.} The   cyclotomic Lie coalgebra $\widehat {\cal C}_{\bullet, \bullet}(\mu_N)$ is bigraded by the weight $w$ and depth $m$. 
Denote by  $\Lambda^*_{w,m}(\widehat {\cal C}_{\bullet, \bullet}(\mu_N))$ 
 the   weight $w$,  depth $m$ part of the corrersponding dg-algebra. 
Then   Theorem \ref{4.26.01.3a} means just the following. For each $w \geq m \geq 1$:

a) There exists canonical surjective map of $\Q$-vector spaces
\be \la{MMa}
\gamma_{m, w}^1: {\rm M}^1_{(m)}\otimes_{\Gamma_1(m;N)}{\rm S}^{w-m}(V_m ) \lra {\cal C}_{w,m}(\mu_N).
\ee

b) 
The  map (\ref{MMa}) extends to  a canonical surjective map of complexes
\begin{equation} \label{4.26.01.2a}
\gamma_{m, w}^\ast: {\rm M}^{*}_{(m)}\otimes_{\Gamma_1(m;N)}{\rm S}^{w-m}(V_m ) \lra 
\Lambda_{w,m}^*\Bigl(\widehat {\cal C}_{\bullet, \bullet}(\mu_N)\Bigr).
\end{equation}
The map (\ref{4.26.01.2a})    was defined in  \cite{G99}, \cite{G00a} for a variant of the cyclotomic Lie algebra.

\vskip 2mm
The   map of complexes $\gamma^*_{m,m}$ induces the following map on the last cohomology group, $\ast =m$: 
\be \nonumber
\gamma_{m}^m: {\rm H}^m \left({\rm M}^{\ast}_{(m)} \otimes_{\Gamma_1(m;N)} \Q\right) \lra  
 {\rm H}^{\frac{m(m-1)}{2}}(\Gamma_1(m;N), \varepsilon^{m-1}).\\ 
\ee
  Let us explain how to define this map   by using  Mazur's modular symbols for ${\rm GL_m}$.

 \subsection{Mazur's modular symbols for ${\rm GL_m}$ and the map $\gamma$ in simplest case} \la{SECT1.4}

\paragraph{1. Mazur's modular symbols  
for ${\rm GL}_m$.} Recall a rank $m$ lattice 
${\rm L_m}$ and the   vector space $V_m$ dual to ${\rm L_m} \otimes \R$.  Let ${\cal P}(V_m)$ (respectively $\overline {\cal P}(V_m)$) be the cone of positive (respectively non-zero, non-negative)  definite quadratic forms in    $V_m$. 
The  symmetric space 
for ${\rm GL}_m(\R)$ is given by 
\be \la{MMSa}
{\Bbb H}_{\rm L_m} :=  {\rm {\rm GL}}_m(\R)/{\rm O}(m)\cdot \R^*_+   =  {\cal P}(V_m)/\R_+^*. 
\ee
It is  compactified   by the space $\overline {\Bbb H}_{\rm L_m}:= \overline {\cal P}(V_m)/\R_+^*$.  

Any vector $f \in {\rm L_m}$ defines a degenerate    quadratic form 
$$
\varphi(f):= (f,x)^2 \in \overline {\cal P}(V_m).
$$
Pick a basis $(f_1, \ldots , f_{m})$ of the lattice ${\rm L_m}$. 
Take  the convex hull  of  the vectors $\varphi(f_k)$ in 
the cone $\overline {\cal P}(V_m)$ and project it to  $\overline {\Bbb H}_{\rm L_m}$, getting a simplex  of dimension $m-1$ in the symmetric space (\ref{MMSa}):
\be \la{SIMp}
\varphi(f_1) \ast \ldots \ast \varphi(f_m): = \{\lambda_1 \cdot \varphi(f_1) + ... + \lambda_m \cdot 
\varphi(f_m)\}/\R_{>0}^*, 
\quad \lambda_i \geq 0,
\ee 
 Its boundary lies on the boundary of the symmetric space. 
So for any   subgroup $\Gamma \subset {\rm GL}_m(\Z)$ its projection to the quotient $\Gamma\backslash {\Bbb H}_{\rm L_m}$  produces a class in the  
  Borel-Mooore homology:
\be \la{MMS}
[\varphi(f_1) \ast \ldots \ast \varphi(f_m)] \in {\rm H}^{\rm BM}_{m-1}(\Gamma\backslash {\Bbb H}_{\rm L_m}; \Z) = {\rm H}^{d_m-(m-1)}(\Gamma \backslash {\Bbb H}_{\rm L_m}; \Z), ~~~~d_m = {\rm dim}~{\Bbb H}_{\rm L_m}.
\ee
These classes are  {\it Mazur's modular symbols} \cite{Ma}, \cite{AR}.  They are  the modular symbols   associated to the Borel subgroup 
of ${\rm GL}_m$. They play an important role in arithmetic of 
certain automorphic p-adic L-functions for ${\rm GL}_m$ \cite{KMS}.  
The modular symbols (\ref{MMS})  generate  ${\rm H}^{d_m-(m-1)}(\Gamma \backslash {\Bbb H}_{\rm L_m}; \Z)$, see (\ref{dm}), 
which  is the top dimensional cohomology group of the space $\Gamma \backslash {\Bbb H}_{\rm L_m}$:
$$
{\rm H}^{i}(\Gamma \backslash {\Bbb H}_{\rm L_m}; \Z) =0 ~~~~\mbox{if $i>d_m - (m-1)$}.
$$

Recall the modular manifold 
\be \nonumber
Y_1(m;N)    =   \Gamma_1(m;N) \backslash {\rm GL_m}(\R)/ {\rm O(m)} \cdot \R_+^*= \Gamma_1(m;N) \backslash {\Bbb H}_{\rm L_m}.
\ee 
\bd \la{DEFMMS}
a) The subgroup ${\cal M}{\cal S}^{m}_{(m)}$ of  Borel-Moore $(m-1)-$cycles   in the  symmetric space $ {\Bbb H}_{\rm L_m}$ is generated by 
 the simplices (\ref{SIMp}). 
  
b)   The subgroup ${\cal M}{\cal S}^{m}(Y_1(m; N))$ of  Borel-Moore $(m-1)-$cycles   in the  space $Y_1(m;N)$ is generated by 
  the images of  simplices (\ref{SIMp}). 
  \ed

 \paragraph{2. The key map.}
Let us pick a basis in the lattice ${\rm L}_m$. The group ${\rm GL_m}(\Z)$ acts  on the set of bases of the lattice ${\rm L_m}$. The stabilizer of   simplex (\ref{SIMp}) is the semidirect product 
of the symmetric group $S_m$ and the diagonal subgroup $D_m = (\Z/2\Z)^m $. The subgroup $D_m$ preserves the  simplex orientation;  $S_m$ alters it by 
the  sign character  $S_m \to \{\pm 1\}$.   Therefore the group ${\cal M}{\cal S}^{m}(Y_1(m; N))$  is isomorphic to  the sign-coinvariants of the    group $S_m \times D_m$     acting on 
$\Z[{\rm M}_m(N)]$, where we set
$$
  {\rm M}_m(N):=  \Gamma_1(m;N) \backslash  {\rm GL_m}(\Z).
$$
The action of ${\rm GL_m}(\Z)$ on $(\Z/N\Z)^{m} $ provides a bijection
 \be \la{SMN}
  {\rm M}_m(N) = \left\{\{\alpha_1, ... ,\alpha_m\} \in (\Z/N\Z)^{m} ~|~{\rm g.c.d.}(\alpha_1, ... ,\alpha_m) =1\right\}.
\ee 
For example,  when $N=p$ is a prime, 
${\rm M}_m(p) =  (\Z/p\Z)^{m} -\{0\}$.  
So any   $\{\alpha_1, ... ,\alpha_m\} \in {\rm M}_m(N)$ gives rise to a modular symbol given by a Borel-Moore  $(m-1)-$cycle  
\be \nonumber
\begin{split}
&[ \alpha_{1}]\ast \ldots \ast [\alpha_{m}]  \in {\cal M}{\cal S}^{m}(Y_1(m; N)),\\
&[\pm  \alpha_{\sigma(1)}]\ast  ... \ast [\pm  \alpha_{\sigma(m)}] =   {\rm sgn}(\sigma) ~ [ \alpha_{1}]\ast \ldots \ast [\alpha_{m}] , ~~~~\sigma \in S_m.\\
\end{split}
\ee

 \bd Pick a primitive $N-$th root of unity $\zeta_N$ and consider a map 
\be \la{qwe}
\begin{split}
\gamma_{m}^{m}: ~&{\cal M}{\cal S}^{m}(Y_1(m; N)) \lra  \Lambda^m\widehat {\cal O}^*({\rm S_N}),\\
&[ \alpha_{1}]\ast \ldots \ast [\alpha_{m}]  \lms (1-\zeta_N^{\alpha_1}) \wedge \ldots \wedge (1-\zeta_N^{\alpha_m}).\\
\end{split}
 \ee 
 Here we set $(1-\zeta_N^{\alpha}):= \gamma^{\cal M}$ if $\alpha =0$. 
 \ed 
 
 \bt The map (\ref{qwe}) induces a surjective map of the cohomology groups
\be \la{mmKK}
\begin{split}
&{\rm H}^{\frac{m(m-1)}{2}}(\Gamma_1(m;N), \varepsilon^{m-1})\lra H_{m,m}^m(\widehat {\rm C}_{\bullet, \bullet}(\mu_N)) = \\
&{\rm Coker}\Bigl( {\cal C}_{2}(\mu_N)\otimes \Lambda^{m-2} \widehat {\cal O}^*({\rm S_N}) \stackrel{\delta}{\lra} \Lambda^m \widehat {\cal O}^*({\rm S_N})\Bigr).\\
\end{split}
\ee
 The map (\ref{mmKK}) is an isomorphism when $N=p$ is a prime.

 \et
 
We stress that the map (\ref{qwe}) does depend on the choise of the  basis $(e_1, ..., e_m)$ of the lattice ${\rm L}_m$, and of the   primitive $N-$root of unity $\zeta_N$.


 \paragraph{Acknowledgments.}  
The work   was
supported by the  NSF grants  DMS-1564385 and  DMS-1900743. The final version of the paper was prepared   at   
 IHES    (Bures sur Yvette),  MPI and MSRI (Berkeley) during the Summer and Fall of 2019.  
The hospitality and support of   IHES,  MPI and MSRI, and   the support of the Simons Foundation are  gratefully acknowledged. I am very grateful to Nikolay Malkin for making useful comments on the preliminary draft of the paper, 
and writing the Appendix.

 \section{The main construction}

In Section \ref{SS1.3} we outline the main  construction relating the cohomology of the cyclotomic Lie algebras and the geometry of  modular manifods. 
It  go back to \cite{G99}, \cite{G00b}. 

The construction consists of two parts of rather different nature: geometric and motivic. The  geometric part  is discussed in Sections \ref{SSECC2.2}--\ref{Sec2aa}, and the motivic one in 
Section \ref{SEC8}. 

The main result, Theorem \ref{4.26.01.3a}, is proved in Section \ref{4.26.01.3aX}. Its variant is proved is given in Section \ref{Sec2}. The  combinatorial sceleton of the proof 
is clarified by the results of  Sections \ref{Sec2b}--\ref{SECCTT2.6}.

\paragraph{The key ideas.} We define a collection of canonical elements in each of the spaces ${\cal C}_{w,m}(\mu_N)$. These elements    span  them  
as   vector spaces. For ${\cal C}_{1,1}(\mu_N)$ these are just the cyclotomic units. 

In general these are   motivic correlators/motivic multiple polylogarithms at $N-$th roots of unity, projected onto the associate graded  for the depth filtration. 
They satisfy the {\it double shuffle relations}. The first    is clear from the  definition, the second is much harder to prove. 
This, plus the explicit formulae for the coproduct of motivic correlators,  is  sufficient to get Theorem \ref{4.26.01.3a}. 

\vskip 2mm
To prove Theorems \ref{MTTHHa} and   Theorem \ref{CMR1y}, we   relate  further  the properties of the motivic correlators at $N-$th roots of unity 
to the geometry of modular manifolds $Y_1(m;N)$. 

We assign to each motivic cyclotomic correlator  a canonical $(2m-2)-$chain in the complex calculating the cohomology of the local system 
$S^{w-m}V_m$ on $Y_1(m;N)$. It is defined via the "sum of the plane trivalent trees" construction. 
If $w=m$,  it is just a $(2m-2)-$chain in $Y_1(m;N)$. 
 
\vskip 2mm
 The  key feature of these chains is that their boundaries reflect the coproduct of the corresponding motivic cyclotomic correlators. 
Arguing by the induction, this suggests that   relations between   motivic cyclotomic correlators should give rise to   cycles, at least after suitable corrections.

The $(2m-2)-$chains do satisfy  the  "difficult" motivic  shuffle relations. However,  if $m>2$, 
the "easy" shuffle relations do not hold for these chains. Instead, if $m=3,4$, they are boundaries of certain explicit $(2m-1)-$chains. 
These results, plus the study the Voronoi complexes for  $m=3,4$, deliver Theorems \ref{MTTHHa} and   Theorem \ref{CMR1y}. 
The situation for $m>4$ needs further investigation. 

 \subsection{An outline of the construction} \la{SS1.3}
In short, our goal is to generalize the map (\ref{qwe}): both the source and the target. 

To generalize the source, we start from introduction of the higher modular symbol complex.

We use the following strategy. First of all, we pick  a   lattice 
${\rm L_m}$. 
 
\begin{enumerate}

\item   Our main geometric construction provides   a {\it higher modular symbols complex} ${\cal M}{\cal S}^\ast_{(m)}$. It is a  complex of right ${\rm Aut}({\rm L_m})-$modules of amplitude $[1, m]$:
\be \nonumber
      {\cal M}{\cal S}^{1}_{(m)}   \stackrel{\partial}{\lra}       \ldots  \stackrel{\partial}{\lra}
  {\cal M}{\cal S}^{m-1}_{(m)}   \stackrel{\partial}{\lra}     {\cal M}{\cal S}^{m}_{(m)} .
\ee
 It sits in  the complex of Borel-Moore chains in the symmetric space for the group ${\rm GL_m}(\R)$. 

The elements of   $ {\cal M}{\cal S}^{i}_{(m)} $ are certain $(2m-1-i)-$dimensional  Borel-Moore chains. 

Its last group ${\cal M}{\cal S}^{m}_{(m)}$ is   the   one   from Definition \ref{DEFMMS}. The complex is defined in Section \ref{SSECC2.2}.
  
\item  Axiomatizing some of the properties of the   complex ${\cal M}{\cal S}^\ast_{(m)}$, 
we introduce another   complex of ${\rm Aut}({\rm L_m})-$modules $\widehat {\rm M}^*_{(m)}$ of amplitude $[1, m]$, called the 
 {\it rank $m$ 
 relaxed modular complex}:
 $$
 \widehat  {\rm M}^{1}_{(m)}  \stackrel{\partial}{\lra} \ldots \stackrel{\partial}{\lra} \widehat  {\rm M}^{m-1}_{(m)}  \stackrel{\partial}{\lra}  \widehat  {\rm M}^{m}_{(m)} .
  $$
   It is defined by imposing the dihedral and  the first shuffle relations on the generators. 
   
   The two complexes are related by the  {\it geometric  realization} map  of complexes:
 \be \la{GRM}
 \psi_{(m)}^*: \widehat {\rm M}^*_{(m)} \lra{\cal M}{\cal S}^\ast_{(m)}.
  \ee
  It is surjective by the very definition, and looks as follows: 
   \begin{displaymath} 
    \xymatrix{    
\widehat   {\rm M}^{1}_{(m)} \ar[r]^{}  \ar[d]^{\psi_{(m)}^{1}} &     \ldots \ar[r]^{} & 
\widehat   {\rm M}^{m-1}_{(m)}  \ar[r]^{}  \ar[d]^{ \psi_{(m)}^{m-1}} & \widehat {\rm M}^{m}_{(m)} \ar[d]^{\psi_{(m)}^{m}} \\   
  {\cal M}{\cal S}^{1}_{(m)} \ar[r]^{}&\ldots \ar[r]^{} &{\cal M}{\cal S}^{m-1}_{(m)} \ar[r]^{} & {\cal M}{\cal S}^{m}_{(m)} \\}
\end{displaymath}

  \item  The complex $\widehat   {\rm M}^{\ast}_{(m)}$ has a    quotient $  {\rm M}^{\ast}_{(m)} $,  called the {\it rank $m$ modular complex},  
 It is obtained by imposing the second shuffle relations. So  there is a surjective map of complexes
$$
p^*_{(m)}: \widehat   {\rm M}^{\ast}_{(m)} \lra{\rm M}^{\ast}_{(m)}.
$$

 \item The group ${\rm Aut}({\rm L}_m)$ acts on  the complexes ${\cal M}{\cal S}^\ast_{(m)}$, $\widehat {\rm M}^\ast_{(m)}$ and ${\rm M}^\ast_{(m)}$.  So 
  for any subgroup $\Gamma \subset {\rm Aut}({\rm L}_m)$ and any representation $V$ of ${\rm Aut}({\rm L}_m)$, there are  three complexes:  
 \be \nonumber
\begin{split}
& {\cal M}{\cal S}^\ast_{(m)}\otimes_{\Gamma}V, ~~~~ \widehat {\rm M}^\ast_{(m)}\otimes_{\Gamma}V, ~~~~  {\rm M}^\ast_{(m)}\otimes_{\Gamma}V.\\
\end{split}
 \ee 
In particular,  given a positive integer $N$, we  can consider the following three complexes: 
 \be \la{MSMS}
\begin{split}
{\cal M}{\cal S}_N^\ast:= &\bigoplus_{ m=1}^\infty \bigoplus_{w\geq m} {\cal M}{\cal S}^\ast_{(m)}\otimes_{\Gamma_1(m;N)} S^{w-m}V_m,\\
\widehat {\rm M}_N^\ast:= &\bigoplus_{ m=1}^\infty \bigoplus_{w\geq m} \widehat {\rm M}^\ast_{(m)}\otimes_{\Gamma_1(m;N)} S^{w-m}V_m,\\
 {\rm M}_N^\ast:= &\bigoplus_{ m=1}^\infty \bigoplus_{w\geq m}   {\rm M}^\ast_{(m)}\otimes_{\Gamma_1(m;N)} S^{w-m}V_m.\\
\end{split}
\ee
Each of them carries  three compatible gradings: the cohomological grading $\ast$, and  the gradings by    the weight $w$, and    depth $m$.   
The  $(w, m)$ bigrading      is preserved by the  cohomological differential  $\partial$.

We equip each of the   complexes (\ref{MSMS}) with a product, providing it with a structure of a commutative dg-algebra with the  cohomological grading $\ast$ and   
the differential $\partial$. Each of these dg-algebras are also bigraded by $(w,m)$. We use the following names for them:

 ${\cal M}{\cal S}_N^\ast$: the {\it level $N$ dg-algebra of higher modular symbols}. 

  $\widehat {\rm M}_N^\ast$: the {\it level $N$ relaxed modular dg-algebra}.

 ${\rm M}_N^\ast$: the {\it level $N$ modular dg-algebra}.

  \item The geometric realization maps (\ref{GRM})  give rise to the {\it geometric  realization} map  of dg-algebras 
  \be \la{MRg}
 \psi: \widehat {\rm M}^*_{N} \lra{\cal M}{\cal S}^\ast_{N}.
  \ee
  
  \item Our main motivic construction provides the {\it motivic realization} map  of dg-algebras\footnote{It is tempting to think that there exists a map of dg-algebras: 
  \be\nonumber
  ?: {\cal M}{\cal S}^\ast_{N} \lra \Lambda^\ast \widehat {\cal C}_{\bullet, \bullet}(\mu_N) ~~\mbox{such that} ~~ 
 \gamma\circ p= ?\circ \psi.
 \ee
 We do not know how to prove this. The problem is that   one might have relations between the higher modular symbols which are  not reflected in the definition of the relaxed modular complexes. }
  \be \la{MR}
 \gamma:{\rm M}^*_{N} \lra \Lambda^\ast \widehat {\cal C}_{\bullet, \bullet}(\mu_N).
  \ee
  It uses motivic correlators,    reviewed in Section \ref{SEC8}. 
Alternatively, one can use the motivic multiple polylogarithms, as we did in \cite{G00b} and \cite{G02b}, see Section \ref{Sec2}.  
 
  These maps can be organized into a   diagram of commutative dg-algebras:
 \begin{displaymath} 
    \xymatrix{
     \widehat  {\rm M}_N^\ast \ar[r]^{\psi} 
     \ar[d]^{p}  & {\cal M}{\cal S}^{\ast}_N 
     \\     
{\rm M}_{N}^\ast\ar[r]^{\gamma~~~~~}    &
 \Lambda^\ast \widehat {\cal C}_{\bullet, \bullet}(\mu_N)\\}
\end{displaymath} 
   
   \item The dg-algebra ${\rm M}_{N}^\ast$ is obtained from the one   $\widehat {\rm M}_{N}^\ast$   by imposing the second shuffle relations. However if $m>2$, 
   these relations do not vanish under the geometric realization map (\ref{MRg}). 
   
   This begs for a resolution of dg-algebras ${\rm M}_{N}^\ast$ and    ${\cal M}{\cal S}^\ast_{N}$ - see discussion at the end of Section \ref{4.26.01.3a}.  
   We leave this question for future investigation. 
   In  Section \ref{SEC5} we show  that, for $m=3,4$, the map (\ref{MRg}) sends the other shuffle relations   to boundaries of certain  chains.     
   This plays an important role in the proof of Theorem \ref{MTTHHa}.

     \item Both the generators of the modular dg-algebra and the motivic cyclotomic correlators satisfy the double shuffle relations. 
     In each case there is an "easy" set of shuffle relations, and the "difficult" one. The "easy" shuffle relations for the modular dg-algebra vanish under the geometric realization map. 
     The "easy"  shuffle relations for  motivic correlators are valid on the nose, and easy to prove.
        
         However the motivic  realization map (\ref{MR}) interchanges the "easy"   and  "difficult" shuffle relations!

   \item 
   The {\it sum over all plane trivalent trees} construction, enters   crucially   to both   geometric and   motivic   constructions, and    
  fit together in a   mysterious way. 
  In each case the "easy" shuffle relations follow from the sum over all plane trivalent trees construction.
     \end{enumerate}

\paragraph{An example: $w=m$.} The $w=m$ summand  of  ${\cal M}{\cal S}_N^\ast $ in (\ref{MSMS}) reduces to   the   $\Gamma_1(m;N)-$coinvariants, providing the higher modular symbols complex  for the modular manifold 
$Y_1(m;N)$:
$$
{\cal M}{\cal S}^\ast(Y_1(m;N)):=  {\cal M}{\cal S}^\ast_{(m)}\otimes_{\Gamma_1(m;N)}\Q. 
$$
Its last group  ${\cal M}{\cal S}^m(Y_1(m;N))$  recovers  Mazur's modular symbols for  $Y_1(m;N)$. 
 
We   map   the   modular 
  complex for $Y_1(m;N)$, understood as the  $\Gamma_1(m;N)-$coinvariants of the modular complex, to the weight $m$ part of the  cochain complex of the diagonal  cyclotomic Lie algebra $\widehat {\cal C}_{\bullet}(\mu_N)$:
 \begin{displaymath} 
    \xymatrix{    
  {\rm M}^{1}_{(m)} \otimes_{\Gamma_1(m;N)}\Q\ar[r]^{~~~~~~~~~\partial}  \ar[d]^{\gamma_{m}^{1}} &     \ldots \ar[r]^{\partial~~~~~~~} & 
  {\rm M}^{m-1}_{(m)}\otimes_{\Gamma_1(m;N)}\Q  \ar[r]^{\partial}  \ar[d]^{ \gamma_{m}^{m-1}} & {\rm M}^{m}_{(m)}\otimes_{\Gamma_1(m;N)}\Q \ar[d]^{\gamma_{m}^{m}} \\   
  {\cal C}_m(\mu_N)\ar[r]^{\delta}&\ldots \ar[r]^{\delta~~~~~~~~~~~~~} &{\cal C}_{2}(\mu_N)\otimes \Lambda^{m-2} \widehat {\cal C}_1(\mu_N)\ar[r]^{~~~~~~~\delta} &
   \Lambda^m \widehat {\cal C}_1(\mu_N)\\}
\end{displaymath}

Let us proceed  to realization of the  strategy sketched above.

\subsection{Higher modular symbols complexes} \la{SSECC2.2}

 \paragraph{1. The groups ${\cal M}{\cal S}^{1}_{(m)}$ and ${\cal M}{\cal S}^{1}(Y_1(m; N))$ via Feynman graphs.} 
 {\it Terminology}. 
A tree is a connected graph without loops. The edges of a tree 
consist of legs (i.e. external edges) 
and internal edges.So a leg is an edge of the tree 
such that one or both of its vertices have 
valency one.  A tree may have no internal edges or vertices. We do not allow 
vertices of valency two.

A {\it plane tree} is a tree with a given cyclic order of the edges incident to each vertex. There is 
natural cyclic order of 
the legs of a plane tree. To see it, shrink all internal edges to a point.

An orientation of the tree is an ordering of the set of the edges of the tree considered up to even permutations.   
 A plane  trivalent tree has a canonical orientation. Namely,   take an $\varepsilon-$neighborhood of the tree, and go around the tree 
 counting the edges, skipping the ones which has been counted. 
 
 \begin{figure}[ht]
\centerline{\epsfbox{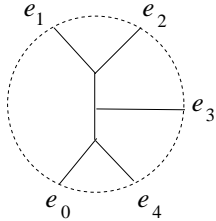}}
\caption{A plane colored tree.} 
\label{mp1aa}
\end{figure}
  
\vskip 2mm 
  {\it Coloring}.    An {\it extended basis} of the lattice ${\rm L_m}$ is a collection of vectors $e_0, e_1, \ldots , e_m$ so that 
  the vectors $e_1, \ldots , e_m$ form a basis of the lattice ${\rm L_m}$, and 
  \be \la{sum}
  e_0+ \ldots + e_m=0.
 \ee

Denote by  $\varphi(v_{1}, ... , v_{k})$ the convex hull of the forms 
$\varphi(v_{1})$, ..., $\varphi(v_{k})$ in the space of quadratic forms. 
It depends only on the vectors $\pm v_i$ defined up to a sign. 

 Take a  plane  trivalent tree ${\rm T}$ whose legs are decorated cyclically 
 by the vectors $e_0, ..., e_m$ of an extended basis for the lattice ${\rm L}_m$,  see Figure \ref{mp1aa}. 
Each edge $E$ of the  tree ${\rm T}$ provides a vector $f_E \in {\rm L_m}$ 
defined up to a sign. Namely, cutting the edge $E$ we get two trees, see Figure \ref{mp1ba}.  
Then $f_E$ is the sum of   the vectors $e_i$ at legs  of one of these trees. 
Thanks to (\ref{sum}),     the second   tree gives $-f_E$. So the  quadratic form $\varphi(f_E)$ is well defined.

\bd We introduce   a $(2m-2)-$chain in the symmetric space ${\Bbb H}_{\rm L_m}$ by setting
\be \la{FFFa}
\begin{split}
&\psi[e_1,...,e_m]:= 
 \sum_{\mbox{T}}{\rm sgn}_{\rm T} (E_1 \wedge ... \wedge E_{2m-1})\cdot 
\varphi(f_{E_1}, ... , f_{E_{2m-1}}).\\
\end{split}
\ee
Here $\{E_1, \ldots , E_{2m-1}\}$ is the set of all edges of the tree ${\rm T}$. The sum is over all plane trivalent trees ${\rm T}$ colored by $e_0,...,e_m$. The sign is for the canonical orientation of the plane trivalent tree.
\ed

\begin{figure}[ht]
\centerline{\epsfbox{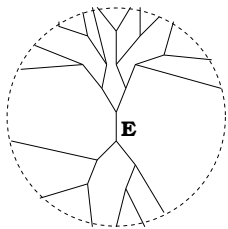}}
\caption{An internal edge $E$ separates a tree into two.} 
\label{mp1ba}
\end{figure}

When we run through the set of all bases in ${\rm L_m}$, we get a collection of  $(2m-2)-$chains, possibly intersecting,  on which the group 
${\rm Aut}({\rm L_m})$ acts. 

\paragraph{\bf Examples.} a) For $m=2$ there is one plane trivalent tree colored by $e_0, e_1, e_2$, so 
we get the triangle $\varphi(e_0, e_1, e_2)$, see the left picture on Figure \ref{mp1c}.  
\begin{figure}[ht]
\centerline{\epsfbox{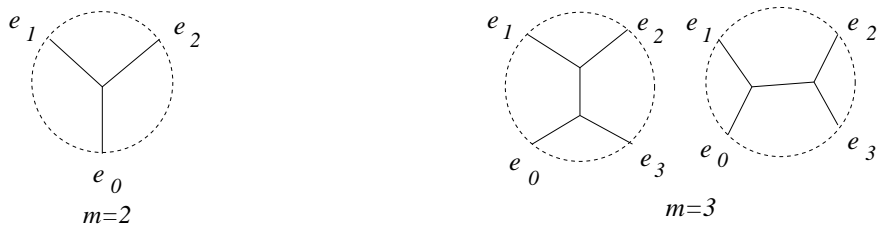}}
\caption{The plane colored trivalent trees for $m=2,3$.} 
\label{mp1c}
\end{figure}
So for $m=2$ we get   the   triangles of the modular triangulation of the hyperbolic plane, see Figure \ref{mod}. 

b) Let $f_{ij}:= e_i+e_j$. 
For $m=3$ there are two  plane trivalent trees colored by $e_0, e_1, e_2, e_3$, 
see the two pictures on the right on Figure \ref{mp1c}, so 
the chain is 
\be
\varphi(e_0, e_1, e_2, e_3, f_{01}) - \varphi(e_0, e_1, e_2, e_3, f_{12}). 
\ee

c) For $m=4$ there are five  plane trivalent trees colored by $e_0, e_1, e_2, e_3, e_4$.  
We showed one of them on Figure \ref{mp1d}; the rest are obtained by rotations of the tree. 
Therefore  
the chain is 
\be \la{Mappsi}
{\rm Cycle}_5 \varphi(e_0, e_1, e_2, e_3, e_4, f_{12}, f_{34}). 
\ee

\begin{figure}[ht]
\centerline{\epsfbox{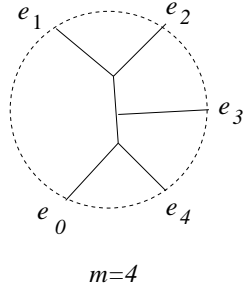}}
\caption{A plane colored trivalent tree  for $m=4$.} 
\label{mp1d}
\end{figure}

d) For $m=5$ there are  eleven  plane trivalent trees colored by $e_0, e_1, e_2, e_3, e_4, e_5$.  
They are obtained by rotations of the trees shown on Figure \ref{mp1e}. 
Therefore  
the chain is 
\be
\begin{split}
{\rm Cycle}_6 \Bigl( &\frac{1}{3}\varphi(e_0, e_1, e_2, e_3, e_4, e_5, f_{01}, f_{23}, f_{45})  + 
\varphi(e_0, e_1, e_2, e_3, e_4, e_5, f_{01}, f_{501}, f_{23}) +\\
&  \frac{1}{2}\varphi(e_0, e_1, e_2, e_3, e_4, e_5, f_{01}, f_{345}, f_{34}) 
\Bigr).\\
\end{split}
\ee
\begin{figure}[ht]
\centerline{\epsfbox{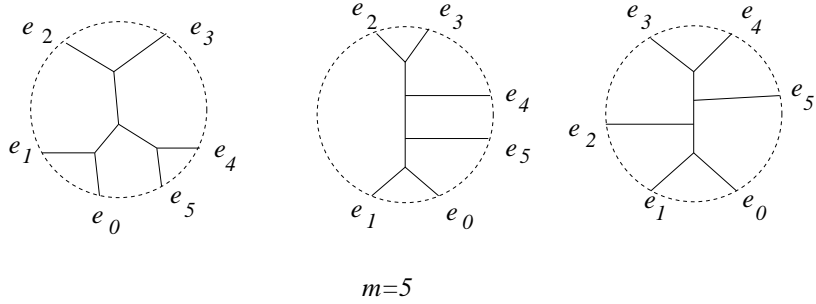}}
\caption{Three types of plane colored trivalent trees  for $m= 5$.} 
\label{mp1e}
\end{figure}

\bd i) The subgroup 
$$
{\cal M}{\cal S}^{1}_{(m)}\subset {\rm C}^{\rm BM}_{2m-2}({\Bbb H}_{{\rm L}_m}) 
$$ 
is spanned by  Borel-Moore chains $\psi[e_1, ..., e_m]$ in (\ref{FFFa})  
 assigned to all bases $(e_1, \ldots , e_m)$ of   ${\rm L_m}$.
 
ii) The subgroup 
$$
{\cal M}{\cal S}^{1}(Y_1(m; N))\subset {\rm C}^{\rm BM}_{2m-2}(Y_1(m; N)).
$$ 
 is spanned by the projections  to  $Y_1(m; N)$ of the chains $\psi[e_1, ..., e_m]$   
for bases $(e_1, \ldots , e_m)$ of  ${\rm L_m}$.\ed

\paragraph{5. The higher modular symbols complex ${\cal M}{\cal S}^{\ast}({\Bbb H}_{\rm L_m})$.} 
Take   non-negative definite quadratic forms $Q_m$ and $Q_n$ in  vector space ${\rm L}_m\otimes \R$ and  ${\rm L}_n\otimes \R$, lifting points in 
$\overline {\Bbb H}_{{\rm L}_{m}}$ and $\overline {\Bbb H}_{{\rm L}_{n}}$, respectively.  Consider 
each of them as quadratic forms in $({\rm L}_m \oplus {\rm L}_n)\otimes \R$ by extending it by zero to the second summand, and take the convex hull  of their positive multiples,  given by
$$
\lambda_1 Q_m + \lambda_2 Q_n, ~~~~\lambda_1 + \lambda_2 >0, ~~\lambda_1,  \lambda_2\geq 0.
$$
Finally, project it back to $\overline {\Bbb H}_{{\rm L}_{m}\oplus {\rm L}_n}$. 
Applying this construction to subsets of  $\overline {\Bbb H}_{{\rm L}_{m}}$ and $\overline {\Bbb H}_{{\rm L}_{n}}$ rather than   points, we get a map of complexes of  Borel-Moore chains:
$$
\ast: {\rm C}^{\rm BM}_i({\Bbb H}_{{\rm L}_m}) \otimes {\rm C}^{\rm BM}_j({\Bbb H}_{{\rm L}_n}) \lra {\rm C}^{\rm BM}_{i+j+1}({\Bbb H}_{{\rm L}_{m}\oplus {\rm L}_{n}}).
$$
We transform the homological grading into a cohomological one, with a shift by $2m-1$:
$$
{\rm C}^i({\Bbb H}_{{\rm L}_m}):= {\rm C}_{2m-1-i}({\Bbb H}_{{\rm L}_m}).
$$
Then we get a map 
\be \la{AST}
\begin{split}
&\ast: {\rm C}_{\rm BM}^i({\Bbb H}_{{\rm L}_m}) \otimes {\rm C}_{\rm BM}^j({\Bbb H}_{{\rm L}_n}) \lra {\rm C}_{\rm BM}^{i+j}({\Bbb H}_{{\rm L}_{m}\oplus {\rm L}_n}).\\
\end{split}
\ee

Now take a decomposition of the lattice ${\rm L}_{m}$ into a direct sum of $k$ sublattices  ${\rm L_{m_1}}, \ldots , {\rm L_{m_k}}$:
 \be \la{dec}
{\rm L_{m_1}}\oplus  \ldots \oplus {\rm L_{m_k}} \stackrel{\sim}{\lra} {\rm L}_{m}.
 \ee
  Then the   $\ast-$product map (\ref{AST}) for the tensor product   $\otimes_{i=1}^k{\rm C}_{\rm BM}^*({\Bbb H}_{{\rm L}_{m_i}})$ restrictis to the map
\be \la{decu}
\ast_{{\rm L_{m_1}}, \ldots, {\rm L_{m_k}} }: {\cal M}{\cal S}^{1}({\Bbb H}_{{\rm L}_{m_1}}) \otimes \ldots \otimes {\cal M}{\cal S}^{1}({\Bbb H}_{{\rm L}_{m_k}}) \lra {\rm C}_{\rm BM}^{k}({\Bbb H}_{{\rm L}_m}).
\ee
It is graded commutative: interchanging any two factors we get a minus sign. 

\bd The group ${\cal M}{\cal S}^{k}({\Bbb H}_{\rm L_m})$ is
 the sum of the images of the maps (\ref{decu}) over all direct sum decompositions (\ref{dec}): 
\be \nonumber
{\cal M}{\cal S}^{k}({\Bbb H}_{\rm L_m}) := \sum_{{\rm L}_m= {\rm L_{m_1}} \oplus \ldots \oplus {\rm L_{m_k}} } {\rm Im}\Bigl(\ast_{{\rm L_{m_1}}, \ldots, {\rm L_{m_k}} }\Bigr).
\ee
 \ed
 
Explicitely, take a basis $e_1,...,e_m$ of ${\rm L_m}$,  a partition $m = m_1 + \ldots + m_k$,  and  consider a cochain
\be \la{5467}
\psi[e_1,...,e_m]_{m_1, ..., m_k}:=\psi [e_1,...,e_{m_1}] \ast  \psi [e_{m_1+1},...,e_{m_1+m_2}]\ast  \ldots \ast \psi[e_{m_1+...+m_{k-1}+1} ,...,e_{m}].
\ee
The group ${\cal M}{\cal S}^{k}({\Bbb H}_{{\rm L}_m})$ is generated by  these cochains.

 \bt The differential in ${\rm C}_{\rm BM}^{\ast}({\Bbb H}_{\rm L_m})$ provides a differential on  ${\cal M}{\cal S}^{\ast}({\Bbb H}_{\rm L_m})$. 
   \et
  
  \begin{proof} The boundary of the chain (\ref{FFFa}) is given by the following formula, reflected on Figure \ref{mp11B}. Let us introduce an extra vector $e_0$ such that $e_0+e_1 + \ldots + e_m=0$. 
  Then 
  \be \la{DIC}
  \partial \psi[e_1,...,e_m] = -\sum_{i \not = j \in \Z/(m+1)\Z}  \psi[e_{i+1},...,e_j] \ast \psi[e_{j+1},...,e_{i-1}].
  \ee  
  The proof of this formula is given in the part 5) of Section \ref{Sec4.2}.

 \begin{figure}[ht]
\centerline{\epsfbox{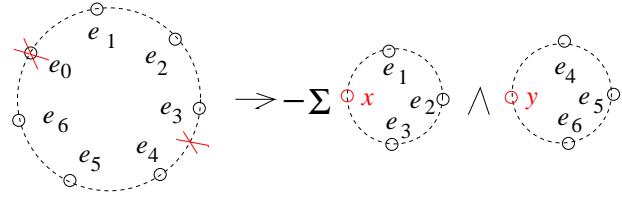}}
\caption{$x= - e_1-e_2-e_3$, $y=- e_4-e_5-e_6$. } 
\label{mp11B}
\end{figure}

 \end{proof}
 
 \paragraph{2. The level $N$ dg-algebras of higher modular symbols.}

 \begin{proof}  Denote by 
 \be \la{2m2ch}
 \psi[\alpha_1, ..., \alpha_m] \in {\cal M}{\cal S}^{1}(Y_1(m; N)), ~~~~\{\alpha_1, ..., \alpha_m\}\in {\rm M}_m(N) 
 \ee
  the $(2m-2)-$chain obtained by the projection of the chain 
 $\psi[e_1, ..., e_m] \in {\cal M}{\cal S}^{1}({\Bbb H}_{\rm L_m})$ such that $\{\alpha_1, ..., \alpha_m\}\in {\rm M}_m(N)$ describes the coset of the basis $(e_1, ..., e_m)$ of ${\rm L_m}$ 
 in the standard  basis. 
 
Recall the chosen basis $e_1,...,e_m$ of the lattice ${\rm L}_m$. Take a partition $m = m_1 + \ldots + m_k$. It provides a decomposition 
 ${\rm L}_m = {\rm L}_{m_1} \oplus \ldots \oplus {\rm L}_{m_k}$ where ${\rm L}_{m_1}:= \langle e_1,...,e_{m_1}\rangle$ etc..

Now take a collection of elements 
 $$
  (\alpha_1, ..., \alpha_{m_1}) \in {\rm M}_{m_1}(N), ~~   \ldots , ~~ (\alpha_{m- m_{k}+1} ,...,e_{m}) \in  {\rm M}_{m_k}(N).
  $$
We introduce the product $\ast$ by defining the   element
$$
\psi [\alpha_1,...,\alpha_{m_1}] \ast  \psi [\alpha_{m_1+1},...,\alpha_{m_1+m_2}]\ast  \ldots \ast \psi[\alpha_{m - m_{k}+1} ,...,e_{m}]\in {\cal M}{\cal S}^{k}(Y_1(m; N))  
$$
to be the chain obtained by the projection of the chain (\ref{5467}) to the modular manifold $Y_1(m; N)$.  \end{proof}
 
 \paragraph{Remark.} The elements $\psi [\alpha_1,...,\alpha_{p}]$ do not generate the algebra ${\rm M}^\ast$. Indeed, the latter are defined only for 
  $\{\alpha_1, ..., \alpha_p\}\in {\rm M}_p(N)$, while in  ${\rm M}^k$  we have elements $[\alpha_1, ..., \alpha_m]_{m_1, ..., m_k}$ corresponding to any $\{\alpha_1, ..., \alpha_m\}\in {\rm M}_m(N)$, e.g. 
  to the element  $(0, ..., 0, 1)$.

\subsection{Modular complexes} \la{Sec2aa}

In Section \ref{Sec2aa} we recall the definition of the rank $m$ modular complex \cite{G98},  \cite[Section 2]{G00a} 
and define the modular bicomplex. They are (bi)complexes of ${\rm GL}_m(\Z)$-modules,  
 defined purely combinatorially. Their 
  structure is   related with  the structure of the   
symmetric space 
for ${\rm GL}_m(\R)$ 
and its boundary considered as ${\rm GL}_m(\Z)$-sets. 
The precise relationship with the geometry of 
symmetric spaces is explored 
in the next Sections.

\paragraph{1. The modular complex.} 
Let ${\rm L_m}$ be a rank $m$ lattice. 
The modular complex $
{\rm M}_{(m)}^{*}$ is a complex of left ${\rm GL}_m(\Z)$-modules 
  sitting in the degrees $[1,m]$. It looks as follows:
$$
{\rm M}_{(m)}^{*}:=   {\rm M}_{(m)}^{1} \stackrel{\partial}{\lra} {\rm M}_{(m)}^{2} \stackrel{\partial}{\lra}
... \stackrel{\partial}{\lra} {\rm M}_{(m)}^{m}.
$$

We start with the following two versions of a basis of the lattice ${\rm L_m}$.

 \begin{itemize}
 
 \item 
An  {\it extended basis} of a lattice ${\rm L_m}$ is an  ordered 
$(m+1)$-tuple of the lattice vectors 
$$
\{v_1, ..., v_{m+1}\} \quad \mbox{such that} \quad v_1 + ... + v_{m+1} = 0 
\quad\mbox{and $v_1,...,v_m$ is a basis of ${\rm L_m}$}.
$$ 
If $v_1,...,v_{m+1}$ is a an extended basis of ${\rm L_m}$, then
 omitting any vector $v_i$ we get a basis. 
The group ${\rm GL}_m(\Z)$ acts from the left on the set of basis of ${\rm L_m}$, considered as columns
of vectors $(v_1, ..., v_m)$. Thus the set of extended basis is a
 left principal homogeneous ${\rm GL}_m(\Z)$-set. 

\item

 Let $u_1, ..., u_{m+1}$ be elements of the lattice ${\rm L_m}$ such that the set of elements 
$\{(u_i, 1)\}$ form a basis of  ${\rm L_m} \oplus \Z$. The lattice ${\rm L_m}$ acts on 
such sets by 
$
l: \{(u_i, 1)\} \lms \{(u_i+l, 1)\}
$.  
We call the coinvariants of this action 
{\it homogeneous affine basis} of ${\rm L_m}$ and denote them by $\{u_1: ... : u_{m+1}\}$. There is a canonical bijection 
\begin{equation} \label{5.26.1}
\begin{split}
&\{\mbox{homogeneous affine basis of ${\rm L_m}$}\} \quad \longleftrightarrow \quad 
\{\mbox{extended basis of ${\rm L_m}$}\}.\\
&~~~~~~~~~\{u_1: ... : u_{m+1}\}   \longleftrightarrow    \{u_2-u_1, u_3-u_2, ..., u_1-u_{m+1}\}.\\
\end{split}
\end{equation} 
\end{itemize}

  \paragraph{ The abelian group ${\rm M}_{(m)}^{1}$.}  
It is generated  by the elements $\langle v_1, ... , v_{m+1}\rangle$ corresponding to 
extended  basis $v_1, ... , v_{m+1}$ of ${\rm L_m}$. 
To list the relations, called  {\it the double shuffle relations}, 
we need another set of   generators corresponding to 
homogeneous affine bases of ${\rm L_m}$ via (\ref{5.26.1}):
$$
\langle u_1: ... :u_{m+1}\rangle:=  \langle v_1, v_2, ..., v_{m+1}\rangle, \quad v_i:= u_{i+1}-u_i.
$$

\vskip 2mm
{\it The double shuffle relations}. 

 Denote  by  $\Sigma_{p,q}$ is the set of 
all shuffles of $\{1,...,p\}$ and $\{p+1,...,p+q\}$, i.e. 
permutations  $\sigma \in S_{p+q}$  such that $\sigma(1) < ... < \sigma(p)$ 
and $\sigma(p+1) < ... < \sigma(p+q)$.    
Then:

\begin{itemize}

\item 
For $m=1$ we have $\langle v_1, v_2\rangle = \langle v_2, v_1\rangle$. 

  For any $1 \leq k \leq m$ one has:
\begin{equation} \label{sshh1}
\sum_{\sigma \in \Sigma_{k,m-k}} 
\langle v_{\sigma(1)}, ... ,v_{\sigma(m)}, v_{m+1}\rangle   = \  0.
\end{equation}
 \begin{equation} \label{sshh3}
\sum_{\sigma \in \Sigma_{k,m-k}} \langle u_{\sigma(1)}: ... :u_{\sigma(m)}: u_{m+1}\rangle   =   0.
\end{equation} 
\end{itemize}

\begin{theorem} \label{d3} Double shuffle relations (\ref{sshh1})-(\ref{sshh3}) 
imply   dihedral symmetry relations for $m \geq 2$:
$$
\langle v_{1}, ... ,v_{m}, v_{m+1}\rangle   = \langle v_{2}, ... ,v_{m+1}, v_{1}\rangle   =  \langle -v_{1}, ... ,-v_{m}, -v_{m+1}\rangle   = (-1)^{m+1}\langle v_{m+1}, v_{m}, ... , v_{1}\rangle .  
$$
\end{theorem}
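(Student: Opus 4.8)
The strategy is to establish, as separate statements, the invariance of $\langle v_1,\dots,v_{m+1}\rangle$ under the cyclic shift $(v_1,\dots,v_{m+1})\mapsto(v_2,\dots,v_{m+1},v_1)$, under the involution $v_i\mapsto-v_i$, and under the signed reversal $\langle v_1,\dots,v_{m+1}\rangle=(-1)^{m+1}\langle v_{m+1},\dots,v_1\rangle$; the last two drop out once the cyclic shift is known, so the cyclic shift is the crux. Let $L$ be the linear map from the space of multilinear words in the letters $v_1,\dots,v_m$ to ${\rm M}^1_{(m)}$ given by $v_{\sigma(1)}\cdots v_{\sigma(m)}\mapsto\langle v_{\sigma(1)},\dots,v_{\sigma(m)},v_{m+1}\rangle$, which is well defined because $v_{m+1}=-(v_1+\cdots+v_m)$ is fixed by permutations of $v_1,\dots,v_m$. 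Imposing (\ref{sshh1}) for every extended basis obtained from $(v_1,\dots,v_{m+1})$ by permuting its first $m$ entries says exactly that $L$ kills every two-factor shuffle product of nonempty complementary multilinear words, hence every decomposable element of the shuffle algebra on $v_1,\dots,v_m$; the relations (\ref{sshh3}) say the same thing in the homogeneous affine coordinates $u_i$.

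The first step is to read off a reflection symmetry from the antipode identity of the shuffle Hopf algebra,
\[
v_1 v_2\cdots v_m+(-1)^m v_m v_{m-1}\cdots v_1=-\sum_{k=1}^{m-1}(-1)^k\,(v_k v_{k-1}\cdots v_1)\shuffle(v_{k+1}\cdots v_m),
\]
whose right-hand side is decomposable. Applying $L$ annihilates every term on the right and gives
\[
\langle v_1,\dots,v_m,v_{m+1}\rangle=(-1)^{m+1}\langle v_m,\dots,v_1,v_{m+1}\rangle .
\]
Running the identical argument with (\ref{sshh3}) in the coordinates $u_i$ and then rewriting via $v_i=u_{i+1}-u_i$, a short computation turns the resulting affine reflection into
\[
\langle v_1,\dots,v_{m+1}\rangle=(-1)^{m+1}\langle -v_{m-1},-v_{m-2},\dots,-v_1,-v_{m+1},-v_m\rangle .
\]

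Composing the two reflections, all signs cancel and one obtains
\[
\langle v_1,v_2,\dots,v_{m+1}\rangle=\langle -v_2,-v_3,\dots,-v_{m+1},-v_1\rangle ,
\]
invariance under the composite $g$ of $v_i\mapsto-v_i$ with the cyclic shift. When $m$ is even, $m+1$ is odd, so the $(m+1)$-fold iterate of $g$ is exactly $v_i\mapsto-v_i$; thus the involution symmetry follows, the cyclic shift is obtained by composing it with the displayed identity for $g$, and the signed reversal of the theorem follows by composing the cyclic shift with the first reflection. This settles the theorem for $m$ even.

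The case $m$ odd is the main obstacle. There the two reflections and $g$ together yield only invariance under the cyclic shift by two steps and under $g$ itself, which is not enough to isolate either the bare shift by one step or the bare involution $v_i\mapsto-v_i$ --- one genuinely new relation is still needed, and this is where the real content of the theorem lies. The relations (\ref{sshh1}) and (\ref{sshh3}) present ${\rm M}^1_{(m)}$ simultaneously as a quotient of the multilinear free Lie coalgebra in two ways, one attached to deleting the entry $v_{m+1}$ and one attached to deleting $u_{m+1}$, and the missing identity should come from the compatibility of these two presentations under $v_i=u_{i+1}-u_i$. Concretely, I would translate the $k=1$ instance of (\ref{sshh3}) into $v$-coordinates and reduce the generators appearing there --- which are honest extended bases but built from partial sums of the $v_i$ --- back toward $\langle v_1,\dots,v_{m+1}\rangle$ and $\langle -v_1,\dots,-v_{m+1}\rangle$ using the reflections and the shift-by-two symmetry already in hand; what remains is a relation among basis generators that forces the last $\Z/2$. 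Everything apart from this step is bookkeeping with shuffles and the Hopf-algebra antipode.
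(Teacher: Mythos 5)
Your construction is sound as far as it goes: the map $L$ is well defined, the relations (\ref{sshh1}) applied to all permutations of $v_1,\dots,v_m$ do say that $L$ kills two-factor shuffles, the antipode identity then yields the reflection $\langle v_1,\dots,v_m,v_{m+1}\rangle=(-1)^{m+1}\langle v_m,\dots,v_1,v_{m+1}\rangle$, the analogous computation with (\ref{sshh3}) translated through $v_i=u_{i+1}-u_i$ is correct, and composing the two reflections does give invariance under (negation)$\circ$(cyclic shift). For $m$ even this closes the argument, since an odd iterate of that composite isolates the negation. But for $m$ odd there is a genuine gap, and you have identified it yourself without filling it: the relations you have actually derived generate only the subgroup spanned by the shift-by-two, the negation-times-shift, and the reflection fixing the last slot, which is a proper subgroup of the dihedral-with-negation symmetry claimed in the theorem. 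The final paragraph of your proposal ("translate the $k=1$ instance of (\ref{sshh3}) into $v$-coordinates and reduce\dots what remains is a relation among basis generators that forces the last $\Z/2$") is a plan, not a derivation; the generators appearing in that instance involve partial sums of the $v_i$, i.e.\ different extended bases, and it is not shown that the reductions you propose ever return to the orbit of $\langle v_1,\dots,v_{m+1}\rangle$ and $\langle -v_1,\dots,-v_{m+1}\rangle$, let alone produce the missing sign. Calling everything else "bookkeeping" inverts the difficulty: the odd case is precisely where the content of the theorem lies, and it covers half of all $m$, including $m=3$, which this paper uses heavily.

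For comparison, the paper does not prove the statement internally: it cites \cite[Theorem 2.7]{G00a}, where the implication double shuffle $\Rightarrow$ dihedral symmetry is established by a more elaborate combinatorial argument that genuinely plays the two families of shuffle relations against each other (in both the $v$- and $u$-presentations simultaneously), rather than by the antipode trick alone. Your even-$m$ argument is a nice shortcut and could be kept, but to have a complete proof you must either carry out the odd-$m$ step explicitly — for instance, verify it by hand for $m=3$ and then show how the computation propagates, or reproduce the inductive scheme of \cite{G00a} — or simply cite that result as the paper does.
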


\begin{proof} See  \cite[Theorem 2.7]{G00a}.  It follows that similar 
relations hold for the $u$-generators. \end{proof}

We picture  both types of   generators on an
 oriented circle as shown on Figure \ref{mp11AA}. The circle has $m+1$ (black) $u$-slots, and  $m+1$ (white) $v$-slots located  between the $u$-slots. 
The sum of the vectors  at the $v$-slots is zero. The vectors at the $u$-slots are defined up to a shift by a lattice vector. They are related by (\ref{5.26.1}). Reversing the orientation 
of the circle we change the sign of the generator by $(-1)^{m+1}$.
\begin{figure}[ht]
\centerline{\epsfbox{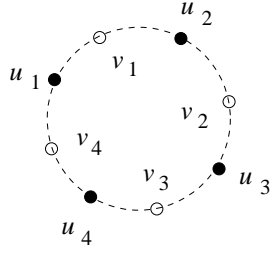}}
\caption{An oriented circle decorated by the   vectors $v_i$   and   $u_j$.} 
\label{mp11AA}
\end{figure}


  \paragraph{ The group ${\rm M}_{(m)}^{k}$.}  For each decomposition of ${\rm L_m}$ as a direct 
sum of $k$ non zero lattices ${\rm L_i}$, we consider the tensor product of the 
${\rm M}^{1}({\rm L_i})$. We consider 
 ${\rm M}^{1}({\rm L_i})$ as odd, and use the sign rule to identify $\otimes {\rm M}^{1}({\rm L_i})$ and 
$\otimes {\rm M}^{1}({\rm L_{i'}})$ when the decomposition ${\rm L}$ and ${\rm L}'$ differ only in the 
ordering of the factors. The group ${\rm M}_{(m)}^{k}$ is defined to be the sum over 
all such unordered 
decomposition ${\rm L_m}=\oplus {\rm L_i}$ of the corresponding 
$\otimes {\rm M}^{1}({\rm L_i})$.  
In other words it is generated by the elements $\langle A_1\rangle \wedge ... \wedge \langle A_k\rangle  $ where 
$A_i$ is an extended  basis of the sublattice ${\rm L_i}$ and $\langle A_i\rangle  $'s anticommute. 

Let us set 
$$
[v_1,...,v_p]:= \langle v_1,...,v_p, v_{p+1}\rangle, \quad v_1 + ... + v_p + v_{p+1} = 0.
$$
Then the group ${\rm M}_{(m)}^{k}$ is generated by  the symbols
$$
[A_1] \wedge ... \wedge [A_k]:= \quad [v_1,...,v_{m_1}]\wedge ... \wedge [v_{m_{k-1}+1},...,v_{m_k}].
$$
where $m= m_k$ and $(v_1,...,v_m)$ is
a basis of ${\rm L_m}$.

We define a homomorphism  
$\partial: {\rm M}_{(m)}^{1} \lra {\rm M}_{(m)}^{2}
$  
by setting  
\be \la{FD}
\partial:  \langle v_1,...,v_{m+1}\rangle   \lms   -{\rm Cycle}_{m+1}\Bigl(\sum_{k=1}^{m-1} 
[v_1,...,v_k] \wedge [v_{k+1},...,v_m] \Bigr),
\ee
where  the indices are modulo $m+1$  and
$$
{\rm Cycle}_{m+1} \Bigl(f(v_1,...,v_{m})\Bigr):=  \sum_{i=1}^{m+1}f(v_{i},...,v_{i+m}).
$$
This formula amounts to the following geometric procedure, see Figure \ref{mp11}. We cut the 
circle at a $v$-slot, and at a $u$-slot, 
and make two oriented circles out of the obtained arcs. Each of the arcs inherits 
some $v$-slots, including the one which was cut. We put at  the $v$-slots 
but the cutted one  the corresponding vectors $v_i$. At the cutted slot 
we put the minus sum 
of the  vectors sitting at the rest of the slots on this arc. Thus we get a generator out of each of the arcs. 
Then we take the (minus)
wedge product of these two generators, 
and  sum over all possible cuts. 

\begin{figure}[ht]
\centerline{\epsfbox{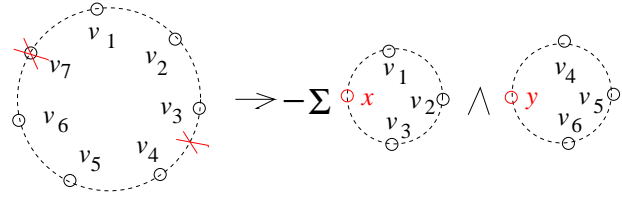}}
\caption{$x= - v_1-v_2-v_3$, $y=- v_4-v_5-v_6$. } 
\label{mp11}
\end{figure}

We get the differential in  ${\rm M}_{(m)}^{\bullet}$ 
by extending   $\partial$ 
using  the Leibniz rule:
$$
\partial( [A_1] \wedge [A_2] \wedge ... ) :=  
\partial [A_1]  \wedge [A_2] \wedge ...    -     [A_1] \wedge \partial [A_2]  \wedge ...    + \ldots
$$

 \begin{theorem} \label{16} a) The differential $\partial$ is a well defined homomorphism of abelian groups. 

b) One has $\partial^2=0$.
\end{theorem}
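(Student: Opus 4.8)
The plan is to reduce both parts to combinatorial identities for the ``cyclic deconcatenation'' operation underlying formula (\ref{FD}), using the dihedral symmetry of Theorem \ref{d3} to translate between the cyclic picture on the circle of slots and the linear picture obtained by opening the circle up. Throughout I keep in mind that $\partial$ is manifestly ${\rm GL}_m(\Z)$-equivariant, so only compatibility with the defining relations and the identity $\partial^2=0$ need to be checked.

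\emph{Part a).} The Leibniz extension of $\partial$ to the groups ${\rm M}^{k}_{(m)}$ is automatically compatible with the sign rule used to build ${\rm M}^{k}_{(m)}$ from the ${\rm M}^{1}$ of sublattices, so the only point is that $\partial$ must kill the defining relations of ${\rm M}^{1}_{(m)}$, namely the $v$-shuffle relations (\ref{sshh1}) and the $u$-shuffle relations (\ref{sshh3}). I would first observe that, once the position of the cut $v$-slot is fixed, the operation $\langle v_1,\dots,v_{m+1}\rangle \mapsto -\sum_{k=1}^{m-1}[v_1,\dots,v_k]\wedge [v_{k+1},\dots,v_m]$ is exactly the reduced deconcatenation coproduct in the letters $v_1,\dots,v_m$, taking values in the wedge of the ${\rm M}^1$'s of the two complementary sublattices. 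Since deconcatenation and the shuffle product form the shuffle Hopf algebra, $\partial$ applied to the left side of (\ref{sshh1}) regroups into a $\Q$-linear combination of $v$-shuffle relations inside each of the two tensor factors; the only delicate terms are those in which the cut $v$-slot separates, rather than lies strictly inside, the two shuffled blocks, and these are matched using the dihedral symmetry of Theorem \ref{d3}. Rewriting (\ref{FD}) in the homogeneous affine generators via (\ref{5.26.1}) turns it into the reduced deconcatenation in the letters $u_1,\dots,u_{m+1}$, and the identical argument shows $\partial$ annihilates (\ref{sshh3}). This is the main technical content, and it reproduces the computations of \cite{G98} and \cite[Section 2]{G00a}.

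\emph{Part b).} Here I would first prove the purely linear statement: writing $[w_1,\dots,w_p]:=\langle w_1,\dots,w_p, -(w_1+\dots+w_p)\rangle$ and defining $\partial'[w_1,\dots,w_p]:= -\sum_{j=1}^{p-1}[w_1,\dots,w_j]\wedge[w_{j+1},\dots,w_p]$, extended as a derivation for $\wedge$, one has $(\partial')^2=0$. This is a short check: applying $\partial'$ twice to $[v_1,\dots,v_m]$ produces the two families of triple wedges obtained by subdividing the first, resp. the second, block, and the Leibniz signs force these two families to cancel term by term, just as coassociativity forces a cobar-type differential to square to zero. Then I would deduce $\partial^2\langle v_1,\dots,v_{m+1}\rangle=0$ from $(\partial')^2=0$: by dihedral symmetry each cyclic rotation of the word represents the same element, so the cyclic $\partial$ is the cyclic average of $\partial'$, and expanding $\partial^2$ by Leibniz regroups the terms — after one more use of Theorem \ref{d3} to absorb the ``wrap-around'' terms, where a cut of the second application lands on the closure $v$-slot created by the first — into copies of the identity $(\partial')^2=0$ on the constituent sublattices together with terms that vanish by antisymmetry of $\wedge$.

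\emph{Main obstacle.} The real labor in both parts is the bookkeeping of the $2(m+1)$ cyclic slot positions and of the ``fill with the minus sum'' vectors: one must verify that the filler vectors, read off from the relevant sub-arcs, coincide in the two orders in which a pair of cuts can be performed — which holds because the relation $\sum_i v_i=0$ is inherited by every sub-arc — and that the dihedral symmetry of Theorem \ref{d3} is precisely the input needed to cancel the terms in which a cut meets a previously created closure slot. Everything else is routine manipulation of shuffles and of wedge signs.
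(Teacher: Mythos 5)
You should first be aware that the paper does not actually give an argument for Theorem \ref{16}: its proof is the single line ``This is \cite[Theorem 2.8]{G00a}''. So the only question is whether your sketch would stand on its own, and as written it would not. The overall strategy you describe (compatibility of the cutting operation in (\ref{FD}) with the two shuffle families for part a), pairing of ordered double cuts for part b)) is indeed the standard double-shuffle bookkeeping behind the cited result, but you explicitly defer ``the main technical content'' to \cite{G98} and \cite[Section 2]{G00a}; that makes your proposal no more self-contained than the paper's citation, and the parts you do spell out contain inaccuracies.

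Concretely, in part a) the Hopf-algebra observation (deconcatenation is a morphism for the shuffle product) disposes of only one of the $m+1$ terms of ${\rm Cycle}_{m+1}$ in (\ref{FD}), namely the one where the omitted $v$-slot carries the fixed vector $v_{m+1}$, so that the word being deconcatenated is literally the shuffle of the two blocks; and even there the leftover boundary terms are the two terms $[v_1,\dots,v_k]\wedge[v_{k+1},\dots,v_m]$ and $[v_{k+1},\dots,v_m]\wedge[v_1,\dots,v_k]$, which cancel by antisymmetry of $\wedge$, not by Theorem \ref{d3}. The genuinely hard terms are the remaining $m$ cyclic terms, where the omitted letter lies \emph{inside} one of the shuffled blocks: the deconcatenated word is then a rotation passing through $v_{m+1}$ whose shape depends on $\sigma$, the sum over $\Sigma_{k,m-k}$ is no longer a shuffle product, and the sub-blocks are closed up by filler vectors that vary with $\sigma$; regrouping these into relations is exactly the laborious content of \cite[Theorem 2.8]{G00a}, and your sketch (whose labeling of ``delicate'' versus easy terms appears inverted) gives no indication of how to do it. The claim that in the $u$-presentation (\ref{5.26.1}) the differential becomes plain reduced deconcatenation in $u_1,\dots,u_{m+1}$ also needs its own verification, since the cut $u$-slot is shared by the two factors while the $v$-closures differ, so killing (\ref{sshh3}) is not ``the identical argument''. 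In part b), the reduction to $(\partial')^2=0$ is not correct as stated: since $\partial$ is a sum over all cut positions, the bulk of $\partial^2$ consists of cross terms in which the two cuts come from different rotations, and these are precisely what must be cancelled by the involution exchanging the order of the two cuts, together with the filler-vector consistency you mention and a genuine treatment of the terms where the second cut removes the closure slot created by the first. Those last terms are not a marginal ``wrap-around'' family — $\partial$ applied to a wedge factor again runs over a full cyclic sum including its closure slot, so they account for a large share of the terms — and an appeal to Theorem \ref{d3} without the explicit matching does not dispose of them.
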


\begin{proof} This is   \cite[Theorem 2.8]{G00a}. \end{proof}


\paragraph{\bf 2. The modular bicomplex.}  
The {\it  rank $m$ modular bicomplex} 
${\rm M}_{(m)}^{\bullet, \bullet}({\rm L}_m)$, or simply ${\rm M}_{(m)}^{\bullet, \bullet}$,  
is a bicomplex of left 
 ${\rm GL}_m(\Z)$-modules of  the following shape:
\begin{displaymath} 
    \xymatrix{
        {\rm M}_{(m)}^{1,0} \ar[r]^{\partial}  & {\rm M}_{(m)}^{2,0} \ar[d]^{\partial'}\ar[r]^{\partial}&\ldots\ar[r]^{\partial} &  {\rm M}_{(m)}^{m,0}\ar[d]^{\partial'} \\     
 & {\rm M}_{(m)}^{2,1} \ar[r]^{\partial} &\ldots\ar[r]^{\partial} &\ar[d]^{\partial'} {\rm M}_{(m)}^{m,1}\\
 &&\ldots &\ldots \ar[d]^{\partial'} \\
 &&&{\rm M}_{(m)}^{m,m}\\}
\end{displaymath}

The group ${\rm M}_{(m)}^{s, r}$ sits in the bidegree $(s, r)$. The top row ${\rm M}_{(m)}^{\bullet, 0}$ of the bicomplex ${\rm M}_{(m)}^{\bullet, \bullet}$ 
is the   modular complex of the lattice ${\rm L_m}$. So by definition one has 
${\rm M}_{(m)}^{s,0} = {\rm M}_{(m)}^{s}$. 
The total complex associated with the bicomplex ${\rm M}_{(m)}^{\bullet, \bullet}$  is  called the rank $m$ 
{\it extended  modular complex}  and denoted by ${\Bbb M}_{(m)}^{\bullet}$. 
The modular complex is a quotient of the extended modular complex. 
To define the rest of the groups we set
$$
{\rm M}_{(m)}^{\bullet, r} := \quad \bigoplus_{{\rm L} \subset {\rm L_m}}M^{\bullet, 0}({\rm L})
$$
where the sum is over all corank $r$ sublattices of ${\rm L_m}$. 
So the group ${\rm M}_{(m)}^{s, r}$ is generated by  symbols
$$
[A_1]\wedge ... \wedge [A_{s-r}]  
$$
where  $[A_1, ..., A_{s-r}]$ 
is a basis of a corank $r$ sublattice of ${\rm L_m}$, each of the blocks 
$[A_i]$ 
satisfies the double shuffle relations, and they anticommute.

For any corank $r$ sublattice $L \subset {\rm L_m}$ one has a natural 
inclusion of the bicomplexes
$$
{\rm M}_{(m-r)}^{\bullet, \bullet}(L)[-r,-r] \subset  
{\rm M}_{(m)}^{\bullet, \bullet}({\rm L_m}).
$$
Here $[-r,-r]$ means the shift of the bicomplex by $[r,r]$ to 
the right and down.

\vskip 2mm
{\it The differential $\partial'$}.
 Consider an element $[v_1] \wedge ... \wedge [v_p] \wedge X$ where in $X$ all the blocks $[A_i]$ has length $ >1$. For such an element we set 
$$
\partial'
([v_1] \wedge ... \wedge [v_p]\wedge X ):=  \sum_{i=1}^{p}(-1)^{i-1} 
[v_1] \wedge ... \wedge  [\widehat v_i]\wedge ... \wedge [v_p] \wedge X.
$$
Then it is clearly well defined and one has $\partial'^2 =0$. 
  
\begin{proposition} \label{16a} The differentials $\partial$ and $\partial'$ provides ${\rm M}_{(m)}^{\bullet, \bullet}$ with a structure of a bicomplex. 
\end{proposition}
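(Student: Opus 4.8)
The plan is to verify the three defining properties of a bicomplex: $\partial^{2}=0$, $\partial'^{2}=0$, and $\partial\partial'+\partial'\partial=0$ (equivalently, that the total complex ${\Bbb M}_{(m)}^{\bullet}$ carries a differential $D=\partial+\partial'$ with $D^{2}=0$; if one prefers commuting squares one twists $\partial'$ by $(-1)^{s}$ on the $s$‑th column). The first property is immediate: the row ${\rm M}^{\bullet,r}_{(m)}=\bigoplus_{L}{\rm M}^{\bullet,0}(L)$ is a direct sum, over corank‑$r$ sublattices $L\subset{\rm L}_{m}$, of copies of the modular complex ${\rm M}^{\bullet}_{(m-r)}(L)$, and $\partial$ preserves each summand — splitting a block $[A_{i}]$ of a decomposition $L=\bigoplus\langle A_{i}\rangle$ produces two blocks whose sublattices still direct‑sum to $L$ — so both the well‑definedness of $\partial$ and $\partial^{2}=0$ follow from Theorem~\ref{16} applied to each $L$.

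For $\partial'$ I would first record that it is well defined: for a generator $[A_{1}]\wedge\dots\wedge[A_{k}]$ the set of indices $i$ with $\langle A_{i}\rangle$ of rank one is intrinsic, since the double shuffle relations only relate different extended bases of a single fixed summand and never merge or split summands; hence the presentation $[v_{1}]\wedge\dots\wedge[v_{p}]\wedge X$ with $X$ a wedge of length‑$\ge 2$ blocks is unambiguous up to reordering the $[v_{i}]$ (absorbed by the sign $(-1)^{i-1}$) and moves internal to $X$. Then $\partial'^{2}=0$ is the standard simplicial cancellation of "delete $[v_{i}]$, then $[v_{j}]$" against "delete $[v_{j}]$, then $[v_{i}]$".

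The substantive point is $\partial\partial'+\partial'\partial=0$. Since $\partial$ annihilates length‑one blocks ($\partial\langle v,-v\rangle=0$, the sum in (\ref{FD}) being empty for $m=1$) and obeys the Leibniz rule, for $\xi=B_{1}\wedge\dots\wedge B_{n}$ one has $\partial\xi=\sum_{i:\ \mathrm{length}(B_{i})\ge 2}\pm\,B_{1}\wedge\dots\wedge\partial B_{i}\wedge\dots\wedge B_{n}$. Now $\partial B_{i}$ is itself a wedge of blocks and may contain length‑one blocks — in particular, if $\mathrm{length}(B_{i})=2$ then both factors of every term of $\partial B_{i}$ have length one — so applying $\partial'$ to the $i$‑th summand produces two kinds of terms: (a) those deleting one of the original length‑one blocks $B_{j}$, $j\ne i$, and (b) those deleting a newly created length‑one block inside $\partial B_{i}$. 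A Koszul‑sign bookkeeping shows the type‑(a) terms reassemble precisely into $-\partial\partial'\xi$, since deleting $B_{j}$ and splitting $B_{i}$ are operations on disjoint blocks. For the type‑(b) terms I would invoke the key lemma: for any extended basis $v_{1},\dots,v_{m+1}$ of a rank‑$m$ lattice with $m\ge 2$, one has $\partial'\bigl(\partial\langle v_{1},\dots,v_{m+1}\rangle\bigr)=0$, where $\partial'$ here deletes, with alternating signs, all the length‑one blocks. Granting the lemma, the type‑(b) contribution equals $\sum_{i}\pm\,B_{1}\wedge\dots\wedge\partial'(\partial B_{i})\wedge\dots\wedge B_{n}=0$, because every length‑one block occurring in $\partial B_{i}$ is "new"; hence $\partial'\partial\xi=-\partial\partial'\xi$, which is exactly the required compatibility.

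The lemma itself I would prove by inspecting (\ref{FD}): in $-{\rm Cycle}_{m+1}\bigl(\sum_{k=1}^{m-1}[v_{1},\dots,v_{k}]\wedge[v_{k+1},\dots,v_{m}]\bigr)$ a length‑one factor occurs only for $k=1$ (left factor) or $k=m-1$ (right factor); applying $\partial'$, the $k=1$ term at cyclic shift $j$ and the $k=m-1$ term at shift $j+1$ produce the same length‑$(m-1)$ block with opposite signs, so the cyclic sum telescopes to zero (the case $m=2$, where the two values of $k$ coincide and both factors have length one, is a separate one‑line telescoping over $\mathbb Z/3\mathbb Z$). The anticipated main obstacle is precisely this type‑(b) interaction: because $\partial$ manufactures length‑one summands on which $\partial'$ then acts, the compatibility of $\partial$ and $\partial'$ is \emph{not} the formal "disjoint operations commute" statement but genuinely relies on the cyclic symmetry of the modular differential built into (\ref{FD}); everything else — well‑definedness, $\partial^{2}=0$, $\partial'^{2}=0$, and the type‑(a) sign match — is routine.
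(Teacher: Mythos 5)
Your proof is correct and follows essentially the same route as the paper: the only non-formal input is the vanishing of $\partial'\circ\partial$ on a single generator $\langle v_1,\dots,v_{m+1}\rangle$, which your key lemma establishes by exactly the cyclic/telescoping cancellation of the $k=1$ and $k=m-1$ terms that the paper displays. The extra Leibniz/Koszul bookkeeping you carry out (the type-(a)/(b) split, $\partial'^2=0$, well-definedness) is precisely what the paper dismisses as "easily seen," so there is no substantive difference.
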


\begin{proof} One can easily see that it is enough to check that the composition
$$
{\rm M}_{(m)}^{1,0} \stackrel{\partial}{\lra} {\rm M}_{(m)}^{2,0} \stackrel{\partial'}{\lra} {\rm M}_{(m)}^{2,1} 
$$
is zero, which is clear from 
$$
\partial' \circ \partial \langle v_1,...,v_{m+1}\rangle  = 
- \partial'    {\rm Cycle}_{m+1}\Bigl([v_1] \wedge [v_2,...,v_m] + 
[v_2,...,v_m]\wedge [v_{m+1}]\Bigr)  = 0.
$$
\end{proof}

\paragraph{3. Modular complexes and modular cohomology of a subgroup $\Gamma \subset 
{\rm GL}_m(\Z)$.}
\begin{definition} \label{35} Let $\Gamma \subset {\rm GL}_m(\Z)$ be a finite index 
subgroup and $V$ a $\Q$-rational ${\rm GL}_m$-module. 
The modular and  extended modular complexes  
 of $\Gamma$  with coefficients in $V$ are 
defined as follows:
\be
{\rm M}_{(m)}^{\ast}(\Gamma, V):=  {\rm M}^{\ast}_{(m)}\otimes_{\Gamma} V  , \qquad 
{\Bbb M}_{(m)}^{\ast}(\Gamma, V):=   {\Bbb M}^{\ast}_{(m)} \otimes_{\Gamma} V. 
\ee
Their cohomology  are   the modular and extended modular cohomology 
of $\Gamma$ with coefficients in $V$:
$$
{\rm HM}_{(m)}^{\ast}(\Gamma, V), \qquad {\rm H}{\Bbb M}_{(m)}^{\ast}(\Gamma, V).
$$
\end{definition}

Denote by ${\cal F}(X)$ the space of $\Q$-valued functions on a set $X$. 
If $X$ is a right $G$-set then the group $G$ acts on ${\cal F}(X)$ from the left by 
$(T_gf)(x):= f(xg)$. 
By  Shapiro's lemma, for a  finite index subgroup 
$\Gamma \subset {\rm GL}_m(\Z)$ and a  
 ${\rm GL}_m$-module $V$  there is a canonical isomorphism of complexes
\begin{equation} \nonumber
\begin{split}
&{\rm M}_{(m)}^{\ast}(\Gamma, V)     =  
{\rm M}^{\ast}_{(m)}
\otimes_{{\rm GL}_m(\Z)} \Bigl( {\cal F}(\Gamma \backslash {\rm GL}_m(\Z))  \otimes_{\Q} V\Bigr). \\
&
{\Bbb M}_{(m)}^{\ast}(\Gamma, V)  =    {\Bbb M}^{\ast}_{(m)} \otimes_{{\rm GL}_m(\Z)} 
\Bigl({\cal F}(\Gamma \backslash {\rm GL}_m(\Z))  \otimes_{\Q} V\Bigr). \\
\end{split}
\end{equation}

\paragraph{4. Modular bicomplexes for ${\rm GL}_2$ and ${\rm GL}_3$.}  Here are 
explicit descriptions of   modular bicomplexes 
${\rm M}_{(m)}^{\bullet}$ for $m=1,2,3$.

{\bf 1. $m=1$}. Then $[v] = [-v]$, and  ${\rm M}_{(1)}^{1} = \Z$.

{\bf 2. $m=2$}. Then the  modular bicomplex is 
\begin{displaymath} 
    \xymatrix{   
  {\rm M}_{(2)}^{1,0} \ar[r]^{\partial} &\ar[d]^{\partial'} {\rm M}_{(2)}^{2,0}\\
 &{\rm M}_{(2)}^{2,1}\\}
\end{displaymath}

The group ${\rm M}_{(2)}^{1,0}$ is generated by the symbols $\langle v_0, v_1,v_2 \rangle $ where $ v_0 +
v_1 + v_2 =0$  and  $(v_1,v_2)$ run through all the basises in $L_2$. 
The differentials are:
\begin{equation} \label{34}
\begin{split}
&\partial: \langle v_0, v_1,v_2\rangle    \lms   -[v_1] \wedge [v_2] -  [v_2] \wedge [v_0] -  [v_0] \wedge [v_1], \\
&\partial': [v_1] \wedge [v_2]   \lms   [v_2] - [v_1]. \\
\end{split}
\ee

\begin{lemma} The double shuffle relations for $m=2$ are equivalent to the  dihedral relations:
 $$
\langle v_0, v_1,v_2\rangle  = - \langle v_1,v_0,v_2\rangle   = - \langle v_0, v_2,v_1\rangle = \langle -v_0,-v_1,-v_2\rangle.
$$
\end{lemma}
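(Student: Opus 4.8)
The plan is to unwind the definitions so that both families of relations become explicit linear relations among the finitely many generators $\langle v_0,v_1,v_2\rangle$ attached to extended bases of $\mathrm{L}_2$, and then check directly that each side is a consequence of the other. Recall that for $m=2$ the relevant shuffle sets are $\Sigma_{1,1}$, which consists of the identity and the transposition, so the $v$-shuffle relation (\ref{sshh1}) with $k=1$ reads $\langle v_1,v_2,v_3\rangle+\langle v_2,v_1,v_3\rangle=0$, and similarly the $u$-shuffle relation (\ref{sshh3}) reads $\langle u_1:u_2:u_3\rangle+\langle u_2:u_1:u_3\rangle=0$. Using the dictionary (\ref{5.26.1}) between homogeneous affine bases $\{u_1:u_2:u_3\}$ and extended bases, with $v_i=u_{i+1}-u_i$ (indices mod $3$), I would translate the $u$-relation back into the $v$-generators. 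Writing the bookkeeping out, swapping $u_1$ and $u_2$ sends $(v_1,v_2,v_3)=(u_2-u_1,u_3-u_2,u_1-u_3)$ to $(u_1-u_2,u_3-u_1,u_2-u_3)=(-v_1,-(v_1+v_2),\,\ldots)$ — the point is that it produces, up to relabeling, the generator with a sign flip on one slot, i.e. an ``$m=1$-type'' reflection inside the $m=2$ picture.

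First I would establish the easy direction: double shuffle $\Rightarrow$ dihedral. This is already granted to us in full generality by Theorem \ref{d3} (which is \cite[Theorem 2.7]{G00a}), and for $m=2$ it gives precisely $\langle v_0,v_1,v_2\rangle=-\langle v_1,v_0,v_2\rangle=-\langle v_0,v_2,v_1\rangle=\langle -v_0,-v_1,-v_2\rangle$ — the three stated dihedral relations being the transpositions generating $S_3$ together with the sign involution $v_i\mapsto -v_i$. So this half is a one-line citation.

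The substantive half is the converse: the four dihedral relations imply both shuffle relations (\ref{sshh1}) and (\ref{sshh3}) for $m=2$. The $v$-shuffle relation $\langle v_1,v_2,v_3\rangle+\langle v_2,v_1,v_3\rangle=0$ is literally the transposition relation $\langle v_0,v_1,v_2\rangle=-\langle v_1,v_0,v_2\rangle$ after cyclically relabeling which of the three summing-to-zero vectors we call $v_0$; so it follows from the first dihedral relation. For the $u$-shuffle relation I would carry out the substitution above: expressing $\langle u_1:u_2:u_3\rangle$ and $\langle u_2:u_1:u_3\rangle$ via (\ref{5.26.1}) as $\langle v_0,v_1,v_2\rangle$-type symbols, one finds that the two $u$-terms differ by a transposition composed with the sign involution $v_i\mapsto -v_i$, and hence their sum vanishes by combining the transposition relation with the relation $\langle v_0,v_1,v_2\rangle=\langle -v_0,-v_1,-v_2\rangle$. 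The main obstacle — really the only place care is needed — is getting the index bookkeeping in the affine-to-extended-basis translation (\ref{5.26.1}) exactly right, including the cyclic shift $\{v_1,v_2,v_3\}\leftrightarrow\{v_0,v_1,v_2\}$, so that the ``permutation $+$ sign flip'' identification is manifestly the one controlled by the listed dihedral relations; once that is pinned down the equivalence is immediate.
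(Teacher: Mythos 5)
Your proposal is correct, and it is a genuinely different route from the paper, whose entire proof of this lemma is a citation to \cite[Lemma 7.10]{G00a}; your self-contained verification is therefore worthwhile. The structure is right: the implication (double shuffle $\Rightarrow$ dihedral) may legitimately be delegated to Theorem \ref{d3}, which is stated earlier for all $m\geq 2$ and is not circular here, while the converse reduces to translating the $u$-shuffle relation through the dictionary (\ref{5.26.1}). That translation, done carefully, gives: for $v_1=u_2-u_1$, $v_2=u_3-u_2$, $v_3=u_1-u_3$ one has
$$
\langle u_2:u_1:u_3\rangle = \langle u_1-u_2,\; u_3-u_1,\; u_2-u_3\rangle = \langle -v_1,\,-v_3,\,-v_2\rangle,
$$
so $\langle u_1:u_2:u_3\rangle+\langle u_2:u_1:u_3\rangle=\langle v_1,v_2,v_3\rangle+\langle -v_1,-v_3,-v_2\rangle$, which vanishes by the sign-flip relation followed by the transposition of the last two slots; and the $v$-shuffle relation is literally the transposition of the first two slots. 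This is exactly the identification you state in your final paragraph (``a transposition composed with the sign involution''), so the argument closes. One blemish to fix: in your intermediate display you wrote the middle entry as $-(v_1+v_2)$ and described the result as ``a sign flip on one slot''; in fact $u_3-u_1=v_1+v_2=-v_3$, so the middle entry is $-v_3$ and the correct description is the one you give later (all signs flipped and the last two slots transposed) --- with your intermediate signs the three entries would not even sum to zero. Also note the shuffle relations should be read for $1\le k\le m-1$ (so only $k=1$ when $m=2$), which is the interpretation you implicitly and correctly used.
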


\begin{proof}  See  \cite[Lemma 7.10]{G00a}.  \end{proof}

{\bf 3. $m=3$}. The modular bicomplex looks as follows:
\begin{displaymath} 
    \xymatrix{
        {\rm M}_{(3)}^{1,0} \ar[r]^{\partial}  & {\rm M}_{(3)}^{2,0} \ar[d]^{\partial'}\ar[r]^{\partial}&  {\rm M}_{(3)}^{3,0}\ar[d]^{\partial'} \\     
 & {\rm M}_{(3)}^{2,1} \ar[r]^{\partial} &\ar[d]^{\partial'} {\rm M}_{(3)}^{2,1}\\
 &&{\rm M}_{(3)}^{2,2}\\}
\end{displaymath}

The differentials are
\be \nonumber
\begin{split}
&\partial: \quad \langle v_0, v_1, v_2, v_3\rangle  \lms 
- [v_1,v_2] \wedge [v_3]  -  [v_2,v_3] \wedge [v_0]  - [v_3,v_0] \wedge [v_1] - [v_0,v_1] \wedge [v_2] \\
& -[v_0] \wedge [v_1,v_2]   -  [v_1] \wedge [v_2,v_3] -  [v_2] \wedge [v_3, v_0]- [v_3] \wedge [v_0, v_1] ,\\
&\partial: [v_1,v_2]\wedge [v_3]   \lms  -\Bigl(   [v_1]\wedge [v_2] +  [v_2]\wedge  
 [-v_1 - v_2]  +   [-v_1 - v_2]\wedge [v_1]\Bigr)\wedge [v_3], \\
&\partial': [v_1,v_2]\wedge [v_3]   \lms   - [v_1,v_2], \\
&\partial': [v_1] \wedge [v_2] \wedge [v_3] \lms   [v_2]\wedge [v_3] - 
[v_1]\wedge [v_3]  + [v_1]\wedge [v_2]. \\
\end{split}
\ee

\paragraph{\bf 5. The relaxed modular complex.}  Consider a version ${\widehat {\rm M}}_{(m)}^{\bullet, \bullet}$ of the modular bicomplex, called the {\it relaxed modular bicomplex}, 
where the group ${\widehat  {\rm M}}^{1,0}_{(m)}$ is defined using the same generators $[v_1,...,v_m]$ 
satisfying only 
the dihedral symmetry relations and 
the first shuffle relations (\ref{sshh1}).  We do not impose 
the second shuffle relations (\ref{sshh3}). The other groups are defined   similarly: we take the same generators and forget about the second shuffle relations. The differentials are the same as before. Just as before we define 
 the   relaxed modular complex ${\widehat  {\rm M}}^{(m)}_{\bullet}$ and the 
complex ${\widehat {\Bbb M}}^{(m)}_{\bullet}$. 
 
\paragraph{6. Proof of Theorem \ref{THEOREM1.4}.}  
Given any collection of subgroups $\Gamma_{(m)} \subset {\rm GL}_m(\Z)$, $m=1, 2, \ldots $, and  injective maps 
$\Gamma_{(p)}  \times \Gamma_{(q)} \lra \Gamma_{(p+q)}$ compatible with the natural block diagonal 
embeddings  ${\rm GL}_p(\Z) \times {\rm GL}_q(\Z) \lra {\rm GL}_{p+q}(\Z)$, the space  
 \be \nonumber
{{\rm M}^\Gamma_N}^\ast:= \bigoplus_{m=1}^\infty\bigoplus_{w\geq m} {\rm M}^\ast_{(m)}\otimes_{\Gamma_{(m)}} S^{w-m}V_m 
\ee
has a natural commutative dg-algebra structure. Namely, for each positive integer $m$ we  pick a rank $m$ lattice ${\rm L}_m$ with a basis $(e_1, ..., e_m)$. 
We identify ${\rm Aut}({\rm L}_m) = {\rm GL}_m(\Z)$ and use the natural map ${\rm L}_p \oplus {\rm L}_q \lra {\rm L}_{p+q}$, provided by the chosen bases, as well as the 
product map $S^aV_p \otimes S^bV_q \lra  S^{a+b}(V_p\oplus V_q)$ to define the algebra structure on the generators. The claim that it respects the double shuffle relations is evident.  
Together with the differential on the modular complexes, we get a commutative dg-algebra.

When  $\Gamma_{(m)} := \Gamma_1(m; N)$, we get a commutative dga, called the   level $N$ modular dg-algebra. 

Another interesting case is when $\Gamma_{(m)} := \Gamma(m; N)$ is the full level $N$ congruence subgroup. 

The construction in these two cases is compatible with taking the projective limit over $N$, over the directed system for the  $N|M$ relation. 

\subsection{Motivic correlators} \la{SEC8}

Motivic correlators were defined in \cite{G08}. 
By the   definition, they live in the motivic Lie coalgebra.

 \vskip 2mm

 Let $\{s_0, s_1, ..., s_n\}$ be a collection of distinct points on ${\Bbb P}^1(\F)$, and $v_0$ a non-zero tangent vector at $s_0$, defined over a number field $\F$. 
Then there is the  motivic fundamental group   \cite{DG}:
 $$
\pi_1^{\cal M}(X, v_0), ~~~~X:= {\Bbb P}^1 - \{s_0, ..., s_n\}.
$$
 It is a pro-Lie algebra object in the  category ${\cal M}_T(\F)$.   Recall the canonical fiber functor $\omega$, see (\ref{FFW}), and the equivalence (\ref{EQV}) it induces. Therefore
  there is a canonical 
map of Lie algebras
$$
{\rm L}_\bullet(\F) \lra {\rm Der}~\omega(\pi_1^{\cal M}(X, v_0)).
$$ 
  
 There are elements  $X_{s_{i}} \in H_1(X)$ corresponding to the punctures $s_i$. In the Betti realization ${\cal X}_s$ is the homology class of
 a little loop around the puncture $s$. They have the following incarnations. 
 
 The tangential base point $v_0$ provides a canonical map $\varphi_{v_0}: \Q(1) \to \pi_1^{\cal M}(X, v_0)$. Therefore 
  the element $X_0:= \omega\circ \varphi_{v_0}(\Q(1))$ is killed by the action of ${\rm L}_\bullet(\F)$.   Furthermore, there are elements 
  $X_0, X_{s_1}, ...,   X_{s_n} \in  \omega(\pi_1^{\cal M}(X, v_0))$ such that 
  
  \begin{enumerate}
  
  \item One has 
$
  X_0 + X_{s_1} +  \ldots +  X_{s_n}  =0.
$
  
  \item The conjugacy classes of  elements $X_{s_i}$, $i=1, ..., n$, are preserved  by the action of ${\rm L}_\bullet(\F)$. 
  
  \item   The element $X_0$ is killed by the action of ${\rm L}_\bullet(\F)$. 
  
  \end{enumerate}  
  
Denote by ${\rm Der}^S\omega ( \pi_1^{\cal M}(X, v_0))$ the Lie subalgebra of all {\it special} derivations of the graded 
  Lie algebra $\omega(\pi_1^{\cal M}(X, v_0)$, defined as the derivations which have the properties 1) - 3). Then there is a canonical 
 homomorphism of Lie algebras
\be \la{HLA1}
 {\rm L}_\bullet(\F) \lra {\rm Der}^S\omega(\pi_1^{\cal M}(X, v_0).
\ee   
The target Lie algebra   has a simple combinatorial description, which we recall now. 
    
 Given a  vector space $V$, denote by ${\cal C}_{V}$ the cyclic tensor envelope of 
$V$:
$$
{\cal C}_{V}:= \oplus_{m=0}^{\infty}\left(\otimes^mV\right)_{\Z/m\Z}
$$
where the subscript $\Z/m\Z$ denotes the coinvariants of the cyclic shift.  

The subspace of {\it shuffle relations} in 
${\cal C}_{V}$  generated by the 
elements
\be \la{SRI}
\sum_{\sigma \in \Sigma_{p,q}}  (v_0 \otimes v_{\sigma(1)} 
\otimes \ldots \otimes v_{\sigma(p+q)})_{\cal C}, 
\qquad p, q \geq 1, 
\ee
 Set
\be \nonumber
{\cal C}{{\cal L}ie}_X:=  
\frac{ {\cal C}_{H^1(X)}}{\mbox{Shuffle relations}}\otimes \Q(1);
\qquad 
{\rm CLie}_{X}:= \mbox{the graded dual of 
${\cal C}{{\cal L}ie}_X$}.
\ee
  The space ${\cal C}{{\cal L}ie}_X$ is has a Lie coalgebra structure:
   \be \la{wt2a}
  \delta:  (v_0, \ldots , v_m)\lms  \sum_{i<j} (v_i, \ldots , v_j) \wedge (v_{j}, \ldots , v_{i-1}).
\ee   
   So   ${\rm CLie}_X$ is   a Lie algebra. 

\bt \la{COR11} There is a canonical isomorphism of Lie algebras:
\be \la{KAP1}
\kappa:  {\rm CLie}_X \stackrel{\sim}{\lra} {\rm Der}^S\omega(\pi_1^{\cal M}(X, v_0)).
\ee
\et

Composing the map (\ref{HLA1}) with the isomorphism (\ref{KAP1}),  and dualising the composition, 
we arrive at a canonical map of graded Lie coalgebras, called the {\it motivic correlator map}:
$$
{\rm Cor}^{\cal M}: {\cal C}{\cal L}ie_X \lra  {\cal L}_\bullet(\F). 
$$
The 
cyclic tensor products of the basis elements  ${X}_{s_i}$ span ${\cal C}{\cal L}ie_X$ as a vector space. Their images under the motivic correlator map are called {\it motivic correlators}:
\be \la{MCR}
{\rm Cor}^{\cal M}_{v_0}(s_{i_1}, \ldots , s_{i_m}):= {\rm Cor}^{\cal M}\Bigl(X_{s_{i_1}} \otimes X_{s_{i_2}} \otimes \ldots \otimes X_{s_{i_m}} \Bigr)_{\cal C} \in {\cal L}_{m-1}(\F).
\ee
 By the very definition, motivic correlator (\ref{MCR}) are cyclic  invariant:
 $$
 {\rm Cor}^{\cal M}_{v_0}(s_{i_1}, \ldots , s_{i_m})  = {\rm Cor}^{\cal M}_{a, v}(s_{i_2}, \ldots , s_{i_m}, s_{i_1}).
$$
Below we use $s_0=\infty \in {\rm P}^1$, and the tangent  vector $v_0$ corresponding to the natural coordinate on ${\Bbb A}^1$. 
Motivic correlators of  weights $>1$ do not depend on $v_0$. So we skip $v_0$ in the notation. 
Motivic correlators of weight $>1$ with the base point at $\infty$ are invariant under rescaling: 
$$
{\rm Cor}^{\cal M}_{v_0}(\lambda \cdot s_{i_1}, \ldots , \lambda \cdot s_{i_m}) = {\rm Cor}^{\cal M}_{v_0}(s_{i_1}, \ldots , s_{i_m}), ~~~~m>2.
$$

The key properties  of motivic correlators are the following:
 
 \begin{itemize}
 
 \item There is a simple formula for the coproduct, reflecting (\ref{wt2a}):
 \be \la{wt2}
  \delta: {\rm Cor}_{v_0}^{\cal M}(b_0, \ldots , b_m)\lms  \sum_{i<j} {\rm Cor}_{v_0}^{\cal M}(b_i, \ldots , b_j) \wedge{\rm Cor}_{v_0}^{\cal M}(b_{j}, \ldots , b_{i-1}).
\ee 

\item  Given an embedding $\F \hra \C$, the canonical real period of the Hodge realization of the 
 motivic correlator is calculated as the Hodge correlator integral \cite{G08}. 
 
\end{itemize}


These two properties determine  motivic correlators  uniquely when $\F$ is a number field.

\begin{figure}[ht]
\centerline{\epsfbox{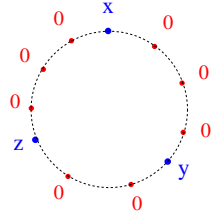}}
\caption{Motivic correlators.} 
\label{cor}
\end{figure}

\paragraph{Hodge correlators.} The real Hodge realization functor provides a map of Lie coalgebras
$$
r_{\rm Hod}: {{\cal C}}_{\bullet}(\mu_N) \lra {\cal L}^{\rm Hod}_\bullet.
$$
There is the canonical real period map \cite{G08}:
$$
{\rm P}: {\cal L}^{\rm Hod}_\bullet \lra \R.
$$
So composing them we get a canonical linear map 
\be \nonumber
{\rm P}\circ r_{\rm Hod}: {{\cal C}}_{\bullet}(\mu_N) \lra \R.
\ee

Our next goal is to calculate this map explicitly. Consider  the following current on ${\Bbb C}{\Bbb P}^1$: 
$$
G(x,y):= \log |x-y|, ~~~~x,y \in \C.
$$

Take a plane trivalent tree ${\rm T}$ whose legs are decorated cyclically by  $\zeta_0, \ldots , \zeta_m \in \mu_N$. 

Let us assign to each edge $E$ of the tree ${\rm T}$, internal or external, a current $\kappa_{\rm T}(\zeta_0, \ldots , \zeta_m)$  on 
\be \la{intv}
({\Bbb C}{\Bbb P}^1)^{\{\mbox{internal vertices of ${\rm T}$}\}}.
\ee
For each edge $E$ of the tree, we define a distribution $G_E$ on (\ref{intv}) given by the pull back of the Green function $G(x,y)$ under the map given by the projection 
 $(\ref{intv})\lra ({\Bbb C}{\Bbb P}^1)^{\{\mbox{vertices of ${E}$}\}}$. We define the current $\kappa_{\rm T}(\zeta_0, \ldots , \zeta_m)$ of the top degree $2m-2$ by 
 the sum over all trivalent trees ${\rm T}$:
 \be \nonumber
  \kappa_{\rm T}(\zeta_0, \ldots , \zeta_m):=   \sum_{\mbox{T}}
{\rm Alt}_{2m-1} \left ({\rm sgn}_{\rm T} (E_1 \wedge ... \wedge E_{2m-1})~ G_{E_1} d^\C G_{E_2} \wedge \ldots \wedge d^\C G_{E_{2m-1}}\right ).
\ee 
One proves that it is integrable, i.e. it is indeed a current. So  integrating it over (\ref{intv}), we get a real number, called the Hodge correlator \cite{G00b}, \cite{G08}:
\be \nonumber
{\rm Cor}_{\rm Hod}(\zeta_0, \ldots , \zeta_m) \in \R.
\ee

The next theorem tells that this number is the canonical real period map, calculated on the  Hodge realization of of the motivic correlator ${\rm Cor}_{\rm Mot}(\zeta_0, \ldots , \zeta_m) $. 
\bt \la{g08} \cite{G08} One has 
$$
{\rm P}\circ r_{\rm Hod} \circ {\rm Cor}_{\rm Mot}(\zeta_0, \ldots , \zeta_m)  = {\rm Cor}_{\rm Hod}(\zeta_0, \ldots , \zeta_m).
$$
\et

\subsection {Dihedral Lie coalgebras} \la{Sec2b} 

Section  \ref{Sec2b} presents the formalism of the double shuffle relations considered modulo the depth filtration, 
together with the construction a coproduc, making it into a bigraded Lie coalgebra. 
 Section \ref{SECCTT2.6}  relates it to the modular complex dga. 
Sections \ref{Sec2b} - \ref{SECCTT2.6}  clarify the   structure of the proof given in 
 Section \ref{4.26.01.3aX}. The reader may skip Sections \ref{Sec2b} - \ref{SECCTT2.6}  and proceed to Section \ref{4.26.01.3aX}.
\vskip 2mm  

Let $\G$   be a commutative group written  multiplicatively. 
The dihedral Lie coalgebra is a bigraded    Lie coalgebra over $\Q$ introduced in \cite{G98} and studied in \cite[Section 4]{G00a}:
$$
{\cal D}_{\bullet, \bullet}(\G ) = \oplus_{m \geq 1} {\cal D}_{\bullet, m}(\G ).
$$ 
Let us recall its definition. 
The space  ${\cal D}_{w, m}(\G)$  
  is spanned by the elements  
 \be  \la{DIHL2}
\{g_1,...,g_m \}_{n_1,...,n_m}, ~~~~n_i\geq 0; ~~\sum (n_i+1)=w.
 \ee
 We package them into power series, setting 
$g_0:= (g_1 \ldots g_m)^{-1}$ and writing
\be \la{DIHL1}
\{g_0, g_1,...,g_m~|~0:t_1:....:t_m\} =: \sum_{n_i \geq 0} \{g_1,...,g_m  \}_{n_1,...,n_m} t_1^{n_1}...t_m^{n_m}.
\ee
Relations between  generators  (\ref{DIHL2}) are encoded by   relations between the  generating series (\ref{DIHL1}). 

The generating series  (\ref{DIHL1}) are encoded  by  {\it    nonhomogeneous dihedral} words  in $\G$, where $g_i \in \G$: 
\be \la{DIHL}
\begin{split}
&\{g_0, g_1, \ldots,  g_m~ |~ t_0:\ldots :t_m\} \qquad \mbox{such that} \quad g_0 \cdot \ldots \cdot  g_m = 1,\\
&\{g_0, g_1, \ldots,  g_m ~|~ t_0: \ldots :t_m\} = \{g_0, g_1, \ldots ,  g_m | t + t_0:...: t+t_m\}.\\
\end{split}
\ee
We can  parametrize generators  (\ref{DIHL}) in a dual way, via  the   {\it    homogeneous dihedral} words in $\G$:
\be \la{F37}
\begin{split}
&\{g_0: g_1: \ldots :  g_m ~|~ t_0, \ldots, t_m\} \qquad \mbox{such that} \quad t_0 + \ldots + t_m = 0,\\
&\{g\cdot g_0:   \ldots : g\cdot g_m~|~t_0,\ldots,t_m\} = \{g_0:  \ldots : g_m~|~t_0,\ldots,t_m\}\quad \forall g \in G,\\
\end{split}
\ee
 The duality between the homogeneous and nonhomogeneous dihedral words   is given by  
\be \la{F38}
\begin{split} 
&\{g_0: g_1 : \ldots :  g_m~|~t_0,\ldots,t_m\} \longmapsto \{g_0^{-1} g_{ 1}, g_1^{-1} g_{ 2}, \ldots ,  g_m^{-1} g_{0}~|~t_0:t_0+t_1:\ldots:t_0+\ldots+t_m \},  \\
&\{g_0, g_1, \ldots ,  g_m~|~t_0:\ldots:t_m\} \longmapsto \{g_0: g_0  g_1 : \ldots :  g_0   \ldots  g_m~|~t_1-t_0,t_2-t_1,\ldots,t_0-t_m\}.\\
\end{split}
\ee

   \begin{definition}
  The space  ${\cal D}_{\bullet, m}(\G)$ is   generated by   elements (\ref{DIHL2}) subject to the following relations:  
  
 \begin{itemize}
 
\item The double shuffle relations: 
\be \la{F39}
\begin{split}
& \sum_{\sigma \in \Sigma_{k,m-k}}\{ g_{\sigma(1)} : \ldots :  g_{\sigma(n)}: g_{m+1}~|~ t_{\sigma(1)}, \ldots,  t_{\sigma(n)} , t_{m+1}\},\\
& \sum_{\sigma \in \Sigma_{k,m-k}}
\{x_0, x_{\sigma(1)}, \ldots ,  x_{\sigma(m)}~| ~t_0: t_{\sigma(1)}: \ldots:  t_{\sigma(m)} \}.\\
\end{split}
\ee
\item  The distribution relations, where    $l \in \Z$, and   $|l|$ divides  $|\G|$, if $\G$ is a finite group: 
\be
\{x_0^l, x_{ 1}^l, \ldots ,  x_{ m}^l ~|~ t_0: t_{ 1 }: \ldots:  t_{m} \} -
\sum_{y_i^l = x^l_i}\{y_0, y_{ 1}, \ldots ,  y_{m} ~|~ l\cdot t_0: \ldots:  l\cdot t_{m} \}.
\ee
\end{itemize}
\end{definition}
 
  The distribution relation for $l=-1$ is
$$
\{x_0^{-1}, x_{ 1}^{-1}, \ldots ,  x_{ m}^{-1}~|~ t_0: t_{1}: \ldots:  t_{m} \} =
 \{x_0, x_{ 1}, \ldots ,  x_{m} ~|~ - t_0: \ldots:  - t_{m} \}.
$$
We proved in \cite[Theorem 4.1]{G00a} that  the double shuffle relations imply the dihedral symmetry:
\be
\begin{split}
&\{g_0, \ldots , g_{m-1}, g_m~|~t_0:t_1: \ldots:t_m\} = \{g_1,  \ldots , g_m, g_0~|~t_1: \ldots :t_m: t_0\}, \\
& \{g_0,  \ldots , g_m~|~t_0: \ldots :t_m\} = (-1)^{n+1}\{g_n,  \ldots , g_0~|~t_m: \ldots :t_0\}.\\
\end{split}
\ee
 So we   picture   generators (\ref{DIHL}) as    pairs   $(g_0,t_0), \ldots , (g_m,t_m)$   located cyclically on an oriented circle:

\begin{center}
\hspace{0.0cm}
\epsffile{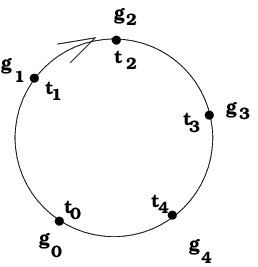}
\end{center}

\paragraph{The coproduct.}
We define a cobracket: 
\be \la{THECOP}
\begin{split}
&\delta:   \widehat {\cal D}_{\bullet, m}(\G)  \longrightarrow   \sum_{k+l=m}   \widehat {\cal D}_{\bullet, k}(\G)\wedge \widehat {\cal D}_{\bullet, l}(\G),\\
&\delta \{g_0,  \ldots ,  g_m~| ~t_0: \ldots :t_m\}:=\\
&\sum_{j \not = \{i-1, i\};~ i,j\in \Z/(m+1)\Z} \{g_{i}, \ldots ,g_{j-1}, y_{ij}, ~| ~t_{i}: \ldots : t_{j-1}:t_j    \}\\
& \wedge \{x_{ij}, g_{j+1},  \ldots ,  g_{i-1} ~|~ t_j: t_{j+1} \ldots : t_{i-1}  \}.\\
\end{split}
\ee
Here   the elements $x_{ij}$ and $y_{ij}$ are defined by 
\be
 g_{i}  ...  g_{j-1}y_{ij}= 1, ~~x_{ij}g_{j+1}  ...   g_{i-1} = 1.
\ee
 Each term of the formula corresponds to the following procedure. Choose 
an arc on the circle   between  two neighboring    points, and 
a   point different from the arc ends. 
Cut the circle 
in the   arc and in the   point,  make two   oriented circles out of it, and then make  the dihedral word on each of the circles out of the initial word.

\begin{figure}[ht]
\centerline{\epsfbox{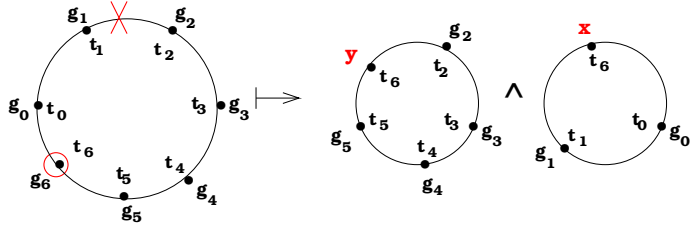}}
\caption{A term in the coproduct formula (\ref{THECOP}) corresponding to $i=2, j=6$.} 
\label{arb2}
\end{figure}


  \begin{theorem} \cite[Theorem 4.3]{G00a} \la{DDLCA}
  The map  $\delta$ provides a structure of   bigraded Lie coalgebras  on  $\widehat D_{\bullet, \bullet}(\G)$ and ${\cal D}_{\bullet, \bullet}(\G)$.
\end{theorem}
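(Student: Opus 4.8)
The plan is to verify three things, writing $L$ for either ${\cal D}_{\bullet,\bullet}(\G)$ or $\widehat{\cal D}_{\bullet,\bullet}(\G)$; once they hold the Lie coalgebra structure follows, and with it (via the Leibniz rule) the dg-algebra $\Lambda^\ast L$ promised in the introduction. First, the formula (\ref{THECOP}) for $\delta$ must be compatible with the dihedral symmetry of the generators (cyclic rotation and orientation reversal of the decorated circle), so that it descends from the free span of dihedral words to a well-defined map on the span of dihedral symmetry classes. Second, $\delta$ must annihilate the remaining defining relations — the full double shuffle relations (\ref{F39}) and the distribution relations in the case of ${\cal D}_{\bullet,\bullet}(\G)$, and only the first shuffle relations in the case of $\widehat{\cal D}_{\bullet,\bullet}(\G)$. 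Third, $\delta$ must satisfy co-Jacobi, i.e. the composite $L \xrightarrow{\delta} \Lambda^2 L \xrightarrow{d} \Lambda^3 L$, with $d$ the Leibniz extension of $\delta$, must vanish. Skew-symmetry requires no work, since (\ref{THECOP}) is written inside $\Lambda^2$ by construction and each admissible pair (arc-slot, point-slot) contributes exactly one term, so there is no hidden cancellation forcing $\delta = 0$; and once co-Jacobi holds, $d^2 = 0$ on all of $\Lambda^\ast L$ is formal.

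For the first step I would argue purely geometrically. Each summand of (\ref{THECOP}) is the output of the two-cut surgery: choose an arc between consecutive decorations and a decoration not bounding that arc, cut the oriented circle at both, and reassemble a dihedral word on each of the two resulting arcs by inserting on each side the unique extra group element making the product of the $g$'s trivial and carrying over the parameter $t_j$ of the cut point. Since a rotation or a reflection of the decorated circle carries the set of admissible two-cut configurations bijectively to itself and intertwines with the induced transformation of the reassembled words, the formula respects the cyclic and orientation-reversal relations of (\ref{DIHL})–(\ref{F37}); the reflection case is exactly where the sign $(-1)^{n+1}$ in the dihedral symmetry is consumed. A parallel check — matching the surgery against the substitution (\ref{F38}) — shows that it makes no difference whether the procedure is run on the homogeneous or the nonhomogeneous presentation of a generator.

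The core of the proof, and the step I expect to be the main obstacle, is the compatibility of $\delta$ with the double shuffle relations. Working with the generating series (\ref{DIHL1}), the key observation is that cutting a shuffled word $\sum_{\sigma\in\Sigma_{k,m-k}}\{\cdots\}$ at a fixed pair of positions breaks each shuffle $\sigma$ into the data of how it distributes the two blocks between the two arcs, so that the outer sum over $\Sigma_{k,m-k}$ reorganizes into a $\wedge$-sum each of whose factors is itself a shuffle-type sum over shuffles of the sub-blocks landing on that side. Hence $\delta$ of a shuffle relation lies in $(\text{shuffle relations})\wedge L + L\wedge(\text{shuffle relations})$, which is zero in $\Lambda^2{\cal D}_{\bullet,\bullet}(\G)$. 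The distribution relations are easier: the substitution $x_i\mapsto x_i^{\,l},\ t_i\mapsto l\cdot t_i$ commutes with the cutting surgery, so $\delta$ of a distribution relation is again a combination of distribution relations (taking the usual care with the index $l$ and the divisibility condition when $\G$ is finite). For $\widehat{\cal D}_{\bullet,\bullet}(\G)$ one needs only the first-shuffle instance of the same reorganization, which goes through verbatim since the first shuffle family is exactly the one compatible with the cut surgery.

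Finally, for co-Jacobi I would expand $d\circ\delta(x)$ as a sum over configurations consisting of a first cut (producing two sub-circles) followed by a second cut performed inside one of them; the second cut may use an original arc or point lying in that sub-circle, or the newly created slot carrying $y_{ij}$ or $x_{ij}$. Tracking these, one checks that each geometric configuration of two compatible cuts on the original circle is produced by exactly two such ordered choices, and that after the antisymmetrization built into $\Lambda^3$ the two contributions carry opposite signs and cancel — the same mechanism as the identity $\partial^2=0$ for the modular complex, Theorem \ref{16}, so I would organize the bookkeeping by direct analogy with that computation. Assembling the three steps yields the bigraded Lie coalgebra structure on both $\widehat{\cal D}_{\bullet,\bullet}(\G)$ and ${\cal D}_{\bullet,\bullet}(\G)$; preservation of the weight and depth bigradings is immediate, since each cut redistributes the parameters and decorations without changing the total weight $\sum(n_i+1)$ or the total depth $m$.
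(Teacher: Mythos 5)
The paper itself contains no proof of this statement---it is quoted directly from \cite[Theorem 4.3]{G00a}---and your outline follows essentially the same route as the proof given there: check that the cut-surgery formula $\delta$ is well defined modulo the dihedral, double shuffle and distribution relations by reorganizing the sum over cuts, and verify co-Jacobi by showing each two-cut configuration contributes twice with cancelling signs. The one caveat is that the configurations excluded in (\ref{THECOP}) (cuts with $j\in\{i-1,i\}$, and the special role of the slot carrying $t_0$) are exactly where your asserted reorganizations of the shuffle sums and the two-cut bookkeeping require explicit case-by-case checking; this is the detailed verification carried out in \cite{G00a}, so your proposal is a correct sketch of that argument rather than a new one.
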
 


\subsection{The  comparison theorem} \la{SECCTT2.6}



The following crucial result is \cite[Theorem 1.2]{G98}. We include the proof for completeness.

\begin{theorem} \label{4.26.01.3} Let us choose a primitive root of unity $\zeta_N$. 

a) There exists a canonical surjective map of dg-algebras
$$
\eta: {\rm M}^*_N\lra \Lambda^*\Bigl({\cal D}_{\bullet, \bullet}(\mu_N)\Bigr).
$$
 b) It gives rise to a collection of surjective homomorphisms o complexes 
\begin{equation} \label{4.26.01.2}
\eta^*_{w,m}: {\rm M}^{*}_{(m)}\otimes_{\Gamma_1(m;N)}{\rm Sym}^{w-m}(V_m) \lra 
\Lambda_{w,m}^*\Bigl({\cal D}_{\bullet, \bullet}(\mu_N)\Bigr).
\end{equation}
 The map (\ref{4.26.01.2}) is an isomorphism when $N=1$, or when $w=m$ and $N=p$ is a prime.
\end{theorem}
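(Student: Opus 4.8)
The plan is to build $\eta$ from its degree--one component and an explicit $\zeta_N$--substitution, extend it multiplicatively, check compatibility with the differentials, and then establish the two isomorphism assertions by comparing presentations.

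\emph{Construction of $\eta$.} Fix the linear functional $\ell\colon {\rm L}_m \to \Z/N\Z$ whose stabiliser in ${\rm GL}_m(\Z)$ is, by definition, $\Gamma_1(m;N)$. To an extended basis $(v_1,\dots,v_{m+1})$ of ${\rm L}_m$ I attach the dihedral datum $g_i := \zeta_N^{\ell(v_i)}\in\mu_N$; since $\sum v_i=0$ we have $\prod g_i=1$ (so $g_{m+1}$ plays the r\^ole of $g_0$ in (\ref{DIHL1})). Writing $(t_1,\dots,t_m)$ for the basis of $V_m$ dual to $(v_1,\dots,v_m)$, the factor $S^{w-m}V_m$ is spanned by the monomials $t_1^{n_1}\cdots t_m^{n_m}$ with $\sum n_i=w-m$, and I send $\langle v_1,\dots,v_{m+1}\rangle\otimes t_1^{n_1}\cdots t_m^{n_m}$ to $\{g_1,\dots,g_m\}_{n_1,\dots,n_m}$; equivalently the generating series $\langle v_1,\dots,v_{m+1}\rangle$ decorated by $0:t_1:\dots:t_m$ is matched with the dihedral word $\{g_0,g_1,\dots,g_m\,|\,0:t_1:\dots:t_m\}$ of (\ref{DIHL1}). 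This is well defined: the first and second shuffle relations (\ref{sshh1})--(\ref{sshh3}) defining ${\rm M}^1_{(m)}$ are carried to the two shuffle families (\ref{F39}) defining ${\cal D}_{\bullet,\bullet}(\mu_N)$, and $\Gamma_1(m;N)$--coinvariance holds because $\Gamma_1(m;N)$ preserves $\ell$, hence the tuple $(g_0,\dots,g_m)$, while a general relabelling of the underlying basis only produces the dihedral symmetry that is already a consequence of the shuffle relations (Theorem \ref{d3}).

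\emph{The dg--algebra structure.} On higher degrees $\eta$ is the wedge product of the degree--one values; this respects the $\ast$--product on ${\rm M}^\ast_N$, since both ${\rm M}^\ast_{(m)}$ and $\Lambda^\ast{\cal D}_{\bullet,\bullet}(\mu_N)$ are assembled from blocks indexed by direct--sum decompositions of the lattice with the same sign rule. The one genuine computation is $\eta\circ\partial=\delta\circ\eta$: here I would place the differential (\ref{FD}) — which cuts the decorated circle at a $v$--slot and a $u$--slot, closing each resulting arc by the negative of the sum of its remaining vectors and wedging the two words — next to the cobracket (\ref{THECOP}) — which cuts the circle at an arc and at a vertex — and observe that under the dictionary ``$v$--slot $\leftrightarrow$ arc between consecutive vertices, $u$--slot $\leftrightarrow$ vertex'' the two operations agree term by term; what remains is to line up the orientation conventions (the cyclic sum ${\rm Cycle}_{m+1}$, the factor $(-1)^{m+1}$ under reversal, the placement of the closing vectors). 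Surjectivity of $\eta$ then reduces to that of $\eta^1$: the tuples $(\ell(v_1),\dots,\ell(v_m))$ realised by bases of ${\rm L}_m$ form exactly one ${\rm GL}_m(\Z)$--orbit, the primitive vectors in $(\Z/N\Z)^m$, i.e. ${\rm M}_m(N)$, so every dihedral generator with primitive underlying tuple is hit, and the remaining ones — in particular the all--ones generator — are expressed through these by the distribution relations.

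\emph{The isomorphism for $N=1$ and for $w=m$, $N=p$.} In both cases I would show $\eta^\ast_{w,m}$ is a bijection of presentations, so injectivity is automatic. When $N=1$ we have $\mu_1=\{1\}$, $\Gamma_1(m;1)={\rm GL}_m(\Z)$, and the only distribution relations ($l=\pm 1$) are vacuous or instances of the dihedral symmetry already implied by the shuffle relations; hence ${\cal D}_{w,m}(\{1\})$ is, by its definition, the quotient of $S^{w-m}V_m$ by the two shuffle families, which — since ${\rm GL}_m(\Z)$ acts simply transitively on extended bases — is precisely ${\rm M}^1_{(m)}\otimes_{{\rm GL}_m(\Z)}S^{w-m}V_m$, the complex structures agreeing by the differential/cobracket comparison above. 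When $w=m$ and $N=p$ is prime the coefficients $S^0V_m=\Q$ drop out, ${\rm M}^1_{(m)}\otimes_{\Gamma_1(m;p)}\Q$ is $\Q[{\rm M}_m(p)]$ modulo the double shuffle relations, and on the ${\cal D}$--side the only distribution relation among weight--$m$ generators not already implied by the double shuffle relations merely introduces the redundant all--ones generator, corresponding to the single tuple \emph{not} arising from a basis; so the two presentations coincide. I expect the real obstacle to lie exactly here: showing that the double shuffle relations (supplemented, in the prime case, by the distribution relations) are \emph{all} the relations on the modular side — equivalently, that no stabiliser in ${\rm GL}_m(\Z)$ forces an unforeseen vanishing — together with the sign bookkeeping in $\eta\circ\partial=\delta\circ\eta$; this is essentially the content of \cite[Theorem 1.2]{G98}.
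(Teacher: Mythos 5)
Your construction is the paper's construction: reading off the exponents $\alpha_i=\ell(v_i)$ along the functional whose stabiliser is $\Gamma_1(m;N)$ and sending the generating series to the dihedral word $\{\zeta_N^{\alpha_0},\zeta_N^{\alpha_1},\ldots,\zeta_N^{\alpha_m}\,|\,0:t_1:\ldots:t_m\}$ is exactly the map $\eta^1_{m,w}$ of the text, with your ``functional plus dual basis'' bookkeeping playing the role of the function spaces $S'_{w,m}(N)\cong S''_{w,m}(N)$ and of Lemma \ref{4.26.01.5}; the identities $(\ref{sshh1})\leftrightarrow(\ref{F39})$ and $\eta\circ\partial=\delta\circ\eta$ via the term-by-term match of the circle-cutting formulas (\ref{FD}) and (\ref{THECOP}) are likewise the paper's steps. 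So for part a) you are on the same route.

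The genuine gap is in part b), exactly where you say you expect the obstacle. For $w=m$, $N=p$ your mechanism is backwards: you propose to use the unique distribution relation to \emph{eliminate} the all-ones generator $\{1,\ldots,1\}_{0,\ldots,0}$ in favour of the primitive ones. If that were the argument, you would still have to check that the shuffle relations supported on the all-ones multiset (which give $m!\,\{1,\ldots,1\}_{0,\ldots,0}=0$) do not, after this elimination, produce a new relation among the primitive generators; you never address this. The paper's argument is the opposite and stronger: the unique distribution relation is itself a \emph{consequence} of the first shuffle relations (the symmetrization relation $\sum_{\sigma\in S_m}\{x_{\sigma(1)},\ldots,x_{\sigma(m)}\}_{0,\ldots,0}=0$ kills both of its sides), so imposing it changes nothing; the all-ones class is already zero, and since a shuffle relation never mixes distinct multisets of group elements, no relations beyond double shuffle are forced on the primitive generators — whence the bijection of presentations. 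A related soft spot is your one-line justification of surjectivity in part a): for composite $N$, and in particular in the diagonal part where the $t$-rescaling in the distribution relation is trivial, iterating the distribution relation can reproduce the non-primitive class one is trying to eliminate with coefficient one, so ``the remaining generators are expressed through these by the distribution relations'' is not an argument as stated. Finally, deferring ``the real obstacle'' to \cite[Theorem 1.2]{G98} is circular: that theorem \emph{is} the statement being proved (the paper reproduces its proof ``for completeness''), so as a self-contained proof your part b) is incomplete.
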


 In particular, there exists a canonical surjective map of $\Q$-vector spaces
$$
\eta^1_{w,m}: {\rm M}^1_{(m)}\otimes_{\Gamma_1(m;N)}{\rm Sym}^{w-m}(V_m) \lra {\cal D}_{w,m}(\mu_N).
$$

\begin{proof} 
 a) Recall that 
 \begin{equation} \label{4.26.01.1}
\Gamma_1(m;N)  \backslash {\rm GL}_m(\Z) = ~ \{ (\alpha_1, ..., \alpha_m)|
~ \alpha_i \in \Z/N\Z, 
\quad {\rm g.c.d.} (\alpha_1, ..., \alpha_m, N) =1\}.
 \end{equation}

Let $S'_{w,m}(N)$ be  the vector space of all functions 
$$
f(\alpha_0, ..., \alpha_{m}  
~|~ s_0: ... : s_{m}  ) 
$$
where $f$ is a polynomial function of the 
 degree $w-m$  in $s_0, ..., s_{m}$, and:
\be \la{KJIK}
\begin{split}
&\alpha_i \in \Z/N\Z, \quad \alpha_0 + ... +\alpha_{m} =0, \quad 
{\rm g.c.d.} (\alpha_1, ..., \alpha_m, N) =1,\\
&f(\alpha_0, ..., \alpha_{m}  
~|~ s_0: ... : s_{m}  )  =   f(\alpha_0, ..., \alpha_{m}  
~|~ s_0+s: ... : s_{m}+s ),~~~~  \forall s.\\
\end{split}
\ee
Using (\ref{4.26.01.1}) we get  canonical isomorphism
$$
{\cal F}\left(\Gamma_1(m;N)  \backslash {\rm GL}_m(\Z)\right) \otimes_{\Q}
  {\rm Sym}^{w-m}(V_m)   =   S'_{w,m}(N).
$$

The space $S'_{w,m}(N)$ can be identified with the  vector space  $S^{''}_{w,m}(N)$ of 
all functions
$$
g(\beta_0: ...: \beta_{m}  
~|~ t_0, ... ,t_{m}  ) 
$$
where $g$ is a degree $w-m$  polynomial function  
  in $t_0, ..., t_{m}$, where $t_0+ ... +t_{m}=0$, 
and
\be \la{F38a}
\begin{split}
&\beta_i \in \Z/N\Z, \quad 
{\rm g.c.d.} (\beta_0 - \beta_{m}, ..., \beta_{m-1} - \beta_{m}, N) =1,\\
&g(\beta_0: ...: \beta_{m}  
~|~ t_0, ... ,t_{m}  )   =  g(\beta_0 +\beta : ...: \beta_{m} +\beta 
~|~ t_0, ... ,t_{m}  )~~ \forall \beta \in \Z/N\Z.\\
\end{split}
\ee
There is a canonical isomorphism ${\rm I}: S'_{w,m}(N) \lra S''_{w,m}(N)$, given by
\be \la{F39a}
\begin{split}
&f(\alpha_0, ..., \alpha_{m}  
~|~ s_0: ... : s_{m}  ) \stackrel{{\rm I}}{\longrightarrow} {\rm I}(f)(\beta_0: ...: \beta_{m}  
~|~ t_0, ... ,t_{m}),\\
& \mbox{where} ~\alpha_i = \beta_{i+1} - \beta_{i}; ~~ t_i = s_{i+1} - s_{i}.\\
\end{split}
\ee

Let $S_{w,m}(N)$ be the quotient of the space $S'_{w,m}(N)$ by the 
double shuffle relations  
\be \la{F40a}
\begin{split}
&\sum_{\sigma \in \Sigma_{k,l}}f(\alpha_{0}, \alpha_{\sigma(1)}, \ldots , \alpha_{\sigma(m)}  
~ | ~s_0: s_{\sigma(1)}: ... : s_{\sigma(m)}  ) = 0, \\
&\sum_{\sigma \in \Sigma_{k,l}}{\rm I}(f)(\beta_0: \beta_{\sigma(1)}: \ldots : \beta_{\sigma(m)}
~  | ~t_0, t_{\sigma(1)}, ... , t_{\sigma(m)} ) = 0.\\
\end{split}
\ee
One should compare    (\ref{KJIK})  $\leftrightarrow$ (\ref{DIHL}),   (\ref{F38a}) $\leftrightarrow$ (\ref{F37}),   (\ref{F39a}) $\leftrightarrow$ (\ref{F38}),   (\ref{F40a}) $\leftrightarrow$ (\ref{F39}).
This leads to:

\begin{lemma} \label{4.26.01.5} There exists a canonical isomorphism of vector spaces
 \begin{equation} \label{4.26.01.4}
{\rm M}^1_{(m)}\otimes_{{\rm GL}_m(\Z)}\Bigl({\cal F}(\Gamma_1(m;N)  \backslash {\rm GL}_m(\Z))  \otimes_{\Q}
  {\rm Sym}^{w-m}(V_m)\Bigr)  =  S_{w,m}(N).
 \end{equation}
\end{lemma}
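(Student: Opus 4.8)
The statement is an identification of two vector spaces, each presented by generators and relations, so the strategy is simply to unwind both presentations to a common one and check that the relations match term by term. I will keep throughout the dictionary indicated in the text: \eqref{KJIK}$\leftrightarrow$\eqref{DIHL}, \eqref{F38a}$\leftrightarrow$\eqref{F37}, \eqref{F39a}$\leftrightarrow$\eqref{F38}, \eqref{F40a}$\leftrightarrow$\eqref{F39}.

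\emph{Step 1: the coefficient module.} By Shapiro's lemma, recalled just before Definition~\ref{35}, the object to understand is the underlying $\Q$-vector space of the ${\rm GL}_m(\Z)$-module $\mathcal{F}(\Gamma_1(m;N)\backslash{\rm GL}_m(\Z))\otimes_\Q {\rm Sym}^{w-m}(V_m)$, together with its ${\rm GL}_m(\Z)$-action. Using the explicit description \eqref{4.26.01.1} of $\Gamma_1(m;N)\backslash{\rm GL}_m(\Z)$ as tuples $(\alpha_1,\dots,\alpha_m)$ with $\alpha_i\in\Z/N\Z$ and ${\rm g.c.d.}(\alpha_1,\dots,\alpha_m,N)=1$, an element is a degree $w-m$ polynomial function $f(\alpha_1,\dots,\alpha_m\mid s)$, and ${\rm GL}_m(\Z)$ acts by its tautological action on the $\alpha_i$ and on $V_m$. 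Setting $\alpha_0:=-(\alpha_1+\dots+\alpha_m)$ and homogenizing the polynomial variable to $s_0,\dots,s_m$ considered up to a simultaneous shift $s_i\mapsto s_i+s$, one obtains exactly the data $f(\alpha_0,\dots,\alpha_m\mid s_0:\dots:s_m)$ constrained as in \eqref{KJIK}; this is the space $S'_{w,m}(N)$.

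\emph{Step 2: the tensor product on generators.} The group ${\rm GL}_m(\Z)$ acts simply transitively on the set of extended bases of ${\rm L}_m$ (an extended basis is an ordinary basis together with the determined vector $v_{m+1}=-(v_1+\dots+v_m)$, and the action on ordinary bases is simply transitive, so the stabilizer of an extended basis is trivial). Hence the free abelian group on extended bases, tensored over ${\rm GL}_m(\Z)$ with the coefficient module of Step~1, is canonically that module: the class of $\langle v_1,\dots,v_{m+1}\rangle\otimes f$ is recorded by the value of $f$ read in the basis $(v_1,\dots,v_m)$, that is, by $f(\alpha_0,\dots,\alpha_m\mid s_0:\dots:s_m)$ where $(\alpha_i)$ is the coset of $(v_1,\dots,v_m)$. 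In particular no relation on $f$ is forced by the coinvariance, and the change of generators $\langle v_1,\dots,v_{m+1}\rangle\leftrightarrow\langle u_1:\dots:u_{m+1}\rangle$ of \eqref{5.26.1} matches the passage between the nonhomogeneous data \eqref{KJIK} and the homogeneous data \eqref{F38a} via the isomorphism ${\rm I}$ of \eqref{F39a}, since both are the same linear substitution $\alpha_i=\beta_{i+1}-\beta_i$, $t_i=s_{i+1}-s_i$ (which is a linear isomorphism preserving the polynomial degree $w-m$). So before quotienting, the free group on extended bases tensored with the coefficients is $S'_{w,m}(N)\cong S''_{w,m}(N)$.

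\emph{Step 3: matching the relations.} The relations defining ${\rm M}^1_{(m)}$ are precisely the two families of double shuffle relations \eqref{sshh1} (in the $v$-generators) and \eqref{sshh3} (in the $u$-generators). Pushing \eqref{sshh1} through the identification of Step~2 gives exactly the first line of \eqref{F40a} for $f$; pushing \eqref{sshh3} through Step~2 composed with the isomorphism ${\rm I}$ of \eqref{F39a} gives exactly the second line of \eqref{F40a}. Therefore the image of ${\rm M}^1_{(m)}\otimes_{{\rm GL}_m(\Z)}(-)$ is the quotient of $S'_{w,m}(N)$ by the double shuffle relations, which is by definition $S_{w,m}(N)$; the dihedral symmetry relations, being consequences of the double shuffles by Theorem~\ref{d3}, are automatically matched by the dihedral symmetry of the homogeneous/nonhomogeneous words. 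This gives the asserted canonical isomorphism.

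\emph{Main obstacle.} There is no deep difficulty here; the work is bookkeeping. The one point that genuinely needs care is verifying that tensoring over ${\rm GL}_m(\Z)$ introduces no relation beyond those already recorded---this is exactly the triviality of the stabilizer of an extended basis used in Step~2---together with the bookkeeping that the two homogenizations and the linear substitution $\alpha_i=\beta_{i+1}-\beta_i$, $t_i=s_{i+1}-s_i$ preserve the polynomial degree $w-m$, so that the degree constraints on the two sides correspond. Once these are in place, the matching of the shuffle relations in Step~3 is a direct comparison of the indexing sets $\Sigma_{k,m-k}$ on both sides.
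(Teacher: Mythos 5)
Your proposal is correct and follows essentially the same route as the paper: the paper's own (very terse) proof just observes that a generator of the coinvariant space can be written in the two forms $\langle v_0,\dots,v_m\rangle\otimes f(\dots)$ and $\langle u_0:\dots:u_m\rangle\otimes g(\dots)$, relying on the dictionary \eqref{KJIK}$\leftrightarrow$\eqref{DIHL}, \eqref{F38a}$\leftrightarrow$\eqref{F37}, \eqref{F39a}$\leftrightarrow$\eqref{F38}, \eqref{F40a}$\leftrightarrow$\eqref{F39} stated just before the lemma. Your Steps 1--3 merely spell out the details left implicit there (the principal homogeneity of the set of extended bases and the matching of the two shuffle families), so there is nothing to correct.
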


\begin{proof}
Let $(u_1, ..., u_m)$ be a basis of the lattice ${\rm L_m}$. Set 
$v_i:= u_{i+1} - u_i$. Then $(v_1, ..., v_m)$ is another basis of the lattice ${\rm L_m}$. 
A generator of the  space (\ref{4.26.01.4}) can be written in either of the following two   forms 
\be \nonumber
\begin{split}
&\langle v_0, ..., v_{m} \rangle  \otimes f(\alpha_0, ..., \alpha_{m}
 ~ | ~s_0: ... : s_{m});\\
&\langle u_0: ...: u_{m} \rangle  \otimes g(\beta_0: ... : \beta_{m}
 ~ |~ t_0, ... , t_{m}).\\
\end{split}
\ee
\end{proof} 

 Let $\alpha_0 + \ldots + \alpha_m =0$ where $\alpha_i \in \Z/N\Z$. 
Pick a basis $(v_1, \ldots , v_m)$ of the lattice ${\rm L_m}$. 
Set 
\begin{equation} \nonumber
\begin{split}
\eta_{m, w}^1: &\sum_{n_i\geq 0} (\alpha_1, \ldots , \alpha_m) \otimes t_1^{n_1} \ldots t_m^{n_m} \otimes [v_1, \ldots , v_m] \lms\\
&\{\zeta_N^{\alpha_0}, \zeta_N^{\alpha_1}, \ldots,  \zeta_N^{\alpha_m} ~|~0:t_1: \ldots : t_m\}= \sum_{n_i\geq 0}
\{\zeta_N^{\alpha_1}, \ldots,  \zeta_N^{\alpha_m}\}_{n_1, \ldots , n_m}t^{n_1}_1 \ldots t^{n_m}_m.\\ 
\end{split}\end{equation}
The double shuffle relations  (\ref{sshh1}) -  (\ref{sshh3}) in the modular complex  match the ones in the dihedral Lie algebra. The relation (\ref{sshh1})  match the second relation in (\ref{F39}), and the relation (\ref{sshh3}) match the first.

\vskip 2mm

Let us consider $\theta_i \in \Z/N\Z$ such that $\theta_i + \alpha_{n_{i-1}+1} + \ldots + \alpha_{n_{i}} = 0$ in $\Z/N\Z$. Set 
\begin{equation} \nonumber
\begin{split}
&\eta_{m, w}^l: \sum_{n_i\geq 0} (\alpha_1, \ldots , \alpha_m) \otimes t_1^{n_1} \ldots t_m^{n_m} \otimes [v_1, \ldots , v_{n_1}]  \wedge \ldots \wedge [v_{n_{l-1}+1}, \ldots , v_{n_l}] \lms\\
&\{\zeta_N^{\theta_1}, \zeta_N^{\alpha_1}, \ldots,  \zeta_N^{\alpha_{n_1}} ~|~0:t_1: \ldots : t_{n_1}\} \wedge \ldots
 \wedge  \{\zeta_N^{\theta_l}, \zeta_N^{\alpha_{n_{l-1}+1}}, \ldots , \zeta_N^{\alpha_{n_l}} ~|~0:t_{n_{l-1}+1}: \ldots : t_{n_l}\}. \\
\end{split}\end{equation}

Formula (\ref{FD}) for the differential in the modular complex match  formula (\ref{THECOP}) for the coproduct in the dihedral Lie coalgebra. 

\vskip 2mm

  b)   
  If $N=p$ is a prime and $w=m>1$, there is just one  
distribution relation $\{1, \ldots , 1\}_{0, ..., 0} = \sum_{x_i^p=1}\{x_1, \ldots , x_{m}\}_{0, ..., 0}$, which follows from the shuffle relation 
$\sum_{\sigma \in S_m}\{x_{\sigma(1)}, \ldots , x_{\sigma(m)}\}_{0, ..., 0}=0$.

If $N=1$ we have the distribution relations only for $l=-1$, and follows from the double shuffle relations  \cite[Theorem 4.1]{G98}.
\end{proof}

 \subsection{Proof of Theorem \ref{4.26.01.3a}}  \la{4.26.01.3aX}
 
\paragraph{1. An alternative   presentation of motivic correlators.} Given a collection of elements $b_0, ..., b_m\in \C^*$ defined up to a   rescaling  
$(b_0 ,  ..., b_m) \sim (tb_0, ..., tb_m)$, consider a new collection of elements $a_0, ..., a_m\in \C^*$ given by
\be \nonumber
 \begin{split}
 &a_i:= b_{i+1}/b_i, ~~~~i \in \Z/(m+1)\Z;\\
 &a_0 \cdot \ldots \cdot a_m=1.\\
 \end{split}
 \ee
 Then let us  organize the the data of a motivic correlator in the following: 
  \begin{equation} \label{nNna}
\begin{split}
&{\rm Cor}_{\rm Mot}^*(a_0|n_0, a_{1}|n_1, ..., a_{m}|n_m) : =\\
& {\rm Cor}_{\rm Mot}(b_0, \underbrace {0, ... , 0}_{n_0 ~\mbox{times}}, b_1, \underbrace {0, ... , 0}_{n_1 ~\mbox{times}}, \ldots , b_m, 
 \underbrace {0, ... , 0}_{n_m ~\mbox{times}}).\\
\end{split}
\ee 
Here the notation $a_i|n_i$ means a string  $(\ldots  a_i, 0, ..., 0, \ldots)$ given by an  element $a_i\in \C^*$ followed by  $n_i$ $0$'s. 
 Note that these $0$'s sit in the original correlator between $b_i$ and $b_{i+1}$.   We organize the elements (\ref{nNna}) into a generating series: 
 
 \be \la{GSMC}
 \begin{split}
 &{\rm Cor}_{\rm Mot}^*(a_0, a_{1}, ..., a_{m} ~|~ t_0: ... :t_m) := \\
 &\sum_{n_i\geq 0}{\rm Cor}_{\rm Mot}^*(a_0|n_0, a_{1}|n_1, ..., a_{m}|n_m)  \otimes t_0^{n_1} \ldots t_m^{n_m}.\\
\end{split}\end{equation} 

We refer to the elements (\ref{GSMC}) as motivic $\ast-$correlators. 
 
 \bt \la{CORHO} The generating series (\ref{GSMC}) are homogeneous with respect to the variables $\{t_i\}$:  
 \be
 {\rm Cor}_{\rm Mot}^*(a_0, a_{1}, ..., a_{m} ~|~ t_0+t: ... :t_m+t) ={\rm Cor}_{\rm Mot}^*(a_0, a_{1}, ..., a_{m} ~|~ t_0: ... :t_m).
\ee
\et

We deduce Theorem \ref{CORHO} from Theorem \ref{MSR} below. 
   
\paragraph{2. The $w=m$ case.} Let us  define    a map of complexes
 \begin{displaymath} 
    \xymatrix{
      \Bigl({\rm M}^{1}_{(m)}  \ar[r]^{~~\partial}  \ar[d]^{\gamma_{(m)}^{1}} &     \ldots \ar[r]^{\partial~~~~~~~} & 
  {\rm M}_{(m)}^{m-1}   \ar[r]^{\partial}  \ar[d]^{ \gamma_{(m)}^{m-1}} & {\rm M}_{(m)}^{m}\Bigr)_{\Gamma_1(m;N)}\ar[d]^{\gamma_{(m)}^{m}}  \\     
{\cal C}_m(\mu_N)\ar[r]^{~~~~\delta}&\ldots \ar[r]^{\delta~~~~~~~~~~~} &{\cal C}_{m-2}\otimes \Lambda^{m-2} \widehat {\cal C}_1(\mu_N)\ar[r]^{~~~~~~~\delta} & 
\Lambda^m \widehat {\cal C}_1(\mu_N) \\}
\end{displaymath} 

 Choose a primitive $N-$th root of unity $\zeta_N$. Pick  a basis $(v_1, \ldots , v_m)$ of the lattice ${\rm L_m}$. 
 
 Let us define the map $\gamma_{(m)}^{1}$  on the generator $[\alpha_1, ..., \alpha_m]$, where $\alpha_i \in \Z/N\Z$.

First of all set $\alpha_0:= -(\alpha_1 + ... + \alpha_m)$. 
Then we   set
\be \la{MG1m}
\begin{split}
&\gamma_{(m)}^{1}: {\rm M}_{(m)}^{1}\otimes_{\Gamma_1(m;N)}\Q  \lra {{\cal C}}_{m}(\mu_N),\\
& [\alpha_1, ..., \alpha_m]  \lra {\rm Cor}^*_{\rm Mot}(\zeta_N^{\alpha_0}, \ldots , \zeta_N^{\alpha_m}),\\
\end{split}
\ee

Note that the generators $[\alpha_1, ..., \alpha_m]$ are defined only when $(\alpha_1, ..., \alpha_m) \in {\rm M}_m(N)$. 
However the map (\ref{MG1m})  make  sense for any $(\alpha_1, ..., \alpha_m) \in (\Z/N\Z)^{m}$, unless $m=1$, $\alpha_0 = \alpha_1 = 1$. 
In this case we set ${\rm Cor}_{\rm Mot}(1,1):= \gamma^{\cal M}$. And this is why we need $\widehat {\cal C}_1(\mu_N)$. 

Therefore we can  extend 
the map $\gamma_{(m)}^{1}$ to all $(\alpha_1, ..., \alpha_m) \in (\Z/N\Z)^m$, and  
define   the other maps $\gamma_{(m)}^{k}$  by the condition that $\gamma$ is a map of graded commutative algebras.

 \paragraph{3. The general case.}  
 Let $\alpha_0 + \ldots + \alpha_m =0$ where $\alpha_i \in \Z/N\Z$.  
Set \begin{equation} \label{XXXb}
\begin{split}
\gamma_{m, w}^1: &\sum_{n_i\geq 0} (\alpha_1, \ldots , \alpha_m) \otimes t_1^{n_1} \ldots t_m^{n_m} \otimes [v_1, \ldots , v_m] \lms\\
&{\rm Cor}^*_{\rm Mot}(\zeta_N^{\alpha_0}, \zeta_N^{\alpha_1}, \ldots,  \zeta_N^{\alpha_m} ~|~0:t_1: \ldots : t_m).\\
\end{split}\end{equation}

In general, let us consider $\theta_i \in \Z/N\Z$ such that $\theta_i + \alpha_{n_{i-1}+1} + \ldots + \alpha_{n_{i}} = 0$. Set 
\begin{equation}\nonumber
\begin{split}
&\gamma_{m, w}^l: \sum_{n_i\geq 0} (\alpha_1, \ldots , \alpha_m) \otimes t_1^{n_1} \ldots t_m^{n_m} \otimes [v_1, \ldots , v_{n_1}]  \wedge \ldots \wedge [v_{n_{l-1}+1}, \ldots , v_{n_l}] \lms\\
&{\rm Cor}^*_{\rm Mot}(\zeta_N^{\theta_1}, \zeta_N^{\alpha_1}, \ldots,  \zeta_N^{\alpha_{n_1}} ~|~0:t_1: \ldots : t_{n_1}) \wedge \ldots \\
& \wedge  {\rm Cor}^*_{\rm Mot}(\zeta_N^{\theta_l}, \zeta_N^{\alpha_{n_{l-1}+1}}, \ldots , \zeta_N^{\alpha_{n_l}} ~|~0:t_{n_{l-1}+1}: \ldots : t_{n_l}). \\
\end{split}\end{equation}

\paragraph{4. Malkin's "difficult" shuffle relations for motivic correlators \cite{Ma19b}.} 
  \bt \la{MSR} 
  For any $p, q \geq 0$ we have:
\be \la{MSRq} 
\begin{split}
&0= \sum_{\sigma \in \Sigma_{p,q}} {\rm Cor}_{\rm Mot}^*(a_0|n_0, a_{\sigma(1)}|n_{\sigma(1)},, ..., a_{\sigma(p+q)}| n_{\sigma(p+q)})  + \mbox{lower depth terms}.\\
\end{split}
\ee
\et

 Here are two examples, where we elaborate all the terms.  We use a shorthand ${\rm Cor}^*(...)$.  
 
 For $p=q=1$, we have:
 \be \nonumber
 \begin{split}
& 0= {\rm Cor}^*(a_0|n_0, a_{1}|n_1,   a_{2}|n_2)  +   {\rm Cor}^*(a_0|n_0, a_{2}|n_2,   a_{1}|n_1)   \\
& - {\rm Cor}^*(a_0|n_0, a_{1}  \cdot a_{2}| n_{1}+n_{2}+1)    \\
& - {\rm Cor}^*(a_{1}|n_1,   a_0\cdot a_{2}| n_{0}+ n_{2}+1)  -  {\rm Cor}^*( a_{2}|n_2,   a_0\cdot a_{1}| n_0+n_{1}+1).\\
\end{split}
\ee

For $p=1, q=2$ we have: 
  \be \nonumber
  \begin{split}
&  0={\rm Cor}^*(a_0|n_0, a_{1}|n_1, a_{2}|n_2, a_3|n_3)  +    {\rm Cor}^*(a_0|n_0, a_{2}|n_2, a_{1}|n_1, a_3|n_3) \\
& +    {\rm Cor}^*(a_0|n_0, a_{2}|n_2, a_3|n_3, a_{1}|n_1)  \\
& - {\rm Cor}^* (a_0|n_0, a_{1} \cdot a_{2}|n_1+n_2+1, a_3|n_3)   - {\rm Cor}^*(a_0|n_0,   a_{2}|n_2, a_{1}\cdot a_3|n_1+n_3+1)  \\
& - {\rm Cor}^*(a_{1}|n_1,   a_{2}\cdot a_3\cdot a_0|n_{2}+ n_{3}+n_0+2)  -  {\rm Cor}^*( a_{2}|n_2,  a_{3}|n_3,  a_0\cdot a_{1}| n_0+n_{1}+1).\\
\end{split}
\ee

  The formula for the product of two motivic $\ast-$correlators is a sum of   terms of three   kinds: 
\begin{enumerate}

\item  The usual shuffle product, where we shuffle $\{a_1|n_1, ..., a_p|n_p\}$ and $\{a_{p+1}|n_{p+1}, ..., a_{p+q}|n_{p+q}\}$.

\item  The  terms  where, in the process of   moving $\{a_1|n_1, ..., a_p|n_p\}$ through $\{a_{p+1} | n_{p+1}, ..., a_{p+q}|n_{p+q}\}$, some pairs $a_i|n_i$ and $a_j|n_j$ where $i \in \{1, ..., p\}$ and 
 $j \in \{p+1, ..., p+q\}$ collided. Each such a collision results in  $a_i\cdot a_j | n_i+n_j+1$,  
and multiplies the correletor  by $-1$.
  
\item  The two extra terms.  One where we put $a_0\cdot a_1\cdot  ... \cdot a_p|n_0+n_1+ ... +n_p+p$   and keep the remaining terms intact. The other  
 is where we put $a_0\cdot a_{p+1}\cdot  ... \cdot a_{p+q}|n_0+n_{p+1}+ ... +n_{p+q}+q$,  and keep the remaining terms intact. These terms always come with the $-$ sign. 
 \end{enumerate}

The   type 1) + 2) terms form the generalized shuffle product, compare with \cite{GZ}. The   two   type 3) terms  are a new phenomenon. 
 
\paragraph{Proof of Theorem \ref{MSR}.} The claim is just equivalent to  the shuffle relation (\ref{MSRq}) of type $(0,q)$. 

\paragraph{End of the proof of Theorem \ref{4.26.01.3a}.} 
Recall the  double shuffle relations (\ref{sshh1}) and (\ref{sshh3}) in the modular complex. One immediately sees that the relation  (\ref{sshh3})  match  the "easy" shuffle relations motivic correlators ${\rm Cor}_{\rm Mot}$, which followed from (\ref{SRI}). The shuffle relation  (\ref{sshh1})  match 
the "difficult" ones for the motivic correlators ${\rm Cor}^*_{\rm Mot}$, considered modulo the depth filtration, and provided by Theorem \ref{MSR}.  This implies that the map $\gamma^1_{\bullet, \bullet}$ in (\ref{XXXb}) 
is well defined as a linear map. The maps $\gamma^*_{\bullet, \bullet}$ are defined so that they give rise to a map of commutative algebras. In particular, they are well defined as linear maps. 

The claim that the map $\gamma^*_{\bullet, \bullet}$ is a map of complexes follows from the  fact that the map $\gamma^1_{\bullet, \bullet}$  intertwines 
the differential in the modular complex with the coproduct   for cyclotomic motivic correlators, considered modulo the depth filtration. 
This is exactly what Lemma \ref{LCFMC}  says. 

\bl \la{LCFMC} The coproduct formula (\ref{wt2}) for the motivic correlators, considered at roots of unity, and modulo the depth filtration,  match the differential (\ref{FD}) in the modular complex, rewritten  for the homogeneous generators
$\langle u_0: u_1: \ldots : u_m\rangle$.  
\el

\begin{proof} Each term of the coproduct of a motivic correlator is determined by choosing a point $x$ and an arc $\alpha$ on the circle. If  $x=0$, 
the total depth of the coproduct is 1 less than the original depth. So this term disappears after taking the associate graded for the depth filtration. 

Next, if the arc containing the point $x$ is right next to the arc $\alpha$, one of the factors in the coproduct will be ${\rm Log}^{\cal M}(x)$, and thus it vanishes if $x^N=1$. 

The remaining terms  match  formula (\ref{THECOP}) for   the coproduct  in the dihedral Lie algebra for $\G = \mu_N$, and hence the differential in the modular complex with the coefficients in ${\rm Sym}^{w-m}(V_m)$. 
\end{proof}

Theorem \ref{4.26.01.3a} is proved.

\paragraph{Concluding comments.} If $m>2$, the  $(2m-2)-$chains $\psi[e_1, ..., e_m]$  do not satisfy the second shuffle relations.  
This   begs for a resolution of the 
dg-algebra of higher modular symbols. 
The boundary of the $(2m-2)-$chain representing second shuffle relation    is not   zero.  
It is  a linear combination of similar shuffle relations of the second kind. Therefore we  need to add extra terms to    the elements $\psi[e_1, ..., e_m]$, and  find 
$(2m-1)-$chains which bound the second shuffles of the modified elements. 
I leave realization of this program for $m>5$ for the future.



  \subsection{Modular complexes and motivic multiple polylogarithms at roots of unity} \la{Sec2}
  
  In this section we present another proof of Theorem \ref{4.26.01.3a}, based on the properties of motivic multiple polylogarithms at roots of unity. 

\paragraph{1. The multiple polylogarithms.} They  
     are defined by the power series \cite{G94}:
\begin{equation} \label{zhe5}
{\rm Li}_{n_{1},...,n_{m}}(x_{1},...,x_{m})   
  := 
\sum_{0 < k_{1} <  ... < k_{m} } \frac{x_{1}^{k_{1}}x_{2}^{k_{2}}
... x_{m}^{k_{m}}}{k_{1}^{n_{1}}k_{2}^{n_{2}}...k_{m}^{n_{m}}}.
\end{equation}
Here $w := n_1+...+n_m$ is the {\it  weight}  and $m$
is the {\it  depth}. 

Their special values when $x_i =1$ are  Euler's \cite{E}
multiple zeta values:
$$
\zeta(n_{1}, n_2, ...,n_{m}) : = \sum_{0 < k_{1} <  ... < k_{m} }
\frac{1}{k_{1}^{n_{1}}k_{2}^{n_{2}}...k_{m}^{n_{m}}} \qquad n_m >1.
$$

On the other hand,  consider the following iterated integrals:
\begin{equation} \label{3??}
\begin{split}
&{\rm I}_{n_{1},...,n_{m}}(a_{1}:...:a_{m}:a_{m+1}) : =\\
&\int_{0}^{a_{m+1}} \underbrace {\frac{dt}{t-a_{1}} \circ \frac{dt}{t} \circ ... \circ 
\frac{dt}{t}}_{n_{1} \quad  \mbox {times}} \circ 
\quad ... \quad \circ 
\underbrace {\frac{dt}{t-a_{m}} \circ \frac{dt}{t} \circ ... \frac{dt}{t}}_
{n_{m} \quad  \mbox {times}}.\\
\end{split}
\ee
We also use the notation
$$
{\rm I}_{n_{1},...,n_{m}}(a_{1}, ..., a_{m}) = {\rm I}_{n_{1},...,n_{m}}(a_{1}:...:a_{m}:1).
$$
If $|x_{i}| < 1$,     power series (\ref{zhe5}) and   iterated integrals (\ref{3??}) are related  by  \cite[Theorem 2.2]{G01}:
\begin{equation} \nonumber
\begin{split}
& {\rm Li}_{n_{1}, \ldots ,n_{m}}(x_{1},...,x_{m} )  =  (-1)^m \cdot 
{\rm I}_{n_{1},\ldots ,n_{m}}
( (x_1\ldots x_m)^{-1},(x_2 \ldots x_m)^{-1},\ldots ,x_m^{-1}).  \\
& {\rm I}_{n_{1},\ldots ,n_{m}}(a_{1}: \ldots :a_{m}:a_{m+1})   =   (-1)^m \cdot {\rm Li}_{n_{1},\ldots ,n_{m}}
\left(\frac{a_2}{a_1},\frac{a_3}{a_2}, \ldots ,\frac{a_m}{a_{m-1}},\frac{a_{m+1}}{a_m}\right).  \\
\end{split}
\end{equation}

\paragraph{2. The double shuffle relations.} Consider the formal generating series
$$
{\rm Li} (x_{1},...,x_{m} ~|~ t_1,...,t_m ) := \sum_{n_i \geq 1}{\rm Li}_{n_{1},...,n_{m}}(x_{1},...,x_{m} )t_1^{n_1-1}... t_m^{n_m-1}.
$$
Then one has  the product formula
\begin{equation} \label{11.21.0.1}
\begin{split}
&{\rm Li}(x_1,...,x_m~|~ t_1,...,t_m)\cdot {\rm Li}(x_{m+1},...,x_{m+n}~|~ t_{m+1},...,t_{m+n})  =\\
&\sum_{\sigma \in \Sigma_{m,n}}{\rm Li}(x_{\sigma (1)},...,x_{\sigma (m+n) }~|~t_{\sigma ( 1) }, 
...,t_{\sigma (m+n)})  + \mbox{the lower depth terms}.\\
\end{split}
\ee

On the other hand, let us set
\begin{equation} \nonumber
\begin{split}
&{\rm I}[a_1,...,a_m~|~t_1,...,t_m]:= \sum_{n_1>0, ..., n_m>0}{\rm I}_{n_1, ..., n_m}(a_1,...,a_m) t^{n_1-1}_1 \ldots t^{n_m-1}_m.\\
&{\rm I}^![a_1,...,a_m~|~t_1,...,t_m]:= {\rm I}[a_1,...,a_m~|~t_1, t_1+t_2,t_1+t_2+t_3,...,t_1+...+t_m].\\
\end{split}
\end{equation}

Then according to    \cite[Lemma 2.10]{G01} one has  
$$
{\rm I}^![x_1,...,x_m~|~t_1,...,t_m] = \int_0^1\frac{ s^{-t_1}}{x_1 -
  s}ds \circ ... \circ \frac{s^{-t_m}}{x_m - s}ds.
$$
Therefore the classical formula for the product of the iterated integrals  immediately implies:
\begin{theorem} \label{shuffle}
\begin{equation} \label{shuffle1}
\begin{split}
&{\rm I}^![a_1,...,a_{k}~|~t_1,...,t_{k}]\cdot
{\rm I}^![a_{k+1},...,a_{k+l}~|~t_{k+1},...,t_{k +l}] =\\
&\sum_{\sigma \in \Sigma_{k,l}}{\rm I}^![a_{\sigma (1)},...,a_{\sigma (k+l) }~|~
t_{\sigma ( 1) },...,t_{\sigma (k+l)}].\\
\end{split}
\ee
\end{theorem}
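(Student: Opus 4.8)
The plan is to read off Theorem \ref{shuffle} from the integral presentation of ${\rm I}^!$ recalled just above, via Chen's shuffle product formula for iterated integrals along a fixed path. Fix the straight path $\gamma$ from $0$ to $1$, and for a pair $(a,t)$ set $\omega_{a,t} := \dfrac{s^{-t}}{a-s}\,ds$, viewed as a formal power series in $t$ whose coefficients are $1$-forms on $(0,1)$ with at worst logarithmic singularities at the endpoints: the factor $s^{-t}=\exp(-t\log s)$ produces powers of $\log s$, while $(a-s)^{-1}$ has a simple pole at $s=1$ exactly when $a=1$, and at $s=0$ exactly when $a=0$. By \cite[Lemma 2.10]{G01}, quoted above,
$$
{\rm I}^![a_1,\dots,a_m\mid t_1,\dots,t_m]\ =\ \int_\gamma \omega_{a_1,t_1}\circ\cdots\circ\omega_{a_m,t_m},
$$
so both sides of the asserted identity (\ref{shuffle1}) are iterated integrals along the single path $\gamma$ of forms taken from the list $\omega_{a_1,t_1},\dots,\omega_{a_{k+l},t_{k+l}}$.

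Next I would invoke the shuffle product formula for iterated integrals: for any path $\gamma$ and forms $\omega_1,\dots,\omega_{k+l}$,
$$
\Big(\int_\gamma\omega_1\circ\cdots\circ\omega_k\Big)\Big(\int_\gamma\omega_{k+1}\circ\cdots\circ\omega_{k+l}\Big)\ =\ \sum_{\sigma\in\Sigma_{k,l}}\int_\gamma\omega_{\sigma(1)}\circ\cdots\circ\omega_{\sigma(k+l)},
$$
the sum running over shuffles preserving the internal orders of $\{1,\dots,k\}$ and of $\{k+1,\dots,k+l\}$. Taking $\omega_i=\omega_{a_i,t_i}$ and re-expressing the right-hand side through the identity just displayed gives exactly $\sum_{\sigma\in\Sigma_{k,l}}{\rm I}^![a_{\sigma(1)},\dots,a_{\sigma(k+l)}\mid t_{\sigma(1)},\dots,t_{\sigma(k+l)}]$, which is (\ref{shuffle1}). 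Extracting the coefficient of a fixed monomial in the $t_i$ then yields the classical shuffle relations among the values ${\rm I}_{n_1,\dots,n_m}$, now including the lower-depth correction terms analogous to those in the product formula (\ref{11.21.0.1}).

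The only genuine obstacle is that the forms $\omega_{a_i,t_i}$ are singular at the endpoints of $\gamma$, so the iterated integrals above are in general divergent and must be understood in the regularized, tangential-base-point sense at $0$ and $1$ — which is precisely the sense in which ${\rm I}^!$, and hence ${\rm I}_{n_1,\dots,n_m}$, is defined here. I would dispose of this in the usual way: restrict to the subinterval $[\varepsilon,1-\varepsilon]$, where all the forms are regular and Chen's shuffle formula holds verbatim; observe that each truncated iterated integral has an asymptotic expansion which is a polynomial in $\log\varepsilon$ with coefficients converging as $\varepsilon\to 0$; and recall that extracting the constant term of such an expansion is a ring homomorphism, hence carries the shuffle identity for the truncated integrals to the desired identity for ${\rm I}^!$. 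Equivalently, one may simply cite the standard fact that shuffle-regularized iterated integrals form a commutative algebra under the shuffle product, which is exactly the content of the displayed formula. No genuinely new computation is required beyond \cite[Lemma 2.10]{G01} and the classical iterated-integral shuffle.
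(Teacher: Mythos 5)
Your proposal is correct and follows the same route as the paper: the paper also derives (\ref{shuffle1}) directly from the integral presentation ${\rm I}^![x_1,\dots,x_m\mid t_1,\dots,t_m]=\int_0^1\frac{s^{-t_1}}{x_1-s}ds\circ\cdots\circ\frac{s^{-t_m}}{x_m-s}ds$ of \cite[Lemma 2.10]{G01} together with the classical shuffle product formula for iterated integrals. Your additional discussion of the endpoint regularization only makes explicit what the paper treats as standard.
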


 \paragraph{3. Motivic multiple polylogarithms.}  Let $\F$ be a number field, and ${\cal O}_{\F, S}$ the ring of $S-$integers in $\F$.  
 Let us denote by ${\cal L}_\bullet({\cal O}_{\F, S})$ the graded dual to the fundamental Lie algebra   ${\rm L}_\bullet({\cal O}_{\F, S})$. Then, if $x_i \in \F$,  combining \cite{G02b} with \cite{DG} we get the motivic multiple polylogarithm elements
 \be \la{MMPa}
 {\rm Li}^{\cal M}_{n_{1},...,n_{m}}(x_{1},...,x_{m})  \in {\cal L}_\bullet({\F}).
 \ee

Let us assume that $x_i \in \mu_N$. Recall the cyclotomic scheme ${\rm S_N}$, see (\ref{SN}). Then we have
  \be \la{MMPb}
 {\rm Li}^{\cal M}_{n_{1},...,n_{m}}(x_{1},...,x_{m})  \in {\cal L}_\bullet({\rm S}_N), ~~~~x_i \in \mu_N.
 \ee
 
 The following basic result was proved in \cite{G02a}. 
 \bt \la{THH3.2} Motivic multiple polylogarithms (\ref{MMPa}) satisfy   regularized double shuffle relations (\ref{11.21.0.1}) $\&$ (\ref{shuffle1}). 
 \et
 
 \paragraph{4. The cyclotomic Lie coalgebra.} Let ${\cal G}_w(\mu_N)$ be the subspace of ${\cal L}_w({\rm S}_N)$ generated by the elements (\ref{MMPb}) of weight $w$. Let us set
\be \la{CLC}
  {\cal G}_\bullet(\mu_N) :=  \bigoplus_{w\geq 1}  {\cal G}_w(\mu_N).
\ee

 There is a depth filtration  on the space ${\cal G}_w(\mu_N)$ such that the depth $\leq m$ subspace is generated by the elements (\ref{MMPb}) of the weight $w$ and depth $\leq m$.
 We denote by $\widetilde {\cal C}_{w,m}(\mu_N)$ the associate graded of $ {\cal G}_{w}(\mu_N)$ for the depth filtration, and set 
\be \la{CLC1}
\widetilde {\cal C}_{\bullet, \bullet}(\mu_N) :=  \bigoplus_{w, m \geq 1}\widetilde {\cal C}_{w, m}(\mu_N).
\ee

\bt  \begin{enumerate}

\item  The coproduct provides (\ref{CLC}) with a structure of a graded Lie coalgebra. 

\item  The depth filtration is compatible with the coproduct,   inducing a filtration on   Lie coalgebra (\ref{CLC}). 
Therefore (\ref{CLC1}) is a bigraded Lie coalgebra.

\item The cyclotomic Lie coalgebra $\widetilde {\cal C}_{\bullet, \bullet}(\mu_N)$  is canonically identified with the Lie coalgebra (\ref{CLC1}):
$$
\widetilde {\cal C}_{\bullet, \bullet}(\mu_N) =  {\cal C}_{\bullet, \bullet}(\mu_N).
$$
\end{enumerate}
\et

\begin{proof} 1)-2). It follows from the coproduct formula for the motivic iterated integrals \cite{G02b} that (\ref{CLC1})
 is a Lie subcoalgebra of ${\cal L}_\bullet({\rm S}_N)$, and that the depth filtration is respected by the coproduct. 
 
 3). Follows from \cite{G01}, where the Hodge version of the claim is proved, combined with \cite{DG} which allows to transform it to the motivic set-up. 
\end{proof}



\bt \la{SMCO} There is a canonical surjective homomorphism of bigraded Lie coalgebras
\be \la{DC}
\xi: {\cal D}_{\bullet, \bullet}(\mu_N) \lra \widetilde {\cal C}_{\bullet, \bullet}(\mu_N).
\ee
\et

\begin{proof}

Assume that $x_i^N=1$. We define map (\ref{DC}) on the generators by setting (cf \cite[(67)]{G02b}):
$$
\xi: \{x_1,...,x_m\}_{n_1,...,n_m}   \longmapsto  {\rm Li}^{\cal M}_{n_1,...,n_m}(x_1,...,x_m).
$$
It is a well defined map of bigraded spaces by Theorem \ref{THH3.2}.  Comparing the formulas for the coproduct in the dihedral Lie coalgebra with the one for the motivic multiple polylogarithms 
considered modulo products and projected to the associated graded for the depth filtration, see \cite[Theorem 5.4]{G02b} as well as Proposition 6.1 there,  we see that this map commutes with the coproduct. It is surjective by the definition. 
 \end{proof}

   \section{Feynman graphs  and geometry of  symmetric spaces} 
\subsection{Voronoi complexes  }

The {\it Voronoi cell complex} is a  
${\rm GL}_m(\Z)$-invariant cell decomposition of the principal 
symmetric space for ${\rm GL}_m(\R)$. The chain complex of this 
cell decomposition is called the {\it Voronoi complex}. 
One can compactify  open simplices of the Voronoi 
decomposition by adding some simplices at infinity.
  For example for ${\rm GL}_2$ this amounts to 
adding the cusps parametrized by  $P^1(\Q)$ 
to the hyperbolic plane. The corresponding 
chain complex is called the {\it extended Voronoi complex}. 

\paragraph{The Voronoi  cell decomposition of the symmetric space 
for ${\rm {\rm GL}}_m(\R)$.} Pick a rank $m$ lattice 
${\rm L_m}$. 
Recall the   vector space $V_m$ dual to ${\rm L_m} \otimes \R$, and the cone ${\cal P}(V_m)$ (respectively $\overline {\cal P}(V_m)$)   of positive (respectively non-zero, non-negative)  definite quadratic forms in    $V_m$. 
Recall the  symmetric space ${\Bbb H}_{\rm L_m}$,  
 compactified   by the space $\overline {\Bbb H}_{\rm L_m}$.  The space of  quadratic forms  in $V_m$ is denoted by $Q(V_m)$. 
Any vector $l \in {\rm L_m}$ defines a degenerate    quadratic form 
$$
\varphi(l):= (l,x)^2 \in \overline {\cal P}(V_m).
$$
Take  the convex hull ${\cal C}({\rm L_m})$ of  the vectors $\varphi(l)$ in 
the cone $\overline {\cal P}(V_m)$ when $l$ runs through all non-zero primitive 
vectors of the lattice ${\rm L_m}$. Then ${\cal C}({\rm L_m})$ is of codimension $1$ 
  and has a structure of 
an infinite polyhedra.  Its faces are convex polyhedra $\varphi(l_1,...,l_n)$ with vertices $\varphi(l_1), ... , \varphi(l_n)$, $l_i \in {\rm L_m}$.  

Then the  faces of ${\cal C}({\rm L_m})$ are polyhedra $\varphi(l_1,...,l_n)$ for appropriately chosen vectors $l_1,...,l_n \in {\rm L_m}$. The projection of ${\cal C}({\rm L_m})$ defines a ${\rm {\rm Aut}}({\rm L_m})$-invariant polyhedral decomposition of a certain part of $\overline {\cal P}(V_m)/\R_+^*$, denoted 
${\Bbb H}^*_{{\rm L_m}}$. It is  the union of ${\Bbb H}_{{\rm L}}$  over all sublattices   ${\rm L}$: 
$$
{\Bbb H}^*_{{\rm L_m}} =   {\Bbb H}_{{\rm L_m}} \bigcup  \bigcup_{{\rm L} \subset {\rm L_m}}{\Bbb H}_{{\rm L}}. 
$$

\paragraph{Example.} ${\cal P}(V_2)$ is a cone $x^2 +y^2 < z^2$, $z \geq 0$ in $\R^3$, and  
${\Bbb H}_{{\rm L_2}}$ is the hyperbolic plane ${\cal H}$. 
It as of the interior of the unit disc, and 
 $\overline {\cal P}(V_2)/\R_+^*$ is the unit disc and 
${\Bbb H}^*_{{\rm L_2}} = {\cal H} \cup P^1(\Q)$.

\vskip 2mm
The vectors of  ${\rm L_m} -\{ 0\}$ minimizing the values of a quadratic 
form $F$ on ${\rm L_m} -\{ 0\}$ are called 
{\it  minimal vectors}
of $F$.

\bd A quadratic form $F$ in $V_m$ is  {\em perfect} if the number of minimal vectors of $F$ is at least $\frac{m(m+1)}{2} = {\rm dim}Q({\rm L_m})$.
\ed

Let $s$ be a codimension $1$ face of ${\cal C}({\rm L_m})$. 
Let $h(s)$ be the codimension $1$ subspace in the space of the quadratic forms $Q(V_m)$, parallel to the face $s$.

\paragraph{Voronoi's lemma.} {\it A quadratic form $F$ on $V_m$  is orthogonal to the subspace $h(s)$ 
if and only if the form $F$ is  perfect. In this case $\pm l_1,...,\pm l_n$ are precisely the set of minimal vectors for $F$.}

\begin{proof} One has $(F, \varphi(l)) = F(l)$. 
Let $(F, x) =c$ be the equation of the hyperplane $h(s)$. Since ${\cal C}({\rm L_m})$ 
is a convex hull it is located in just one of the subspaces $(F, x) < c$ or $(F, x) > c$. Since $(F, \varphi(l)) = F(l)$ could be arbitrary big, we conclude that the domain $\{x| (F, x) < c\}$ 
does not intersect ${\cal C}({\rm L_m})$. Further, 
$(F, \varphi(l)) =c$ for any vertex $\varphi(l)$ of the face $\varphi$, so such $l$'s are minimal vectors for $F$. Since the face $\varphi$ is of codimension $1$, 
the number of its vertices is at least ${\rm dim} {Q}(V_m)$. So the form $F$ is perfect. \end{proof}

{\it Voronoi cells of  type $A_m$}. Let $v_1,...,v_{m+1}$ be an extended basis of ${\rm L_m}$. So $v_1 + ... + v_{m+1} = 0$. 
Set 
\begin{equation} \label{1!}
v_{i,j}:= v_i + v_{i+1} + ... +  v_{j-1}+ v_j, \qquad 1 \leq i, j \leq m+1, \quad i \not = j-1
\end{equation}
Then the configuration of vectors $v_{i,j}$ in (\ref{1!})
is linearly equivalent to the configuration of the roots of the root system $A_n$. 
See Figure \ref{mp2}  
for the configuration  points on the projective plane 
corresponding to the root system $A_3$.  
The convex hull of $\varphi(v_{i,j})$ is a  top dimensional cell of the Voronoi decomposition 
 and  
the corresponding perfect form is the quadratic form of 
the root system $A_m$ with the the set of minimal vectors given by the roots. 

The quadratic form 
of the root system $D_m$ provides another top dimensional Voronoi cell.

It is known that the number of different types of perfect forms, and thus equivalence classes 
of Voronoi sells 
of top dimension, grow very fast with $m$, and so unlikely to be completely understood. However in small dimensions ($m \leq 4$) we have a very clear picture of the Voronoi's cell decomposition, which is extensively used in this paper. Here is one of the main results.

\paragraph{Voronoi's theorem \cite{V}, \cite{M}.} {\it a) For $m=2,3$ any cell of top dimension in the Voronoi decomposition of ${\Bbb H}_{\rm L_m}$ is ${\rm GL}_m(\Z)$-equivalent to a cell of type $A_m$.

b) Any cell of top dimension in the Voronoi decomposition of ${\Bbb H}_{\rm {\rm L_4}}$ is ${\rm GL}_4(\Z)$-equivalent to a cell of type $A_4$ or $D_4$}.

\paragraph{Voronoi complexes.} 
 Let 
$$
({\Bbb V}_{\bullet}^{(m)}, d) =({\Bbb V}_{\bullet}({\rm L_m}), d) :=   {\Bbb V}^{(m)}_{\frac{m(m+1)}{2} -1} \stackrel{d}{\lra} {\Bbb V}^{(m)}_{\frac{m(m+1)}{2} -2} 
\stackrel{d}{\lra} ... \stackrel{d}{\lra}  {\Bbb V}^{(m)}_{0}
$$ 
be the chain complex of 
the Voronoi decomposition of 
${\Bbb H}^*_{{\rm L_m}}$. We  call it the extended Voronoi complex of the lattice ${\rm L_m}$, or simply  {\it the extended Voronoi complex for ${\rm GL}_m(\Z)$}.

The part of the Voronoi complex generated by the polyhedrons whose interior  is inside of the symmetric space is called {\it the Voronoi complex}. 
We denote it by ${V}^{(m)}_{\bullet}$ or ${V}_{\bullet}({\rm L_m})$. It has the following shape:
$$
({V}_{\bullet}^{(m)}, d) =({V}_{\bullet}({\rm L_m}), d) :=   {V}^{(m)}_{\frac{m(m+1)}{2} -1} \stackrel{d}{\lra} {V}^{(m)}_{\frac{m(m+1)}{2} -2} 
\stackrel{d}{\lra} ... \stackrel{d}{\lra}  {V}^{(m)}_{m-1}.
$$ 

Say that a polyhedron of the Voronoi decomposition has rank $r$ if it is a convex hull of the forms $\varphi(l_i)$ such that the vectors $l_i$ span a sublattice of rank $r+1$. 
The interior part of the Voronoi complex consists of all polyhedrons of rank $m-1$. 
We get a decreasing filtration of the Voronoi  complex given by the rank of cells. 

The extended Voronoi complex ${\Bbb V}_{\bullet}({\rm L_m})$ 
is the total complex of the following bicomplex:
$$
{V}_{\bullet, \bullet}({\rm L_m}) = \oplus_r {V}_{\bullet, r}({\rm L_m}), \qquad {V}_{\bullet, r}({\rm L_m}) = \quad \bigoplus_{{\rm L} \subset {\rm L_m}}{V}_{\bullet}({\rm L_m})
$$
where the sum is over all rank $r$ sublattices of ${\rm L_m}$. 
The differentials are 
$$
\partial: {V}^{(m)}_{i,j} \lra {V}^{(m)}_{i-1,j}, \quad \partial': {V}^{(m)}_{i,j} \lra {V}^{(m)}_{i-1,j-1}.
$$
The numeration is different from the standard one in a bicomplex. To get the standard one  use 
${V}^{(m)}_{i-j,j}$ instead of ${V}^{(m)}_{i,j}$. In our case $i$ is the degree in the complex and $j$ is the rank. 
 For example for ${\rm {\rm GL}_2}$ we get
\begin{displaymath} 
    \xymatrix{
        {V}^{(2)}_{2,1} \ar[r]^{\partial}  & 
{V}^{(2)}_{1,1}
\ar[d]^{\partial'} \\     
& {V}^{(2)}_{0,0}\\}
\end{displaymath}

For ${\rm {\rm GL}_3}$ the Voronoi bicomplex looks as follows:
\begin{displaymath} 
    \xymatrix{
        {V}^{(3)}_{5,2}\ar[r]^{\partial}  & 
{V}^{(3)}_{4,2} \ar[r]^{\partial}  & {V}^{(3)}_{3,2}\ar[r]^{\partial}\ar[d]^{\partial'}& {V}^{(3)}_{2,2}\ar[d]^{\partial'} \\     
& &{V}^{(3)}_{2,1}\ar[r]^{\partial}  &{V}^{(3)}_{1,1}\ar[d]^{\partial'}\\
&&&{V}^{(3)}_{0,0}}
\end{displaymath}
Compare with the shape of the modular bicomplex. It makes sense to consider the Voronoi bicomplex for the infinite rank lattice. 

\paragraph{The Voronoi complex and cohomology of subgroups of ${\rm {\rm GL}}_m(\Z)$.} 
Let  $W$ be  
finite dimensional  ${\rm {\rm GL}}_m$-module. 
For a   torsion free subgroup  $\Gamma \subset {\rm {\rm GL}}_m(\Z)$  
the group cohomology 
${\rm H}^*(\Gamma, W)$ is isomorphic to the cohomology of $\Gamma \backslash 
{\Bbb H}_m$ with coefficients in the local system ${\cal L}_W$ assigned to $W$. 

For any finite index subgroup $\Gamma \hookrightarrow {\rm GL}_m(\Z)$ there is a normal 
torsion free finite index subgroup $\widetilde \Gamma \hookrightarrow  \Gamma$.
So if $W$ is a $\Q$-rational ${\rm GL}_m$-module, by the 
Hochshild-Serre spectral sequence  
 \begin{equation} \nonumber
{\rm H}^*(\Gamma, W) \quad = \quad {\rm H}^*(\widetilde \Gamma, W)^{\Gamma/\widetilde \Gamma}. 
 \end{equation}
  Using this  we reduce the study of cohomology  
${\rm H}^*(\widetilde \Gamma, W)$ to the case when $\Gamma$ is torsion free.

\begin{theorem} \label{3.29.01.1} 
Let $\Gamma$ be a  finite index subgroup   of ${\rm GL}_m(\Z)$. Then for any 
${\rm GL}_m$-module $W$ 
\begin{equation} \label{3.29.01.2}
 {\rm H}_i\Bigl(V^{(m)}_{\bullet} \otimes_{\Gamma} W \Bigr)  = \left\{ \begin{array}{ll}
 {\rm H}^{d_m-i}(\Gamma, W\otimes \varepsilon)&: 
\quad \mbox{if $m$ is even}, \\ 
{\rm H}^{d_m-i}(\Gamma, W)&: \quad \mbox{if $m$ is odd}. \\
 \end{array}\right.
\end{equation}
\end{theorem}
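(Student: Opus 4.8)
The plan is to recognise the Voronoi complex $V^{(m)}_\bullet$ as a reindexed, orientation--twisted copy of the cochain complex of the cell structure dual to the Voronoi tiling --- Ash's well--rounded retract --- and then to pass to the quotient by $\Gamma$, where the non--compactness of the symmetric space disappears.

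First I would set up the geometry. Write $X={\Bbb H}_{\rm L_m}$, a contractible manifold of dimension $d_m$. The interior Voronoi cells (those of rank $m-1$, i.e.\ the projectivised convex hulls $\varphi(l_1,\dots,l_n)$ with the $l_i$ spanning a sublattice of full rank) are compact polytopes, each with finite ${\rm GL}_m(\Z)$--stabiliser (a stabiliser permutes the finitely many minimal vectors), and they tile $X$ in the sense that every point of $X$ lies in the relative interior of exactly one of them. By Voronoi reduction and Ash's well--rounded retract, the cell complex dual to this tiling is a ${\rm GL}_m(\Z)$--equivariant, cocompact CW complex $X_{\rm wr}\subset X$ of dimension $d_m-(m-1)=\frac{m(m-1)}{2}$ which is a ${\rm GL}_m(\Z)$--equivariant deformation retract of $X$, and whose $k$--cells are in ${\rm GL}_m(\Z)$--equivariant bijection with the Voronoi cells of dimension $d_m-k$, incidences being reversed.

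Next I would make the duality explicit at the chain level. Choosing orientations of all cells, Poincar\'e duality for the manifold $X$ identifies $V^{(m)}_i$ with the cellular $(d_m-i)$--cochains of $X_{\rm wr}$ tensored with the one--dimensional ${\rm GL}_m(\Z)$--module $\chi$ through which ${\rm GL}_m(\Z)$ acts on the orientation of $X$, and it carries the Voronoi differential $\partial$ to the cellular coboundary $\delta$. The character $\chi$ is computed directly: ${\rm GL}_m(\Z)$ acts on $X={\cal P}(V_m)/\R^*_+$ as the projectivisation of its linear action on the space $Q(V_m)=S^2V_m$, on which $g$ acts with determinant $\varepsilon(g)^{m+1}$; since projectivising a cone along its canonically oriented radial direction does not affect the orientation character, $\chi=\varepsilon^{m+1}=\varepsilon^{m-1}$, which equals $\varepsilon$ when $m$ is even and is trivial when $m$ is odd. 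Now tensor over $\Gamma$. For $\Gamma$ torsion--free the action on $X_{\rm wr}$ is free with finite cell orbits, so $V^{(m)}_\bullet\otimes_\Gamma W$ is precisely the cellular cochain complex of the \emph{finite} complex $\Gamma\backslash X_{\rm wr}$ with coefficients in the local system attached to $W\otimes\varepsilon^{m-1}$; since $X_{\rm wr}\simeq X$ is contractible, $\Gamma\backslash X_{\rm wr}$ is a finite $K(\Gamma,1)$, and reading off the reindexing gives
\[
{\rm H}_i\bigl(V^{(m)}_\bullet\otimes_\Gamma W\bigr)={\rm H}^{d_m-i}(\Gamma\backslash X_{\rm wr};\,{\cal L}_{W\otimes\varepsilon^{m-1}})={\rm H}^{d_m-i}(\Gamma,\,W\otimes\varepsilon^{m-1}),
\]
which is (\ref{3.29.01.2}). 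For a general finite--index $\Gamma$ I would pass to a torsion--free normal finite--index subgroup $\wt\Gamma\subset\Gamma$ and apply $(-)^{\Gamma/\wt\Gamma}$, which is exact over $\Q$, using ${\rm H}^*(\Gamma,-)={\rm H}^*(\wt\Gamma,-)^{\Gamma/\wt\Gamma}$ and the analogous identity for $V^{(m)}_\bullet\otimes_\Gamma W$.

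The hard part will be the orientation bookkeeping in the middle step: matching the signs in the paper's definition of $\partial$ on $V^{(m)}_\bullet$ with the coboundary of $X_{\rm wr}$, and pinning the global twist down to exactly $\varepsilon^{m-1}$ rather than some other power of $\varepsilon$, together with invoking the precise equivariant, cocompact, ``dual--to--Voronoi'' form of the well--rounded retract theorem. Equivalently, one may phrase the whole argument as Poincar\'e--Lefschetz duality for the open $d_m$--orbifold $\Gamma\backslash X$, which is orientable up to the character $\varepsilon^{m-1}$, the Voronoi complex computing the Borel--Moore side; the substance is then precisely the identification of that orientation character and of the homotopy type of $\Gamma\backslash X$.
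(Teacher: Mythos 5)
Your proposal is correct, and its closing ``equivalently'' sentence is in fact the paper's own proof: the paper argues directly that $V^{(m)}_{\bullet}\otimes_{\Gamma}W$ is the finite chain complex of the Voronoi cell structure on $\Gamma\backslash {\Bbb H}_{\rm L_m}$ computing Borel--Moore homology with coefficients in ${\cal L}_W$, and then applies Poincar\'e duality, picking up the orientation twist ($\varepsilon$ for $m$ even, trivial for $m$ odd), with the reduction to torsion-free $\Gamma$ via Hochschild--Serre over $\Q$ done just before the theorem. Your primary narrative differs only in routing the duality through Ash's well-rounded retract: you dualize equivariantly upstairs, identifying $V^{(m)}_\bullet$ with twisted cellular cochains of the cocompact dual complex, and only then pass to the quotient. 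That detour buys a concrete finite $K(\Gamma,1)$ model (and is consistent with the paper's own remark on the dual Voronoi complex), but it imports the retract theorem, which the paper's shorter argument does not need; both versions rest on the same key computation of the orientation character, and your determination $\varepsilon^{m+1}=\varepsilon^{m-1}$ on $S^2$-forms modulo the radial direction agrees with the paper's even/odd case split, so the substance matches.
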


\begin{proof}
The complex $V^{(m)}_{\bullet} \otimes_{\Gamma} W$   
is the complex of chains with 
coefficients in the local system ${\cal L}_W$; 
it is relative to the Voronoi triangulation of 
${\Gamma \backslash {\Bbb H}_{\rm L_m}}$, which has finitely many cells, 
 hence giving the Borel-Moore homology. 
By the Poincar\'e duality we get cohomology of 
${\Gamma \backslash {\Bbb H}_{\rm L_m}}$  with coefficients in the local 
system ${\cal L}_{W}$ twisted by the orientation class, i.e. ${\cal L}_{W \otimes 
\varepsilon_m}$, for even $m$, and ${\cal L}_{W}$ for odd $m$.  \end{proof}

\subsection{Modular complexes, graphs,   and  symmetric spaces} \la{Sec4.2}

The {relaxed modular complex} $\widehat {\rm M}_{(m)}^{\bullet}$ from Section \ref{Sec2aa}  
 has  a canonical realization   in the 
symmetric space for ${\rm GL}_m(\R)$, given by the higher modular symbol complex.  We use it in the next chapters to 
compare the modular and Voronoi complexes   for 
${\rm GL}_2(\Z)$, ${\rm GL}_3(\Z)$  and ${\rm GL}_4(\Z)$. Similarly we define the extended 
complex $\widehat {\Bbb M}_{(m)}^{\bullet}$ placed in degrees $[1,2m-1]$, and its geometric realization.

Our   construction has an important generalization. 
We introduce the rank $m$  colored forest complex $F_{(m)}^{\bullet}$ and 
define its {\it canonical} geometric realization in 
 the  symmetric space for ${\rm GL}_m$. 
The colored forests complex contains 
the relaxed modular complex, and it is quasiisomorphic to it.

\paragraph{\bf 1. The geometric realization of the relaxed modular complex.}   
Let  $v_1,...,v_{n_1}$ and $v_{n_1+1},...,v_{n_1+n_2}$ be two sets of 
vectors of the lattice such that the lattices generated by 
these sets   has zero intersection. Then we set
$$
\varphi(v_1,...,v_{n_1}) \ast \varphi(v_{n_1+1},...,v_{n_1+n_2}) := 
  \varphi(v_1,..., v_{n_1+n_2}).
$$
We extend the join $\ast$ by linearity.

\begin{theorem} \label{15} 
There exists a canonical  morphism of complexes: 
\begin{displaymath} \la{FFa}
    \xymatrix{
        {\widehat {\rm M}}_{(m)}^{1} \ar[r]^{ } \ar[d]^{ \widehat \psi_{(m)}^\bullet} & 
\ldots\ar[r]^{}    & {\widehat {\rm M}}_{(m)}^{m} \ar[d]^{\widehat \psi_{(m)}^\bullet} \\     
V_{(m)}^{1}\ar[r]^{} &\ldots \ar[r]^{ } & {V}_{(m)}^{m}\\}
\end{displaymath}
such that, setting $a_i := |A_i|$, we have:
\begin{equation} \label{hh}
\widehat 
\psi^k_{(m)}: [A_1] \wedge ... \wedge [A_k]   \lms
\widehat \psi^1_{(a_1)} [A_1] \ast ... \ast  \widehat 
\psi^1_{(a_k)} [A_k]. 
\end{equation}
 The map
$\widehat \psi^{\bullet}_{(m)}:{\widehat {\rm M}}^{(m)}_{\bullet} \lra 
{V}^{(m)}_{\bullet} $  
extends to a  map of bicomplexes and   the associated complexes
$$
\widehat \Psi^{\bullet, \bullet}_{(m)}: {\widehat {\rm M}}_{(m)}^{\bullet, \bullet} \lra 
{V}_{(m)}^{\bullet, \bullet}, \qquad 
\widehat \Psi^{\bullet}_{(m)}: {\widehat {\Bbb M}}_{(m)}^{\bullet} \lra 
{\Bbb V}_{(m)}^{\bullet}. 
$$ 
\end{theorem}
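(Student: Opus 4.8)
The plan is to build the map $\widehat\psi^1_{(m)}$ on the degree-1 generators first, check that it respects the defining relations of $\widehat{\rm M}^1_{(m)}$ (dihedral symmetry and the first shuffle relations only), then extend it to all of $\widehat{\rm M}^\bullet_{(m)}$ multiplicatively via the join $\ast$ as dictated by formula $(\ref{hh})$, and finally verify commutativity with the differentials. The degree-1 map is already essentially given: to an extended basis, equivalently to a generator $[v_1,\dots,v_m]$ with $v_0+\dots+v_m=0$, we assign the Borel--Moore $(2m-2)$-chain $\psi[v_1,\dots,v_m]$ built as the signed sum over all plane trivalent trees colored by $e_0,\dots,e_m$ of the Voronoi cells $\varphi(f_{E_1},\dots,f_{E_{2m-1}})$, exactly as in $(\ref{FFFa})$. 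The first point to check is that this chain lands in $V^1_{(m)}$, i.e. that each cell $\varphi(f_{E_1},\dots,f_{E_{2m-1}})$ is a face of the Voronoi decomposition; this is because the $2m-1$ vectors $f_E$ attached to the edges of a plane trivalent tree form (up to sign and linear equivalence) a root system of type $A_m$, whose minimal vectors cut out a top Voronoi cell by Voronoi's lemma, and its faces are the $\varphi(f_{E_1},\dots,f_{E_{2m-1}})$.

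Next I would verify that $\widehat\psi^1_{(m)}$ kills the dihedral and first-shuffle relations. Dihedral symmetry is built into the "sum over plane trivalent trees" construction: cyclic rotation of $e_0,\dots,e_m$ permutes the set of colored plane trivalent trees and preserves the canonical orientation up to the expected sign $(-1)^{m+1}$ for the reflection, so $\psi[v_1,\dots,v_m]$ already has the dihedral symmetry of Theorem \ref{d3}. The first shuffle relation $(\ref{sshh1})$ is the delicate one: here one uses that summing $\psi$ over the shuffles $\Sigma_{k,m-k}$ produces a telescoping cancellation of trivalent trees — two trees contributing to different shuffle terms but differing by a single internal-edge contraction/expansion appear with opposite signs — so the total vanishes as a chain. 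This is precisely the "easy shuffle relations follow from the sum over all plane trivalent trees" mechanism flagged in the introduction, and it is the place I expect the real work to be.

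Finally, extend to higher degrees by $(\ref{hh})$: for $[A_1]\wedge\dots\wedge[A_k]$ corresponding to a direct sum decomposition ${\rm L}_m=\oplus {\rm L}_{a_i}$, set $\widehat\psi^k_{(m)}=\widehat\psi^1_{(a_1)}[A_1]\ast\cdots\ast\widehat\psi^1_{(a_k)}[A_k]$, using the join of Voronoi cells; this is well-defined and graded-commutative because $\ast$ is, matching the anticommutativity of the $[A_i]$. Commutativity with $\partial$ reduces, by the Leibniz rule for both differentials and compatibility of $\ast$ with the Voronoi boundary, to the single identity $\partial\,\widehat\psi^1_{(m)}[v_1,\dots,v_m]=\widehat\psi^2_{(m)}\big(\partial[v_1,\dots,v_m]\big)$; the left side is computed by the Voronoi boundary formula, which I would match term-by-term against the modular differential $(\ref{FD})$ — each way of cutting the circle at a $v$-slot and a $u$-slot corresponds to deleting an internal edge of a trivalent tree, splitting it into two smaller colored trivalent trees, which is exactly one summand $\psi[e_{i+1},\dots,e_j]\ast\psi[e_{j+1},\dots,e_{i-1}]$; this is the content of formula $(\ref{DIC})$, promised a proof "in part 5 of Section \ref{Sec4.2}". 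Passing to $\Gamma$-coinvariants and twisting by $V$ is functorial, so the bicomplex statement $\widehat\Psi^{\bullet,\bullet}_{(m)}$ and the extended statement $\widehat\Psi^\bullet_{(m)}$ follow by including the sublattice bicomplexes ${\rm M}^{\bullet,\bullet}_{(m-r)}({\rm L})[-r,-r]\hookrightarrow{\rm M}^{\bullet,\bullet}_{(m)}$ compatibly with the corresponding Voronoi sub-bicomplexes and checking that $\partial'$ on both sides is given by the same "forget a rank-1 block" rule. The main obstacle, as noted, is the combinatorial cancellation proving the first shuffle relation holds at the chain level and the edge-contraction bijection underlying $(\ref{DIC})$; everything else is bookkeeping with signs and orientations of plane trivalent trees.
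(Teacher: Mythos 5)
Your construction of the map and the reduction to a single identity in degree one follow the paper's route, but the two places where you yourself flag "the real work" are exactly where your argument does not go through, and the paper's proof handles both by one device you never invoke: the identification of $\widehat{\rm M}^1_{(m)}$ with the cyclic Lie co-operad inside the Boardman--Kontsevich tree complex. Concretely, the paper (Lemma \ref{15AQ}) shows that $\widehat\psi^1_{(m)}([e_1,\dots,e_m])=(w^*\circ i^*)(e_0e_1\cdots e_m)^*$, where $i^*:{\cal C}{\cal A}ss^*_{m+1}\to{\cal C}{\cal L}ie^*_{m+1}$ is the quotient by shuffle relations coming from the primitivity of Lie words, see (\ref{MB}); the vanishing of the first shuffle relations (\ref{sshh1}) is then immediate because $i^*$ kills them. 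Your proposed mechanism -- a ``telescoping cancellation'' of trivalent trees ``differing by a single internal-edge contraction/expansion'' -- is not a proof: trees related by contracting an internal edge are not trivalent and do not index summands of (\ref{FFFa}), so no such pairing of terms is even defined, and you give no actual bijection or sign computation. This is a genuine gap, not bookkeeping.

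The boundary identity has the same problem, compounded by a misattribution. In the paper's computation the summands of $\partial\psi[e_1,\dots,e_m]$ arise from the facets obtained by dropping a \emph{leg} $e_i$: the trees inducing a fixed partition (\ref{UYU}) at that leg assemble into the join $\widehat\psi^2_{(m)}([e_{i+1},\dots,e_{i+j}]\wedge[e_{i+j+1},\dots,e_{i-1}])$, because the edge adjacent to the deleted leg carries exactly the ``extra'' vector $-(e_{i+1}+\cdots+e_{i+j})$. The facets obtained by dropping an \emph{internal} edge are extra terms which must cancel, and this cancellation is the second nontrivial point; the paper gets it from $(w^*\circ i^*)(e_0e_1\cdots e_m)^*\in{\rm Ker}\bigl({\rm T}^1_{(m)}\stackrel{d}{\lra}{\rm T}^2_{(m)}\bigr)$, i.e. from the Ginzburg--Kapranov tree complex (Theorem \ref{GKap}); one can also do it directly by an explicit fixed-point-free flip involution on pairs (tree, internal edge), but some such argument is required. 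You instead assert that deleting an internal edge ``is exactly one summand'' of (\ref{DIC}) -- which is false, since that facet omits $\varphi(f_E)$ while the corresponding join cell contains it -- you never address the internal-edge cancellation at all, and you defer the key formula (\ref{DIC}) to the paper's own promised proof, which is circular in a blind argument. So the proposal reproduces the paper's framework but is missing the operadic (or equivalent combinatorial) input that makes both critical steps work.
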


\paragraph{\bf Example.} The very right component of the homomorphism
  from Theorem  \ref{15} is given by 
$$
\widehat \psi^{m-1}_{(m)}\Bigl([v_1] \wedge ... \wedge [v_m] \Bigr) = \varphi(v_1, ... , v_m).
$$  
\vskip 2mm

To define such a map $\widehat \psi^{\bullet}_{(m)}$ one needs  to define 
$\widehat \psi^{1}_{(m)}[v_1,...,v_m]$ for vectors $v_1,...,v_m$ forming a 
basis of the lattice ${\rm L_m}$ so that the dihedral and the 
first shuffle relations go to zero 
and 
$$
d \widehat \psi^{1}_{(m)} [v_1,...,v_m] = \widehat  \psi^{2}_{(m)}\circ \partial [v_1,...,v_m].
$$
Here the right hand side is  
computed by (\ref{hh}) and the formula for $\partial $.  

The proof of Theorem \ref{15} 
is given below.

\paragraph{2. The map $\widehat \psi^{1}_{(m)}$.}   
   Recall that each edge $E$ of a tree ${\rm T}$ colored by an extended basis $(e_0, ..., e_m)$ of  ${\rm L}_m$ provides a vector $f_E \in {\rm L_m}$ 
defined up to a sign, see Figure \ref{mp1ba}. 
Following (\ref{FFFa}), we set 
\be \la{FFF}
\begin{split}
&\widehat \psi^{1}_{(m)}: \langle e_0,e_1,...,e_m \rangle \lms 
 \sum_{\mbox{T}}{\rm sgn} (E_1 \wedge ... \wedge E_{2m-1})\cdot 
\varphi(f_{E_1}, ... , f_{E_{2m-1}}).\\
\end{split}
\ee
Here the sum is over all plane trivalent trees ${\rm T}$ colored by $e_0,...,e_m$. 

To establish the properties of the map $\widehat \psi^{1}_{(m)}$ we use 
the fact that the tree complex considered by 
 Boardman and Kontsevich is a resolution of the cyclic
 Lie (co)operad.

\paragraph{\bf 3. The graph complex resolution of the cyclic Lie co-operad \cite{GK}.} 
Let ${\cal C}{\cal A}ss_{\bullet}$ be the cyclic associative operad \cite{GK}. 
This means, in particular, the following. 
Let ${\cal A}ss_m \{e_1,...,e_m\}$ 
be the space of all associative words formed by the letters $e_1,...,e_m$, each used once. 
Then ${\cal C}{\cal A}ss_{m+1}$ is a vector space generated by the expressions 
$$
(A, e_0), \quad \mbox{where} \quad A \in {\cal A}ss_m \{e_1,...,e_m\}; \quad 
(xy, z) = (x, yz), \quad (x,y) =(y,x)
$$ 
i.e. $(*,*)$ is an invariant scalar product on an associative  algebra. 
So as a vector space ${\cal C}{\cal A}ss_{m+1}$ is isomorphic to the space 
of $m$-ary operations ${\cal A}ss_{m}$ in the associative operad, or simply speaking to 
${\cal A}ss_m \{e_1,...,e_m\}$. 

Similarly let ${\cal C}{\cal L}ie_{\bullet}$ be the cyclic Lie operad. Let 
 ${\cal L}ie_m\{e_1,...,e_m\}$ be the space of Lie words formed by the letters $e_1,...,e_m$, each used once. 
As a vector space, ${\cal C}{\cal L}ie_{m+1}$ is isomorphic to the space of 
$m$-ary operations in the Lie operad. We think about it as of 
the space generated by the expressions 
$$
(L, e_0), ~L \in {\cal L}ie_m \{e_1,...,e_m\}; \quad 
([x,y],z) = (x, [y,z]), \quad (x,y) =(y,x),
$$ 
i.e. $(*,*)$ is an invariant scalar product on a Lie algebra. 

\paragraph{\bf Example.} The space ${\cal C}{\cal L}ie_{3}$ is one dimensional. It is generated by $([e_1, e_2], e_0)$. 
The space ${\cal C}{\cal A}ss_{3}$ is two dimensional, generated  
by $(e_1e_2,e_0)$ and $(e_2e_1, e_0)$. 

\vskip 3mm

Denote by 
${\cal C}{\cal A}ss^*_{m+1}$ (${\cal C}{\cal L}ie^*_{m+1}$) 
 the $\Q-$vector space dual to ${\cal C}{\cal A}ss_{m+1}$ 
(respectively ${\cal C}{\cal L}ie_{m+1}$).  
We claim  that there is canonical isomorphism
\begin{equation}  \label{MB}
{\cal C}{\cal L}ie^*_{m+1} = \frac{{\cal C}{\cal A}ss^*_{m+1}}
{\mbox{Shuffle relations}},
\end{equation}
where the denominator is defined as follows. Let $(e_0 e_1 ... e_m)^* \in 
{\cal C}{\cal A}ss^*_{m+1}$ be a functional whose value 
on $(e_1 ... e_m, e_0 )$ is $1$ and on $(e_{i_1} ... e_{i_{m}}, e_0 )$ is 
zero if $\{i_1, ..., i_m\} \not = 
\{1,2, ..., m\}$ as ordered sets. 
The  denominator  is a  subspace generated by the expressions 
$$
\sum_{\sigma \in \Sigma_{k,m-k}} (e_0 e_{\sigma (1)} ... e_{\sigma (m)})^*; 
\qquad 1 \leq k \leq m-1.
$$

To prove (\ref{MB}) notice that 
${\cal L}ie_m\{e_1,...,e_m\} \subset {\cal A}ss_m\{e_1,...,e_m\}$ is a 
subspace of primitive elements with respect to the 
coproduct $\Delta$, $\Delta(e_i) = e_i \otimes 1 + 1 \otimes e_i$. This implies that 
the dual vector space  ${\cal L}ie^*_m\{e_1,...,e_m\}$ is the quotient of 
${\cal A}ss^*_m\{e_1,...,e_m\}$ by the shuffle relations. 

\paragraph{4. The tree complex \cite{K1}.} Denote by ${\rm T}_{(m)}^i$ the abelian group generated 
by the isomorphism classes of the pairs  $(T, {\rm Or}_T)$ where $T$ is a 
tree (not  a plane tree!) with $2m-i$ edges and 
whith $m+1$  ends colored by the set $\{e_0, ..., e_m\}$. 
Here ${\rm Or}_T$ is an orientation of he tree $T$. The only relation is 
that changing the orientation of the tree $T$ amounts changing the sign of 
the generator. 
There is a differential defined by shrinking of internal edges of a tree $T$: 
\be \nonumber
\begin{split}
&d: {\rm T}_{(m)}^i \lra {\rm T}_{(m)}^{i+1},\\
&(T, {\rm Or}_{T}) \lms \sum_{\mbox{internal edges $E$ of $T$}} (T/E, {\rm Or}_{T/E}).\\
\end{split}
\ee
Here if ${\rm Or}_{T} = E \wedge E_1 \wedge ... $, then  
${\rm Or}_{T/E}:= E_1 \wedge ...$. 

The group  ${\rm T}_{(m)}^i$ is a free abelian group with a basis; the basis vectors 
are defined up to a sign. Thus there is a perfect pairing 
${\rm T}_{(m)}^i \otimes {\rm T}_{(m)}^i \lra \Z$, and we 
may identify the group ${\rm T}_{(m)}^i$ with its dual $({\rm T}_{(m)}^i)^*:= 
{\rm Hom}({\rm T}_{(m)}^i, \Z)$. 
Consider the map $w$
$$
 \Bigl({\rm T}_{(m)}^1\Bigr)^*   \stackrel{w}{\lra}  
{\cal C}{\cal L}ie_{m+1}
  \stackrel{i}{\hookrightarrow}  {\cal C}{\cal A}ss_{m+1}
$$
defined as follows. Take a trivalent tree $T$ colored by 
$e_0, e_1, ..., e_m$ 
(not a plane tree). Make a tree rooted at $e_0$ out of it. 
Define a Lie word 
in $e_1, ..., e_m$ using this tree. This is the element $w(T) \subset {\cal L}{\cal L}ie_{m+1}$. Then consider it as an associative word. We get $i \circ w (T)$. 

There are the dual maps
$$
{\cal C}{\cal A}ss^*_{m+1} \stackrel{i^*}{\lra}  
{\cal C}{\cal L}ie^*_{m+1}   \stackrel{w^*}{\lra}  
{\rm T}_{(m)}^1.
$$

We get a complex
\begin{equation} \label {GKap1}
{\cal C}{\cal L}ie^*_{m+1} \quad \stackrel{w^*}{\lra} \quad 
{\rm T}_{(m)}^1 \stackrel{d}{\lra}{\rm T}_{(m)}^2 \stackrel{d}{\lra}
 ... \stackrel{d}{\lra}{\rm T}_{(m)}^{m-2}.
\end{equation}

\begin{theorem} \label {GKap}
The complex (\ref{GKap1}) is exact.
\end{theorem}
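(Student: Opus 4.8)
\smallskip
\noindent\textbf{Proof proposal.}\ The plan is to recognise the complex (\ref{GKap1}) as the (reduced, augmented) cellular chain complex of the space of trees, compute its homology topologically, and then match the top homology with ${\cal C}{\cal L}ie^{*}_{m+1}$ through $w^{*}$; equivalently, this is the Koszulness of the commutative (or Lie) operad, and I would either cite that or reprove it by a direct homotopy.

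First I would set up the geometric model. Make every tree occurring in ${\rm T}^{\bullet}_{(m)}$ into a metric tree by assigning to each internal edge a positive length, normalised so the lengths sum to $1$; shrinking an internal edge to $0$ is exactly the operation defining $d$. A trivalent tree with $m+1$ coloured ends has $m-2$ internal edges, hence spans an open cell of dimension $m-3$, and these are the top cells of Vogtmann's space of trees $\mathcal{T}_{m+1}$. This identifies ${\rm T}^{i}_{(m)}$ with the group of cellular $(m-2-i)$-chains of $\mathcal{T}_{m+1}$ (with ${\rm T}^{1}_{(m)}$ the top chains), identifies $d$ with the cellular differential, and presents (\ref{GKap1}) --- after adjoining, if one insists, the augmentation onto the corolla --- as the reduced cellular chain complex of $\mathcal{T}_{m+1}$ with its top homology glued on by $w^{*}$. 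In these terms the theorem says: $\widetilde H_{j}(\mathcal{T}_{m+1})=0$ for $j<m-3$ (this is exactness of (\ref{GKap1}) at ${\rm T}^{2}_{(m)},\dots,{\rm T}^{m-2}_{(m)}$, together with injectivity of $w^{*}$), and $\widetilde H_{m-3}(\mathcal{T}_{m+1})\cong {\cal C}{\cal L}ie^{*}_{m+1}$ via $w^{*}$ (exactness at ${\rm T}^{1}_{(m)}$).

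Second, I would invoke the homotopy type of $\mathcal{T}_{m+1}$: it is homotopy equivalent to a wedge of $(m-1)!$ spheres of dimension $m-3$ (the Robinson--Whitehouse/Vogtmann theorem; equivalently, the Cohen--Macaulayness of the partition lattice on $m$ points, which follows from Bj\"orner's $EL$-shelling). Granting this, the reduced homology of $\mathcal{T}_{m+1}$ is concentrated in degree $m-3$ and is free of rank $(m-1)!=\dim {\cal C}{\cal L}ie^{*}_{m+1}$, which settles all the vanishing at once. To finish I would identify $\widetilde H_{m-3}(\mathcal{T}_{m+1})=\ker\big(d\colon{\rm T}^{1}_{(m)}\to{\rm T}^{2}_{(m)}\big)$ with ${\cal C}{\cal L}ie^{*}_{m+1}$ compatibly with $w^{*}$. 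Since the dimensions already agree, it is enough to see that $w^{*}$ is injective and lands in $\ker d$; the latter is immediate from the construction of $w$ (contracting an edge of the tree attached to a Lie word forces an antisymmetrisation to collapse), and for injectivity I would use that the relations cutting out $\ker d$ among trivalent trees are generated one internal edge at a time by the three-term IHX relations, which dualise under the self-pairing of ${\rm T}^{1}_{(m)}$ precisely to the Jacobi relations in the presentation (\ref{MB}) of ${\cal C}{\cal L}ie^{*}_{m+1}$; combining this with the $S_{m}$-equivariant identification $\widetilde H_{m-3}(\mathcal{T}_{m+1})\cong\mathrm{Lie}(m)\otimes\mathrm{sgn}$ (Stanley--Hanlon--Joyal) pins the isomorphism down.

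The step I expect to be the real obstacle is the homotopy type of $\mathcal{T}_{m+1}$; everything else is bookkeeping once it is granted. If one wants the argument internal to the paper I would replace it by an explicit contracting homotopy on ${\rm T}^{\bullet}_{(m)}$ in the spirit of Kontsevich's graph-complex arguments: fix, besides $e_{0}$, a second end $e_{1}$ to orient the branches at the vertex $v_{0}$ carrying $e_{0}$, and define $h\colon{\rm T}^{i}_{(m)}\to{\rm T}^{i-1}_{(m)}$ by detaching $e_{0}$ together with the branch at $v_{0}$ pointing towards $e_{1}$ onto a freshly created internal edge. A sign count should give $dh+hd=\pm\,\mathrm{id}$ modulo the span of trees in which $e_{0}$ already sits at a trivalent vertex, and on that residual span one recognises the image of $w^{*}$; this yields the acyclicity in the interior and the description of $\ker(d|_{{\rm T}^{1}_{(m)}})$ with no appeal to shellability, at the cost of a somewhat intricate but elementary combinatorial verification, which is where the work would lie.
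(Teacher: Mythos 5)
Your argument is correct in outline, but it takes a genuinely different route from the paper: the paper disposes of Theorem \ref{GKap} in two sentences, by observing that the complex (\ref{GKap1}) is (the dual of) the tree resolution of the cyclic Lie (co)operad constructed by Ginzburg--Kapranov, i.e.\ it quotes the Koszulness statement of \cite{GK} directly -- which is the first of the two options you mention in passing and then do not pursue. What you develop instead is the equivalent poset-topology statement: identify ${\rm T}^{\bullet}_{(m)}$ with the cellular chains of the space of metric trees with $m+1$ labelled legs, quote the Robinson--Whitehouse/Vogtmann theorem that this space is a wedge of $(m-1)!$ spheres of dimension $m-3$ (equivalently, Cohen--Macaulayness of the partition lattice), and then match the top homology with ${\cal C}{\cal L}ie^{*}_{m+1}$ through $w^{*}$ by the IHX/Jacobi duality under the self-pairing of ${\rm T}^{1}_{(m)}$ together with the classical identification of that homology with ${\rm Lie}(m)\otimes{\rm sgn}$. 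Your dimension bookkeeping checks out (for $m=4$ the tree space is the Petersen graph, with first Betti number $6=3!$), and you are right -- and in fact more careful than the statement as printed -- that exactness at the last term requires adjoining the corolla/augmentation term. The trade-off is clear: the paper's proof is a one-line reduction to \cite{GK}, while yours buys a more geometric and independent argument at the cost of importing the wedge-of-spheres theorem and the ${\rm Lie}(m)\otimes{\rm sgn}$ identification, results of essentially the same depth as the one being cited; your sketched contracting homotopy would make the proof self-contained, but, as you say yourself, the sign and boundary-case verifications there are exactly where the real work would be, and as written they are not yet a proof.
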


\begin{proof} This is a reformulation of one of the main result of \cite{GK}. 
 Notice that the tree complex in \cite{GK}  
is dual to ours. 

\begin{definition} \label{3.28.01.1}
An unordered extended basis $\{e_0, ..., e_m\}$ is a set of $m+1$ 
vectors of the lattice ${\rm L_m}$  whose sum is zero and  
any $m$ of them provide a basis of ${\rm L_m}$.
\end{definition}

 Notice that an extended basis $(e_0, ..., e_m)$ is ordered. 
One has a natural isomorphism
\begin{equation}  \label{3.28.01.2}
\widehat {\rm M}_{(m)}^1 =  \bigoplus_{\mbox{unordered extended bases}} 
{\cal C}{\cal L}ie_{m+1}^*\{e_0, ..., e_m\}.
\end{equation}

\paragraph{\bf 5. Proof of Theorem \ref{15}.} Let us 
show that the map $\widehat \psi^{1}_{(m)}$ defined on  generators by (\ref{FFF})
 gives rise to a group homomorphism 
$$
\widehat \psi^{1}_{(m)}: \widehat {\rm M}^{1}_{(m)} \lra  V^{1}_{(m)}.
$$

\begin{lemma} \label{15AQ}
a) The homomorphism 
$\widehat \psi^{1}_{(m)}$ can be written as a composition: 
$$
\widehat \psi^{1}_{(m)}([e_1,...,e_m]) =  (w^* \circ i^*) (e_0 e_1 ... e_m)^*.
$$

b) The map $\widehat \psi^{1}_{(m)}$ sends the first shuffle and dihedral symmetry relations to zero. 
\end{lemma} 

\begin{proof} a) The image of the $(e_0 e_1 ... e_m)^* $ under the map $w^* \circ i^* $ is a functional on the colored oriented 
$3$-valent trees. It is $+1$ if the tree is isomorphic to a plane 
trivalent tree colored by $ e_0, ..., e_m$ with the standard 
orientation induced by the orientation of the circle, and it 
is zero otherwise. 

b) Since $i^* $ kills the shuffle relations, 
the first shuffle relations go to zero under 
the homomorphism 
$\psi^{1}_{(m)}$. 
It is obvious that the map $\widehat \psi^{1}_{(m)}$ sends the cyclic symmetry relations to zero. 
 To check that it sends to zero the dihedral symmetry relations notice that 
if we reverse the orientation of the circle 
then the orientation of the colored plane trivalent tree is multiplied by $(-1)^{m-1}$.  This can be seen by the induction on $m$: chopping off an internal vertex with two legs we change the sign of the permutation of the edges induced by changing the plane orientation. 
\end{proof}

Let us compute the boundary of the chain $\widehat \psi^{1}_{(m)}([e_1,...,e_m])$.  
Consider first the contribution of 
 legs. A parametrized by $e_i$  
leg of a  plane $3$-valent tree   
provides a partition 
\begin{equation} \label{UYU}
\{e_0, ..., e_m\} \backslash e_i = \{e_{i+1}, ..., e_{i+j}\} \cup \{e_{i+j+1}, ..., e_{i-1}\}.
\end{equation}
\begin{figure}[ht]
\centerline{\epsfbox{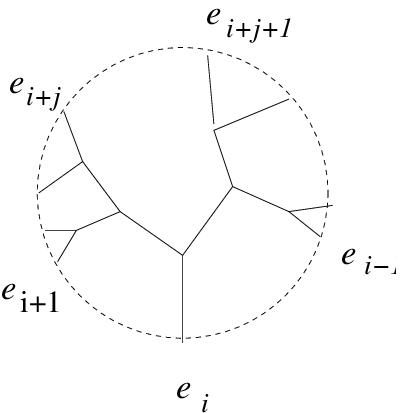}}
\caption{} 
\label{mp1f}
\end{figure}
It is easy to see that the sum over all colored plane $3$-valent trees which provide 
 this partition for the given leg $e_i$ is  
\begin{equation} \label{UYU.}
\widehat \psi^{2}_{(m)}\Bigl([e_{i+1}, ..., e_{i+j}] \wedge [e_{i+j+1}, ..., e_{i-1}]\Bigr).
\end{equation}
Taking the sum over all legs and all possible types of partitions (\ref{UYU}) is the same as taking the sum over all $i,j$ in (\ref{UYU.}). So we get 
$\widehat \psi^{2}_{(m)} \partial \langle e_{0}, ..., e_{m}\rangle$.

The contribution of the internal edges to the boundary of $\widehat \psi^{1}_{(m)}([e_1,...,e_m])$ is zero because
 $$
(w^* \circ i^*) (e_0 e_1 ... e_m)^* \in {\rm Ker} \Bigl( {\rm T}_{(m)}^1 \stackrel{d}{\lra} {\rm T}_{(m)}^2\Bigr).
$$ 
\end{proof} 
 
\paragraph{\bf 6. The colored forest complex.}  

A  forest $F$ is a graph without loops. It is a union 
if its connected components, i.e. trees: $F = T_1 \cup ... \cup T_k$.  
The edges of a forest 
consist of legs, i.e. external edges,  
and internal edges. A forest may have no internal vertices. 

Let us choose   a lattice ${\rm L_m}$. 
A {\it colored forest} is a forest  $F$ equipped with the following data, see Figure \ref{mp1Aa}. 
Let $F = T_1 \cup ... \cup T_k$ be the decomposition of a forest $F$ into trees, and  
$$
{\rm L}^{(1)} \oplus ... \oplus {\rm L}^{(k)} \subset {\rm L_m}
$$
 be  a 
sublattice of ${\rm L_m}$ given by a direct sum of $k$ nonzero sublattices 
 such that the number of legs of the tree $T_i$ is 
equal to 
${\rm rk} ~{\rm L}^{(i)}+1$.  
Choose an extended basis in each of the sublattices  ${\rm L}^{(i)}$ and 
label the legs of the tree $T_i$ by 
the vectors of this extended basis. 
Then we say that the forest $F$ is colored - by the vectors of the extended  basis
 of the lattices ${\rm L}^{(i)}$.

\begin{figure}[ht]
\centerline{\epsfbox{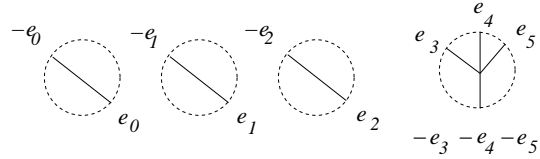}}
\caption{ A colored forest.}
\label{mp1Aa}
\end{figure}

Let us  define the rank of a colored forest as 
${\rm rk}~ ({\rm L}^{(1)} \oplus ... \oplus {\rm L}^{(k)})$. So
$$
\mbox{number of legs of $F$} \quad = \quad\mbox{number of 
connected components of $F$} \quad +  
\quad\mbox{$\Sigma$ ranks of ${\rm L}^{(i)}$}.
$$
The orientation $\Z/2\Z-$torsor  ${\cal O}_F$ of a forest $F$ is a tensor product  
${\cal O}_F = \otimes_{i=1}^k {\cal O}_{T_i}$. 

Let ${\Bbb F}_{(m)}^{i}$ (resp. $F_{(m)}^{i}$) be the abelian group generated 
by the isomorphism classes of pairs $(F, {\rm Or}_F)$ where $F$ is a 
colored forest (resp. rank $m$ colored forest ) 
with 
 $2m - i$ edges,  and ${\rm Or}_F$ is an orientation of the forest. The only relation is that 
if we alter the orientation of the forest, the generator is multiplied by $-1$. 

\paragraph{\bf Example.} The group $F_{(m)}^{1} = {\Bbb F}_{(m)}^{1} $ is generated by 
colored $3$-valent trees with $m+1$ legs. 
\vskip 2mm

Let us define a differential 
$d: {\Bbb F}_{(m)}^{i} \lra {\Bbb F}_{(m)}^{i+1}$. One has $d = d_I + d_L$. 
 
The map $d_I$ is defined as follows. Let $(F, {\rm Or}_F)$ be a generator of 
${\Bbb F}_{(m)}^{i}$ and $E$ an {\it internal} edge of $F$. Let $F/E$ be the forest
 obtained by contracting  the edge $E$. 
So it has one less edge, and one less vertex. The orientation ${\rm Or}_F$ 
of the graph $F$ induces a natural orientation ${\rm Or}_{F/E}$ 
of the graph $F/E$. Namely, if ${\rm Or}_F= E \wedge E_1 \wedge E_{2 }\wedge  ...$ then ${\rm Or}_{F/E} = E_1 \wedge E_{2 }\wedge  ... $. Contructing an internal 
edge we do not touch the coloring. So $d_I$ is  the differential 
used by Boardman and Kontsevich.

Let us define the map $d_L$. Let $L$ be a leg of $F$. 
Let us remove this leg together with a little neighborhood of its vertices. 
If both of the vertices of $L$ has valency $1$ this means that 
$L$ is a connected component of $F$, so we just removed this connected component. Otherwise 
one of the vertices has the valency $v \geq 3$, and the connected component of the tree 
containing the leg $L$ is replaced by 
$v-1$ connected trees. Namely, each of   internal edges sharing the vertex with $L$ produces a new tree. The coloring of the forest $F$ provides a 
  coloring of the new forest, see Figure \ref{mp1Bb}. 
\begin{figure}[ht]
\centerline{\epsfbox{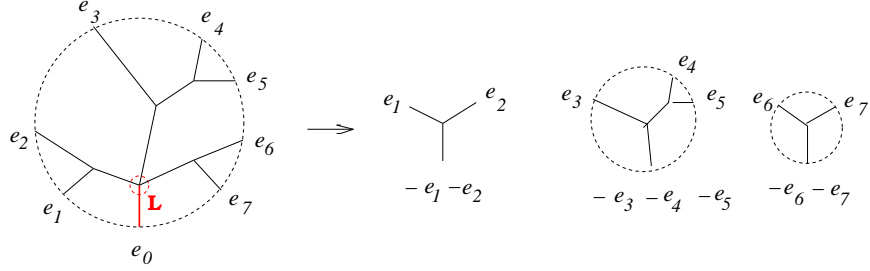}}
\caption{Cutting out a leg $L$. }
\label{mp1Bb}
\end{figure} 
Precisely, the coloring of the legs of $T$ different from $L$ remains unchanged; each of the new trees has one new leg, and the vector attached to it is the minus sum of the vectors atached to the rest of the legs of this tree.

It is easy to see that we get a complex, denoted 
${\Bbb F}_{(m)}^{\bullet}$ and called the colored forests complex.

The rank provides a filtration on the   colored forests complex. 
The $m$-th associated graded   
with respect to this filtration is called the {\it rank $m$ colored forests complex}  
and denoted by ${\rm F}_{(m)}^{\bullet}$. 
It is generated by the oriented colored forests of rank $m$, and the differential is defined the same way as before 
except one detail: by definition the component of the differential 
corresponding to a leg $L$ which forms a connected component of the graph $F$ is zero. 

The rank $m$  colored forests complex 
is  concentrated in degrees $[1, m]$. It differs from the  graph complexes 
 studied by  Boardman and Kontsevich since in those complexes there were
 no $d_L$-component of the differential. 

The relaxed modular complex is a subcomplex of 
colored forests complex: there is a natural inclusion of complexes
$$
w_{(m)}^{\bullet}: \widehat {\rm M}^{\bullet}_{(m)} \hookrightarrow F^{\bullet}_{(m)}.
$$ 
The map $w_{(m)}^1$ is given by the map $w^*$  and (\ref{GKap1})-(\ref{3.28.01.2}). 
To define the maps 
$w_{(m)}^k$ let us define an operation $\circ$ on colored forests by
$$
(F_1, {\rm Or}_{F_1}) \circ (F_2, {\rm Or}_{F_2}) := (F_1 \cup F_2, 
{\rm Or}_{F_1} \wedge {\rm Or}_{F_2}). 
$$
Here the forests $F_i$ are colored by vectors of the lattices ${\Bbb L}^i$, and 
$F_1 \cup F_2$ is colored by the vectors of the lattice ${\Bbb L}^1 \oplus {\Bbb L}^2$.
The coloreing of $F_1 \cup F_2$ is defined in a natural way using the coloring of 
$F_1$ and $F_2$. We extend the operation $\circ$ by linearity and set
$$
w_{(m)}^k ([A_1] \wedge ... \wedge [A_k]) := w_{(a_1)}^1 [A_1] \circ ...\circ 
 w_{(a_1)}^1 [A_k], \qquad a_i = |A_i|. 
$$

\begin{theorem} \label{TTCCqq}
a) The map $w_{(m)}^{\bullet}$ is a quasiisomorphism. So the 
complex $F^{\bullet}_{(m)}$ 
is a resolution of the  complex $\widehat {\rm M}^{\bullet}_{(m)}$. 

b) Similarly the complex ${\Bbb F}^{\bullet}_{(m)}$  is a resolution of the 
 complex $\widehat {\Bbb M}^{\bullet}_{(m)}$. 
\end{theorem}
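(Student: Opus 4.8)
The plan is to prove Theorem~\ref{TTCCqq} by reducing it to the exactness of the tree complex of Ginzburg--Kapranov, Theorem~\ref{GKap}, applied componentwise to each lattice appearing in the rank filtration. Recall that $\widehat{\rm M}^1_{(m)} = \bigoplus {\cal C}{\cal L}ie^*_{m+1}\{e_0,\ldots,e_m\}$ (the sum over unordered extended bases of ${\rm L_m}$), and that by construction $F^1_{(m)} = {\Bbb F}^1_{(m)}$ is generated by colored trivalent trees with $m+1$ legs, i.e.\ it is $\bigoplus_{\text{bases}} {\rm T}^1_{(a)}$ in the notation of \eqref{GKap1}. More generally, $F^i_{(m)}$ decomposes along the rank filtration into a direct sum, over sublattices $L = {\rm L}^{(1)}\oplus\cdots\oplus{\rm L}^{(k)}$, of tensor products $\bigotimes_j {\rm T}^{\bullet}_{(a_j)}$, with the differential $d = d_I + d_L$, where $d_I$ is the internal-edge (Boardman--Kontsevich) differential and $d_L$ is the leg-cutting differential. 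The key point is that $d_L$ is precisely the combinatorial incarnation of the differential $\partial$ on $\widehat{\rm M}^{\bullet}_{(m)}$: cutting a leg $L$ of a tree separates the colored tree into pieces whose colorings are dictated by the partition \eqref{UYU}, and the argument already given in the proof of Theorem~\ref{15} (the computation culminating in \eqref{UYU.}) shows that the sum over trees realizing a fixed partition equals $\widehat\psi^2_{(m)}$ applied to the corresponding wedge term. So $w^{\bullet}_{(m)}$ is a map of complexes.

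First I would set up a spectral sequence (or, equivalently, filter both $F^{\bullet}_{(m)}$ and $\widehat{\rm M}^{\bullet}_{(m)}$ by the \emph{number of internal edges}, which is a subfiltration refining the rank filtration, and contemplate the associated graded). On the associated graded the differential is $d_I$ alone on $F^{\bullet}_{(m)}$ and zero on $\widehat{\rm M}^{\bullet}_{(m)}$; but it is cleaner to argue directly. For a fixed rank-$r$ sublattice decomposition ${\rm L}^{(1)}\oplus\cdots\oplus{\rm L}^{(k)}$ with leg-counts $a_1+1,\ldots,a_k+1$, the corresponding summand of the $d_I$-complex is $\bigotimes_{j=1}^k {\rm T}^{\bullet}_{(a_j)}$, and by Theorem~\ref{GKap} together with the Künneth formula this tensor product is a resolution of $\bigotimes_j {\cal C}{\cal L}ie^*_{a_j+1}$, placed in the appropriate degree. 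Summing over all decompositions and all sublattices, the $d_I$-cohomology of $F^{\bullet}_{(m)}$ (with its leftover $d_L$ differential) is identified with the complex $\widehat{\rm M}^{\bullet}_{(m)}$: the surviving generators are exactly the wedge products $[A_1]\wedge\cdots\wedge[A_k]$ modulo shuffle relations, because \eqref{MB} says ${\cal C}{\cal L}ie^*$ is ${\cal C}{\cal A}ss^*$ modulo shuffles, and the dihedral relations come for free from the orientation sign computation in Lemma~\ref{15AQ}(b). One then checks that the residual $d_L$-differential on this $d_I$-cohomology is exactly $\partial$, which is the content of the leg-cutting identity \eqref{UYU.}. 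This gives part a).

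For part b), the extended complexes ${\Bbb F}^{\bullet}_{(m)}$ and $\widehat{\Bbb M}^{\bullet}_{(m)}$ are assembled from the same building blocks but now allowing sublattices of all coranks, organized as the total complex of a bicomplex (the rank-filtration bicomplex displayed in Section~\ref{Sec2aa}). The quasi-isomorphism $w^{\bullet}_{(m)}$ is compatible with the inclusions ${\Bbb V}_{(m-r)}(\text{corank-}r\ \text{sublattice})[-r,-r]\hookrightarrow {\Bbb V}_{(m)}$, so the claim follows from part a) applied to each lattice of each corank, plus a standard spectral-sequence-of-a-bicomplex argument: the $E_1$-page of the rank filtration on ${\Bbb F}^{\bullet}_{(m)}$ is, by part a) applied corank by corank, the $E_1$-page of the rank filtration on $\widehat{\Bbb M}^{\bullet}_{(m)}$, and the map induces an isomorphism there, hence on the abutments.

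The main obstacle I anticipate is bookkeeping the signs and the interaction of the two differentials $d_I$ and $d_L$: one must verify that $d_L$ descends to $d_I$-cohomology and that it really becomes $\partial$ on the nose (not just up to sign), which requires pinning down the orientation conventions on forests consistently with the cyclic-operad orientation used in Theorem~\ref{GKap} and with the plane-tree orientation of \eqref{FFF}. A secondary subtlety is that the leg-cutting differential on ${\Bbb F}^{\bullet}_{(m)}$ has, by convention, a vanishing component on legs that are connected components; one must check this convention matches the differential $\partial'$ in the modular bicomplex of Section~\ref{Sec2aa}, so that the identification of $E_1$-pages in part b) is genuinely compatible with the bicomplex structures. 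Neither difficulty is conceptual --- both are careful-diagram-chase issues --- but they are where the real work lies.
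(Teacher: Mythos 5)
Your proposal is correct and is essentially the paper's own proof: the paper filters $F^{\bullet}_{(m)}$ by the number of connected components of the forest (within a fixed cohomological degree this is equivalent to your internal-edge count), identifies the $E_1$-term with $\widehat {\rm M}^{\bullet}_{(m)}$ via the Ginzburg--Kapranov exactness of the tree complex (Theorem \ref{GKap}) together with the isomorphisms (\ref{MB}) and (\ref{3.28.01.2}), and disposes of part b) as ``completely similar.'' Your additional remarks about K\"unneth, the residual $d_L$-differential matching $\partial$, and sign bookkeeping are just a more explicit write-up of the same argument.
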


\begin{proof} a) 
The complex $F^{\bullet}_{(m)}$ has a filtration by the 
number of connected components of the forest. The $E_1$-term of the 
corresponding spectral sequence is a sum over all unordered extended basis of 
the  tree complexes.  
By   Ginzburg-Kapranov's theorem \cite{GK} the tree complex with $m+1$ legs 
is a resolution of the space of $m$-ar operations in the cyclic Lie 
cooperad ${\cal C}{\cal L}ie_{m+1}^*$. 
It remains to recall   isomorphism (\ref{3.28.01.2}).

b) Completely similar. \end{proof}

\paragraph{\bf 7. A geometric realization of the complex of colored forests.}

\begin{theorem-construction} \label{TTCC} 
a) There exist canonical homomorphisms of complexes
$$
\tau_{(m)}^{\bullet}: {F}_{(m)}^{\bullet} \lra {V}^{\bullet }_{(m)}; \qquad
\tau_{(m)}^{\bullet}: {\Bbb F}_{(m)}^{\bullet} \lra {\Bbb V}^{\bullet }_{(m)}
$$

b) Restricting these homomorphisms to the subcomplexes $ {\rm M}_{(m)}^{\bullet} 
\hookrightarrow {F}_{(m)}^{\bullet}$ and $ {\Bbb M}_{(m)}^{\bullet} 
\hookrightarrow {\Bbb F}_{(m)}^{\bullet}$ we get 
the homomorphisms $\widehat \psi^{\bullet}_{(m)}$ and 
$\widehat \Psi^{\bullet}_{(m)}$.
\end{theorem-construction}

\begin{proof} a) The construction of the map $\tau_{(m)}^{\bullet}$ 
 generalizes  the construction of the map 
$\widehat \psi^{\bullet}_{(m)}$. 

Let $(F, {\rm Or}_F)$ be a generator of 
${\Bbb F}_{(m)}^i$. Each edge $E$ of the forest $F$ provides a vector $f_E \in {\rm L_m}$ 
defined up to a sign. Namely, let $T$ be the connected component of   $F$ containing $E$. Then $T$ is a colored tree, and the   construction   on Figure \ref{mp1ba}
 provides a vector $f_E$   defined up to a sign.

Let $E_1, ... , E_{2m-2-i}$ be the set of all edges of $F$. 
Let ${\rm sgn}(E_1 \wedge ... \wedge E_{2m-2-i}) = +1$ if ${\rm Or}_F = E_1 \wedge ... \wedge E_{2m-2-i}$, 
and $-1$ otherwise. Set 
$$
\tau^{(m)}_i: (F, {\rm Or}_F) \lms {\rm sgn} (E_1 \wedge ... \wedge E_{2m-2-i}) \cdot 
\varphi(f_{E_1}, ... , f_{E_{2m-2-i}}) \in {\Bbb V}^{i}_{(m)}. 
$$
Let us show that we get a morphism of complexes. The cell $\varphi(f_{E_1}, ... , f_{E_{2m-2-i}})$ is a simplex, so its boundary is a union of 
simplices corresponding to the edges of the tree. The contribution of an {\it internal}
edge $E$ is precisely the cell corresponding to  $(F/E, {\rm Or}_{F/E})$. 
Similarly the contribution to the boundary of a leg $L$ of the tree $F$ is given by the $d_I$ component of the differential. 

b) It follows immediately from the construction of 
the homomorphism $w^{\bullet}_{(m)}$. 
\end{proof}

\section{Modular and   Voronoi complexes for ${\rm GL}_m$} \la{SEC5}

The extended modular complex ${\Bbb M}_{(m)}^{\bullet}$ is a 
cohomological complex placed in degrees $[1,2m-1]$. Let us cook up out of him a homological complex sitting in degrees $[2m-2, 0]$ by setting
$$
{\Bbb M}^{(m)}_{\bullet}:= {\Bbb M}_{(m)}^{2m-1 - \bullet}.
$$
Using the same formula to the modular complex we get a 
complex ${\rm M}^{(m)}_{\bullet}$  in degrees $[2m-2, m-1]$.

\subsection{Relating modular and Voronoi complexes for  ${\rm GL}_2$}

The extended Voronoi complex for ${\rm GL}_2$ looks as follows:
$$
{\Bbb V}^{(2)}_{2} \stackrel{d}{\lra} {\Bbb V}^{(2)}_{ 1 } 
\stackrel{d}{\lra}   {\Bbb V}^{(2)}_{0}.
$$
It is the chain complex of the classical triangulation of ${\cal H}^*:= {\cal H}\cup {\Bbb P}^1(\Q)$, see Figure \ref{mod}.


The Voronoi complex is the chain complex of the triangulation of ${\cal H}$:  
$$
{V}_{\bullet}^{(2)}:= 
{V}^{(2)}_{2} \stackrel{d}{\lra} {V}^{(2)}_{ 1 }.
$$

 According to Theorem-Construction \ref{15} there is a homomorphism of complexes   
$$
\Psi_{\bullet}^{(2)}: {\Bbb M}^{(2)}_{\bullet} \lra {\Bbb V}_{\bullet}^{(2)} 
$$
 defined as follows. Let $e_1,e_2, e_3$ be an extended  basis of ${\rm L_2}$, and $e \in {\rm L_2}$ a primitive vector.  Set
\begin{equation} \nonumber
[e] \lms \varphi(e), \quad [e_1] \wedge [e_2] \lms \varphi(e_1,e_2),
\qquad 
 [e_1,e_2] \lms \varphi(e_1,e_2,e_3).  
\end{equation}

The rank $2$ colored forest complex is canonically isomorphic to 
 the relaxed modular complex. 

\vskip 2mm 
Let us recall 
few facts about the cohomology of a subgroup $\Gamma \subset {\rm GL}_2(\Z)$ 
following  \cite[Section 2.3]{G00b}. 
Let $V$ be a   
${\rm GL}_2$-module.  For a torsion free subgroup $\Gamma \subset {\rm GL}_2(\Z)$ 
the group cohomology 
${\rm H}^*(\Gamma, V)$ is isomorphic to the cohomology of $\Gamma \backslash 
{\Bbb H}_2$ with coefficients in the local system ${\cal L}_V$ corresponding to $V$.
Notice that 
${\rm H}^*(\Gamma \backslash {\Bbb H}_2, {\cal L}_V ) = 
{\rm H}^*(\overline {\Gamma \backslash {\Bbb H}_2}, 
Rj_*{\cal L}_V )$ where 
$j: \Gamma \backslash {\Bbb H}_2 \hookrightarrow \overline {\Gamma \backslash 
{\Bbb H}_2}$.  

For a torsion free finite index subgroup 
$\Gamma$ of ${\rm GL}_2(\Z)$
one defines the cuspidal cohomology ${\rm H}^*_{{\rm cusp}}(\Gamma, V)$ as the cohomology 
of $\overline {\Gamma \backslash {\Bbb H}_2}$ with coefficients in a middle 
extension of the local system 
${\cal L}_V$. In our case the middle extension means 
the sheaf $j_*{\cal L}_V$. 
We define the cuspidal cohomology for arbitrary finite index subgroup $\Gamma$ 
by reducing it to the  torsion free case by formula similar to  (\ref{4-11.1d}).  
The following results is \cite[Lemma 2.3]{G97}. 
 
\bt \label{thc2} 
  a) The map $\Psi_{\bullet}^{(2)}$ is an isomorphism of complexes of ${\rm GL}_2(\Z)$-
modules. Restricting it
 to ${\rm M}^{(2)}_{\bullet}$ we get an isomorphism $\psi_{\bullet}^{(2)}: {\rm M}^{(2)}_{\bullet} 
\stackrel{}{\lra} {V}_{\bullet}^{(2)}$. 

b) Let $\Gamma$ be a  finite index subgroup   of $GL_2(\Z)$. Then for any $\Q$-rational 
${\rm GL}_2$-module $V$ there are  canonical isomorphisms
\be \nonumber
\begin{split}
&{\rm MH}_{(2)}^i(\Gamma, V)   =  
{\rm H}^{i-1}(\Gamma, V\otimes \varepsilon_2),  \\
& {\Bbb M}{\rm H}_{(2)}^i(\Gamma, V)  =  
{\rm H}_{\rm cusp}^{i-1}(\Gamma, V\otimes \varepsilon_2).\\
\end{split}
\ee
\et

\begin{proof} a) When $(e_1,e_2, e_3)$ run  through all extended basis of the lattice
$L_2$,  the   triangles $\varphi(e_1, e_2, e_3)$  
are Voronoi's cells of type $A_2$, and so by Voronoi's theorem produce 
  all the $2$-cells of  Voronoi's complex for ${\rm GL}_2$. 
  So we 
 get an isomorphism of complexes or bicomplexes. 

b) The first equality follows from Theorem \ref{3.29.01.1}.  
The dual to the complex
$
{\Bbb M}_{(2)}^* \otimes_{\Gamma} V[1]
$  
is a  cochain complex computing 
the cohomology of $\overline {\Gamma \backslash {\Bbb H}_2}$ with coefficients 
the   middle extension sheaf 
$j_*{\cal L}_{V^{\vee}}$.  
For the complex itself by Poincar\'e duality we get cohomology of 
 $\overline {\Gamma \backslash {\Bbb H}_2}$ with value in the middle extension of the local 
system ${\cal L}_{V}$ twisted by the orientation class, i.e. $j_*{\cal L}_{V \otimes 
\varepsilon_2}$. \end{proof}

\subsection{Relating   modular and Voronoi complexes for ${\rm GL}_3$}

\paragraph{1. Rank $3$  colored forest complex and the 
truncated Voronoi complex for ${\rm GL}_3$.}

The extended Voronoi complex for ${\rm GL}_3$ looks as follows: 
$$
({\Bbb V}^{(3)}_{\bullet}, d) :=   
{\Bbb V}^{(3)}_{5} \stackrel{d}{\lra} {\Bbb V}^{(3)}_{4} 
\stackrel{d}{\lra} ... \stackrel{d}{\lra}  {\Bbb V}^{(3)}_{0}.
$$ 
  The Voronoi complex is:
$$
({V}^{(3)}_{\bullet}, d) :=  
{V}^{(3)}_{5} \stackrel{d}{\lra} {V}^{(3)}_{4} 
\stackrel{d}{\lra}   {V}^{(3)}_{3} \stackrel{d}{\lra}   {V}^{(3)}_{2}.
$$
Notice that ${V}^{(3)}_{i} = {\Bbb V}^{(3)}_{i}$ for $3 \leq i \leq 5$, but ${V}^{(3)}_{2} = {\Bbb V}^{(3)}_{2,0} \not = {\Bbb V}^{(3)}_{2}$.
Let us cook up a cohomological complex out of the extended Voronoi complex 
for ${\rm GL}_3$ by setting
$
{\Bbb V}_{(3)}^{\bullet}:= {\Bbb V}^{(3)}_{5-\bullet}
$. 
Let $s^{\geq p}C^{\bullet}:= C^p \lra C^{p+1} \lra ... $ be the stupid truncation of a complex $C^{\bullet}$. 
By Theorem-Construction \ref{TTCC} there is a   map of complexes 
\begin{equation}  \label{3mmorr} 
\tau^{\bullet}_{(3)}: {\Bbb F}_{(3)}^{\bullet} \lra s^{\geq 1}{\Bbb V}_{(3)}^{\bullet}.
\end{equation}
\begin{theorem}  \label{thc3a.} 
The morphism (\ref{3mmorr}) is an isomorphism. 
\end{theorem}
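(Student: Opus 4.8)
The plan is to show that $\tau^{\bullet}_{(3)}$ is bijective in each cohomological degree $i\in[1,5]$; since it is already a morphism of complexes by Theorem-Construction \ref{TTCC}, this suffices.

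\emph{Dévissage by rank.} First I would split both sides according to rank. The differential $d=d_I+d_L$ of the colored forest complex preserves the rank of a colored forest (contracting an internal edge, or cutting a leg at a vertex of valency $v$, changes the number of legs and the number of connected components by the same amount), so ${\Bbb F}^{\bullet}_{(3)}$ is the direct sum of its rank-$r$ subcomplexes, $r=1,2,3$; likewise the extended Voronoi complex is the total complex of its rank bicomplex. The map $\tau$ respects these decompositions, and the rank-$s$ piece of each side is a sum, over rank-$s$ primitive sublattices $L\subset{\rm L}_3$, of the top-rank colored forest complex of $L$, resp. of the interior Voronoi complex of $L$. Since ${\rm GL}_3(\Z)$ acts transitively on rank-$s$ sublattices, it is enough to treat one $L$ of each rank. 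The rank-$1$ case is trivial ($[v]\mapsto\varphi(v)$). The rank-$2$ case is exactly the map $\widehat\psi^{\bullet}_{(2)}$ of Theorem-Construction \ref{TTCC}(b) for $L$, which is an isomorphism onto the Voronoi complex by Theorem \ref{thc2}(a) (recall that for $m=2$ the colored forest, relaxed modular, and modular complexes all coincide). So the whole content is the rank-$3$ statement: $\tau_{(3)}\colon {\rm F}^{\bullet}_{(3)}({\rm L}_3)\lra V^{(3)}_{\bullet}$ is an isomorphism in degrees $[1,3]$, i.e. in Voronoi dimensions $4,3,2$.

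\emph{The rank-$3$ part.} Here I would invoke Voronoi's theorem (part a): every $5$-dimensional cell of the Voronoi decomposition of ${\Bbb H}_{{\rm L}_3}$ is ${\rm GL}_3(\Z)$-equivalent to the $A_3$-cell $\Phi=\varphi(v_1,v_2,v_3,v_4,v_1+v_2,v_2+v_3)$ attached to an extended basis $v_1+v_2+v_3+v_4=0$. A short computation in the $6$-dimensional space of quadratic forms shows that the six forms $\varphi(v_1),\varphi(v_2),\varphi(v_3),\varphi(v_4),\varphi(v_1+v_2),\varphi(v_2+v_3)$ are affinely independent, so $\Phi$ is a genuine $5$-simplex. (This is the feature special to $m\le3$; only part a) of Voronoi's theorem is needed, there being no ``$D_3$'' complication since $D_3\cong A_3$.) Hence every interior Voronoi cell of dimension $\le4$ is a face of a translate of $\Phi$, namely the convex hull of a subset of its six vertex forms. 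The task then becomes combinatorial: identify such faces with the cells $\varphi(f_{E_1},\dots)$ that arise as $\tau$-images of rank-$3$ colored forests — trivalent trees with four legs in degree $1$, the four-valent star and the ``edge $\ast$ tripod'' forests in degree $2$, three disjoint colored edges in degree $3$. Inside the vertex set of a face one recovers the extended basis as the unique $4$-subset summing to zero up to sign, reads off the remaining data (the partition of legs encoded by the internal-edge vector of a trivalent tree, resp. the direct-sum decomposition of the sublattice), and checks against the explicit list of linearly dependent triples of $A_3$-roots that every rank-$3$ face of $\Phi$ is hit exactly once with consistent orientation. Transitivity of the Weyl group $S_4\hookrightarrow{\rm GL}_3(\Z)$ on the six roots, hence on the six facets of $\Phi$, reduces degree-$1$ surjectivity to exhibiting one facet, e.g. $\varphi(v_1,v_2,v_3,v_4,v_1+v_2)$, as a tree image. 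Finally, the degree-$0$ part ${\Bbb V}^{(3)}_5$ is rightly discarded by the stupid truncation, since the $6$-vertex simplex $\Phi$ itself is the image of no colored forest (a rank-$3$ forest has at most $5$ edges, hence image of dimension at most $4$).

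\emph{Main obstacle.} The delicate point is exactly the last combinatorial bookkeeping in the rank-$3$ range, degrees $1$–$3$: one must verify not only that every interior Voronoi cell of dimension $2,3,4$ lies in the image of $\tau_{(3)}$, but that it is hit with multiplicity one and that the orientation signs dictated by the canonical orientation of colored forests match the incidence signs of the Voronoi complex, so that $\tau_{(3)}$ carries the distinguished spanning set of ${\rm F}^{\bullet}_{(3)}$ bijectively, up to sign, onto that of $V^{(3)}_{\bullet}$. This is elementary but intricate, and it genuinely relies on the simpliciality of $\Phi$; the same strategy breaks for ${\rm GL}_4$, which is why the subsequent subsections must analyze the $A_4$- and $D_4$-cells separately.
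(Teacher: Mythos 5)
Your proof follows essentially the same route as the paper's: invoke Voronoi's theorem to reduce every interior cell of dimension $2$--$4$ to a face of the $A_3$-simplex $\varphi(e_1,e_2,e_3,e_4,f_{12},f_{23})$, enumerate the rank-$3$ colored forests degree by degree (trivalent four-legged trees; the four-valent star and the edge$\,\sqcup\,$tripod; three disjoint colored edges), match the two lists bijectively, and handle the lower-rank strata by the ${\rm GL}_2$/${\rm GL}_1$ cases. One small slip: the differential does \emph{not} preserve the rank of a colored forest (the $d_L$-term removing a leg that is an entire connected component lowers the rank by one, which is exactly the $\partial'$-direction of the bicomplex), so the rank decomposition is only a filtration rather than a splitting into subcomplexes --- but this is harmless here, since an isomorphism of complexes only requires degreewise bijectivity of the underlying graded groups, which your rank-by-rank matching does establish.
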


\begin{proof}     
In this chapter   we suppose that $e_1,e_2,e_3, e_4$ is  an  extended
   basis of ${\rm L_3}$ and 
$$
f_{12}:= e_1 + e_2, \quad f_{23}:= e_2 + e_3, \quad ... 
$$

By Voronoi's theorem the ${\rm GL}_3(\Z)$-orbits of the $5$-simplex $\varphi( e_1,e_2, e_3, e_4,   
f_{12}, f_{23})$ and its faces provide all cells of the Voronoi decomposition for ${\rm GL}_3$. 
The following  facts about the   Voronoi 
cell decomposition   are  easy to see using this fact from the figure below:


A cell of the Voronoi complex which is not contained 
in the boundary is called an {\it interior cell}. 
Any interior Voronoi cell is ${\rm GL}_3(\Z)$-equivalent to just one cell of the following list:

\begin{enumerate}

\item 
$2$-dimensional cell:
$$
\varphi(e_1, e_2, e_3).
$$

\item $3$-dimensional cells:
\be \nonumber
\begin{split}
&\mbox{a $3$-cell $\varphi(e_1, e_2, -f_{12}, e_3)$, called {\it special $3$-cell}, or}\\
& \mbox{a $3$-cell $\varphi(e_1, e_2, e_3, e_4)$, called 
{\it generic $3$-cell}}.  \\
\end{split}
\ee

\item $4$-dimensional cell:
$$
\varphi(e_1, e_2, e_3, e_4, f_{12}).
$$

\item $5$-dimensional cell:
$$
\varphi(e_1, e_2, e_3, e_4, f_{12}, f_{23}).
$$
\end{enumerate}

Therefore generic  $3$-cells  are parametrized by the set  $E({\rm L_3})$ 
of unordered extended basis of ${\rm L_3}$. 

\vskip 2mm
{\it Verification}. We will show how to deduce the statement for $3$-dimensional cells. 
$3$-dimensional faces of the simplex 
$\varphi(e_1,e_2, e_3, e_4, f_{12},f_{23})$ may have corank $0$ or $1$. 
It is easy to see that there are only three corank zero $3$-faces of this simplex:
\begin{equation} \label{SSREL}
\varphi(e_1,e_2, e_3, e_4), \quad \varphi(f_{12}, e_3, -f_{23}, -e_1), \quad 
\varphi(f_{12}, e_4, f_{23}, -e_2).
\end{equation} 
 All of them are ${\rm GL}_3(\Z)$-equivalent since
we have
$$
e_1 + e_2 + e_3 + e_4 = 0, \quad f_{12} + e_3 - f_{23} - e_1  = 0, 
\quad f_{12} + e_4 +  f_{23} - e_2  = 0.
$$
A similar argument shows that all corank one $3$-faces of the simplex $\varphi(e_1,e_2, e_3, e_4, f_{12},f_{23})$ are equivalent. Since by Voronoi's theorem all top dimensional cells 
are ${\rm GL}_3(\Z)$-equivalent, the statement is  proved.

The generators of the rank $3$  colored 
forests complex are given by the following graphs

\begin{figure}[ht]
\centerline{\epsfbox{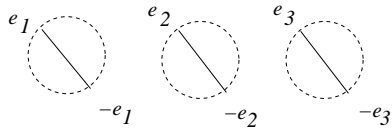}}
\caption{A colored forest for   a 2-cell.}
\label{mp1Cc1}
\end{figure}

 \begin{figure}[ht]
\centerline{\epsfbox{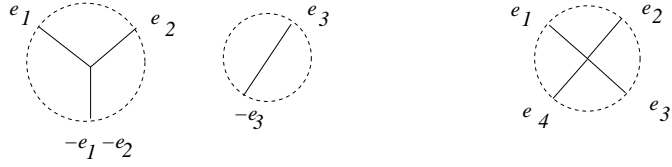}}
\caption{Colored forests for  special (on the left) and generic (on the right) 3-cells.}
\label{mp1Cc2}
\end{figure}

\begin{figure}[ht]
\centerline{\epsfbox{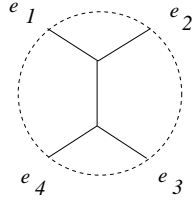}}
\caption{A colored tree for a 4-cell.}
\label{mp1Cc3}
\end{figure}

Since they match with the list of the Voronoi cells given above we get the theorem. \end{proof}

\paragraph{\bf 2. A quasiisomorphism between the extended modular and 
(truncated) Voronoi complexes for ${\rm GL}_3$.} Restricting the map 
$\tau^{\bullet}_{(3)}$ to the subcomplex 
$\widehat {\rm M}^{(3)}_{\bullet}$ we get an injective morphism of complexes

\begin{displaymath} 
    \xymatrix{
         \widehat {\rm M}^{(3)}_4 \ar[r]^{\partial} \ar[d]^{\widehat \psi^{(3)}_4} & 
 \widehat {\rm M}^{(3)}_{3} \ar[r]^{\partial}  \ar[d]^{\widehat \psi^{(3)}_3} & \widehat {\rm M}^{(3)}_{2} \ar[d]^{\widehat \psi^{(3)}_2} \\     
V^{(3)}_{4}\ar[r]^{d} &V^{(3)}_{3}\ar[r]^{d} & {V}^{(2)}_{0,0}\\}
\end{displaymath}
Recall that it is defined on the generators as follows:
\be  \nonumber
\begin{split}
&[e_1] \wedge [e_2]\wedge [e_3] \lms \varphi(e_1,e_2,e_3),\\
 &[e_1,e_2] \wedge [e_3] \lms \varphi(e_1,e_2,-e_1-e_2, e_3),\\ 
 \end{split}
\end{equation}
\be   \label{*} 
\begin{split}
&[e_1,e_2,e_3] \lms \varphi(e_1,e_2, e_3, e_4, f_{12}) - \varphi(e_1,e_2, e_3, e_4,  f_{23}).\\
\end{split}
\end{equation}
We are going to show that it provides a morphism of complexes
\be
\psi_{\bullet}^{(3)}: {\rm M}^{(3)}_{\bullet} \lra
{V}_{\bullet}^{(3)}/ d {V}_5^{(3)}.
\ee 
Then we extend it to a morphism of the bicomplexes
$$
\psi_{\bullet, \bullet}^{(3)}: {\rm M}^{(3)}_{\bullet, \bullet} \lra
{V}_{\bullet, \bullet}^{(3)}/ d {V}_5^{(3)}
$$
using the formulas 
for the map $\psi_{\bullet}^{(2)}$:
$$
[e_1,e_2] \lms \varphi(e_1,e_2,-e_1-e_2), \quad [e_1] \wedge [e_2] 
\lms \varphi(e_1,e_2),\quad [e_1] \lms \varphi(e_1).
$$

\begin{theorem}  \label{thc3} 
a) The map $\psi_{\bullet}^{(3)}$ is an injective morphism of  complexes 
of ${\rm GL}_3(\Z)$-modules. 
It is a  quasiisomorphisms.

b) Let $\Gamma$ be a subgroup   of ${\rm GL}_3(\Z)$. Then for any ${\rm GL}_3$-module $V$ there are  
canonical isomorphisms
$$ 
{\rm H}{\rm M}_{(3)}^i(\Gamma, V) \otimes \Q  =   {\rm H}^i(\Gamma, V)\otimes \Q, \qquad i \geq 1
$$ 

c) Similarly $\psi_{\bullet, \bullet}^{(3)}$ 
is an injective morphism of bicomplexes and it is a quasiisomorphism. 
\end{theorem}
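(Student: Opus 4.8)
The plan is to establish Theorem \ref{thc3} by combining the already-proven isomorphism $\tau^\bullet_{(3)}$ between the rank $3$ colored forest complex and the truncated extended Voronoi complex (Theorem \ref{thc3a.}) with the quasiisomorphism $w^\bullet_{(3)}: \widehat{\rm M}^\bullet_{(3)} \hra {\rm F}^\bullet_{(3)}$ (Theorem \ref{TTCCqq}), and then descending to the honest modular complex ${\rm M}^{(3)}_\bullet$ via the second shuffle relations. First I would verify part a): that $\psi^{(3)}_\bullet$ is a well-defined morphism of complexes. Since $\psi^{(3)}_\bullet$ is the restriction of $\widehat\psi^{(3)}_\bullet = \tau^\bullet_{(3)}|_{\widehat{\rm M}}$ to the submodule generated by the generators $[e_1,e_2,e_3]$ satisfying both shuffle relations, the only thing to check is that the second shuffle relation $\sum_{\sigma\in\Sigma_{1,2}}[e_{\sigma(1)},e_{\sigma(2)},e_{\sigma(3)},e_4] = 0$ is sent to something that vanishes in ${V}^{(3)}_\bullet/d{V}^{(3)}_5$. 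Concretely, using formula (\ref{*}), the image of the second shuffle combination is a signed sum of $4$-cells $\varphi(e_i,e_j,e_k,e_4,f_{\cdot\cdot})$, and I would show this sum equals $d$ of the top-dimensional cell $\varphi(e_1,e_2,e_3,e_4,f_{12},f_{23})$ (or a ${\rm GL}_3(\Z)$-combination thereof) — the three corank-zero $3$-faces in (\ref{SSREL}) and the geometry of the $5$-simplex are exactly what makes this work. This is the crux of the argument and I expect it to be the main obstacle: one must match the combinatorics of the second shuffle relation on the modular side with the boundary of the Voronoi $5$-cell, being careful with orientations and the $A_3$-root-system structure.

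Next, for injectivity and the quasiisomorphism claim: the map $w^\bullet_{(3)}$ is a quasiisomorphism by Theorem \ref{TTCCqq}, and $\tau^\bullet_{(3)}$ is an isomorphism by Theorem \ref{thc3a.}, so $\widehat\psi^{(3)}_\bullet: \widehat{\rm M}^{(3)}_\bullet \to s^{\geq 1}{\Bbb V}^\bullet_{(3)}$ is a quasiisomorphism onto its (explicitly described) image, hence in particular injective. To pass to ${\rm M}^{(3)}_\bullet$, I would observe that ${\rm M}^{(3)}_\bullet$ is the quotient of $\widehat{\rm M}^{(3)}_\bullet$ by the subcomplex generated by second shuffle relations, while ${V}^{(3)}_\bullet/d{V}^{(3)}_5$ is the corresponding quotient of $s^{\geq 1}{\Bbb V}^\bullet_{(3)}$ — the point being (from the verification above) that $\widehat\psi^{(3)}_\bullet$ carries the second-shuffle subcomplex isomorphically onto $\langle d{V}^{(3)}_5\rangle$. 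Then $\psi^{(3)}_\bullet$ is the induced map on quotients, and it remains a quasiisomorphism since both the source subcomplex and the target subcomplex $d{V}^{(3)}_5$ are acyclic in the relevant range (the latter because ${V}^{(3)}_5 \to d{V}^{(3)}_5$ is surjective and ${V}^{(3)}_5 = {\Bbb V}^{(3)}_5$ sits in the top degree). A short diagram chase (snake lemma / five lemma on the short exact sequences of complexes) then gives that $\psi^{(3)}_\bullet$ is a quasiisomorphism, and injectivity survives because the generators $\varphi(e_1,e_2,e_3)$, $\varphi(e_1,e_2,-e_1-e_2,e_3)$ and the images in (\ref{*}) are linearly independent in ${V}^{(3)}_\bullet/d{V}^{(3)}_5$, which one checks directly against the list of interior Voronoi cells.

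Part b) then follows formally: $\psi^{(3)}_\bullet$ identifies ${\rm M}^{(3)}_\bullet$ with ${V}^{(3)}_\bullet/d{V}^{(3)}_5$, so for a finite index $\Gamma\subset{\rm GL}_3(\Z)$ and a ${\rm GL}_3$-module $V$ we get ${\rm HM}^i_{(3)}(\Gamma,V)\otimes\Q = {\rm H}_{d_3-i}\bigl(({V}^{(3)}_\bullet/d{V}^{(3)}_5)\otimes_\Gamma V\bigr)$; since quotienting by $d{V}^{(3)}_5$ and the degree-$5$ term only affects homology in degrees outside $[1,3]$ (equivalently, the truncation is harmless in the range $i\geq 1$ after reindexing), Theorem \ref{3.29.01.1} with $m=3$ odd gives ${\rm H}_{d_3-i}\bigl({V}^{(3)}_\bullet\otimes_\Gamma V\bigr) = {\rm H}^i(\Gamma,V)$, and the truncation does not change this for $i\geq 1$ — I would spell out the one-line comparison of the two truncated complexes. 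Part c) is obtained by running the identical argument for the bicomplexes ${\rm M}^{(3)}_{\bullet,\bullet}$ and ${V}^{(3)}_{\bullet,\bullet}/d{V}^{(3)}_5$, using the already-established $m=2$ formulas for the lower-rank strata (the maps $[e_1,e_2]\mapsto\varphi(e_1,e_2,-e_1-e_2)$ etc.) which fit together with $\psi^{(3)}_\bullet$ by construction, and invoking the fact that $\tau^\bullet_{(3)}$ and $w^\bullet_{(3)}$ were already shown to extend to maps of bicomplexes; a spectral sequence comparison (filtering both bicomplexes by rank) reduces it to the already-proven statements on the associated graded pieces, namely Theorem \ref{thc2} for the rank $\leq 2$ parts and the rank $3$ part handled above.
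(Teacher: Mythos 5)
Your proposal follows essentially the same route as the paper: the crux in both is the computation that the image of the second shuffle relation $\langle e_1,e_2,e_3,e_4\rangle + \langle -e_1,f_{12},e_3,f_{41}\rangle + \langle e_2,-f_{12},-e_4,f_{41}\rangle$ under $\widehat\psi^{(3)}_4$ is exactly $d\varphi(e_1,e_2,e_3,e_4,f_{12},f_{23})$, combined with Voronoi's theorem that these $5$-simplices exhaust the top cells, the identification of Theorem \ref{thc3a.} (with the Ginzburg--Kapranov acyclicity entering through Theorem \ref{TTCCqq}, which the paper uses in the equivalent form of Corollary \ref{kker} describing the cokernel by acyclic tree complexes), and Theorem \ref{3.29.01.1} for part b). The only difference is organizational — you run an explicit quotient/diagram chase where the paper identifies the cokernel directly — so the argument is correct and matches the paper's proof.
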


\begin{proof}  
a)  One has $\widehat {\rm M}^{(3)}_{i} = {\rm M}^{(3)}_{i}$ for $ i \not = 4$, and $\widehat {\rm M}^{(3)}_{4}$ 
is a quotient of ${\rm M}^{(3)}_{4}$ by the 
second shuffle relations. So to show that $\psi_{\bullet}^{(3)}$ is a well defined morphism of complexes we need to check only that the 
map $\widehat \psi_{4}^{(3)}$ sends the second shuffle relations to boundary of certain $5$-chain.

The second shuffle relation looks as follows:
\be  \la{eqa}
\begin{split}
&s[u_1|u_2:u_3] := [u_1:u_2:u_3] + [u_2:u_1:u_3] +[u_2:u_3:u_1] =\\
&[u_2-u_1,u_3-u_2,-u_3] + [u_1-u_2,u_3-u_1,-u_3] +[u_3-u_2,u_1-u_3,-u_1].\\
\end{split}
\ee
Recall the notation 
$$
\langle e_1,e_2,e_3,e_4\rangle := [e_1,e_2,e_3] \quad \mbox{provided} \quad e_1+e_2+e_3+e_4=0.
$$
Changing  the variables $e_1:= u_2-u_1, e_2:= u_3-u_2, e_3:= -u_3$ we write (\ref{eqa}) as  
$$
 \langle e_1,e_2,e_3,e_4\rangle  + \langle -e_1,f_{12},e_3,f_{41}\rangle + \langle e_2,-f_{12},-e_4,f_{41}\rangle.
$$
The 
map $\widehat \psi_{4}^{(3)}$ sends it to the boundary of  Voronoi's  $5$-simplex $\varphi(e_1,e_2, e_3, e_4, f_{12},f_{23})$:
\be \nonumber
\begin{split}
& \varphi(e_1,e_2, e_3, e_4, f_{12}) - \varphi(e_1,e_2, e_3, e_4,  f_{23}) + 
\varphi(e_1, f_{12}, e_3, f_{23}, e_2) \\
&- \varphi(e_1, f_{12}, e_3, f_{23}, e_4) +
\varphi(e_2,f_{12},e_4,f_{23},e_1) - 
\varphi(e_2,f_{12},e_4,f_{23},e_3) = \\
&d \varphi(e_1,e_2, e_3, e_4, f_{12},f_{23}).\\
\end{split}
\ee
It is easy to check that the map $\widehat \psi_{\bullet}^{(3)}$  is injective. 
  Since the cells $\varphi(e_1,e_2, e_3, e_4, f_{12},f_{23})$ are all top 
dimensional cells of the Voronoi complex for ${\rm GL}_3$ by Voronoi's Theorem, this 
implies that 
the map $\psi_{\bullet}^{(3)}$ is injective. 
It is an isomorphism in all the degrees except $3$ and $4$. 

The part a) follows from this and Theorem \ref{thc3a.}. 

The implication a) $=>$ b) follows from  Theorem \ref{3.29.01.1}.  
The part c) is straitforward.  
\end{proof}

\paragraph{3. A resolution for the rank $3$ modular complex.}
Let ${\cal M}^{(3)}_{5}$ be a group generated by the symbols $\{e_1,e_2,e_3,e_4\}_5$ where 
$(e_1,e_2,e_3, e_4)$ run through all extended basis of the lattice ${\rm L_3}$; the only 
relations between them are the dihedral symmetry relations 
$$
\{e_1,e_2,e_3,e_4\}_5 = \{e_2,e_3,e_4,e_1\}_5 = \{e_4,e_3,e_2,e_1\}_5 = 
\{-e_1, -e_2, -e_3, -e_4\}_5. 
$$
There is a group homomorphism 
$$
{\cal M}^{(3)}_{5} \stackrel{\partial}{\lra} 
\widehat {M}^{(3)}_{4}, \qquad \{e_1,e_2,e_3,e_4\}_5 \lms 
[e_1,e_2,e_3] + [ -e_1,f_{12},e_3]  + 
[e_2,-f_{12},-e_4].
$$
It sends the generator $\{e_1,e_2,e_3,e_4\}_5 $ to the second shuffle relation. 
So 
$$
{\rm M}^{(3)}_{4}= \quad {\rm Coker} \Bigl( {\cal M}^{(3)}_{5} \stackrel{\partial}{\lra} 
\widehat {\rm M}^{(3)}_{4} \Bigr).
$$
Let 
$$
{\cal M}^{(3)}_{4} = \widehat {\rm M}^{(3)}_{4}, \quad {\cal M}^{(3)}_{3} = {\rm M}^{(3)}_{3},  \quad 
{\cal M}^{(3)}_{2} = 
{\rm M}^{(3)}_{2}.
$$
We get a complex
$$
{\cal M}^{(3)}_{5} \stackrel{\partial}{\lra}  {\cal M}^{(3)}_{4} \stackrel{\partial}{\lra}
 {\cal M}^{(3)}_{3}  \stackrel{\partial}{\lra}   
{\cal M}^{(3)}_{2}.
$$

\begin{theorem} \label{RESA}
a) There is an injective  morphism of complexes $\widetilde   \psi^{(3)}_{\bullet}: 
{\cal M}^{(3)}_{\bullet} \lra V^{(3)}_{\bullet}$:  
 
\begin{displaymath} 
    \xymatrix{
      {\cal M}^{(3)}_{5}\ar[r]^{\partial}  \ar[d]^{\widetilde  \psi^{(3)}_5} &     {\cal M}^{(3)}_4 \ar[r]^{\partial} \ar[d]^{\widetilde  \psi^{(3)}_4} & 
  {\cal M}^{(3)}_{3} \ar[r]^{\partial}  \ar[d]^{\widetilde  \psi^{(3)}_3} &  {\cal M}^{(3)}_{2} \ar[d]^{\widetilde  \psi^{(3)}_2} \\     
V^{(3)}_{5}\ar[r]^{d}&V^{(3)}_{4}\ar[r]^{d} &V^{(3)}_{3}\ar[r]^{d} & V^{(3)}_{2}\\}
\end{displaymath}

 b) It is a quasiisomorphism.
\end{theorem}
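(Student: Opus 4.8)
\textbf{Proof proposal for Theorem \ref{RESA}.}

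The plan is to prove parts a) and b) together, building on Theorem \ref{thc3} and Theorem \ref{thc3a.}, which already handle the truncated situation. First I would verify that $\widetilde\psi^{(3)}_\bullet$ is a well-defined morphism of complexes. For the three maps $\widetilde\psi^{(3)}_i$ with $i=2,3,4$ there is nothing new: these agree with $\widehat\psi^{(3)}_i$ (resp.\ $\psi^{(3)}_i$) from Theorem \ref{thc3}, so commutativity of the three right-hand squares, and injectivity of $\widetilde\psi^{(3)}_4,\widetilde\psi^{(3)}_3,\widetilde\psi^{(3)}_2$, is inherited. The only genuinely new ingredient is the top map $\widetilde\psi^{(3)}_5$. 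Here I would set
$$
\widetilde\psi^{(3)}_5: \{e_1,e_2,e_3,e_4\}_5 \lms \varphi(e_1,e_2,e_3,e_4,f_{12},f_{23}),
$$
the Voronoi $5$-simplex of type $A_3$. I must check two things: (i) the dihedral relations on $\{e_1,e_2,e_3,e_4\}_5$ go to zero, i.e.\ permuting the extended basis cyclically or reversing it either fixes the $5$-cell or changes its orientation by the correct sign; this is the same orientation bookkeeping already used in Lemma \ref{15AQ}(b) and Voronoi's analysis of the $A_3$-cell, applied to the six roots $e_1,e_2,e_3,e_4,f_{12},f_{23}$. (ii) The left square commutes: $d\circ\widetilde\psi^{(3)}_5 = \widehat\psi^{(3)}_4\circ\partial$. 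But this is precisely the computation already carried out inside the proof of Theorem \ref{thc3}(a), where it is shown that $\widehat\psi^{(3)}_4$ sends the second shuffle relation $\langle e_1,e_2,e_3,e_4\rangle + \langle -e_1,f_{12},e_3,f_{41}\rangle + \langle e_2,-f_{12},-e_4,f_{41}\rangle$ to $d\,\varphi(e_1,e_2,e_3,e_4,f_{12},f_{23})$; since $\partial\{e_1,e_2,e_3,e_4\}_5$ is by definition that very shuffle relation, the square commutes on generators, hence everywhere. Injectivity of $\widetilde\psi^{(3)}_5$ follows because, by Voronoi's theorem, the cells $\varphi(e_1,e_2,e_3,e_4,f_{12},f_{23})$ are exactly the top-dimensional cells of the Voronoi complex for ${\rm GL}_3$, each ${\rm GL}_3(\Z)$-orbit occurring once, matched to orbits of unordered extended bases modulo dihedral symmetry; so the generators map to a basis of $V^{(3)}_5$. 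This proves a).

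For part b), I would argue that $\widetilde\psi^{(3)}_\bullet$ is a quasiisomorphism by comparing it with the quasiisomorphism $\psi^{(3)}_\bullet: {\rm M}^{(3)}_\bullet \to V^{(3)}_\bullet/dV^{(3)}_5$ of Theorem \ref{thc3}(a). There is a short exact sequence of complexes: the subcomplex concentrated in degree $5$ given by $dV^{(3)}_5 \hookrightarrow V^{(3)}_\bullet$ (so far as degrees $5$ and below), with quotient $V^{(3)}_\bullet/dV^{(3)}_5$, and dually the complex ${\cal M}^{(3)}_\bullet$ sits in an exact sequence whose degree-$5$ piece is $\ker(\partial:{\cal M}^{(3)}_5\to{\cal M}^{(3)}_4)$ and $\mathrm{Coker}(\partial)={\rm M}^{(3)}_4$, with the rest of ${\cal M}^{(3)}_\bullet$ agreeing with ${\rm M}^{(3)}_\bullet$. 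The cleanest route: observe that $V^{(3)}_\bullet$ is acyclic in degrees $\geq 1$ except possibly where the modular complex detects cohomology, and more precisely that the map of complexes $\widetilde\psi^{(3)}$ is an isomorphism in degrees $2$, and in degree $5$ is an isomorphism onto $V^{(3)}_5$; in degrees $3,4$ it is the injective map from $\psi^{(3)}$. Then a five-lemma / mapping-cone argument: the cone of $\widetilde\psi^{(3)}_\bullet$ maps to the cone of $\psi^{(3)}_\bullet$ (which is acyclic by Theorem \ref{thc3}) and the difference of the two cones is the acyclic two-term complex $({\cal M}^{(3)}_5 \xrightarrow{\partial} \ker/\ldots) \to (V^{(3)}_5\xrightarrow{d} dV^{(3)}_5)$, which is an isomorphism of complexes by construction. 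Hence the cone of $\widetilde\psi^{(3)}_\bullet$ is acyclic, i.e.\ $\widetilde\psi^{(3)}_\bullet$ is a quasiisomorphism.

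The main obstacle I anticipate is the orientation/sign bookkeeping in establishing that $\widetilde\psi^{(3)}_5$ is well-defined (the dihedral invariance), together with making the mapping-cone comparison in b) precise enough — one must be careful that the truncation $V^{(3)}_\bullet/dV^{(3)}_5$ in Theorem \ref{thc3} is exactly compensated by promoting ${\rm M}^{(3)}_4 = \mathrm{Coker}(\partial)$ back to the two-term presentation ${\cal M}^{(3)}_5 \to {\cal M}^{(3)}_4$. Everything else (commutativity of squares, injectivity, the identification of $A_3$-cells with extended bases) is either routine or already contained in the proofs of Theorems \ref{thc3} and \ref{thc3a.}. I expect the total argument to be short once the degree-$5$ compatibility is phrased as "the shuffle relation equals $\partial\{e_1,e_2,e_3,e_4\}_5$ on the nose, and $\widehat\psi^{(3)}_4$ of it equals $d$ of the top Voronoi cell."
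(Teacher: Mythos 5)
Your proposal is correct and follows essentially the same route as the paper: define $\widetilde\psi^{(3)}_5$ on generators by the top $A_3$ Voronoi cell $\varphi(e_1,e_2,e_3,e_4,f_{12},f_{23})$, get commutativity of the new square from the already-proved identity "second shuffle relation $\mapsto d$ of that cell" in Theorem \ref{thc3}(a), use Voronoi's theorem for surjectivity and (easy) injectivity in degree $5$, and then reduce part b) to the quasiisomorphism of Theorem \ref{thc3}(a). The paper's own proof is just a terser version of exactly this comparison.
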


\begin{proof} Setting $\widetilde  \psi^{(3)}_5: \{e_1,e_2,e_3,e_4\}_5 \lra 
\varphi(e_1,e_2,e_3, e_4, f_{12}, f_{23})$ we get a well defined group homomorphism 
$\widetilde  \psi^{(3)}_5: {\cal M}^{(3)}_{5} \lra V^{(3)}_{5}$. It is surjective by Voronoi's
 theorem, and it is very easy to see that it is injective. The rest follows from the part a) of 
\ref{thc2}.  \end{proof}

\begin{corollary} \label{3.21.01.3}
The map $\widetilde \partial: {\cal M}^{(3)}_5 \lra {\cal M}^{(3)}_4$ is injective.
\end{corollary}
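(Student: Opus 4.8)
The statement to be proved is Corollary \ref{3.21.01.3}: the map $\widetilde\partial\colon {\cal M}^{(3)}_5 \lra {\cal M}^{(3)}_4$ is injective. The plan is to extract this directly from Theorem \ref{RESA}, which asserts that $\widetilde\psi^{(3)}_\bullet\colon {\cal M}^{(3)}_\bullet \lra V^{(3)}_\bullet$ is an injective morphism of complexes that is moreover a quasi-isomorphism. The key observation is that in the complex $V^{(3)}_\bullet$ the group $V^{(3)}_5$ sits in the \emph{top} degree, so the map $d\colon V^{(3)}_5 \lra V^{(3)}_4$ has no incoming differential; hence $V^{(3)}_5$ contains no boundaries and $H_5(V^{(3)}_\bullet) = \ker\bigl(d\colon V^{(3)}_5 \to V^{(3)}_4\bigr)$. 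The same is true on the modular side: ${\cal M}^{(3)}_5$ is the top group of ${\cal M}^{(3)}_\bullet$, so $H_5({\cal M}^{(3)}_\bullet) = \ker\bigl(\widetilde\partial\colon {\cal M}^{(3)}_5 \to {\cal M}^{(3)}_4\bigr)$.

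\textbf{Main steps.} First I would record that $\widetilde\psi^{(3)}_5\colon {\cal M}^{(3)}_5 \lra V^{(3)}_5$ is injective. This is already contained in the proof of Theorem \ref{RESA}: the generators $\{e_1,e_2,e_3,e_4\}_5$ map to the Voronoi $5$-simplices $\varphi(e_1,e_2,e_3,e_4,f_{12},f_{23})$, the only relations imposed on the source are the dihedral symmetry relations, and these are exactly the symmetries of the corresponding oriented Voronoi cell; distinct dihedral classes of extended bases give distinct (up to orientation) top cells, so the map is injective on generators modulo relations. Second, suppose $x \in \ker\widetilde\partial$. Then $\widetilde\psi^{(3)}_5(x)$ is a cycle in $V^{(3)}_\bullet$ of degree $5$, i.e. $d\,\widetilde\psi^{(3)}_5(x) = \widetilde\psi^{(3)}_4(\widetilde\partial x) = 0$ by commutativity of the diagram in Theorem \ref{RESA}. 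Third, I invoke the quasi-isomorphism: since $\widetilde\psi^{(3)}_\bullet$ induces an isomorphism on homology and is a \emph{monomorphism} of complexes, one checks that it must be injective on cycles in each degree — indeed if $\widetilde\psi^{(3)}_5(x)$ were a boundary in $V^{(3)}_\bullet$ it would be $0$ (top degree has no boundaries), so $[\widetilde\psi^{(3)}_5(x)] = [\widetilde\psi^{(3)}_5(x)]$ is the homology class, and by the quasi-isomorphism it equals the image of $[x] \in H_5({\cal M}^{(3)}_\bullet)$; but $\widetilde\psi^{(3)}_5$ injective plus $\widetilde\psi^{(3)}_5(x)=0 \Rightarrow x=0$ would already finish it. So really the cleanest route is: $x \in \ker\widetilde\partial$, hence $x$ is a $5$-cycle, hence $[x] \in H_5({\cal M}^{(3)}_\bullet)$; apply the quasi-isomorphism to see $[x] \mapsto [\widetilde\psi^{(3)}_5(x)]$; use that $\widetilde\psi^{(3)}_5$ is injective and that degree-$5$ cycles in both complexes equal the respective homology groups to conclude $x = 0$.

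\textbf{Alternative, more elementary route.} Actually the injectivity of $\widetilde\partial$ can also be read off without the quasi-isomorphism, purely from the construction: by Theorem \ref{RESA}(a) the square
\[
\begin{array}{ccc}
{\cal M}^{(3)}_5 & \xrightarrow{\ \widetilde\partial\ } & {\cal M}^{(3)}_4\\[4pt]
\downarrow \widetilde\psi^{(3)}_5 & & \downarrow \widetilde\psi^{(3)}_4\\[4pt]
V^{(3)}_5 & \xrightarrow{\ d\ } & V^{(3)}_4
\end{array}
\]
commutes, with $\widetilde\psi^{(3)}_5$ injective and $d$ restricted to $V^{(3)}_5$ injective (the top Voronoi differential is injective, since a nonzero chain of top-dimensional cells cannot have zero boundary in the contractible symmetric space $\Bbb H_{{\rm L}_3}$). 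Hence $d\circ\widetilde\psi^{(3)}_5 = \widetilde\psi^{(3)}_4\circ\widetilde\partial$ is injective, which forces $\widetilde\partial$ to be injective.

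\textbf{Expected obstacle.} The only non-routine point is justifying that the top Voronoi differential $d\colon V^{(3)}_5 \to V^{(3)}_4$ is injective (equivalently $H_5(V^{(3)}_\bullet)=0$, or that a $5$-chain supported on top cells has no nonzero cycle). This follows from the fact that $\Bbb H_{{\rm L}_3}$ is contractible of dimension $d_3 = 5$ and has no compactly supported homology in the top degree (or, concretely, from Voronoi's theorem describing the cells together with the standard fact that the Voronoi complex computes the Borel–Moore homology of the contractible space, which vanishes above the middle) — but in any case Theorem \ref{RESA}(b) already gives $H_5({\cal M}^{(3)}_\bullet) \cong H_5(V^{(3)}_\bullet)$, and combined with the injectivity of $\widetilde\psi^{(3)}_5$ this is enough; this is why I would phrase the final argument via the quasi-isomorphism rather than via an independent vanishing statement.
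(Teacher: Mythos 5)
Your ``alternative, more elementary route'' is precisely the paper's proof: the paper deduces the corollary from Theorem \ref{RESA} together with the observation that the corresponding differential $d\colon V^{(3)}_5 \to V^{(3)}_4$ in the Voronoi complex is injective, so that $\widetilde\psi^{(3)}_4\circ\widetilde\partial = d\circ\widetilde\psi^{(3)}_5$ is injective and hence so is $\widetilde\partial$. So the proposal is correct, and in its second formulation it takes the same route as the paper.

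Two cautions about the rest of your write-up. First, your primary route via the quasi-isomorphism does not close on its own: knowing only that $\widetilde\psi^{(3)}_\bullet$ is an injective quasi-isomorphism and that $x\in\ker\widetilde\partial$ gives you a class $[x]\in H_5({\cal M}^{(3)}_\bullet)\cong H_5(V^{(3)}_\bullet)$, but you cannot conclude $x=0$ unless you also know $H_5(V^{(3)}_\bullet)=0$ --- which is exactly the injectivity of $d$ on $V^{(3)}_5$ that the commutative-square argument uses. So the ``expected obstacle'' is not optional in the first route; it is the whole content, and your remark that ``Theorem \ref{RESA}(b) combined with injectivity of $\widetilde\psi^{(3)}_5$ is enough'' is not correct as stated. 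Second, your justification of that obstacle misstates the topology: the Borel--Moore homology of the contractible $5$-dimensional space ${\Bbb H}_{{\rm L}_3}$ does \emph{not} vanish in the top degree (it is generated by the fundamental class). What saves the argument is that the fundamental class is an infinite, locally finite chain, whereas $V^{(3)}_5$ consists of finite linear combinations of top cells; a nonzero finite chain of top-dimensional cells always has a surviving interior $4$-face on the frontier of its support, so $d$ is injective on $V^{(3)}_5$. With that correction the argument is complete and agrees with the paper.
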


\begin{proof}  Follows from  Theorem \ref{RESA} since the 
similar map $\widetilde \partial: V^{(3)}_5 \lra V^{(3)}_4$ in the Voronoi complex is injective. \end{proof}

\paragraph{4. Miscellenious remarks.} Below we collect some 
  geometric information which should help to visualize the 
relationship between the double shuffle relations for the lattice ${\rm L_3}$ and the 
geometry of the Voronoi complex for ${\rm GL}_3$. 

i) A   generic  $3$-cell   is contained in  three $5$-simplices. 
For the $3$-cell $\varphi(e_1,e_2, e_3, e_4)$ these are 
\begin{equation} \label{three}
\varphi(e_1,e_2, e_3, e_4, f_{12},f_{23}), \quad \varphi(e_2, e_1, e_3, e_4, f_{21},f_{13}), \quad  \varphi(e_2, e_3, e_1, e_4, f_{23},f_{31}). 
\end{equation}
A $5$-simplex containing  a generic  $3$-cell $\varphi(e_1, e_2, e_3, e_4)$ 
is determined by the dihedral order of the vectors 
$e_1, e_2, e_3, e_4$.

ii) Elements (\ref{*})
 are in bijection with the pairs 
\be \la{00}
\{ \mbox{generic $3$-cell, a $5$-cell containing it}\}.
\ee   
Given a pair $(C_3, C_5)$ as in (\ref{00}), the right hand side of (\ref{*}) is the sum of the $4$-cells $C_4$ such that   $C_3 \subset C_4 \subset C_5$. 

The first shuffle relation means that the sum of the elements  of type (\ref{*})  corresponding to  $5$-cells
 containing a given generic $3$-cell is zero.

The sum of  elements (\ref{*}) corresponding to the generic 
$3$-cells (\ref{SSREL}) of a given $5$-simplex is the second shuffle relation. 
It is the boundary of that $5$-simplex. 

iii) Recall  the set  $E({\rm L_m})$ of all unordered 
extended basis  of the lattice ${\rm L_m}$. 

Let $T^{\bullet}_{(m)}\{e_0,e_1, ... , e_m\}$ be the complex of trees with 
$m+1$ legs colored by an extended basis
$e_0,e_1, ... , e_m$. Let $\overline T^{\bullet}_{(3)}\{e_0,e_1, ... , e_m\}$ 
be the quotient of this complex by the subgroup 
${\cal C}{\cal L}ss_{m+1}^* \subset T^{0}_{(m)}\{e_0,e_1, ... , e_m\}$. 
So it is acyclic by Theorem \ref{GKap}.

\begin{corollary} \label{kker}
One has
\begin{equation} \label{cocker}
{\rm Coker }(\psi^{\bullet}_{(3)}) =  \prod_{\{e_0, ..., e_3\} \in E({\rm L_3})} \overline 
T^{\bullet}_{(3)}\{e_0, ..., e_3\}. 
\end{equation} 
\end{corollary}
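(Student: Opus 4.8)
\textbf{Proof proposal for Corollary \ref{kker}.}

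The plan is to identify the cokernel of $\psi^{\bullet}_{(3)}$ cell by cell, using the explicit description of the Voronoi complex for ${\rm GL}_3$ obtained in Theorem \ref{thc3a.} together with the resolution in Theorem \ref{RESA}. First I would recall that $\psi^{\bullet}_{(3)}$ arises from restricting the isomorphism $\tau^{\bullet}_{(3)}: {\Bbb F}^{\bullet}_{(3)} \stackrel{\sim}{\lra} s^{\geq 1}{\Bbb V}^{\bullet}_{(3)}$ to the subcomplex of colored forests coming from the modular complex; equivalently, $\psi^{\bullet}_{(3)}$ factors as ${\rm M}^{(3)}_{\bullet} \hookrightarrow F^{\bullet}_{(3)} \stackrel{\tau}{\lra} V^{(3)}_{\bullet}$. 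Since $\tau^{\bullet}_{(3)}$ is an isomorphism by Theorem \ref{thc3a.}, the cokernel of $\psi^{\bullet}_{(3)}$ is canonically the cokernel of the inclusion ${\rm M}^{(3)}_{\bullet} \hookrightarrow F^{\bullet}_{(3)}$, which is a purely combinatorial object. Because both the modular complex ${\rm M}^{\bullet}_{(3)}$ and $F^{\bullet}_{(3)}$ decompose, degree by degree, as direct sums indexed by unordered extended bases $\{e_0,\ldots,e_3\} \in E({\rm L}_3)$ (using isomorphism (\ref{3.28.01.2}) and the analogous decomposition of $F^{\bullet}_{(3)}$), the cokernel also decomposes over $E({\rm L}_3)$, and the summand attached to a fixed extended basis is exactly the quotient of the tree complex $T^{\bullet}_{(3)}\{e_0,\ldots,e_3\}$ by the image of ${\cal C}{\cal L}ie^*_{4}\{e_0,\ldots,e_3\}$ sitting inside $T^0_{(3)}$ via $w^*$.

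Next I would carry out the bookkeeping that matches degrees. The modular complex in the relevant range is generated by the elements $[e_1,e_2,e_3]$ (equivalently $\langle e_0,e_1,e_2,e_3 \rangle$), which under $w^*_{(3)}$ land in $T^1_{(3)}\{e_0,\ldots,e_3\} = {\Bbb F}^1_{(3)}$; this is exactly ${\cal C}{\cal L}ie^*_{4}$ by (\ref{MB}). The forest complex $F^{\bullet}_{(3)}$ in these degrees is the tree complex $T^{\bullet}_{(3)}\{e_0,\ldots,e_3\}$ (when restricted to connected forests with one component) extended downward by the $d_L$-differential. The point is that the inclusion $w^{\bullet}_{(3)}: \widehat {\rm M}^{\bullet}_{(3)} \hookrightarrow F^{\bullet}_{(3)}$ is, after passing to $E({\rm L}_3)$-components, the inclusion ${\cal C}{\cal L}ie^*_4\{e_0,\ldots,e_3\} \hookrightarrow T^{\bullet}_{(3)}\{e_0,\ldots,e_3\}$ appearing in the Ginzburg--Kapranov complex (\ref{GKap1}). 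Quotienting gives precisely $\overline T^{\bullet}_{(3)}\{e_0,\ldots,e_3\}$ as defined in item iii). Taking the product (or sum --- only finitely many terms are relevant after tensoring with a $\Gamma$-module, but here we work at the level of ${\rm GL}_3(\Z)$-modules) over all unordered extended bases yields the formula (\ref{cocker}). I should be careful here that the modular complex ${\rm M}^{(3)}_{\bullet}$, not just the relaxed one $\widehat {\rm M}^{(3)}_{\bullet}$, is being used: the difference is the second shuffle relations in degree $4$, and these are precisely killed in $F^{\bullet}_{(3)}$ as well (they are boundaries of Voronoi $5$-simplices, as shown in the proof of Theorem \ref{thc3}), so the quotient is unaffected --- this is the one place requiring a small verification.

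The main obstacle I anticipate is not conceptual but organizational: one must check that the identification of cokernels is compatible with the ${\rm GL}_3(\Z)$-action and with the differentials on both sides, i.e. that the short exact sequence $0 \to {\rm M}^{(3)}_{\bullet} \to F^{\bullet}_{(3)} \to \bigoplus_{E({\rm L}_3)} \overline T^{\bullet}_{(3)} \to 0$ is a sequence of complexes, and that under $\tau^{\bullet}_{(3)}$ this matches $0 \to {\rm M}^{(3)}_{\bullet} \to V^{(3)}_{\bullet} \to {\rm Coker}(\psi^{\bullet}_{(3)}) \to 0$. The differential-compatibility follows because $\tau^{\bullet}_{(3)}$ and $w^{\bullet}_{(3)}$ are maps of complexes (Theorem-Construction \ref{TTCC} and Theorem \ref{TTCCqq}), and the ${\rm GL}_3(\Z)$-equivariance is built into all these constructions. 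Once this is in place, (\ref{cocker}) is immediate; in particular, since each $\overline T^{\bullet}_{(3)}\{e_0,\ldots,e_3\}$ is acyclic by Theorem \ref{GKap}, one recovers that $\psi^{\bullet}_{(3)}$ is a quasiisomorphism, consistent with Theorem \ref{thc3}.
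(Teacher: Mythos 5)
Your overall route --- transport the computation through the isomorphism of Theorem \ref{thc3a.}, decompose over unordered extended bases using (\ref{3.28.01.2}), and identify each summand with $\overline T^{\bullet}_{(3)}$ via the Ginzburg--Kapranov exactness (Theorem \ref{GKap}) --- is indeed the mechanism behind the corollary. But the reduction you base it on is flawed at exactly the point you flag as a ``small verification''. There is no inclusion ${\rm M}^{(3)}_{\bullet}\hookrightarrow F^{\bullet}_{(3)}$ and the map $\psi^{\bullet}_{(3)}$ does not factor through $F^{\bullet}_{(3)}$: only the relaxed complex $\widehat{\rm M}^{\bullet}_{(3)}$ sits inside $F^{\bullet}_{(3)}$ (via $w^{\bullet}_{(3)}$, which is injective), and the second shuffle relations are therefore \emph{not} killed in $F^{\bullet}_{(3)}$ --- they are nonzero elements of $\widehat{\rm M}^1_{(3)}\subset F^1_{(3)}$. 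They only become irrelevant after passing to the Voronoi side, because the target of $\psi^{\bullet}_{(3)}$ in Theorem \ref{thc3} is $V^{(3)}_{\bullet}/dV^{(3)}_5$, not $V^{(3)}_{\bullet}$. Relatedly, $\tau^{\bullet}_{(3)}$ identifies the rank $3$ forest complex only with the span of the interior cells of dimension $\leq 4$ (i.e.\ $V^{(3)}_4\oplus V^{(3)}_3\oplus V^{(3)}_2$); the $5$-cells never come from rank $3$ forests, so your short exact sequence $0\to{\rm M}^{(3)}_{\bullet}\to F^{\bullet}_{(3)}\to\bigoplus\overline T^{\bullet}_{(3)}\to 0$ is not defined, and the quotient $dV^{(3)}_5$ is left unaccounted for.

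The repair is short but must be done where the geometry lives. The correct exact sequence is $0\to\widehat{\rm M}^{\bullet}_{(3)}\to F^{\bullet}_{(3)}\to\bigoplus_{E({\rm L_3})}\overline T^{\bullet}_{(3)}\to 0$, obtained from the decomposition of $F^{\bullet}_{(3)}$ by the number of connected components (as in the proof of Theorem \ref{TTCCqq}): the disconnected forests are exactly the image of $\widehat{\rm M}^{\geq 2}_{(3)}$, and the connected part per unordered extended basis is $T^{\bullet}_{(3)}$ with $\widehat{\rm M}^1_{(3)}={\cal C}{\cal L}ie^*_{4}$ mapped in by $w^*$. Then ${\rm Coker}(\psi^{\bullet}_{(3)})$ coincides with $F^{\bullet}_{(3)}/w(\widehat{\rm M}^{\bullet}_{(3)})$ because, in the top degree, ${\rm Coker}(\psi)=V^{(3)}_4/\bigl(dV^{(3)}_5+\widehat\psi_4(\widehat{\rm M}^{(3)}_4)\bigr)$ and one has $dV^{(3)}_5\subseteq\widehat\psi_4(\widehat{\rm M}^{(3)}_4)$: by Voronoi's theorem every $5$-cell is of the form $\varphi(e_1,e_2,e_3,e_4,f_{12},f_{23})$, and the computation in the proof of Theorem \ref{thc3} shows its boundary is precisely $\widehat\psi_4$ of the corresponding second shuffle element. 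So imposing the second shuffle relations on the source and quotienting by $dV^{(3)}_5$ on the target changes neither the image nor the cokernel, and (\ref{cocker}) follows. In other words, the fact you cite is the right one, but it proves that the boundaries of the $5$-cells lie in the image of $\widehat\psi_4$, not that the shuffle relations vanish in $F^{\bullet}_{(3)}$; as written, your argument would equally ``prove'' that $\widehat{\rm M}^{\bullet}_{(3)}$ and ${\rm M}^{\bullet}_{(3)}$ have the same image in $F^{\bullet}_{(3)}$ as subcomplexes, which is meaningless since the latter does not map there.
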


iv) {\it The double shuffle relations and projective duality}. Consider the configuration 
formed by $6$ points corresponding to the vectors $e_1,e_2,e_3,e_4,f_{12}, f_{23}$ and $6$ lines through them, see Figure \ref{mp2}. It is selfdual with respect to the projective duality. 
\begin{figure}[ht]
\centerline{\epsfbox{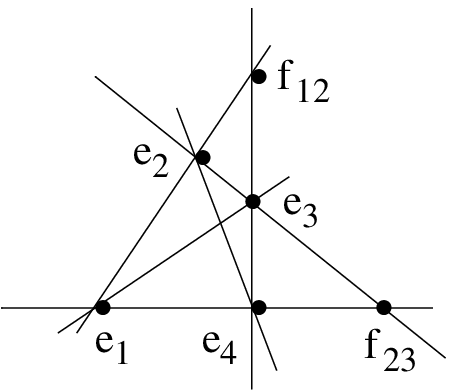}}
\caption{    } 
\label{mp2}
\end{figure}

The first shuffle relation corresponds to the three different $4$-gons with the given 
$4$ vertices, say $e_1, e_2,e_3,e_4$, which can be obtained using these lines, see Figure \ref{mp3}: 
\begin{figure}[ht]
\centerline{\epsfbox{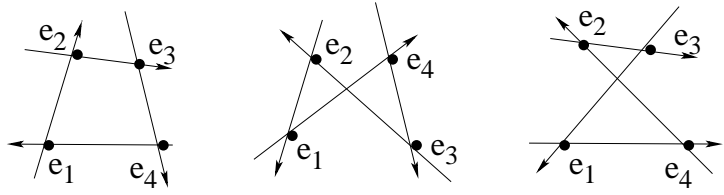}}
\caption{The first shuffle relation.} 
\label{mp3}
\end{figure}
$$
\langle e_1,e_2,e_3,e_4 \rangle  + \langle e_2,e_1,e_3,e_4 \rangle  + \langle e_2,e_3,e_1,e_4 \rangle.  
$$
   The second shuffle relation corresponds to the  
three different $4$-gons one can get 
using some of the six vertices and four given lines, see Figure \ref{mp4}.
\begin{figure}[ht]
\centerline{\epsfbox{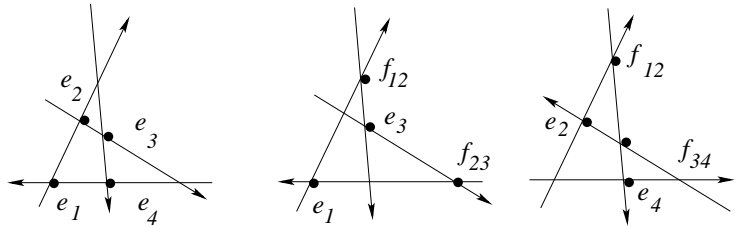}}
\caption{The second shuffle relation.} 
\label{mp4}
\end{figure}
$$
\langle e_1,e_2,e_3,e_4 \rangle  + \langle -e_1,f_{12},e_3, f_{14} \rangle  + \langle e_2,-f_{12},-e_4, f_{14} \rangle  
$$
So in a sense the projective duality interchanges the shuffle relations. 

v) {\it The dual Voronoi complex}. Consider the cell complex $\widehat {\cal H}_{{\rm L}_m}$ dual to the Voronoi cell decomposition. It's dimension is 
$d_m - (m-1)$; the cells of dimension $i$ are dual to the cells of codimension $i$ in the Voronoi complex. The symmetric space space ${\Bbb H}_{{\rm L}_m}$ can be retracted onto it.

\paragraph{Example.} For ${\rm GL}_2$ we have the famous plane $3$-valent tree, dual to the modular triangulation. 

For ${\rm GL}_3$ we get a $3$-dimensional complex, see \cite{S}.

The corank zero $3$-cells of the Voronoi complex correspond to the triangles in the dual Voronoi cell complex $\widehat {\cal H}_{{\rm L}_m}$. An element (\ref{*}) corresponds to the $1$-chain given by the
 difference of the two edges of the triangle. 
There are three such $1$-chains attached to a given triangle. Their sum is zero:
this corresponds precisely to the first shuffle relation.

\subsection{Modular and Voronoi complexes for ${\rm GL}_4$} \la{SEC6}

\paragraph{1. The homomorphism $\widehat \psi^{(4)}_6$.}
 The configuration of  points in $P^3$ 
corresponding to the roots of the root system $A_4$ is shown on Figure \ref{mls}:
\begin{figure}[ht]
\centerline{\epsfbox{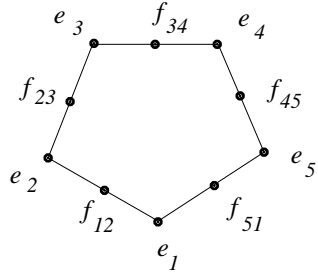}}
\caption{The projectivised configuration of  the roots of the root system $A_4$.} 
\label{mls}
\end{figure}
Here $e_1,...,e_5$ is an extended basis of the lattice ${\rm {\rm L_4}}$, so  $e_1+...+e_5 = 0$ 
 and $e_1,...,e_4$ is a basis.

Recall the map $\widehat \psi_6^{(4)}$, see (\ref{FFF}) and (\ref{Mappsi}),  defined by 
$$
\widehat \psi_6^{(4)}:   \langle  e_1,...,e_5\rangle  := [e_1,e_2,e_3,e_4]  \lms   
{\rm Cycle}_5    \varphi(e_1,...,e_5, f_{12}, f_{34}). 
$$

We will employ the following notations (compare with Section  \ref{Sec2}):
\be  \nonumber
\begin{split} &[u_1:u_2:u_3:u_4]:=   \langle u_1: u_2:u_3:u_4:0\rangle, \\
&s[u_1: ... : u_p|u_{p+1}: ... :u_m]:=  
s(u_1: ... : u_p|u_{p+1}:... :u_m:0).\\
\end{split}
\ee
The  second  shuffle relations for ${\rm GL}_4$ are given by
\be \label{simpler1}
\begin{split}
&s[u_1|u_2:u_3:u_4]:= [u_1:u_2:u_3:u_4] + [u_2:u_1:u_3:u_4] +
  [u_2:u_3:u_1:u_4] + [u_2:u_3:u_4:u_1].\\
  \end{split}
\end{equation}
and
\be \label{simpler}
\begin{split}
&s[u_1:u_2|u_3:u_4]:= \\
&[u_1:u_2:u_3:u_4] + [u_1:u_3:u_2:u_4] + [u_1:u_3:u_4:u_2] +\\
&[u_3:u_1:u_2:u_4]  + [u_3:u_1:u_4:u_2] + [u_3:u_4:u_1:u_2]. \\
\end{split}
\ee

We call the relation $[u_1: u_2: u_3: u_4] + [u_4: u_3: u_2: u_1] =0 $ 
the {\it reflection symmetry}.

\begin{lemma} \label{4more} One has 
\be \label{4m}
\begin{split}
&s[u_1:u_2|u_3:u_4] =s[u_3|u_1: u_2: u_4]  + s[u_1|u_3: u_4: u_2] - [u_1: u_2: u_4: u_3] - [u_3: u_4: u_2: u_1].\\
\end{split}
\end{equation}
Therefore in the case $n=4$ the shuffle relation of type $(2,2)$ 
is equivalent to the reflection symmetry modulo the shuffle relation of type $(1,3)$. 
\end{lemma}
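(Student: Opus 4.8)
\textbf{Proof proposal for Lemma \ref{4more}.}

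The plan is to verify the identity (\ref{4m}) by a direct symbolic manipulation of the six generators $[u_{\sigma(1)}:u_{\sigma(2)}:u_{\sigma(3)}:u_{\sigma(4)}]$ appearing in $s[u_1:u_2|u_3:u_4]$, using only the cyclic and reflection symmetry of the homogeneous generators $[a:b:c:d] = \langle a:b:c:d:0\rangle$. Recall from Theorem \ref{d3} (dihedral symmetry) that, writing $[a:b:c:d]$ in the homogeneous notation, the generator is invariant under the full cyclic rotation of the five slots $(a,b,c,d,0)$ and changes by the sign $(-1)^{m+1}=(-1)^5=1$ under reversal, i.e. $[a:b:c:d] = \langle 0:a:b:c:d\rangle$ and $[a:b:c:d] = [d:c:b:a]$ once one accounts for the trailing $0$. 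So each of the three terms $s[u_i|u_j:u_k:u_l]$ of type $(1,3)$ on the right of (\ref{4m}) is, by (\ref{simpler1}), a sum of four generators.

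First I would expand the left-hand side $s[u_1:u_2|u_3:u_4]$ explicitly as the six terms listed in (\ref{simpler}). Then I would expand $s[u_3|u_1:u_2:u_4]$ and $s[u_1|u_3:u_4:u_2]$ using (\ref{simpler1}), obtaining $4+4 = 8$ generators, and subtract the two extra correction terms $[u_1:u_2:u_4:u_3]$ and $[u_3:u_4:u_2:u_1]$. The claim is that after rewriting each of the resulting ten generators in a normal form — say, cyclically rotating so that the trailing symbol $0$ is in the last slot and $u_1$ (or the smallest available index) is first, using the rotation invariance of $\langle\,\cdot\,\rangle$ — the eight terms from the two $(1,3)$-shuffles minus the two corrections collapse exactly onto the six terms of the left-hand side. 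This is a finite check: there are only $4!=24$ orderings of $\{u_1,u_2,u_3,u_4\}$, and each generator $[u_a:u_b:u_c:u_d]$ has a $5$-element rotation orbit once the $0$ is inserted, so every term has a canonical representative and the cancellation can be carried out by bookkeeping.

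The second sentence of the lemma — that for $n=4$ the type $(2,2)$ shuffle is equivalent, modulo type $(1,3)$ shuffles, to the reflection symmetry $[u_1:u_2:u_3:u_4]+[u_4:u_3:u_2:u_1]=0$ — then follows immediately: in the relaxed modular complex $\widehat{\rm M}^{1}_{(4)}$ we impose the first shuffle relations $s[u_i|\cdots]=0$ of type $(1,3)$, so (\ref{4m}) reduces to $s[u_1:u_2|u_3:u_4] \equiv -[u_1:u_2:u_4:u_3]-[u_3:u_4:u_2:u_1]$, and by the dihedral (rotation) symmetry $[u_3:u_4:u_2:u_1] = [u_1:u_3:u_4:u_2]$ (rotate the five-slot word), so the right side is a single instance of $-([a:b:c:d]+[d:c:b:a])$ up to a type $(1,3)$ term; hence $s[u_1:u_2|u_3:u_4]$ vanishes iff the reflection symmetry holds, modulo type $(1,3)$ relations. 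I would phrase this last step carefully so the reader sees which rotations are being applied.

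The main obstacle I anticipate is purely organizational rather than conceptual: keeping consistent track of the implicit trailing $0$ and of which cyclic rotation puts each of the $\sim 10$ generators into the chosen normal form, so that the telescoping is transparent. There is no deep input needed beyond Theorem \ref{d3}; the risk is a sign or indexing slip in the reduction, so I would display the normal-form table of all terms on both sides before asserting the cancellation. Once that table is in place, both assertions of the lemma are immediate.
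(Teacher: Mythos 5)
Your overall strategy — expand both sides and compare term by term — is exactly the paper's proof, which is simply declared ``straightforward''; so the approach is fine. However, two points in your write-up need correcting. First, no dihedral/rotational normalization is needed at all: expanding $s[u_3|u_1:u_2:u_4]+s[u_1|u_3:u_4:u_2]$ gives eight generators, and after removing $[u_1:u_2:u_4:u_3]$ and $[u_3:u_4:u_2:u_1]$ the remaining six symbols are \emph{literally} the six summands of $s[u_1:u_2|u_3:u_4]$, so (\ref{4m}) is an identity in the free abelian group on the ordered symbols $[u_{\sigma(1)}:u_{\sigma(2)}:u_{\sigma(3)}:u_{\sigma(4)}]$, before any relations are imposed. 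Building the verification on Theorem \ref{d3} and on a ``normal form with the trailing $0$'' only adds room for error.

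Second, the rotation identity you invoke in the last step, $[u_3:u_4:u_2:u_1]=[u_1:u_3:u_4:u_2]$, is false: cyclically rotating the five-slot word $\langle u_3:u_4:u_2:u_1:0\rangle$ moves the $0$ out of the last slot, and bringing it back requires translating all entries (the homogeneous generators are only defined up to a common shift), which changes the $u_i$; so a rotation does not produce $[u_1:u_3:u_4:u_2]$. Fortunately the step is unnecessary: $(u_3,u_4,u_2,u_1)$ is precisely the reversal of $(u_1,u_2,u_4,u_3)$, so the two correction terms already form one reflection pair $[a:b:c:d]+[d:c:b:a]$ with $(a,b,c,d)=(u_1,u_2,u_4,u_3)$ on the nose. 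With that repair, your deduction that modulo the type $(1,3)$ shuffles the $(2,2)$ shuffle relation is equivalent to the reflection symmetry is correct.
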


\begin{proof} Straithforward. \end{proof}

According to Theorem-Construction  \ref{15} there exists 
a morphism of complexes
\begin{displaymath} 
    \xymatrix{
      \widehat {\rm M}^{(4)}_{6}\ar[r]^{\partial}  \ar[d]^{\widehat  \psi^{(4)}_6} &     \widehat {\rm M}^{(4)}_{5} \ar[r]^{\partial} \ar[d]^{\widehat  \psi^{(4)}_5} & 
  \widehat {\rm M}^{(4)}_{4} \ar[r]^{\partial}  \ar[d]^{\widehat \psi^{(4)}_4} & \widehat {\rm M}^{(4)}_{3} \ar[d]^{\widehat  \psi^{(4)}_3} \\     
V^{(4)}_{6}\ar[r]^{d}&V^{(4)}_{5}\ar[r]^{d} &V^{(4)}_{4}\ar[r]^{d} & V^{(4)}_{3}\\}
\end{displaymath}
Let us set   in (\ref{simpler1}) 
$$
e_1:= u_2-u_1, \quad e_2:= u_3-u_2, \quad e_3:= u_4-u_3, \quad e_4:= -u_4.
$$
 Then we write it as  
\begin{equation} \label{bvbva}
\begin{split}
&\langle e_1, e_2, e_3, e_4, e_5\rangle + \langle -e_1, f_{12}, e_3, e_4, f_{51}\rangle +  
 \langle e_2, -f_{12}, -f_{45}, e_4, f_{15}\rangle 
+ \langle e_2, e_3, f_{45}, -e_5, f_{15}\rangle.\\
\end{split} 
\ee

\paragraph{2. The complex ${\cal M}^{(4)}_{\bullet}$.}
To relate   modular and Voronoi complexes of the rank $4$ we  introduce the following  complex, which, as we prove below, 
  is quasiisomorphic to both the truncated rank $4$ Voronoi complex $\tau_{[6,3]}V_{\bullet}^{(4)}$ and the rank $4$ modular complex:
\begin{equation}  \la{bvbv}
{\cal M}^{(4)}_{6}/d{\cal M}^{(4)}_{7} \stackrel{\widetilde \partial}{\lra}  
{\cal M}^{(4)}_{5} \stackrel{\widetilde \partial}{\lra}
{\cal M}^{(4)}_{4} \stackrel{\widetilde \partial}{\lra}  {\cal M}^{(4)}_{3}. 
\end{equation}
Namely, we set   
\begin{equation}  \label{bvb}
\begin{split}
&{\cal M}^{(4)}_{i}:= \widehat {\rm M}^{(4)}_i, ~~3 \leq i \leq 5,\\
& {\cal M}^{(4)}_{6}:= \widehat {\rm M}^{(4)}_6 \bigoplus 
\bigoplus_{(L_3, w)} {\cal M}^{(3)}_{5}(L_3) \wedge [w].\\
\end{split}
\end{equation}
The sum in (\ref{bvb}) is over all decompositions of the lattice ${\rm {\rm L_4}}$ 
into a direct sum $L_3 \oplus \langle w\rangle $ of 
lattices of ranks $3$ and $1$; $w$ is a generator of the rank $1$ lattice $\langle w\rangle$.

Let us define a differential
$
\widetilde \partial: {\cal M}^{(4)}_{6} \lra {\cal M}^{(4)}_{5}
$.  
Its restriction to $\widehat {\rm M}^{(4)}_6 $ is the map 
$\partial : \widehat {\rm M}^{(4)}_6  \lra \widehat M^{(4)}_5 $ 
defined above, and the restriction to 
$ {\cal M}^{(3)}_{5}(L_3) \wedge [w]$ is $\partial \wedge {\rm Id}$.

\bd The subgroup  $d{\cal M}^{(4)}_{7} \subset {\cal M}^{(4)}_6 $ is generated by the following 
  elements (\ref{4.12.01.0})+(\ref{4.12.01.1})  when $(e_1, e_2, e_3, e_4)$ 
run through all  bases of the lattice ${\rm {\rm L_4}}$:
\be \label{4.12.01.0}
\begin{split}
&\langle e_1, e_2, e_3, e_4, e_5 \rangle  + \langle -e_1, f_{12}, e_3, e_4, f_{15}\rangle  \\
&+\langle e_2, -f_{12}, -f_{45}, e_4, f_{15}\rangle   + \langle e_2, e_3, f_{45}, -e_5, f_{15}\rangle \\
\end{split}
\end{equation}
\begin{equation} \label{4.12.01.1}
\begin{split}
&-\langle e_1, e_2, e_3, f_{45}\rangle  \wedge [e_4]
-\langle e_4, e_5, e_1, f_{23}\rangle  \wedge [e_2]\\
&-\langle e_5, e_1, e_2, f_{34}\rangle  \wedge [e_3]
-\langle e_3, e_4, e_5, f_{12}\rangle  \wedge [f_{51}]\\
&+\langle e_5, e_1, e_2,  f_{34}\rangle  \wedge [e_4]
+\langle e_3, e_4, e_5, f_{12}\rangle  \wedge [e_2]\\
&+\langle e_4, e_5, e_1, f_{23}\rangle  \wedge [e_3]
+\langle e_1, e_2, e_3,  f_{45}\rangle  \wedge [f_{51}].\\
\end{split}
\end{equation}
\ed
An element (\ref{4.12.01.0})+(\ref{4.12.01.1})  lifts the second 
shuffle relations of type $(1,3)$, see (\ref{bvbva}),  to ${\rm Ker} \widetilde \partial \subset {\cal M}^{(4)}_6$.

\begin{lemma} \label{sus1} $\tilde \partial (d{\cal M}^{(4)}_{7}) =0$.  
\end{lemma}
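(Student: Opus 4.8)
The map $\tilde\partial$ restricts to the relaxed modular differential $\partial$ of (\ref{FD}) on the summand $\widehat{\rm M}^{(4)}_6$ and to $\partial\wedge{\rm Id}$ on each summand ${\cal M}^{(3)}_5({\rm L}_3)\wedge[w]$ of ${\cal M}^{(4)}_6$. So the plan is to verify, for every basis $(e_1,\dots,e_4)$ of ${\rm L_4}$ (with $e_5:=-(e_1+\dots+e_4)$ and $f_{ij}:=e_i+e_j$), the identity
\[
\partial\big((\ref{4.12.01.0})\big)\;+\;(\partial\wedge{\rm Id})\big((\ref{4.12.01.1})\big)\;=\;0\qquad\text{in }\widehat{\rm M}^{(4)}_5,
\]
where, by the discussion following the definition, (\ref{4.12.01.0}) is the second shuffle relation of type $(1,3)$ written in the $e$-variables, see (\ref{simpler1}) and (\ref{bvbva}).

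First I would expand each of the four generators in (\ref{4.12.01.0}) by formula (\ref{FD}) with $m=4$,
\[
\partial\langle v_1,\dots,v_5\rangle=-\,{\rm Cycle}_5\big([v_1]\wedge[v_2,v_3,v_4]+[v_1,v_2]\wedge[v_3,v_4]+[v_1,v_2,v_3]\wedge[v_4]\big),
\]
and sort the resulting terms according to the ranks of the two blocks, into a \emph{rank $2\wedge$ rank $2$} part and a \emph{rank $3\wedge$ rank $1$} part. Since the correction term (\ref{4.12.01.1}) produces only rank $3\wedge$ rank $1$ contributions (the rank-$3$ differential $\partial\{a,b,c,d\}_5\in\widehat{\rm M}^{(3)}_4$, wedged with $[w]$), the rank $2\wedge$ rank $2$ part of $\partial\big((\ref{4.12.01.0})\big)$ must vanish by itself; I expect to prove this using the dihedral symmetry of Theorem \ref{d3} (valid in $\widehat{\rm M}^{(4)}$), the relation $[a,b]=-[b,a]$ for rank-$2$ blocks, and the antisymmetry of $\wedge$, cancelling in pairs the terms coming from consecutive generators of (\ref{4.12.01.0}) --- these generators are related by the substitutions $e_1\mapsto-e_1$, $f_{12}\mapsto f_{12}$, etc., that carry one term of the type-$(1,3)$ shuffle into the next. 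For the remaining part I would regroup (\ref{4.12.01.1}) by its rank-$3$ block, writing it as $\sum_j\langle A_j\rangle\wedge\big([w_j]-[w_j']\big)$ over the four rank-$3$ faces $A_j$ appearing there, and apply $\partial\wedge{\rm Id}$ using the explicit rank-$3$ differential $\partial\{a,b,c,d\}_5=[a,b,c]+[-a,a+b,c]+[b,-(a+b),d]$ (with $a+b+c+d=0$). Normalizing every rank-$3$ block to a dihedral-canonical form and using the first shuffle relations in $\widehat{\rm M}^{(3)}$, these terms should cancel exactly the rank $3\wedge$ rank $1$ part of $\partial\big((\ref{4.12.01.0})\big)$, giving the identity.

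The main obstacle is purely the bookkeeping: on the order of a hundred generators in the vectors $e_i,f_{ij}$, each carrying a sign. I would organize the computation along the geometric dictionary of Section \ref{Sec4.2}: the element $(\ref{4.12.01.0})+(\ref{4.12.01.1})$ is precisely the combination whose image under $\widehat\psi^{(4)}_6$ and the rank-$3$ realizations $\widetilde\psi^{(3)}$ is the Voronoi boundary $d\,\varphi(e_1,\dots,e_5,f_{12},f_{23})$ of the top $A_4$-cell, the terms (\ref{4.12.01.1}) supplying exactly the corank-$1$ $5$-faces that come from the corank-$0$ $3$-faces of that simplex. Thus $\tilde\partial$ of it realizes $d\big(d\,\varphi(e_1,\dots,e_5,f_{12},f_{23})\big)=0$; this identity in the Voronoi complex both motivates the definition of (\ref{4.12.01.1}) and dictates which terms must cancel in $\widehat{\rm M}^{(4)}_5$, after which the substitution above is routine.
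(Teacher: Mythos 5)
The paper's own proof is literally the words ``Direct calculation,'' and your proposal is that same calculation, organized correctly: since the $2\wedge 2$ and $3\wedge 1$ terms lie in different direct summands of $\widehat{\rm M}^{(4)}_{5}$ (indexed by the rank profile of the lattice decomposition), the $2\wedge2$ part of $\partial\bigl((\ref{4.12.01.0})\bigr)$ must vanish on its own while the $3\wedge1$ part must cancel against $(\partial\wedge{\rm Id})\bigl((\ref{4.12.01.1})\bigr)$, which is exactly what you set out to verify, with the Voronoi picture ($d\circ d=0$ on the $A_4$-cell) as the correct guide. One slip to correct before executing the bookkeeping: the differential ${\cal M}^{(3)}_5\to\widehat{\rm M}^{(3)}_4$ is $\{a,b,c,d\}_5\mapsto [a,b,c]+[-a,a+b,c]+[b,-(a+b),-d]$ (the paper's $[e_2,-f_{12},-e_4]$), not $[b,-(a+b),d]$ as you quote, and with the wrong sign the $3\wedge1$ cancellation would not close.
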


\begin{proof}  Direct calculation. \end{proof}

So we get a  map $\widetilde \partial: {\cal M}^{(4)}_{6}/d{\cal M}^{(4)}_{7} 
\to {\cal M}^{(4)}_{5}$. Other differentials are inherited from the complex  
$\widehat M^{(4)}_{\bullet}$. 

\begin{lemma} \label{il}
There is a quasiisomorphism of complexes
\begin{displaymath} 
    \xymatrix{
      {\cal M}^{(4)}_{6}/d{\cal M}^{(4)}_{7}\ar[r]^{~~~~\widetilde \partial}  \ar[d]^{p_6} &     {\cal M}^{(4)}_{5} \ar[r]^{\widetilde\partial} \ar[d]^{p_5} & 
{\cal M}^{(4)}_{4} \ar[r]^{\widetilde\partial}  \ar[d]^{p_4} &{\cal M}^{(4)}_{3} \ar[d]^{p_3} \\     
{\rm M}^{(4)}_{6}\ar[r]^{\partial}&{\rm M}^{(4)}_{5}\ar[r]^{\partial} &{\rm M}^{(4)}_{4}\ar[r]^{\partial} & {\rm M}^{(4)}_{3}\\}
\end{displaymath}
\end{lemma}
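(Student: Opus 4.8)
The plan is to write down $p_\bullet$ by hand, check it is a surjective morphism of complexes, and then show that its kernel complex is acyclic because it is concentrated in two adjacent degrees with isomorphism differential. I would take $p_3$ and $p_4$ to be the identity: in degrees $3$ and $4$ the blocks that occur have rank $\leq 2$, so by the ${\rm GL}_2$ computation the double shuffle relations are equivalent to the dihedral ones already imposed in $\widehat{\rm M}^{(4)}_\bullet$, whence ${\cal M}^{(4)}_i=\widehat{\rm M}^{(4)}_i={\rm M}^{(4)}_i$ for $i=3,4$. I would take $p_5\colon\widehat{\rm M}^{(4)}_5\to{\rm M}^{(4)}_5$ to be the natural projection killing the rank $3$ second shuffle relations occurring in the summands of type $(3,1)$ (the summands of type $(2,2)$ contribute no extra relation, since for rank $2$ the second shuffle is dihedral). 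I would take $p_6$ to be, on ${\cal M}^{(4)}_6=\widehat{\rm M}^{(4)}_6\oplus B$ with $B:=\bigoplus_{(L_3,w)}{\cal M}^{(3)}_5(L_3)\wedge[w]$, the natural projection $\widehat{\rm M}^{(4)}_6\to{\rm M}^{(4)}_6$ on the first summand and $0$ on $B$. That $p_6$ descends to the quotient by $d{\cal M}^{(4)}_7$ is immediate from the shape of the generators $(\ref{4.12.01.0})+(\ref{4.12.01.1})$: the part $(\ref{4.12.01.0})$ is a type $(1,3)$ second shuffle, hence lies in $\ker(\widehat{\rm M}^{(4)}_6\to{\rm M}^{(4)}_6)$, and $(\ref{4.12.01.1})$ lies in $B$; both are killed by $p_6$. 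Compatibility with the differentials on $\widehat{\rm M}^{(4)}_\bullet$ is just the statement that $\partial$ descends to the modular complex, and on $B$ one has $p_5\widetilde\partial(\xi\wedge[w])=p_5(\partial^{(3)}(\xi)\wedge[w])=0$ because $\partial^{(3)}(\xi)$ is a rank $3$ second shuffle relation, while $\partial p_6(\xi\wedge[w])=0$ as well; here $\partial^{(3)}\colon{\cal M}^{(3)}_5\to\widehat{\rm M}^{(3)}_4$ denotes the map from the rank $3$ section.

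Since $p_3,p_4$ are isomorphisms, $K_\bullet:=\ker p_\bullet$ vanishes in degrees $3,4$. The map $p_5$ is visibly surjective with $K_5=\bigoplus_{(L_3,w)}{\rm im}(\partial^{(3)}_{L_3})\wedge[w]$, and by Corollary \ref{3.21.01.3} the map $\partial^{(3)}_{L_3}$ is injective, so $\partial^{(3)}\wedge{\rm Id}$ identifies $B$ with $K_5$. The map $p_6$ is surjective because the composite $\widehat{\rm M}^{(4)}_6\hookrightarrow{\cal M}^{(4)}_6\twoheadrightarrow{\cal M}^{(4)}_6/d{\cal M}^{(4)}_7\stackrel{p_6}{\lra}{\rm M}^{(4)}_6$ is the canonical surjection. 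Thus $0\lra K_\bullet\lra{\cal M}^{(4)}_\bullet/d{\cal M}^{(4)}_7\stackrel{p_\bullet}{\lra}{\rm M}^{(4)}_\bullet\lra0$ is a short exact sequence of complexes, $K_\bullet$ is concentrated in degrees $5$ and $6$, and it suffices to prove that $\widetilde\partial\colon K_6\to K_5$ is an isomorphism.

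The heart of the argument is the identification $K_6\cong B$. Write the generators of $d{\cal M}^{(4)}_7$ as $g_j=a_j+b_j$ with $a_j$ of the form $(\ref{4.12.01.0})$ (a type $(1,3)$ second shuffle, in $\widehat{\rm M}^{(4)}_6$) and $b_j$ of the form $(\ref{4.12.01.1})$ (in $B$); by Lemma \ref{sus1} each $g_j$ is a $\widetilde\partial$-cycle, so $\widetilde\partial a_j=-\widetilde\partial b_j$. The claim is that $B\hookrightarrow{\cal M}^{(4)}_6\twoheadrightarrow K_6$ is bijective. For surjectivity, Lemma \ref{4more} shows that modulo the type $(1,3)$ second shuffles the type $(2,2)$ ones reduce to the reflection symmetry, which is part of the dihedral symmetry already present in $\widehat{\rm M}^{(4)}_6$; hence $A:=\ker(\widehat{\rm M}^{(4)}_6\to{\rm M}^{(4)}_6)$ is spanned by the $a_j$, so $K_6=(A\oplus B)/d{\cal M}^{(4)}_7$, and since $a_j\equiv-b_j\pmod{d{\cal M}^{(4)}_7}$ every class in $K_6$ has a representative in $B$. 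For injectivity, if $b\in B$ becomes $0$ in $K_6$ then $b=\sum_j\mu_j g_j$, and the $\widehat{\rm M}^{(4)}_6$-component of the right-hand side, namely $\sum_j\mu_j a_j$, must vanish; then $\widetilde\partial b=\sum_j\mu_j\widetilde\partial b_j=-\sum_j\mu_j\widetilde\partial a_j=0$, and since $\widetilde\partial|_B=\partial^{(3)}\wedge{\rm Id}$ is injective (Corollary \ref{3.21.01.3}), $b=0$. Under the identifications $B\cong K_6$ and $B\stackrel{\sim}{\lra}K_5$ thereby obtained, the differential $\widetilde\partial\colon K_6\to K_5$ becomes $\partial^{(3)}\wedge{\rm Id}$, an isomorphism; hence $K_\bullet$ is acyclic and $p_\bullet$ a quasiisomorphism.

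I expect the delicate point to be exactly the content of the last paragraph: identifying $\ker(\widehat{\rm M}^{(4)}_6\to{\rm M}^{(4)}_6)$ precisely — this is where Lemma \ref{4more} is indispensable, since without the reduction of type $(2,2)$ to type $(1,3)$ one would be forced to enlarge the resolution in degree $7$ — and verifying that $d{\cal M}^{(4)}_7$ has exactly the right size: large enough that $p_6$ descends and that each type $(1,3)$ second shuffle acquires a correction term in $B$, yet small enough (via Lemma \ref{sus1} together with injectivity of $\partial^{(3)}$) that $B\hookrightarrow K_6$ remains injective. An alternative, less combinatorial route would be to compare both sides with the truncated rank $4$ Voronoi complex $\tau_{[6,3]}V^{(4)}_\bullet$ — via $\widetilde\psi^{(4)}$ on the left and via $\psi^{(4)}$ together with Voronoi's theorem on the right — but the kernel computation above is self-contained given the results already in hand.
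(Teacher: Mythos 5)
Your argument is correct and is essentially the paper's own proof recast on the kernel side: the paper exhibits the acyclic subcomplex $\bigoplus_{({\rm L_3},w)}\bigl({\cal M}^{(3)}_{5}({\rm L_3})\wedge[w]\to\partial{\cal M}^{(3)}_{5}({\rm L_3})\wedge[w]\bigr)$ (acyclic by Corollary \ref{3.21.01.3}) and observes that the quotient is ${\rm M}^{(4)}_{\bullet}$, which is exactly your two-term kernel complex $K_6\stackrel{\sim}{\to}K_5$. Your write-up merely makes explicit two points the paper leaves implicit — that Lemma \ref{4more} is what identifies the degree-$6$ quotient with ${\rm M}^{(4)}_6$, and that $B$ injects into ${\cal M}^{(4)}_6/d{\cal M}^{(4)}_7$ (via Lemma \ref{sus1} and injectivity of $\partial^{(3)}$) — so no further changes are needed.
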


{\bf Proof}. Complex (\ref{bvbv}) contains a subcomplex
$$
\oplus_{(L_3, w)}\Bigl( {\cal M}^{(3)}_{5}(L_3) \wedge [w] \stackrel{\partial}{\lra }
\partial {\cal M}^{(3)}_{5}(L_3) \wedge [w]\Bigr).
$$
This subcomplex is acyclic since $\partial $ is injective on $ {\cal M}^{(3)}_{5}(L_3)$ 
by Corollary  \ref{3.21.01.3}.  The quotient of (\ref{bvbv}) by this subcomplex is isomorphic to 
the rank $4$ modular complex. 

 \paragraph{3. The main result.} 

\begin{theorem} \label{1more} a) There is an injective  morphism of complexes
\begin{displaymath}  
    \xymatrix{
      {\cal M}^{(4)}_{6}/d{\cal M}^{(4)}_{7}\ar[r]^{~~~~\widetilde \partial}  \ar[d]^{\widetilde  \psi^{(4)}_6} &     {\cal M}^{(4)}_{5} \ar[r]^{\widetilde\partial} \ar[d]^{\widetilde  \psi^{(4)}_5} & 
{\cal M}^{(4)}_{4} \ar[r]^{\widetilde\partial}  \ar[d]^{\widetilde\psi^{(4)}_4} &{\cal M}^{(4)}_{3} \ar[d]^{\widetilde  \psi^{(4)}_3} \\     
V^{(4)}_{6}/dV^{(4)}_{7}\ar[r]^{~~~d}&V^{(4)}_{5}\ar[r]^{d} &V^{(4)}_{4}\ar[r]^{d} & V^{(4)}_{3}\\}
\end{displaymath}

b) It  is a quasiisomorphism.
\end{theorem}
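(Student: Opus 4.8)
The plan is to prove both parts by carefully analyzing the combinatorics of $A_4$ and $D_4$ Voronoi cells, paralleling the strategy used for ${\rm GL}_2$ and ${\rm GL}_3$ in Theorems \ref{thc2}, \ref{thc3}, \ref{thc3a.} and \ref{RESA}.

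\textbf{Step 1: Well-definedness of $\widetilde\psi^{(4)}_\bullet$.} First I would verify that the maps $\widetilde\psi^{(4)}_i$ are well-defined morphisms of complexes. On $\widehat{\rm M}^{(4)}_i$ for $3 \leq i \leq 5$, and on the summand $\widehat{\rm M}^{(4)}_6$ of ${\cal M}^{(4)}_6$, this is exactly Theorem-Construction \ref{15}. On the extra summand $\bigoplus_{(L_3,w)} {\cal M}^{(3)}_5(L_3)\wedge[w]$, the map $\widetilde\psi^{(4)}_6$ is built from $\widetilde\psi^{(3)}_5$ of Theorem \ref{RESA} via the join $\ast$ with $\varphi(w)$, and compatibility with the differentials follows from compatibility of $\widetilde\psi^{(3)}_\bullet$ with $\partial$ together with the Leibniz rule for the boundary of a join. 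The only substantive point is that $\widetilde\psi^{(4)}_6$ kills $d{\cal M}^{(4)}_7$: by the very definition of $d{\cal M}^{(4)}_7$ as the span of elements (\ref{4.12.01.0})+(\ref{4.12.01.1}), and by the key computation (already carried out for ${\rm GL}_3$ in the proof of Theorem \ref{thc3}) that $\widehat\psi^{(4)}_6$ sends the type-$(1,3)$ second shuffle relation $\langle e_1,e_2,e_3,e_4,e_5\rangle + \langle -e_1,f_{12},e_3,e_4,f_{15}\rangle + \langle e_2,-f_{12},-f_{45},e_4,f_{15}\rangle + \langle e_2,e_3,f_{45},-e_5,f_{15}\rangle$ to the boundary $d\varphi(e_1,\dots,e_5,f_{12},f_{34})$ modulo cells coming from the ${\cal M}^{(3)}_5(L_3)\wedge[w]$ pieces, this reduces to a finite check. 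I would also invoke Lemma \ref{4more} to trade the type-$(2,2)$ shuffle for type-$(1,3)$ plus the reflection symmetry, so that only one family of shuffle relations needs to be examined geometrically.

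\textbf{Step 2: Injectivity.} Next I would establish that $\widetilde\psi^{(4)}_\bullet$ is injective. The generators of ${\cal M}^{(4)}_i$ map to distinct Voronoi cells of the appropriate dimension (for $i=3,4,5$ using the dictionary between colored forests and low-dimensional Voronoi cells of ${\rm GL}_4$, and for $i=6$ using that $\widetilde\psi^{(4)}_6\langle e_1,\dots,e_5\rangle = {\rm Cycle}_5\,\varphi(e_1,\dots,e_5,f_{12},f_{34})$ hits the $A_4$-type $6$-cells and the ${\cal M}^{(3)}_5(L_3)\wedge[w]$ pieces hit $5$-cells that are joins of ${\rm GL}_3$ $5$-simplices with a ray). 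The argument is parallel to the injectivity arguments in Theorems \ref{thc3a.} and \ref{RESA}: a nontrivial relation among the generators would descend to a nontrivial relation among linearly independent Voronoi cells, which is impossible. Passing to $V^{(4)}_6/dV^{(4)}_7$ on the target is exactly what makes $\widetilde\psi^{(4)}_6$ land in the right place, since the second-shuffle relations of type $(1,3)$ go to genuine boundaries.

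\textbf{Step 3: Quasiisomorphism.} Finally, for part b), I would show $\widetilde\psi^{(4)}_\bullet$ is a quasiisomorphism. The key input is Voronoi's theorem (quoted above): every top-dimensional cell of the Voronoi decomposition for ${\rm GL}_4(\Z)$ is equivalent to an $A_4$-cell or a $D_4$-cell. The $A_4$-cells are precisely the images of the generators $\langle e_1,\dots,e_5\rangle$, so the cokernel of $\widetilde\psi^{(4)}_\bullet$ in each degree is supported on the $D_4$-contributions and on the graph-complex acyclicity slack (as in Corollary \ref{kker} for ${\rm GL}_3$). I would then argue that the cokernel complex is acyclic: in the interior it is a sum, over extended bases, of the Ginzburg--Kapranov tree complexes $\overline T^\bullet_{(4)}\{e_0,\dots,e_4\}$, which are acyclic by Theorem \ref{GKap}, plus the $D_4$ cells which I would need to show are absorbed by a contractible piece — this is where I would do an explicit spectral-sequence/diagram chase on the filtration of $V^{(4)}_\bullet$ by rank of cells, combined with Theorem \ref{thc3}/\ref{RESA} to control the rank-$<4$ strata. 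The main obstacle I anticipate is precisely the $D_4$ cells: unlike ${\rm GL}_2$ and ${\rm GL}_3$, the rank $4$ Voronoi complex has a second orbit of top cells with no direct analogue among the colored-forest generators, and the heart of the proof is to show that the chains they span, together with their faces, form an acyclic subcomplex of the cokernel — equivalently, that the $D_4$-cell and its boundary contribute nothing to homology after quotienting by $d V^{(4)}_7$. I expect this to require either an explicit identification of the relevant $D_4$ faces with faces already hit by $A_4$-cells (up to ${\rm GL}_4(\Z)$-equivalence and boundaries), or a direct verification that the relevant sub-bicomplex of the Voronoi complex is contractible, and this is the step I would allot the most space to.
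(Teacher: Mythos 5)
Your plan for part b) targets the wrong difficulty, and the step you allot the most space to rests on a false premise. The $D_4$ cells are top-dimensional ($9$-dimensional) Voronoi cells, while the comparison in Theorem \ref{1more} only involves the truncated range of cells of dimension $3$ through $6$, together with $7$-cells used to form the quotient $V^{(4)}_6/dV^{(4)}_7$; by Lemma \ref{102} (deduced from part b) of Voronoi's theorem) every cell of dimension $\leq 7$ is ${\rm GL}_4(\Z)$-equivalent to a face of the $A_4$ simplex, so the $D_4$ orbit never enters. Conversely, your claim that the cokernel is ``supported on the $D_4$-contributions and on the graph-complex acyclicity slack'' fails in rank $4$: unlike the ${\rm GL}_3$ case (Theorem \ref{thc3a.}, Corollary \ref{kker}), the colored forest complex does not surject onto the truncated Voronoi complex here --- there are internal $6$-cells of types i) and iv), and other low-dimensional cells, to which no colored forest corresponds --- so the cokernel is not a sum of Ginzburg--Kapranov tree complexes and its acyclicity cannot be obtained from Theorem \ref{GKap} plus a contractibility statement about $D_4$ faces. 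The actual argument requires the explicit classification of internal cells of dimension $3$--$7$, splitting off the acyclic subcomplexes $\bigoplus C({\rm L_3},w)$ (Lemmas \ref{101}, \ref{c-acyca}), killing the type iv) $6$-cell / type ii) $5$-cell pairs (Lemma \ref{105}), and then a generators-and-relations analysis of the remaining quotient in degrees $4,5,6$ (Proposition \ref{3.25.01.14} and Proposition \ref{a106}, or the shorter degree-$6$ argument in the Appendix), in which one must identify exactly which boundaries of $7$-cells land in the span of the type ii) $6$-cells; none of this is captured by your spectral-sequence sketch.

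Part a) is also thinner than it needs to be at the one genuinely delicate point: injectivity of $\widetilde\psi^{(4)}_6$. The image of a generator $\langle e_1,\ldots,e_5\rangle$ is a five-term cycle ${\rm Cycle}_5\,\varphi(e_1,\ldots,e_5,f_{12},f_{34})$, and different orderings of the same extended basis produce overlapping collections of $6$-cells, so ``distinct generators map to distinct cells'' is not available; the paper bounds $\dim P^{(4)}_6\leq 6$ using the dihedral and first shuffle relations and then exhibits a rank-$6$ $6\times 15$ matrix (Lemma \ref{hps-inj}), and separately checks that passing to the quotients ${\cal M}^{(4)}_6/d{\cal M}^{(4)}_7$ and $V^{(4)}_6/dV^{(4)}_7$ creates no new kernel, i.e. $\overline\psi^{(4)}_6(d{\cal M}^{(4)}_7)=dV^{(4)}_7\cap\overline\psi^{(4)}_6({\cal M}^{(4)}_6)$, which again uses the classification of $6$- and $7$-cells. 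Your proposal does not address either of these points, and the well-definedness step, while correctly outlined, should cite the explicit ${\rm GL}_4$ computation expressing the image of the type $(1,3)$ shuffle as the boundary of five specific $7$-cells rather than deferring to the ${\rm GL}_3$ case.
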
 

\begin{corollary} \label{2more} 
The modular complex ${\rm M}^{(4)}_{\bullet}$ is quasiisomorphic to the truncated 
Voronoi complex $\tau_{[6,3]}{V}^{(4)}_{\bullet}$. 
Thus for a finite index subgroup $\Gamma \in {\rm GL}_4(\Z)$ and a 
$\Q$-rational ${\rm GL}_4$-module  $V$ we have
\begin{equation} \label{3more} 
{{\rm H}{\rm M}}^i(\Gamma, V) = {\rm H}^{i+2}(\Gamma, V \otimes \varepsilon) \quad \mbox{for} \quad 1 \leq i \leq 4.
\end{equation}
\end{corollary}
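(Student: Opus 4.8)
\textbf{Proof proposal for Corollary \ref{2more}.}

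The plan is to deduce Corollary \ref{2more} essentially for free from Theorem \ref{1more} together with the machinery already in place. First I would recall that Theorem \ref{1more} produces a quasiisomorphism between the complex ${\cal M}^{(4)}_\bullet$ (more precisely, $({\cal M}^{(4)}_6/d{\cal M}^{(4)}_7 \to {\cal M}^{(4)}_5 \to {\cal M}^{(4)}_4 \to {\cal M}^{(4)}_3)$) and the truncated Voronoi complex $\tau_{[6,3]}{V}^{(4)}_\bullet = (V^{(4)}_6/dV^{(4)}_7 \to V^{(4)}_5 \to V^{(4)}_4 \to V^{(4)}_3)$. Next, Lemma \ref{il} gives a quasiisomorphism of complexes $p_\bullet: {\cal M}^{(4)}_\bullet \to {\rm M}^{(4)}_\bullet$, identifying the cohomology of the auxiliary complex ${\cal M}^{(4)}_\bullet$ with that of the rank $4$ modular complex ${\rm M}^{(4)}_\bullet$. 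Composing, we get that ${\rm M}^{(4)}_\bullet$ is quasiisomorphic to $\tau_{[6,3]}{V}^{(4)}_\bullet$, as complexes of ${\rm GL}_4(\Z)$-modules. (One must check that all these quasiisomorphisms are genuinely ${\rm GL}_4(\Z)$-equivariant, which is immediate from the fact that every map in sight is defined by the same formula on generators regardless of the coordinate choice.)

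The remaining step is to pass from this quasiisomorphism of ${\rm GL}_4(\Z)$-module complexes to the statement about group cohomology with coefficients in a ${\rm GL}_4$-module $V$ and a finite index subgroup $\Gamma$. Here I would first tensor the quasiisomorphism over $\Gamma$ with $V$: since both ${\rm M}^{(4)}_\bullet$ and $\tau_{[6,3]}{V}^{(4)}_\bullet$ consist of free (or at least $\Q[\Gamma]$-acyclic after a torsion-free reduction) modules in the relevant range, tensoring with $V$ over $\Gamma$ preserves the quasiisomorphism; more carefully, one reduces to a torsion-free finite-index $\widetilde\Gamma \triangleleft \Gamma$ using the Hochschild--Serre argument recalled after Theorem \ref{3.29.01.1}, so one may assume $\Gamma$ torsion free and the modules genuinely free. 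This yields
$$
{\rm H}^i\!\left({\rm M}^{(4)}_\bullet \otimes_\Gamma V\right) = {\rm H}^i\!\left(\tau_{[6,3]}{V}^{(4)}_\bullet \otimes_\Gamma V\right), \qquad i \geq 1.
$$
Then I would invoke Theorem \ref{3.29.01.1} with $m=4$: it gives ${\rm H}_i(V^{(4)}_\bullet \otimes_\Gamma W) = {\rm H}^{d_4-i}(\Gamma, W \otimes \varepsilon)$ since $m=4$ is even, where $d_4 = {\rm dim}~{\Bbb H}_{\rm L_4} = (4-1)(4+2)/2 = 9$. Re-indexing the cohomological grading of $\tau_{[6,3]}{V}^{(4)}_\bullet$ against the homological grading of ${V}^{(4)}_\bullet$, the four groups $V^{(4)}_6/dV^{(4)}_7, V^{(4)}_5, V^{(4)}_4, V^{(4)}_3$ sit in homological degrees $6,5,4,3$, so cohomological degree $i$ of the modular complex ($i = 1,2,3,4$) matches homological degree $7-i$ of the (truncated) Voronoi complex, giving cohomology ${\rm H}^{9-(7-i)}(\Gamma, V\otimes\varepsilon) = {\rm H}^{i+2}(\Gamma, V\otimes\varepsilon)$. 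This is exactly formula (\ref{3more}). The one point requiring care is the truncation: I must confirm that the top group $V^{(4)}_6/dV^{(4)}_7$ of the truncated complex computes the same thing as the untruncated Voronoi complex in this range, i.e. that quotienting by the image of $d$ from $V^{(4)}_7$ has no effect on ${\rm H}_i$ for $i \leq 6$ — which holds because the Voronoi complex for ${\rm GL}_4$ has top nonzero term in degree $\tfrac{4\cdot 5}{2}-1 = 9$, and $V^{(4)}_7$ is one step above degree $6$, so $d{V}^{(4)}_7$ is precisely the boundaries we want to kill to compute homology in degrees $\leq 6$.

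The main obstacle, and the one genuinely substantive point, is \textbf{Theorem \ref{1more}(b)} — the assertion that the map $\widetilde\psi^{(4)}_\bullet$ is a quasiisomorphism between ${\cal M}^{(4)}_\bullet$ and the truncated Voronoi complex — which relies on the full Voronoi classification for ${\rm GL}_4$ (including the $D_4$ cell, by Voronoi's theorem) and on the verification that the second shuffle relations of type $(1,3)$ and $(2,2)$ are sent to boundaries of explicit Voronoi $6$-cells (the $A_4$-cell $\varphi(e_1,\dots,e_5,f_{12},f_{23})$ and its relatives), together with Lemma \ref{4more} reducing the $(2,2)$ relation to the $(1,3)$ relation modulo reflection symmetry. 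But that is the content of Theorem \ref{1more}, which we are entitled to assume; granting it, Corollary \ref{2more} is a purely formal consequence of the chain of quasiisomorphisms above and Theorem \ref{3.29.01.1}, and the proof is just the bookkeeping of degrees and the reduction to torsion-free subgroups.
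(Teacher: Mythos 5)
Your proposal is correct and follows exactly the paper's route: the paper's own proof of Corollary \ref{2more} is precisely ``combine Theorem \ref{1more} with Lemma \ref{il} to get the quasiisomorphism of ${\rm M}^{(4)}_\bullet$ with $\tau_{[6,3]}V^{(4)}_\bullet$, then pass to cohomological complexes,'' with the identification (\ref{3more}) coming from Theorem \ref{3.29.01.1} (here $m=4$ even, $d_4=9$), just as you do. Your additional remarks on the torsion-free reduction and on the compatibility of the canonical truncation $V^{(4)}_6/dV^{(4)}_7$ with $\otimes_\Gamma V$ only spell out bookkeeping the paper leaves implicit.
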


\begin{proof}  Combining Theorem \ref{1more} with Lemma \ref{il}  
we get the first statement. Passing to cohomological complexes we arrive to (\ref{3more}). \end{proof}

To prove Theorem \ref{1more} we need a classification of Voronoi's cells for ${\rm GL}_4$.

\paragraph{\bf 4.  Voronoi cells and colored forests for ${\rm GL}_4$.}  
We   use the following result, which can be easily 
deduced from the part b) of Voronoi's theorem, see [Mar].

\begin{lemma} \label{102}
All cells of dimension $\leq 7$  of the Voronoi complex for ${\rm GL}_4$ 
are ${\rm GL}_4(\Z)$-equivalent to the  faces of the Voronoi simplex corresponding to the quadratic 
form of the root system $A_4$. 
\end{lemma}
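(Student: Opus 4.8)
\textbf{Plan of proof for Lemma \ref{102}.}

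The statement is a purely combinatorial fact about the Voronoi cell decomposition of the symmetric space for ${\rm GL}_4(\R)$, and the plan is to reduce it to Voronoi's classification of perfect forms in dimension $4$ together with a face-by-face analysis. First I would recall, from the part b) of Voronoi's theorem quoted above, that there are exactly two ${\rm GL}_4(\Z)$-equivalence classes of top-dimensional ($7$-dimensional) cells in ${\Bbb V}^{(4)}_\bullet$: the one of type $A_4$, coming from the perfect form of the root system $A_4$, and the one of type $D_4$, coming from the perfect form of $D_4$. Every cell of the Voronoi complex is a face of some top-dimensional cell, so it suffices to show that every proper face of the $D_4$-cell of dimension $\leq 7$ — equivalently, every face of codimension $\geq 1$ in the $D_4$-cell, since the $D_4$-cell itself has dimension $9$, wait: one must be careful about dimensions. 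The symmetric space ${\Bbb H}_{{\rm L}_4}$ has dimension $d_4 = (4-1)(4+2)/2 = 9$, so the interior top cells of ${V}^{(4)}_\bullet$ are $9$-dimensional; the relevant range here is dimension $\leq 7$, i.e. codimension $\geq 2$ inside a top cell. So the claim is: every face of codimension $\geq 2$ of the $D_4$-cell is ${\rm GL}_4(\Z)$-equivalent to a face of the $A_4$-cell.

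The key steps, in order, would be: (1) Write down the perfect form $Q_{D_4}$ and its set of minimal vectors — the $24$ roots of $D_4$, i.e. $\pm e_i \pm e_j$ — and describe the corresponding top Voronoi cell as the projectivized convex hull of the rank-one forms $\varphi(l)$ over these minimal vectors $l$. (2) Enumerate the faces of this cell: a face corresponds to a subset $S$ of the minimal vectors such that $\{\varphi(l) : l \in S\}$ spans a face of the convex hull, and the dimension of the face is controlled by the rank of the span of the quadratic forms $\varphi(l)$, $l \in S$ (equivalently, by a Voronoi-type incidence condition). (3) For each such face of codimension $\geq 2$, exhibit a lattice sublattice and an extended basis realizing it as a face of an $A_4$-cell; concretely, one checks that the relevant subconfigurations of roots of $D_4$ are, after a ${\rm GL}_4(\Z)$ change of variables, subconfigurations of the $A_4$ root configuration $\{e_i, f_{ij}, \ldots\}$ described on Figure \ref{mls}. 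The arithmetic input is that the $D_4$ root system contains many $A_4$ and $A_k$ ($k\le 4$) subsystems, and that passing to a face of codimension $\geq 2$ kills enough of the "$D_4$-specific" structure (the triality-type configurations) that only type-$A$ subconfigurations survive. (4) Conclude, using Voronoi's theorem part a) for $m=2,3$ if needed for the lower-rank faces sitting on the boundary strata ${\Bbb H}_{{\rm L}}$, ${\rm L} \subset {\rm L}_4$.

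The main obstacle I expect is step (2)–(3): carrying out the explicit enumeration of the faces of the $D_4$-cell and checking, for each codimension-$\geq 2$ face, that the associated set of minimal vectors is ${\rm GL}_4(\Z)$-equivalent to a subset of the $A_4$ configuration. This is where the "it can be easily deduced from Voronoi's theorem" in the statement hides a finite but genuinely tedious verification; the clean way to organize it is to note that a codimension-$\geq 2$ face of a top cell lies in the closure of at least two top cells, so it is a face of \emph{some} $A_4$-cell as soon as it is not a face \emph{only} of $D_4$-cells — and then one must rule out faces of dimension $\leq 7$ that are shared exclusively among $D_4$-type top cells, which a direct inspection of the $D_4$ Voronoi fan (e.g. via its $24$ minimal vectors and their incidence relations, or by citing the tables in the literature on perfect forms in dimension $4$) shows do not occur below dimension $8$. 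I would present this last point either by the sharing argument or by quoting the explicit face lattice from [M] or [V], whichever gives the shorter write-up.
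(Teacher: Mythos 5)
Your proposal is correct and follows essentially the same route as the paper, which offers no worked-out argument at all: it simply states that the lemma "can be easily deduced from part b) of Voronoi's theorem" and refers to Martinet's book, i.e.\ exactly the reduction you describe (every cell of dimension $\leq 7$ is a face of a top-dimensional cell, top cells are of type $A_4$ or $D_4$ by Voronoi's theorem b), so one only has to check that the low-dimensional faces of the $D_4$ domain are ${\rm GL}_4(\Z)$-equivalent to faces of the $A_4$ simplex). The finite face-by-face verification you defer to a direct inspection or to the tables in [M], [V] is precisely what the paper leaves implicit in its citation, so your outline is, if anything, more explicit than the paper's treatment.
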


\begin{figure}[ht]
\centerline{\epsfbox{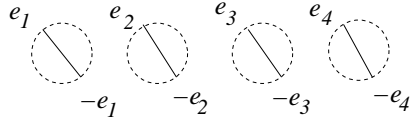}}
\caption{Colored forests corresponding to internal 3-cells.}
\label{mp1Dd3.eps}
\end{figure}

\begin{figure}[ht]
\centerline{\epsfbox{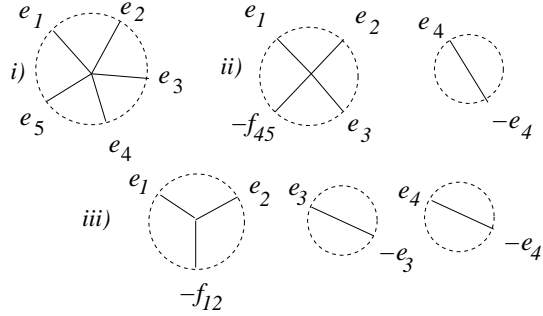}}
\caption{Colored forests corresponding to internal 4-cells.}
\label{mp1Dd4.eps}
\end{figure}

It was proved in    \cite[Lemma 2.3]{LS}, and can be checked using Lemma \ref{102},  that any 
internal Voronoi cell whose 
dimension is from $3$ to $7$  is ${\rm GL}_4(\Z)$-equivalent to just one cell from the following list:

\begin{enumerate}

\item $3$-dimensional cells:

\be
\varphi(e_1, e_2, e_3, e_4).
\ee

\item $4$-dimensional cells:
\be
{\rm i)} \quad \varphi(e_1, e_2, e_3, e_4, e_5),
\qquad
{\rm ii)} \quad  \varphi(e_1, e_2, e_3, e_4, f_{45}),
\qquad
{\rm iii)} \quad  \varphi(e_1, e_2, e_3, e_4, f_{12}).
\ee

\item $5$-dimensional cells:
\be
\begin{split}
&{\rm i)} \quad \varphi(e_1, e_2, e_3, e_4, e_5, f_{12}),\qquad
{\rm ii)} \quad \varphi(e_1, e_2, e_3, e_4, f_{45}, f_{51}),\\
&{\rm iii)} \quad  \varphi(e_1, e_2, e_3, e_4, f_{12}, f_{45}),\quad
{\rm iv)} \quad \varphi(e_1, e_2, e_3, e_4, f_{12}, f_{34}).\\
\end{split}
\ee

\item $6$-dimensional cells:
\be
\begin{split}
&{\rm i)} \quad  \varphi(e_1, e_2, e_3, e_4, e_5, f_{34}, f_{45}),
\qquad
{\rm ii)} \quad \varphi(e_1, e_2, e_3, e_4, e_5, f_{12}, f_{34}),\\
&{\rm iii)} \quad  \varphi(e_1, e_2, e_3, e_4, f_{12}, f_{13}, f_{45}),
\quad
{\rm iv)} \quad \varphi(e_1, e_2, e_3, e_4, f_{12}, f_{35}, f_{45}).\\
\end{split}
\ee

\item $7$-dimensional cells:
\be
{\rm i)} \quad \varphi(e_1, e_2, e_3, e_4, e_5, f_{12}, f_{34}, f_{45}),
\qquad
{\rm ii)} \quad  \varphi(e_1, e_2, e_3, e_4, e_5, f_{12}, f_{13}, f_{34}).
\ee
\end{enumerate}

To check this use Lemma \ref{102}. To simplify   considerations use the dihedral symmetry and 
assume, without loss of generality since the $e$'s and $f$'s appeared in the root system $A_4$ 
in a symmetric way, that at least half of the vectors are $e$'s.

\begin{figure}[ht]
\centerline{\epsfbox{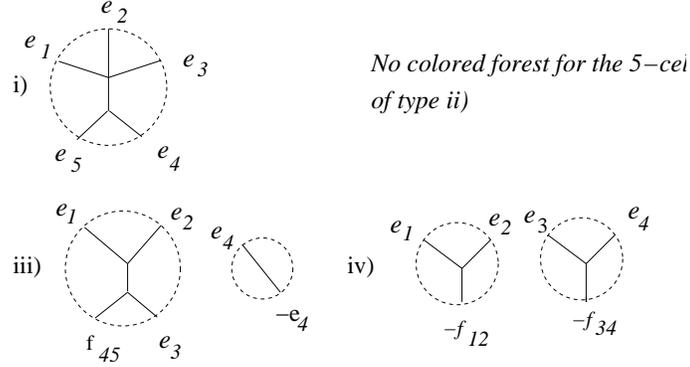}}
\caption{Colored forests corresponding to internal 5-cells. }
\label{mp1Dd5.eps}
\end{figure}

 
\begin{figure}[ht]
\centerline{\epsfbox{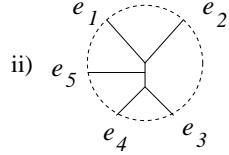}}
\caption{A colored forest corresponding to an internal 6-cell (type ii).}
\label{mp1Dd6.eps}
\end{figure}

For a $6$-cell of type iii) one has 
\begin{equation} \label{1form}
\begin{split}
&\varphi(e_1, e_2, e_3, e_4,  f_{45}, f_{12}, f_{13}) = \\
&\varphi(e_1, e_2, e_3, f_{45}, f_{12}, f_{13}) \ast \varphi(e_4) \in \widetilde \psi_5^{(4)}
\Bigl( {\cal M}^{(3)}_5(L_3) \wedge [e_4]\Bigr), \quad L_3:= 
\langle e_1,e_2,e_3\rangle.\\
\end{split}
\ee
No colored forests correspond to  $6$-cells of types i) and iv).

\paragraph{\bf 5. Proof of Theorem \ref{1more}a).} 
i) {\it The map $\widetilde \psi^{(4)}_6$ is well defined}. 
The maps $\widetilde \psi^{(4)}_{i}$ are defined on the generators by the same formulas as  the maps $\widehat \psi^{(4)}_{i}$. 
We need to  show that $\widetilde \psi^{(4)}_6$ 
is a well defined map. For this it remains to 
show that 
it  kills the subgroup $d{\cal M}^{(4)}_7 $. 
Since the Voronoi complex is acyclic in 
the degree $6$,  Lemma \ref{sus1} implies
  that one has 
$\widetilde \psi^{(4)}_6(d {\cal M}^{(4)}_7) \subset dV_7^{(4)}$. The statement 
is proved. 

In fact we can do a much better job. By Lemma \ref{4more} 
the second shuffle relation of 
type $(2,2)$ are reduced to the second shuffle relation of 
type $(1,3)$ plus reflection symmetry. Reflection symmetry relations die under the map 
$\widehat \psi^{(4)}_6$. 
The image  of the second shuffle relation of 
type $(1,3)$ under the  
map $\widehat \psi^{(4)}_6$ is computed by the following formula: 
\be \label{m6a}
\begin{split}
&\widehat \psi^{(4)}_6\Bigl( \langle e_1, e_2, e_3, e_4, e_5 \rangle  + \langle -e_1, f_{12}, e_3, e_4, f_{15} \rangle  + \\
&\langle e_2, -f_{12}, -f_{45}, e_4, f_{15} \rangle  + \langle e_2, e_3, f_{45}, -e_5, f_{15} \rangle  \Bigr) = \\
\end{split}
\ee
\be \label{m6}
\begin{split}&d\Bigl(\varphi(e_1, e_2, e_3, e_4, e_5, f_{12}, f_{45}, f_{51}) - 
\varphi(e_1, e_2, e_3, e_4, e_5, f_{23}, f_{45}, f_{51}) \\
&-\varphi(e_1, e_2, e_3, e_4, e_5, f_{12}, f_{34}, f_{51}) 
+ \varphi(e_1, e_2, e_3, e_4, f_{12}, f_{23}, f_{45}, f_{51}) \\
&+ \varphi(e_2, e_3, e_4, e_5, f_{12}, f_{34}, f_{45}, f_{51}) \Bigl)\\
\end{split}
\ee
\be \label{m6aa}
\begin{split}
&- \varphi(e_1, e_2, e_3, e_4, f_{12}, f_{23}, f_{45}) 
- \varphi(e_1, e_2, e_4, e_5, f_{23}, f_{45}, f_{51})\\
&+\varphi(e_1, e_2, e_3, e_5, f_{12}, f_{34}, f_{51}) 
+\varphi(e_3, e_4, e_5, f_{12}, f_{34}, f_{45}, f_{51}) \\
&- \varphi(e_1, e_2, e_4, e_5, f_{12}, f_{34}, f_{51}) 
- \varphi(e_2, e_3, e_4, e_5, f_{12}, f_{34}, f_{45}) \\
&+\varphi(e_1, e_3, e_4, e_5, f_{23}, f_{45}, f_{51}) 
- \varphi(e_1, e_2, e_3,  f_{12}, f_{23}, f_{45}, f_{51}).\\
\end{split}
\ee

Elements (\ref{m6aa}) are precisely the images of  elements 
(\ref{4.12.01.1})  under the map 
$\widetilde \psi^{(4)}_6$. This not only shows that the generators of the subgroup 
$d{\cal M}_7^{(4)}$ are mapped to the boundaries of some $7$-cells in the Voronoi complex, 
but describes these $7$-cells explicitly as the five cells (\ref{m6}). 

ii) {\it The maps $\widetilde \psi^{(4)}_i$ are injective for $i=3,4,5$}. The right two squares in the map $\widetilde \psi_\bullet^{(4)}$ in Theorem \ref{1more} are identified with  
a map of complexes
\begin{displaymath} 
    \xymatrix{
            \widehat {\rm M}^{(4)}_{5} \ar[r]^{\partial} \ar[d]^{\widehat  \psi^{(4)}_5} & 
  \widehat {\rm M}^{(4)}_{4} \ar[r]^{\partial}  \ar[d]^{\widehat \psi^{(4)}_4} & \widehat {\rm M}^{(4)}_{3} \ar[d]^{\widehat  \psi^{(4)}_3} \\     
 V^{(4)}_{5}\ar[r]^{d} &V^{(4)}_{4}\ar[r]^{d} & V^{(4)}_{3}\\}
\end{displaymath} 

The first two maps are given by 
\be  \label{a*} 
\begin{split}
&\widehat  \psi^{(4)}_3: [e_1] \wedge [e_2]\wedge [e_3] \wedge [e_4]\lms \varphi(e_1,e_2,e_3, e_4),\\
 &\widehat  \psi^{(4)}_4: [e_1,e_2] \wedge [e_3] \wedge [e_4]\lms \varphi(-e_1-e_2, e_1,e_2, e_3, e_4).\\ 
 \end{split}
\end{equation}

 The maps $\widehat  \psi^{(4)}_\bullet$ commute with the action of the group ${\rm GL}_4(\Z)$. The element $[e_1] \wedge [e_2]\wedge [e_3] \wedge [e_4]$ is stabilized by a finite subgroup 
 $C_4$ vof ${\rm GL}_4(\Z)$, generated by the diagonal matrices ${\rm diag}(\pm 1, \pm 1, \pm 1, \pm 1)$, and the subgroup of permutations $S_4$ - the 
 symmetry group of a 4-dimensional cube.   The same subgroup stabilizes   the cell $g\cdot \varphi(e_1,e_2,e_3, e_4)$ $g \in {\rm GL}_4(\Z)$ either coincide, or their interiors are disjoint. 
 The injectivity of the map  $\widehat  \psi^{(4)}_3$ is clear from this.
 
  The injectivity of the map  $\widehat  \psi^{(4)}_4$ is proved by a similar argument: there is a finite subgroup $C_2 \times (\Z/2\Z)^3$ stabilizing the element $[e_1,e_2] \wedge [e_3] \wedge [e_4]$ and the same subgroup stabilizes   the cell $\varphi(-e_1-e_2, e_1,e_2, e_3, e_4)$.

  The map $\widehat  \psi^{(4)}_5$ has two components:  
\be  \label{ac*} 
\begin{split}
 &\widehat  \psi^{(4)}_5: [e_1,e_2] \wedge [e_3,e_4]\lms \varphi(-e_1-e_2, e_1,e_2, -e_3-e_4, e_3, e_4),\\ 
&\widehat  \psi^{(4)}_5:  [e_1,e_2,e_3] \wedge [e_4] \lms \varphi(e_1,e_2, e_3, e_4, f_{45}, f_{12}) - \varphi(e_1,e_2, e_3, e_4, f_{45},  f_{23}).\\
 \end{split}
\end{equation}

The injectivity of its first component is proved by a similar analysis.  The subgroup stabilizing an element 
$[e_1,e_2] \wedge [e_3,e_4]$ and the cell $\varphi(-e_1-e_2, e_1,e_2, -e_3-e_4, e_3, e_4)$ 
  coincide, and the cells $g\cdot \varphi(-e_1-e_2, e_1,e_2, -e_3-e_4, e_3, e_4)$, $g \in {\rm GL}_4(\Z)$ either coincide, or their interiers are disjoint. 

The chain $\widehat  \psi^{(4)}_5([e_1,e_2,e_3] \wedge [e_4])$ is the join of $\varphi(e_4)$ and the cell $\widehat  \psi^{(3)}_4([e_1,e_2,e_3] )$. So the injectivity follows from 
the ${\rm GL_3}$-case since a linear combination of joins  $\varphi(v_i)\ast X_i$, where $X_i$ is a ${\rm GL}_3$-cell, can be zero only if each of the joins $\varphi(v_i)\ast X_i$ is zero.

iii) {\it The map $\widetilde \psi^{(4)}_6$ is injective}.  A proof by N. Malkin is given in the Appendix.

\paragraph{\bf 6. Proof of   Theorem \ref{1more}b).} The part b) is the difficult part of Theorem \ref{1more}.  Our  goal is to show that 
{\it the complex ${\rm Coker} ~\widetilde \psi_{\bullet}^{(4)}$ is acyclic}.

Suppose that ${\rm L_4} = {\rm L_3} \oplus \langle w\rangle$.  
Denote by $C({\rm L_3}, w)$ the subcomplex of   
${\rm Coker}~\widetilde \psi_{\bullet}^{(4)}$ 
generated by the Voronoi cells 
\begin{equation} \label{quq}
\varphi(v_1,...,v_k, w) \quad \mbox{where ${\rm span}_{\Z}(v_1,...,v_k) = {\rm L_3}$}. 
\end{equation}
\begin{lemma} \label{101}
The subcomplex $C({\rm L_3}, w)$ is acyclic.
\end{lemma}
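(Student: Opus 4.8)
The statement to prove is Lemma~\ref{101}: for a rank decomposition ${\rm L_4} = {\rm L_3} \oplus \langle w\rangle$, the subcomplex $C({\rm L_3}, w) \subset {\rm Coker}~\widetilde\psi_\bullet^{(4)}$ generated by Voronoi cells of the form $\varphi(v_1, \ldots, v_k, w)$ with $\mathrm{span}_{\Z}(v_1, \ldots, v_k) = {\rm L_3}$ is acyclic. The key observation is that a cell $\varphi(v_1, \ldots, v_k, w)$ is the join $\varphi(v_1, \ldots, v_k) \ast \varphi(w)$, so these cells are precisely the images under the $\ast$-product of rank $3$ Voronoi cells with the fixed ray $\varphi(w)$. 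Thus $C({\rm L_3}, w)$ is, up to the cone construction, a copy of the truncated rank $3$ Voronoi complex modulo the image of the rank $3$ modular/relaxed data --- i.e.\ a shifted copy of ${\rm Coker}~\widetilde\psi_\bullet^{(3)}$ (or rather of $\mathrm{Coker}~\psi_\bullet^{(3)}$ from Theorem~\ref{thc3}, together with the degree $5$ correction from Theorem~\ref{RESA}). So the plan is: first identify $C({\rm L_3}, w)$ explicitly as a join-shift of the rank $3$ cokernel complex, then invoke that the rank $3$ comparison maps are quasiisomorphisms.

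\textbf{Step 1: the join identification.} I would first check that the $\ast$-product with $\varphi(w)$ gives a bijection between the generators of $C({\rm L_3}, w)$ in degree $i$ and the rank $3$ Voronoi cells of dimension $i - 1 - 1$ spanning ${\rm L_3}$ --- that is, ${\rm C}^{\rm BM}_{j}({\Bbb H}_{\rm L_3}) \ni X \mapsto X \ast \varphi(w) \in {\rm C}^{\rm BM}_{j+1}({\Bbb H}_{\rm L_4})$, which is injective on cells that genuinely span ${\rm L_3}$ (a linear combination $\sum_i \varphi(w) \ast X_i$ vanishes only if each $X_i$ vanishes, exactly the argument used in part~iii) of the proof of Theorem~\ref{1more}a for $\widehat\psi^{(4)}_5$). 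Under this identification the Voronoi differential $d$ on $C({\rm L_3}, w)$ matches $d \ast \mathrm{Id}$ on the rank $3$ side, because the boundary of a join $X \ast \varphi(w)$ is $(\partial X) \ast \varphi(w) \pm X$ and the extra term $X$ (the face opposite $w$) is a rank $3$ cell not containing $w$, hence lies outside $C({\rm L_3}, w)$ --- so it is killed in the quotient defining $C({\rm L_3}, w)$ as a subcomplex of the cokernel. I need to be careful here about exactly which cells are "inside $C({\rm L_3},w)$" and to confirm the claimed face drops out; this bookkeeping is the fiddly part.

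\textbf{Step 2: match the subtracted image.} Next I would verify that the part of $\mathrm{Im}~\widetilde\psi_\bullet^{(4)}$ landing in the span of cells (\ref{quq}) is exactly the join of $\mathrm{Im}~\widetilde\psi_\bullet^{(3)}(\text{on } {\rm L_3})$ with $\varphi(w)$. On the degree $6$ level this is the summand $\bigoplus_{(L_3,w)} {\cal M}^{(3)}_5(L_3) \wedge [w]$ in the definition (\ref{bvb}) of ${\cal M}^{(4)}_6$, whose image under $\widetilde\psi^{(4)}_6$ is $\widetilde\psi^{(3)}_5({\cal M}^{(3)}_5(L_3)) \ast \varphi(w)$ by construction (cf.\ formula (\ref{1form})); in lower degrees the relevant images are $\widehat\psi^{(3)}_i(\cdot) \ast \varphi(w)$ by the formulas (\ref{a*}), (\ref{ac*}) for $\widehat\psi^{(4)}_\bullet$. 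Putting Steps~1 and~2 together yields a canonical isomorphism of complexes
$$
C({\rm L_3}, w) \;\cong\; \bigl({\rm Coker}~\widetilde\psi_\bullet^{(3)}\bigr)[-1],
$$
the shift coming from the join raising dimension by one.

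\textbf{Step 3: conclude.} By Theorem~\ref{RESA}b) the map $\widetilde\psi^{(3)}_\bullet: {\cal M}^{(3)}_\bullet \to V^{(3)}_\bullet$ is a quasiisomorphism, hence its cokernel complex is acyclic; equivalently, by Theorem~\ref{thc3}a) $\psi^{(3)}_\bullet$ is a quasiisomorphism onto $V^{(3)}_\bullet/dV^{(3)}_5$ and the missing top piece is handled precisely by the ${\cal M}^{(3)}_5$ resolution. Therefore ${\rm Coker}~\widetilde\psi_\bullet^{(3)}$ is acyclic, and by the isomorphism of Step~2 so is its shift $C({\rm L_3}, w)$. \emph{The main obstacle} I anticipate is Step~1: making the cell-level bookkeeping precise --- confirming that the boundary face of a join cell opposite to $\varphi(w)$ genuinely falls outside $C({\rm L_3},w)$ (so that the differential restricts cleanly to the join part) and that no rank-degeneration phenomena in ${\rm L_4}$ reintroduce such cells. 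Once the join is shown to be a map of complexes onto $C({\rm L_3},w)$ with the stated source, the acyclicity is a formal consequence of the already-established rank $3$ results.
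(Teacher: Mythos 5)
Your overall strategy is the same as the paper's: identify $C({\rm L_3},w)$, via the join with $\varphi(w)$, with (a shift of) the rank~$3$ picture, and then quote the rank~$3$ results (the paper cites Corollary \ref{kker}; your route through the injective quasiisomorphism of Theorem \ref{RESA} is equivalent). But the one step you yourself flagged as the fiddly part is justified incorrectly, and as written it would not stand. You argue that the face of $\varphi(v_1,\dots,v_k,w)$ opposite $\varphi(w)$, namely $\varphi(v_1,\dots,v_k)$, can be ignored because it ``lies outside $C({\rm L_3},w)$ --- so it is killed in the quotient defining $C({\rm L_3},w)$ as a subcomplex of the cokernel.'' There is no such quotient: $C({\rm L_3},w)$ is a \emph{sub}complex, and a boundary term is not discarded merely because it is not among your chosen generators --- if that face were nonzero in ${\rm Coker}~\widetilde\psi^{(4)}_\bullet$, the span of the cells $\varphi(v_1,\dots,v_k,w)$ would simply fail to be closed under the differential, and the statement of the lemma would not even parse. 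The correct reason, which is the first sentence of the paper's proof, is that $v_1,\dots,v_k$ span only the rank~$3$ sublattice ${\rm L_3}\subset{\rm L_4}$, so $\varphi(v_1,\dots,v_k)$ is not an \emph{interior} cell of the rank~$4$ Voronoi decomposition; since $V^{(4)}_\bullet$ is generated by interior cells only, this face contributes zero to the differential already in the Voronoi complex, before any cokernel is taken. With that substitution your Step~1 becomes the paper's argument.

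A smaller point: your claimed bijection between generators of $C({\rm L_3},w)$ and rank~$3$ Voronoi cells spanning ${\rm L_3}$ has a nontrivial half that you gloss over. Injectivity of $X\mapsto X\ast\varphi(w)$ is fine, but you also need that whenever $\varphi(v_1,\dots,v_k,w)$ is a Voronoi cell for ${\rm L_4}$ with ${\rm span}_\Z(v_1,\dots,v_k)={\rm L_3}$, the cell $\varphi(v_1,\dots,v_k)$ is genuinely a Voronoi cell for ${\rm L_3}$; the paper verifies this by inspecting the list of ${\rm GL}_4(\Z)$-equivalence classes of cells. Once these two repairs are made, your Step~2 (matching the relevant part of ${\rm Im}~\widetilde\psi^{(4)}_\bullet$ with the joined image of the rank~$3$ maps, cf.\ (\ref{1form})) and Step~3 are correct and complete the proof.
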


\begin{proof} If the lattice spanned by the  vectors $v_1,...,v_k$ has rank $\leq 3$ 
then $\varphi(v_1,...,v_k)$ is not an interior cell, so 
 its contribution to the boundary of $\varphi(v_1,...,v_k, w)$ is zero.  
 One checks using the list of equivalence classes of the Voronoi cells that if 
$\varphi(v_1,...,v_k, w)$ is a Voronoi cell
for ${\rm L_4}$ and ${\rm L_3}:=  {\rm span}_{\Z}(v_1,...,v_k)$ is of rank $3$ then 
$\varphi(v_1,...,v_k)$ is a Voronoi cell for ${\rm L_3}$. 
Therefore the subcomplex $C'({\rm L_3},w)$ of the Voronoi complex generated by the cells 
$\varphi(v_1,...,v_k, w)$ where ${\rm span}_{\Z}(v_1,...,v_k) = {\rm L_3}$ is canonically 
isomorphic to the Voronoi 
complex for the lattice ${\rm L_3}$. The image of this complex in 
${\rm Coker}~\widetilde \psi_{\bullet}^{(4)}$ coincides, by the very definitions, 
with the complex ${\rm Coker}~\widetilde \psi_{\bullet}^{(3)}$ for the lattice ${\rm L_3}$, 
which is acyclic by Corollary \ref{kker}. \end{proof} 

\bl \label{c-acyca}
 The subcomplex of ${\rm coker}~ \widetilde \psi_\bullet^{(4)}$ generated by cells of type (\ref{c-cells}) is isomorphic to 
 \be \la{sc}
 \bigoplus_{({\rm L_3},w)}C({\rm L_3},w).
 \ee
\el

Lemma \ref{c-acyca} is proved in the Appendix as Lemma \ref{c-acyc}. 

 \vskip 2mm
Denote by $\overline {\rm Coker}~\widetilde \psi_{\bullet}^{(4)}$ 
the quotient of the complex ${\rm Coker}\widetilde \psi_{\bullet}^{(4)}$ 
by  subcomplexes (\ref{sc}). 

We denote by $\overline \varphi (\ast)$ the image of the cell 
$\varphi (\ast)$ 
in $\overline {\rm Coker}~\widetilde \psi_{\bullet}^{(4)}$, and by 
$\overline d$ the differential.

\begin{lemma} \label{105}
$$
\overline d: \overline \varphi(e_1,e_2,e_3,e_4, f_{12},f_{35}, f_{45}) \lms 
\overline \varphi(e_1, e_2, e_3, e_4, f_{35}, f_{45}).
$$
\end{lemma}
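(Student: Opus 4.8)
The statement to be proved is the explicit computation of the differential $\overline d$ on the $6$-cell $\overline\varphi(e_1,e_2,e_3,e_4,f_{12},f_{35},f_{45})$ in the quotient complex $\overline{\rm Coker}~\widetilde\psi_{\bullet}^{(4)}$. The strategy is entirely hands-on: compute the boundary $d\varphi(e_1,e_2,e_3,e_4,f_{12},f_{35},f_{45})$ in the full Voronoi complex for ${\rm GL}_4$ as an alternating sum of $5$-cells (each obtained by deleting one of the seven vertices), then identify which of those $5$-cells become zero in $\overline{\rm Coker}~\widetilde\psi_{\bullet}^{(4)}$, namely (i) those that fail to be interior cells (their rank drops), (ii) those lying in the image of $\widetilde\psi_5^{(4)}$, and (iii) those lying in one of the subcomplexes $C({\rm L_3},w)$ that were quotiented out. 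What survives should be exactly $\overline\varphi(e_1,e_2,e_3,e_4,f_{35},f_{45})$.

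\textbf{Key steps, in order.} First I would write the seven candidate $5$-cells obtained by omitting each vertex in turn: $\varphi(e_2,e_3,e_4,f_{12},f_{35},f_{45})$, $\varphi(e_1,e_3,e_4,f_{12},f_{35},f_{45})$, $\varphi(e_1,e_2,e_4,f_{12},f_{35},f_{45})$, $\varphi(e_1,e_2,e_3,f_{12},f_{35},f_{45})$, $\varphi(e_1,e_2,e_3,e_4,f_{35},f_{45})$, $\varphi(e_1,e_2,e_3,e_4,f_{12},f_{45})$, $\varphi(e_1,e_2,e_3,e_4,f_{12},f_{35})$, with alternating signs. Second, using the relations among $e_i$ and the $f_{ij}$ (recall $e_1+\dots+e_5=0$ and $f_{ij}=e_i+e_j$), I would determine the rank of the lattice spanned by each sextuple of vectors; any cell whose vectors span a rank $\leq 3$ sublattice is not interior, hence dies in ${\rm Coker}~\widetilde\psi_{\bullet}^{(4)}$. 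Third, for the remaining rank-$4$ cells I would match each against the classification list of internal Voronoi cells of dimension $5$ (the four types i)--iv) in Section~\ref{SEC6}) via the dihedral symmetry, and check whether it lies in $\widetilde\psi_5^{(4)}({\cal M}_5^{(4)})$ using the explicit formulas (\ref{ac*}); such cells vanish in the cokernel. Fourth, among the cells of the form $\varphi(v_1,\dots,v_k,w)$ with ${\rm span}(v_1,\dots,v_k)$ of rank $3$, I would identify those sitting in some $C({\rm L_3},w)$ and hence killed in $\overline{\rm Coker}$; the cell $\varphi(e_1,e_2,e_3,e_4,f_{12},f_{35})$ and cells involving $f_{45}=e_4+e_5$ with the $e_5=-(e_1+e_2+e_3+e_4)$ substitution are the natural candidates here. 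After discarding all of these, the claim is that only $\overline\varphi(e_1,e_2,e_3,e_4,f_{35},f_{45})$ remains, with coefficient $+1$ after fixing orientation conventions.

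\textbf{Main obstacle.} The hard part will be the careful bookkeeping of orientations and the verification that the surviving term has coefficient exactly $+1$ rather than $-1$: the canonical orientation of each Voronoi simplex is fixed by the ordering of its vertices, and the induced orientations on boundary faces carry signs $(-1)^j$ that must be reconciled with the sign conventions baked into $\widetilde\psi_{\bullet}^{(4)}$ and into the definition of $\overline d$. A secondary subtlety is confirming that no two of the discarded $5$-cells are ${\rm GL}_4(\Z)$-equivalent in a way that would produce cancellation before passing to the quotient, which requires using the stabilizer/disjointness arguments from the proof of Theorem~\ref{1more}a) part ii). Once the orientation signs are pinned down and the classification of $5$-cells is applied, the identity is a short computation; I expect this lemma to be one line of genuine content surrounded by routine but error-prone sign-chasing.
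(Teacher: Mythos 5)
Your skeleton coincides with the paper's proof: one expands $\overline d\,\overline\varphi(e_1,e_2,e_3,e_4,f_{12},f_{35},f_{45})$ as the alternating sum of the seven faces obtained by deleting one vertex, shows six of them vanish in $\overline {\rm Coker}~\widetilde \psi^{(4)}_{\bullet}$, and is left with $\overline\varphi(e_1,e_2,e_3,e_4,f_{35},f_{45})$. However, you have mislocated where the content of the argument lies, and two of the three killing mechanisms you propose do nothing here. Every one of the seven faces spans the full rank-$4$ lattice (e.g.\ the face $\varphi(e_2,e_3,e_4,f_{12},f_{35},f_{45})$ contains $e_2,e_3,e_4$ and $e_5=f_{45}-e_4$), so non-interiority (your mechanism (i)) kills nothing. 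Likewise, membership in the image of $\widetilde\psi^{(4)}_5$ (your mechanism (ii)) does not apply to any single face: by (\ref{ac*}) the image contains only cells $\varphi(-e_1-e_2,e_1,e_2,-e_3-e_4,e_3,e_4)$ and \emph{differences} $\varphi(e_1,e_2,e_3,e_4,f_{45},f_{12})-\varphi(e_1,e_2,e_3,e_4,f_{45},f_{23})$, never an individual cell of type iii); if you lean on (ii) for the faces $\varphi(e_1,e_2,e_3,e_4,f_{12},f_{45})$ or $\varphi(e_1,e_2,e_3,e_4,f_{12},f_{35})$ you will stall.

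In the paper all six unwanted faces die for one and the same reason, your mechanism (iii): each is a cell $\varphi(v_1,\dots,v_5,w)$ in which five of the six vertices span a rank-$3$ direct summand ${\rm L_3}$ and the sixth is a complementary generator $w$, so each lies in one of the subcomplexes $C({\rm L_3},w)$ of (\ref{quq}) that were quotiented out in forming $\overline{\rm Coker}$. This is a short check using $e_1+\dots+e_5=0$, i.e.\ $f_{12}=-(e_3+e_4+e_5)$, $f_{35}=-(e_1+e_2+e_4)$, $f_{45}=-(e_1+e_2+e_3)$: for the faces missing $e_1$ or $e_2$ take ${\rm L_3}={\rm span}_{\Z}(e_3,e_4,e_5)$; for the face missing $e_3$ take ${\rm L_3}={\rm span}_{\Z}(e_1,e_2,e_4)$ with $w=f_{45}$; for the faces missing $e_4$ or $f_{35}$ take ${\rm L_3}={\rm span}_{\Z}(e_1,e_2,e_3)$ with $w=f_{35}$, resp.\ $w=e_4$; for the face missing $f_{45}$ take ${\rm L_3}={\rm span}_{\Z}(e_1,e_2,e_4)$ with $w=e_3$. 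One also checks that the surviving face $\varphi(e_1,e_2,e_3,e_4,f_{35},f_{45})$ admits no such rank-$3$ five-element subset (any five of its vertices span rank $4$), so it is not killed. Finally, your two announced obstacles are largely red herrings: since six of the seven boundary terms are individually zero in the quotient, the survivor simply carries its sign from the simplicial boundary formula, and possible ${\rm GL}_4(\Z)$-equivalences among the discarded faces are irrelevant because each of them is separately zero in $\overline{\rm Coker}$, so no cancellation bookkeeping is needed.
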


\begin{proof} The differential $\overline d \overline \varphi(e_1,e_2,e_3,e_4, f_{12},f_{35}, f_{45})$ 
is equal to
\be
\begin{split}
&\overline \varphi(e_2, e_3, e_4, f_{12},f_{35}, f_{45}) - 
\overline \varphi(e_1, e_3, e_4, f_{12},f_{35}, f_{45}) + 
\overline \varphi(e_1, e_2, e_4, f_{12},f_{35}, f_{45}) - \\
&\overline \varphi(e_1, e_2, e_3, f_{12},f_{35}, f_{45}) + 
\overline \varphi(e_1,e_2,e_3,e_4, f_{35}, f_{45}) -\\
&\overline \varphi(e_1,e_2,e_3,e_4, f_{12}, f_{45}) +
\overline \varphi(e_1,e_2,e_3,e_4, f_{12},f_{35}). \\
\end{split}
\ee

All  summands except $\overline \varphi(e_1,e_2,e_3,e_4, f_{35}, f_{45})$ are zero 
in $\overline {\rm Coker}~ \widetilde \psi_{\bullet}^{(4)}$ 
since all of them are ${\rm GL}_4(\Z)$-equivalent  to the cell 
$\varphi(e_1, e_2, e_3, f_{12}, f_{45}, e_4)$. Indeed, the vectors 
$e_1, e_2, e_3, f_{12}, f_{45}$ span the lattice ${\rm span}_{\Z}(e_1, e_2, e_3)$ and 
$e_4$ does not belong to this lattice. On the other hand:

For the first two terms: $e_3, e_4, f_{12},f_{35}, f_{45}$ span the lattice 
${\rm span}_{\Z}(e_3, e_4, e_5)$. 

For the third term:  $e_1, e_2, e_4, f_{12},f_{35}, $ span the lattice 
${\rm span}_{\Z}(e_1, e_2,  e_4)$.

For the fourth and sixth terms: $e_1, e_2, e_3, f_{12},f_{45}, $ span the lattice 
${\rm span}_{\Z}(e_1, e_2, e_3)$. 

For the seventh term: $e_1, e_2, e_3, f_{12},f_{35}, $ span the lattice 
${\rm span}_{\Z}(e_1, e_2, e_3)$. \end{proof}

Notice that 
$$
\overline \varphi(e_1,e_2,e_3,e_4, f_{35}, f_{45}) \sim 
\overline \varphi(e_1, e_2, e_3, e_4, f_{45}, f_{51}).
$$
So Lemma \ref{105} tells us that the   differential $\overline d $ maps a 6-cell of type iv) to a 5-cell of type ii). 
\vskip 3mm

Let us summarize what has been already done. Denote by 
$\overline {\rm Cok} ~\widetilde \psi^{(4)}_{\bullet}$ the quotient of the complex 
$\overline {\rm Coker} ~\widetilde \psi^{(4)}_{\bullet}$ by the subcomplex 
generated by the cells ${\rm GL}_4(\Z)$-equivalent to 
\begin{equation} \label{106a}
\overline \varphi(e_1, e_2, e_3, e_4, f_{35}, f_{45}), \quad 
\overline \varphi(e_1, e_2, e_3, e_4, f_{12}, f_{35}, f_{45}).
\end{equation} 
This subcomplex is acyclic by Lemma \ref{105}, so 
$\overline {\rm Cok}~ \widetilde \psi^{(4)}_{\bullet}$ is 
quasiisomorphic to $\overline {\rm Coker} ~\widetilde \psi^{(4)}_{\bullet}$. 

\vskip 2mm
The complex $\overline {\rm Cok}~ \widetilde \psi^{(4)}_{\bullet}$ 
sits in the degrees $[4,6]$. We  
describe it below  by generators and relations, 
listing   representatives for each ${\rm GL}_4(\Z)$-equivalence 
class.

{\it degree $4$}: The group $\overline {\rm Cok} ~\widetilde \psi^{(4)}_{4}$ is 
generated by the elements 
\begin{equation} \label{3.25.01.10}
\overline \varphi(e_1, e_2, e_3, e_4, e_5).
\end{equation} 

{\it degree $5$}: The group $\overline {\rm Cok} ~\widetilde \psi^{(4)}_{5}$ 
is 
generated by  the elements 
\begin{equation} \label{3.25.01.11}
\overline \varphi(e_1, e_2, e_3, e_4, e_5, f_{12}).
\end{equation} 

{\it degree $6$}: The group $\overline {\rm Cok} ~\widetilde \psi^{(4)}_{6}$ 
is generated by  the elements:
\begin{equation} \label{106c}
\overline \varphi(e_1, e_2, e_3, e_4, e_5, f_{12}, f_{34}), \quad 
\end{equation} 
\begin{equation} \label{106b}
\overline \varphi(e_1, e_2, e_3, e_4, e_5, f_{34}, f_{45}).
\end{equation} 
The relations are: 
\begin{equation} \label{1116b}
{\rm Cycle}_5 \overline \varphi(e_1, e_2, e_3, e_4, e_5, f_{12}, f_{34}), \quad 
\end{equation} 
\begin{equation} \label{116b}
\overline d\overline \varphi(e_1, e_2, e_3, e_4, e_5, f_{12}, f_{34}, f_{45}), \quad 
\overline d\overline \varphi(e_1, e_2, e_3, e_4, e_5, f_{12}, f_{13}, f_{34}).
\end{equation} 
Indeed,  element (\ref{1116b}) is $\widehat \psi^{(4)}_6(\langle e_1, ..., e_5\rangle)$,
and  elements (\ref{116b}) are boundaries of   $7$-cells i) and ii). 

Observe that the subset $\{e_1, e_2, e_3, e_4, e_5\}$ of the set of 
vectors $\{e_1, e_2, e_3, e_4, e_5, f_{12}\}$ is 
determined uniquely as the subset of $5$ vectors whose sum is zero. 

\begin{proposition} \label{3.25.01.14} The complex 
$\overline {\rm Cok} ~\widetilde \psi^{(4)}_{\bullet}$ is acyclic in the degrees $4$ and $5$.
\end{proposition}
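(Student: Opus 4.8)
The plan is to describe the complex $\overline{\mathrm{Cok}}\,\widetilde\psi^{(4)}_\bullet$ in degrees $4$ and $5$ completely explicitly — by generators and relations, using the classification of internal Voronoi cells of dimension $\leq 7$ for $\mathrm{GL}_4$ recalled above — and then to verify acyclicity by a direct chase. First I would record that $\overline{\mathrm{Cok}}\,\widetilde\psi^{(4)}_4$ is generated by the single class $\overline\varphi(e_1,e_2,e_3,e_4,e_5)$ (the $4$-cell of type i)), since the cells of types ii) and iii) lie in the image of $\widehat\psi^{(4)}$ or of the $C({\rm L_3},w)$-subcomplexes, and that $\overline{\mathrm{Cok}}\,\widetilde\psi^{(4)}_5$ is generated by $\overline\varphi(e_1,e_2,e_3,e_4,e_5,f_{12})$ (the $5$-cell of type i)), all the other $5$-cell types having been killed. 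In degree $6$ the surviving generators are the classes (\ref{106c}) and (\ref{106b}), and the relations are (\ref{1116b}) (coming from $\widehat\psi^{(4)}_6$ applied to $\langle e_1,\dots,e_5\rangle$) together with (\ref{116b}) (the boundaries of the two types of $7$-cells).

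Next I would compute the two differentials. The key is to evaluate $\overline d$ on the degree-$6$ generators (\ref{106c}) and (\ref{106b}) and on the degree-$5$ generator (\ref{3.25.01.11}), discarding every summand that is $\mathrm{GL}_4(\Z)$-equivalent to a cell already known to vanish in $\overline{\mathrm{Cok}}\,\widetilde\psi^{(4)}_\bullet$ — namely cells of types ii), iii) in their respective degrees, cells spanning a rank-$\leq 3$ sublattice together with an extra vector (these lie in some $C({\rm L_3},w)$), and the cells (\ref{106a}). This is exactly the kind of bookkeeping already carried out in Lemma \ref{105}: one writes out the full boundary in the Voronoi complex and observes which faces collapse. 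I expect to find that $\overline d$ maps the generator (\ref{3.25.01.11}) of degree $5$ injectively (modulo the relations in degree $4$ it realizes the class (\ref{3.25.01.10}) up to a nonzero multiple), which immediately gives acyclicity at degree $4$; and that the degree-$6$ generators, modulo the relations (\ref{1116b}) and (\ref{116b}), surject onto the kernel of $\overline d$ in degree $5$, giving acyclicity at degree $5$. Concretely, $\overline d \overline\varphi(e_1,\dots,e_5,f_{34},f_{45})$ should produce $\overline\varphi(e_1,\dots,e_5,f_{12})$-type terms that, after using the cyclic relation (\ref{1116b}) and the $7$-cell boundary relations, span all of $\ker\overline d$ in degree $5$.

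The main obstacle, and the step I would spend the most care on, is the degree-$5$ acyclicity: one must check that the cycles in degree $5$ of $\overline{\mathrm{Cok}}\,\widetilde\psi^{(4)}_\bullet$ — i.e. $\Q$-linear combinations of translates of $\overline\varphi(e_1,\dots,e_5,f_{12})$ killed by $\overline d$ modulo the vanishing cells — are precisely the image of degree $6$. This requires knowing exactly which $6$-faces of the two $7$-cell types survive in $\overline{\mathrm{Cok}}$, and then solving the resulting linear system over the orbit set; the cyclic symmetry relation (\ref{1116b}) is what makes the count work out, and getting the signs and the $\mathrm{GL}_4(\Z)$-equivalences right there is delicate. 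I would organize this by fixing an extended basis $(e_1,\dots,e_5)$, expressing every relevant $6$- and $7$-cell in terms of it, and reducing the verification to a finite check on the combinatorics of the root system $A_4$ — using, as in the proof of the cell classification, the dihedral symmetry to assume at least half the vectors are $e$'s. Once degrees $4$ and $5$ are handled, only degree $6$ remains for the full acyclicity of $\mathrm{Coker}\,\widetilde\psi^{(4)}_\bullet$, which is where the $D_4$-cells and the comparison with $H^6(\Gamma,\cdot)$ — the interesting, possibly nonzero, piece — will enter separately.
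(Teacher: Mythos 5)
Your overall strategy is the same as the paper's: describe $\overline {\rm Cok}\,\widetilde\psi^{(4)}_\bullet$ in degrees $4,5,6$ by generators and relations via the cell classification, compute $\overline d$ on generators discarding every face that is $\mathrm{GL}_4(\Z)$-equivalent to a cell already killed, get degree $4$ from $\overline d\,\overline\varphi(e_1,\dots,e_5,f_{12})=\overline\varphi(e_1,\dots,e_5)$, and get degree $5$ by identifying the $5$-cycles and exhibiting them as boundaries of degree-$6$ classes. That is exactly Lemma \ref{3.25.01.13}, Lemma \ref{3.25.01.17} and the final boundary computation in the paper's proof.

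However, the concrete computation you propose for degree $5$ would not work as stated. You suggest taking $\overline d$ of the type i) $6$-cell $\overline\varphi(e_1,\dots,e_5,f_{34},f_{45})$, i.e.\ of (\ref{106b}). Its boundary modulo the vanishing cells is not a clean difference of two $5$-cells of type i): besides the faces obtained by dropping $f_{34}$ or $f_{45}$, the faces obtained by dropping $e_3$ or $e_5$ also survive. For instance $\varphi(e_1,e_2,e_4,e_5,f_{34},f_{45})$ is itself a type i) $5$-cell with respect to the extended basis $(e_4,\,-f_{45},\,-e_1,\,-e_2,\,-f_{34})$ (whose sum is zero and whose first two vectors add up to $-e_5$), and similarly for the face with $e_5$ omitted; only the faces with $e_1,e_2$ omitted (which lie in $C({\rm L}_3,w)$) and with $e_4$ omitted (type ii)) die. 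So (\ref{106b}) produces a four-term relation, and one would still have to argue separately that the kernel described in Lemma \ref{3.25.01.17} is hit. The paper's choice is the type ii) $6$-cell (\ref{106c}): for $\overline\varphi(e_1,\dots,e_5,f_{12},f_{34})$ every face $\overline\varphi(e_1,\dots,\widehat{e_i},\dots,e_5,f_{12},f_{34})$ is a $5$-cell of type iii) ($i=1,2,3,4$) or iv) ($i=5$), hence zero, so $\overline d$ gives exactly $\overline\varphi(\dots,f_{12})-\overline\varphi(\dots,f_{34})$, which together with Lemma \ref{3.25.01.17} finishes degree $5$. A smaller point: the cyclic relation (\ref{1116b}) and the $7$-cell boundaries (\ref{116b}) are relations in degree $6$ and play no role in the acyclicity in degrees $4$ and $5$ (they enter only in the degree-$6$ analysis of Proposition \ref{a106}), so your reliance on them for "making the count work out" in degree $5$ is misplaced; what is actually needed there is the explicit determination of the $5$-cycles, i.e.\ Lemma \ref{3.25.01.17}, which your plan acknowledges but does not carry out.
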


\begin{proof} In the degree $4$ it follows from

\begin{lemma} \label{3.25.01.13} In the complex 
$\overline {\rm Cok} ~\widetilde \psi^{(4)}_{\bullet}$ one has 
$$
\overline d: 
\overline  \varphi(e_1, e_2, e_3, e_4, e_5, f_{12}) \lms 
\overline \varphi(e_1, e_2, e_3, e_4, e_5).
$$
\end{lemma}

\begin{proof} One has $\overline 
 \varphi(e_1, ... \widehat e_i, ...,  e_5, f_{12})=0$. Indeed, if $i=1$ or $i=2$ then it belongs to $C({\rm L_3}, w)$ where 
${\rm L_3} = {\rm span}_{\Z}(e_3, e_4, e_5, f_{12})$. Observe that $f_{12} = - f_{3 4 5}$. 
If $i=3, 4, 5$ then it is equivalent to a 4-cell of type iii), and so it is 
zero already in 
$\overline {\rm Coker} ~\widetilde \psi^{(4)}_{\bullet}$. \end{proof}

  To handle 
the degree $5$ case of Proposition \ref{3.25.01.14} we need the following Lemma.

\begin{lemma} \label{3.25.01.17} The subgroup of $5$-cycles in 
$\overline {\rm Cok} ~\widetilde \psi^{(4)}_{6}$ 
is generated by the cycles ${\rm GL}_4(\Z)$-equivalent to 
 \begin{equation} \label{3.25.01.1}
\overline  \varphi(e_1, e_2, e_3, e_4, e_5, f_{12}) - 
\overline  \varphi(e_1, e_2, e_3, e_4, e_5, f_{34}).
\end{equation}
\end{lemma}

\begin{proof} Any $5$-cycle in the complex 
$\overline {\rm Cok} ~\widetilde \psi^{(4)}_{\bullet}$ 
is ${\rm GL}_4(\Z)$-equivalent to   cycle (\ref{3.25.01.1}) or 
$$
\overline  \varphi(e_1, e_2, e_3, e_4, e_5, f_{12}) - 
\overline  \varphi(e_1, e_2, e_3, e_4, e_5, f_{23}),  
$$
which is a sum of   cycle (\ref{3.25.01.1}) and the two cycles obtained from it 
by action of the  cyclic shift $i \lms i+2$ mod $5$, and its iteration. 
\end{proof}

 Returning to the proof of Proposition \ref{3.25.01.14}, we note that one has
$$
\overline d:  \overline  \varphi(e_1, e_2, e_3, e_4, e_5, f_{12}, f_{34}) 
\lms 
\overline  \varphi(e_1, e_2, e_3, e_4, e_5, f_{12}) - 
\overline  \varphi(e_1, e_2, e_3, e_4, e_5, f_{34}).
$$
Indeed, each of the terms 
$\overline  \varphi(e_1, ... \widehat e_i, ... , e_5, f_{12}, f_{34})$ 
is zero in $\overline {\rm Cok} ~\widetilde \psi^{(4)}_{\bullet}$ since  
it is a $5$-cell of type iv) when $i=5$, and  of type iii) when $i=1,2,3,4$.
\end{proof}

\paragraph{7. End of the proof.} A   short  proof by N. Malkin can be found in the Appendix. 
We present below the original argument, since it gives some extra information on the story. 

\begin{proposition} \label{a106} a) The group $\overline {\rm Cok} ~\widetilde \psi^{(4)}_{6}$ 
is generated by the elements (\ref{106c}). 

b) The subgroup of relations is generated by the elements (\ref{1116b}) and 
\be \label{m9}
\begin{split}  
&\overline d \Bigl(\overline \varphi(e_1, e_2, e_3, e_4, e_5, f_{12}, f_{13}, f_{34}) -
\overline \varphi(e_2, e_5, e_1, e_3, e_4, f_{25}, f_{13}, f_{34}) \\
&+ \overline \varphi(e_4, e_5, e_2, e_1, e_3, f_{45}, f_{12}, f_{13})  
+ \overline \varphi(e_2, f_{13}, - e_3, f_{34}, e_5, -f_{45}, e_4, - f_{12})  \\
&- \overline \varphi(e_4, f_{13}, - e_1, f_{12}, e_5, -f_{25}, e_2, -f_{34})\Bigr).\\
\end{split}
\end{equation}
\end{proposition}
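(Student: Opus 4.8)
The goal of Proposition~\ref{a106} is to pin down the group $\overline{\rm Cok}~\widetilde\psi^{(4)}_6$ by generators and relations, completing the computation of $H_6$ of the complex $\overline{\rm Cok}~\widetilde\psi^{(4)}_\bullet$, and hence (together with Proposition~\ref{3.25.01.14}) the acyclicity of ${\rm Coker}~\widetilde\psi^{(4)}_\bullet$ asserted in Theorem~\ref{1more}b). First I would establish part a): we already know from the classification of internal Voronoi $6$-cells in Section~6.4 that the only surviving classes in degree $6$ are the type-ii) cells $\overline\varphi(e_1,\dots,e_5,f_{12},f_{34})$ of~(\ref{106c}) and the type-i) cells $\overline\varphi(e_1,\dots,e_5,f_{34},f_{45})$ of~(\ref{106b}) --- the type-iii) and type-iv) cells having already been killed (type iii) lies in a subcomplex $C({\rm L}_3,w)$, type iv) was eliminated in Lemma~\ref{105}). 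So the claim in a) is precisely that the type-i) generators~(\ref{106b}) are redundant modulo the type-ii) generators. This follows from Lemma~\ref{105}: the relation $\overline d\,\overline\varphi(e_1,\dots,e_5,f_{12},f_{34},f_{45})$ expresses, up to ${\rm GL}_4(\Z)$-equivalence and cells already zero in $\overline{\rm Cok}$, a type-i) $6$-cell as a $\Z$-linear combination of type-ii) $6$-cells; I would write out this boundary explicitly using the list of $7$-cells and the equivalences of $6$-faces, discarding every face that is type iii), type iv), or of lower rank, and read off that $\overline\varphi(e_1,\dots,e_5,f_{34},f_{45})$ is congruent to a combination of the~(\ref{106c}).

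For part b), the relations: after a) the group is the quotient of the free abelian group on the type-ii) generators~(\ref{106c}) (one for each ${\rm GL}_4(\Z)$-orbit datum $\{e_1,\dots,e_5;\{f_{12},f_{34}\}\}$) by the image of $\overline d$ from degree $7$. There are two orbits of internal $7$-cells (cases i) and ii) in the classification). The image of the boundary of the type-i) $7$-cell $\varphi(e_1,\dots,e_5,f_{12},f_{34},f_{45})$, after deleting the faces that vanish in $\overline{\rm Cok}$, yields --- by the same bookkeeping as in Lemma~\ref{105} --- the relation~(\ref{1116b}), namely ${\rm Cycle}_5\,\overline\varphi(e_1,\dots,e_5,f_{12},f_{34})$, which is also recognizable as $\widehat\psi^{(4)}_6$ applied to the colored-tree generator $\langle e_1,\dots,e_5\rangle$ (the $m=4$ case of~(\ref{Mappsi})). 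The image of the boundary of the type-ii) $7$-cell $\varphi(e_1,\dots,e_5,f_{12},f_{13},f_{34})$ requires more care, because several of its $6$-faces are themselves type-ii) cells but with the five "$e$-type'' vectors being a different subset of the ambient root configuration; using the remark that in a type-ii) $6$-cell the five zero-sum vectors are determined, one re-expresses each such face canonically and obtains precisely the combination~(\ref{m9}). So the relation subgroup contains~(\ref{1116b}) and~(\ref{m9}); the content of b) is that it is \emph{generated} by these, i.e.\ every $6$-cycle bounds a combination of the two types of $7$-cells. For this I would argue that an arbitrary element of $\overline d\big(\overline{\rm Cok}~\widetilde\psi^{(4)}_7\big)$ is, after using dihedral symmetry to normalize, a $\Z$-combination of ${\rm GL}_4(\Z)$-translates of the boundaries of the two $7$-cell types, and that all other faces appearing are zero in $\overline{\rm Cok}$, so no further relations are produced.

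\textbf{The main obstacle.} The genuinely delicate step is the second half of b): controlling \emph{all} of $\overline d(\text{degree }7)$ and verifying no relation beyond~(\ref{1116b})+(\ref{m9}) survives. Concretely, one must check that when the boundary of a type-ii) $7$-cell is reduced modulo the subcomplexes already quotiented out (the $C({\rm L}_3,w)$'s and the Lemma~\ref{105} subcomplex) and modulo dihedral symmetry, the resulting class in the free group on~(\ref{106c}) is exactly~(\ref{m9}) and not something that produces additional independent relations --- this is a somewhat intricate combinatorial reconciliation of several ${\rm GL}_4(\Z)$-orbit labels, of the same flavor as, but longer than, the computation in Lemma~\ref{105}. (As the text notes, N.~Malkin's Appendix gives a shorter route to the final acyclicity; the argument above is the direct one.) Once b) is in hand, comparing with the presentation of ${\rm H}^6(\Gamma_1(4;N),\varepsilon)$-type data via the Voronoi complex gives the cokernel acyclicity, and Theorem~\ref{1more}b) follows.
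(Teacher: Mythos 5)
Your part a) is, in substance, the paper's own argument: one computes the boundary of the type-i) $7$-cell $\varphi(e_1,\dots,e_5,f_{12},f_{34},f_{45})$ in $\overline{\rm Cok}~\widetilde\psi^{(4)}_{\bullet}$ — this is the content of Lemma \ref{106}, not of Lemma \ref{105}, which concerns the type-iv) $6$-cell — and checks that it contains exactly one term of type (\ref{106b}), the remaining nonzero faces being of type (\ref{106c}); that eliminates the generators (\ref{106b}). The trouble is in part b). First, the relation (\ref{1116b}) is \emph{not} the boundary of the type-i) $7$-cell: by Lemma \ref{106} that boundary equals $-\overline\varphi(e_1,\dots,e_5,f_{34},f_{45})$ plus four terms of type (\ref{106c}), whereas (\ref{1116b}) enters the presentation as $\widehat\psi^{(4)}_6(\langle e_1,\dots,e_5\rangle)$, i.e. it comes from the image of $\widetilde\psi^{(4)}_6$ by which the cokernel is taken, not from $dV^{(4)}_7$. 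Second, the boundary of the type-ii) $7$-cell $\varphi(e_1,\dots,e_5,f_{12},f_{13},f_{34})$ is not a sum of type-ii) $6$-cells with relabelled $e$'s: of its eight faces, the four terms (\ref{126a}) are ${\rm GL}_4(\Z)$-equivalent to type-i) cells (\ref{106b}) and only the four terms (\ref{126c}) are of type (\ref{106c}). Hence that single boundary is not a relation among the generators (\ref{106c}); the element (\ref{m9}) is by construction the boundary of a combination of \emph{five} $7$-cells, the type-ii) one corrected so that all type-(\ref{106b}) faces cancel.

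The genuine gap is the closing step, which you leave at ``I would argue\dots''. The missing idea is precisely Lemma \ref{3.26.01.1}: from the configuration $\{\pm e_1,\dots,\pm e_5,\pm f_{34},\pm f_{45}\}$ of a type-i) $6$-cell one recovers $f_{12}$ up to sign, so type-(\ref{106b}) elements are in bijection with boundaries of type-i) $7$-cells, each such boundary containing exactly one type-(\ref{106b}) term. Only this uniqueness lets one conclude that any $\Z$-combination of $7$-cell boundaries lying in the span of the generators (\ref{106c}) is forced to be a combination of the elements (\ref{m9}), i.e. that no further relations survive. Your substitute claim that ``all other faces appearing are zero in $\overline{\rm Cok}$'' is false for the type-(\ref{106b}) faces of the type-ii) $7$-cell boundary: after part a) they are expressible through (\ref{106c}), not zero. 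So while the skeleton of your plan matches the paper's, as written the derivations of both (\ref{1116b}) and (\ref{m9}) are incorrect, and the completeness of the relation list — the actual assertion of b) — is not established.
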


\begin{proof} We need the following lemma.

\begin{lemma} \label{106} In $\overline {\rm Cok} ~\widetilde \psi^{(4)}_{6}$ one has
\begin{equation} \label{2106}
\overline d \overline \varphi(e_1, e_2, e_3, e_4, e_5, f_{12}, f_{34}, f_{45}) =
\end{equation}
\begin{equation} \label{106d}
 - \overline \varphi(e_1, e_2, e_3, e_4, e_5, f_{34}, f_{45}) + 
\end{equation}
\begin{equation} \label{106e}
\overline \varphi(e_1, e_2, e_4,  e_5, f_{12}, f_{34}, f_{45}) +  
\overline \varphi(e_1, e_2, e_3, e_4, f_{12}, f_{34}, f_{45}) 
\end{equation}
\begin{equation} \label{106f}
- \overline \varphi(e_1, e_2, e_3, e_4, e_5, f_{45}, f_{12}) 
- \overline \varphi(e_1, e_2, e_3, e_4, e_5, f_{12}, f_{34}).  
\end{equation}
\end{lemma}

\begin{proof} The five terms above enter to the formula for $\overline d$. The remaining three terms in the formula for $\overline d$ are 
\begin{equation} \label{2107}
\overline \varphi(e_2, e_3, e_4, e_5, f_{12},f_{34}, f_{45}) - 
\overline \varphi(e_1, e_3, e_4, e_5, f_{12},f_{34}, f_{45}) + 
\overline \varphi(e_1, e_2, e_3, e_5, f_{12},f_{34}, f_{45}).  
\end{equation}
The first two of them are ${\rm GL}_4(\Z)$-equivalent to the element 
$\overline \varphi(e_1, e_2, e_3, e_4, f_{12}, f_{13}, f_{45})$, 
and thus they are zero already in $\overline {\rm Coker} ~\widetilde \psi^{(4)}_{\bullet}$ 
by (\ref{1form}). Indeed,  for the first one 
 use the transformation
$(e_1, e_2, e_3, e_4, e_5) \lms (e_4, e_3, e_5, e_2, e_1) $. The second  is 
  similar. 
Flipping $e_4 \leftrightarrow e_5$ we see that the last term in (\ref{2107}) 
 is ${\rm GL}_4(\Z)$-equivalent to $\varphi(e_1, e_2, e_3, e_4, f_{12}, 
f_{35}, f_{45})$, which is in (\ref{106a}), and so has been killed. \end{proof}

\paragraph{\bf Proof of Proposition \ref{a106}.}  a) Let us say that a generator of $\overline {\rm Cok} ~\widetilde \psi^{(4)}_{6}$ is of type (\ref{106c}) or (\ref{106b})
 if it is ${\rm GL}_4(\Z)$-equivalent to element (\ref{106c}) or (\ref{106b}). 

Let us study  element (\ref{2106}). The term 
(\ref{106d}) is obviously of type (\ref{106b}). The terms 
(\ref{106e})-(\ref{106f}) are of type (\ref{106c}). This is clear for (\ref{106f}); for the 
terms (\ref{106e}) use the transformation
\be \nonumber
\begin{split}
&(e_1, e_2, e_3, e_4, e_5, f_{12}, f_{34}) \lms (e_1, e_2, -e_4, f_{45}, f_{34}, f_{12},  e_5),\\
&(e_1, e_2, e_3, e_4, e_5, f_{12}, f_{34}) \lms (e_1, e_2, -e_4, f_{34}, f_{45}, f_{12},  e_3).\\
\end{split}
\ee
The part a)   is proved.

b) Now let us investigate the second element in (\ref{116b}). We start from the following   observation

\begin{lemma} \label{3.26.01.1}
Elements of type (\ref{106d}) are in 1-1 correspondence with the  
elements of type (\ref{2106}). Namely, an element  (\ref{106d}) determines
{\it uniquely} an element of type (\ref{2106})
which contains  (\ref{106d}). 
\end{lemma}

\begin{proof} The element (\ref{106d}) is the unique element of 
type (\ref{106b}) cotained in 
(\ref{2106}). 

To prove the converse statement we need to show that one can recover 
vector $f_{12}$, up to a sign,  from the unordered configuration of the vectors 
\begin{equation} \label{3.21.01.8}
\{\pm e_1, ..., \pm e_5, \pm f_{34}, \pm f_{45}\}.
\end{equation} 
Among these vectors 
 there are exactly four $3$-tuples of vectors which sum to zero: 
$(e_3, e_4, f_{34})$, $(e_4, e_5, f_{45})$, and 
another two obtained by reversing the signes. 
The two vectors which has not been used, considered up to a sign,  
are $\pm e_1$ and $\pm e_2$. Notice that $e_1$ and $-e_2$ can 
not be part of an extended basis formed by the $5$ of the vectors (\ref{3.21.01.8}). 
\end{proof}

One has 
$$
\overline d \overline \varphi(e_1, e_2, e_3, e_4, e_5, f_{12}, f_{13}, f_{34}) 
= \quad 
$$
\be \label{126a}
\begin{split}  
&- \overline \varphi(e_1, e_2, e_3, e_4, e_5, f_{13}, f_{34}) 
-\overline \varphi(e_1, e_2, e_3,  e_4, e_5, f_{12}, f_{13}),  \\
&+\overline \varphi(e_2, e_3, e_4, e_5, f_{12}, f_{13}, f_{34}) 
+\overline \varphi(e_1, e_2, e_4, e_5, f_{12}, f_{13}, f_{34}).\\
\end{split}
\end{equation}

\be \label{126c}
\begin{split} 
&- \overline \varphi(e_1, e_3, e_4, e_5, f_{12}, f_{13}, f_{34})
 - \overline \varphi(e_1, e_2, e_3, e_5, f_{12}, f_{13}, f_{34}) \\
&+\overline \varphi(e_1, e_2, e_3, e_4, f_{12}, f_{13}, f_{34}) +
 \overline \varphi(e_1, e_2, e_3, e_4, e_5, f_{12}, f_{34}).\\
\end{split}\end{equation}

We claim that the first four  terms (\ref{126a})  in this formula are of type 
(\ref{106b}). Indeed, for the first two 
terms in (\ref{126a}) this is clear. For the second two terms in (\ref{126a}) we   use   transformations
\be \nonumber
\begin{split}
&(e_1, e_2, e_3, e_4, e_5, f_{34}, f_{45}) \lms (e_2, f_{13},-e_3, f_{34},   e_5, e_4, -f_{12}),\\
&(e_1, e_2, e_3, e_4, e_5, f_{34}, f_{45}) \lms (e_4, f_{13},-e_1, f_{12},   e_5, e_2, -f_{34}).\\
\end{split}
\ee
The rest of the terms (\ref{126c})  are of type  (\ref{106c}). This is clear for the last of them. For the others use the transformations
\be \nonumber
\begin{split}
&(e_1, e_2, e_3, e_4, e_5, f_{12}, f_{34}) \lms (-e_1, f_{13},e_5, f_{12},   e_4, e_3, -f_{34}),\\
&(e_1, e_2, e_3, e_4, e_5, f_{12}, f_{34}) \lms (e_5, f_{34}, f_{13},   -e_3, e_2, -f_{12}, e_1),\\
&(e_1, e_2, e_3, e_4, e_5, f_{12}, f_{34}) \lms (-e_2, f_{12},   -e_4, f_{34}, -f_{13}, e_1,  e_3).\\
\end{split}
\ee

It follows from this that 
  element (\ref{m9})  is a sum of terms of type (\ref{106c}) 
only. 

It follows from this and Lemma \ref{3.26.01.1} that if 
a linear combination of elements (\ref{116b}) consists of elements of type 
(\ref{106c}) only, then it is a sum of elements (\ref{m9})   only. 
The part b)   is proved. 
\end{proof}

The last $8$ elements in the formula in Section 6.5 vanish after projection to 
$\overline {\rm Coker} \widetilde \varphi^{(4)}_{\bullet}$. 

Under the transformation $(e_1, e_2, e_3, e_4, e_5) \lms (e_1, e_2, e_5, e_4, e_3)$  
elements (\ref{m6})  go to (\ref{m9}). 

It follows from the definition of the subgroup $d{\cal M}^{(4)}_7$  
that the image of $\widetilde \psi_6^{(4)}(d{\cal M}^{(4)}_7)$ in 
$\overline {\rm Cok} ~\widetilde \psi_6^{(4)}$ coincides with the 
subgroup spanned by elements (\ref{m9}).  On the 
other hand we just proved that this subgroup is precisely the image 
of the  second shuffle relations of type $(1,3)$. 
Applying Lemma \ref{4more} we conclude that in fact it coincides 
with the image 
of   all second shuffle relations.

 \section{Applications}

 \paragraph{The   reduced  Lie coalgebra ${\cal D}'_{\bullet, \bullet}(\G)$.}  Recall the element $\{1,1\}_{1,1} \in {\cal D}_{1,1}(\G)$. Let us write
$$
  {\cal D}_{\bullet, \bullet}(\G) =  {\cal D}'_{\bullet, \bullet}(\G) \oplus \{1,1\}_{1,1}\cdot \Q.
$$
  It was proved in \cite{G00a} that 
$ 
  \delta {\cal D}_{\bullet, \bullet}(\G) \subset \Lambda^2   {\cal D}'_{\bullet, \bullet}(\G). 
$ 
So ${\cal D}'_{\bullet, \bullet}(\G)$ is a bigraded Lie coalgebra.

 \paragraph{The diagonal  Lie coalgebra.}    The following subspace of  ${\cal D}_{\bullet, \bullet}(\G)$ is evidently closed under the cobracket, and thus is a graded Lie subcoalgebra: 
 $$
 {\cal D}_\bullet(\G) := \bigoplus_{w=1}^\infty  {\cal D}_w(\G), ~~~~{\cal D}_w(\G):= {\cal D}_{w,w}(\G).
 $$
 Therefore we get a graded     Lie coalgebra, called the diagonal (dihedral)  Lie coalgebra.
 
  \vskip 2mm 
  There is a codimension one Lie subcoalgebra ${\cal D}'_\bullet(\G) \subset  {\cal D}_\bullet(\G) $. 
  
  In particular, for $\G = \mu_p$ we have
$$
{\cal D}_\bullet(\mu_p) =  {\cal D}'_\bullet(\mu_p) \oplus \{1,1\}_{1,1}\cdot \Q, ~~~~\{1,1\}_{1,1}\in {\cal D}_1(\mu_p).
$$
The element $\{1,1\}_{1,1}\in {\cal D}_1(\mu_p)$ corresponds, in a sense,  to the exponential motive provided by the Euler $\gamma-$constant, see \cite[Section 7.5]{G00a}. 

 \paragraph{The  unramified Lie coalgebra ${\cal D}^\circ_\bullet(\mu_p)$.}   There is a  map 
\be \nonumber
\begin{split}
&v: {\cal D}'_1(\mu_p) \lra \Q\\
&\{1, \zeta\}_{1,1} \lms 1~~ \forall\zeta \in \mu_p - \{1\}.
\end{split}
\ee
 Its kernel  is denoted by ${\cal D}^\circ_1(\mu_p)$.  So we have a non-canonical splitting
 $$
  {\cal D}_1(\mu_p) =  {\cal D}^{\circ}_1(\mu_p) \oplus \Q^2. 
  $$
Denote by  $\Q[-1]$  a one-dimensional $\Q-$vector space in the degree $1$. So we arrived at
\bl \la{L3.5}
 There is a non-canonical splitting of graded spaces:
$$
 {\cal D}_\bullet(\mu_p) =  {\cal D}^{\circ}_\bullet(\mu_p) \oplus \Q[-1]^2.
$$
 \el

Lemma \ref{L3.5} immediately implies that
\be \label{BR4}
\Lambda^*_{(w)}( {\cal D}_\bullet(\mu_p)) = \Lambda^*_{(w)}( {\cal D}^\circ_\bullet(\mu_p)) \bigoplus  \Lambda^*_{(w-1)}({\cal D}^\circ_\bullet(\mu_p))\otimes \Q^2[-1]  \bigoplus  \Lambda^*_{(w-2)}( {\cal D}^\circ_\bullet(\mu_p))[-2].   
\ee

 \paragraph{Cohomology of the unramified  Lie coalgebra ${\cal D}^\circ_{\bullet}(\mu_p)$.}  We start with   the following    result.
\bt \la{CMR}
 \begin{equation} \label{BR3}
\begin{split}
& {\rm H}^i_{(4)}({{\cal D}}_{\bullet}(\mu_p)) =  
 {\rm H}^{i+2}  (\Gamma_1(4;p), \varepsilon).\\
& {\rm H}^i_{(3)}({{\cal D}}_{\bullet}(\mu_p)) =  
 {\rm H}^{i}  (\Gamma_1(3;p), \Q) ,  ~~~~ i>0.\\
& {\rm H}^i_{(2)}({{\cal D}}_{\bullet}(\mu_p)) =  
 {\rm H}^{i-1}  (\Gamma_1(2;p), \varepsilon).\\
\end{split}
 \end{equation}
 \et
 
 \begin{proof} Note that $ {\rm H}^i_{(w)}({{\cal D}}_{\bullet}(\mu_p)) =0$  unless $1 \leq i \leq w$.  
 If $1 \leq i \leq w$, where $w=2,3,4$, the claim (\ref{BR3}) follows from Theorem \ref{4.26.01.3}c) and Theorem \ref{MTTHHa}.  
 It follows from the second line in (\ref{EST}) that    
 $$
  {\rm H}^{> 6}  (\Gamma_1(4;p), \varepsilon)=0, ~~~~ {\rm H}^{>3}  (\Gamma_1(3;p), \Q) =0, ~~~~ {\rm H}^{>1}  (\Gamma_1(2;p), \varepsilon)=0.
 $$
 Finally, $ {\rm H}^{i}  (\Gamma_1(4;p), \varepsilon) =0$ for $i<3$. For $i=0$ this is clear. For $i=1$, see  \cite[Theorem 16]{BMS}. 
 For $i=2$ this was proved  in \cite{H19}. 
  \end{proof}

\bt \la{CMR1} The weight $2,3$ cohomology of the   unramified diagonal Lie coalgebra ${\cal D}^{{\circ}}_{\bullet}(\mu_p)$ are:
\begin{equation} \nonumber
\begin{split}
 & {\rm H}^i_{(2)}({{\cal D}}^{{\circ}}_{\bullet}(\mu_p)) =  
 {\rm H}^{i-1}_{\rm cusp}  (\Gamma_1(2;p), \varepsilon);\\
& {\rm H}^i_{(3)}({{\cal D}}^{{\circ}}_{\bullet}(\mu_p)) =  
 {\rm H}^{i}_{\rm cusp}  (\Gamma_1(3;p), \Q).\\
  \end{split}
  \end{equation}
 \et
 
 
 \begin{proof} 
Follows from  
(\ref{BR4}).
\end{proof}

  

\paragraph{Applications to   cyclotomic Lie algebras.}  Recall   the subspace of  cyclotomic units 
$$
{\cal C}^\circ_{1 }(\mu_p):= {\cal O}^*(\Z[\zeta_p])\otimes\Q\subset {\cal C}_{1 }(\mu_p) = {\cal O}^*({\rm S}_p)\otimes\Q.
$$  
There is a canonical isomorphism
\be \nonumber
\begin{split}
& \xi_{1}: {\cal D}^{\circ}_1(\mu_p) \stackrel{\sim}{\lra} {\cal C}_{1}^\circ(\mu_p),\\
&\{1, \zeta_1\}_{1,1} - \{1, \zeta_2\}_{1,1} \lms \frac{1 - \zeta_1}{1 - \zeta_2}.\\
\end{split}
\ee
   
 We use the notation ${\cal C}^\circ_{w }(\mu_p) = {\cal C}_{w }(\mu_p)$ for $w>1$.
 
  \bt \la{THw23} Let $p$ be a prime number. Then 

i) The map (\ref{DC}) induces isomorphisms of vector spaces
\be \nonumber
\begin{split}
&\xi_{2}:  {\cal D}_{2}(\mu_p) \lra  {\cal C}_{2}(\mu_p),\\
 &\xi_{3}:   {\cal D}_{3}(\mu_p) \lra  {\cal C}_{3}(\mu_p).\\
\end{split}
\ee

ii) Furthermore, it  induces   a canonical isomorphism of bigraded Lie coalgebras
$$
\bigoplus_{w=1}^3{\cal D}^\circ_{w}(\mu_p) \lra \bigoplus_{w=1}^3{\cal C}^\circ_{w }(\mu_p).
$$
\et
 
 \begin{proof} i) There is a map of complexes of amplitude $[1,2]$: 
 \begin{displaymath} 
    \xymatrix{
         {\cal D}_{2 }(\mu_p) \ar[r]^{\delta~~~} \ar[d]^{\xi_{2}} &    \bigwedge^2 {\cal D}^\circ_{1 }(\mu_p)\ar[d]_{=}^{\wedge^2\xi_{1 } }\\     
     {\cal C}_{2 }(\mu_p) \ar[r]^{\delta~~~}  &   \bigwedge^2 {\cal C}^\circ_{1 }(\mu_p) \\}
\end{displaymath}
The degree $2$ component of this map is  an isomorphism. The $ {\rm H}^1$  of the top complex 
is equal to $ {\rm H}^1(\Gamma_1(2;p); \varepsilon_2)$ by Theorem \ref{4.26.01.3}c) and Theorem \ref{MTTHHa}. Since this group is zero,  the map $\xi_{2}$ is injective. 
On the other hand, the map $\xi_{2}$ is surjective by Theorem \ref{SMCO}. So it is an isomorphism. 

\vskip 2mm

There is a map of complexes of amplitude $[1,3]$:
 \begin{displaymath} 
    \xymatrix{
         {\cal D}_{3 }(\mu_p) \ar[r]^{\delta~~~~~~~~} \ar[d]^{\xi_{3 }} 
         &  {\cal D}_{2 }\otimes  {\cal D}^\circ_{1 }(\mu_p) \ar[d]_=^{\xi_{2 }\otimes\xi_{1 }}\ar[r]^{~~~~\delta}&   \bigwedge^3 {\cal D}^\circ_{1 }(\mu_p)\ar[d]_=^{\wedge^3\xi _{1 }}\\     
     {\cal C}_{3 }(\mu_p) \ar[r]^{\delta~~~~~~~~}  &  {\cal C}_{2 } \otimes  {\cal C}^\circ_{1 }(\mu_p)  \ar[r]^{~~~~\delta}&  \bigwedge^3 {\cal C}^\circ_{1 }(\mu_p) \\}
\end{displaymath}
Its degree  $2$ and $3$ components  are isomorphisms. The $ {\rm H}^1$  of the top complex 
is equal to $ {\rm H}^1(\Gamma_1(3;p); \Q)$ by Theorem \ref{4.26.01.3}c) and Theorem \ref{MTTHHa}. Since 
${\rm H}^1(\Gamma_1(3;p), \Q)=0$ by Kazhdan's theorem,  the map $\xi_{3}$ is injective. 
It is surjective by Theorem \ref{SMCO}. So it is an isomorphism. 

ii) Follows from i) and Theorem \ref{SMCO}.\end{proof}

\bc All relations between the motivic multiple polylogarithms of weight = depth $\leq 3$ at $p-$th roots of unity follow from  the double shuffle relations.
\ec


\bt \la{CMR1x} The weight $2, 3$ cohomology of the   unramified cyclotomic Lie coalgebra ${\cal C}^{{\circ}}_{\bullet}(\mu_p)$ are:
\begin{equation} \nonumber
\begin{split}
 & {\rm H}^i_{(2)}({{\cal C}}^{{\circ}}_{\bullet}(\mu_p)) =  
 {\rm H}^{i-1}_{\rm cusp}  (\Gamma_1(2;p), \varepsilon),\\
& {\rm H}^i_{(3)}({{\cal C}}^{{\circ}}_{\bullet}(\mu_p)) =  
 {\rm H}^{i}_{\rm cusp}  (\Gamma_1(3;p), \Q).\\
  \end{split}
  \end{equation}
 \et
 
 \begin{proof} It follows from Theorems \ref{CMR1} and \ref{THw23}.
 \end{proof}

\paragraph{In the depth $\leq 4$, the double shuffle relations are all relations.} According to Horozov, \cite[Theorem 1.1]{H06},  we have: 
\begin{equation} \label{4.8.01.4} 
  {\rm H}^i({\rm GL}_4(\Z), S^{w-4}V_4 \otimes \varepsilon) = \quad \left\{ \begin{array}{ll}
  {\rm H}_{\rm cusp}^1({\rm GL}_2(\Z), S^{w-2}V_2 \otimes \varepsilon) \bigoplus \Q& i=3,\\ 
0 & \mbox{\rm otherwise}.\\
 \end{array}\right.
\end{equation}
 
 I   suspect that the extra summand $\Q$ in $ {\rm H}^3$ is an error. As shown in the Remark below, the main result of this paper  for $w=4$ is compatible with the following result:
 \begin{equation} \label{4.8.01.4A} 
  {\rm H}^i({\rm GL}_4(\Z), S^{w-4}V_4 \otimes \varepsilon) = \quad \left\{ \begin{array}{ll}
  {\rm H}_{\rm cusp}^1({\rm GL}_2(\Z), S^{w-2}V_2 \otimes \varepsilon)& i=3,\\ 
0 & \mbox{\rm otherwise}.\\
 \end{array}\right.
\end{equation}

  \bt \la{TTHH5.7} i) All relations between  motivic multiple $\zeta-$values  in the depth $\leq 3$ follow from the regularised motivic double shuffle relations
  
  ii)  Assuming Horozov's theorem in the corrected form (\ref{4.8.01.4A}), all relations between  motivic multiple $\zeta-$values  in the depth $4$ follow from the regularised motivic double shuffle relations. 
 \et

 \begin{proof} i) Recall the natural surjective map of Lie coalgebras $ {\cal D}_{\bullet, \bullet}  \lra {\cal C}_{\bullet, \bullet}$.   Consider the  depth $m$  part of the map of the standard complexes  induced by this map.  It is a surjective map. 
For example,  
 for the depth $4$ we get a map of complexes of amplitude $[1,4]$: 
 \begin{displaymath} 
     \xymatrix{
          {\cal D}_{\bullet, 4}  \ar[r]^{\delta~~~~~~~~~~ ~} \ar[d]  
        &  {\cal D}_{\bullet, 3} \otimes  {\cal D}_{\bullet, 1 }  \bigoplus \bigwedge^2 {\cal D}_{\bullet,  2 }  \ar[d] \ar[r]^{~~~~\delta}
          &  {\cal D}_{\bullet, 2 }  \otimes \bigwedge^2 {\cal D}_{\bullet, 1} \ar[d] \ar[r]^{~~~~~~\delta}&  
           \bigwedge^4 {\cal D}_{\bullet, 1}\ar[d] \\     
      {\cal C}_{\bullet, 4}  \ar[r]^{\delta~~~~~~~ ~~~~~}  &  {\cal C}_{\bullet, 3}  \otimes  {\cal C}_{\bullet,  1}   \bigoplus \bigwedge^2 {\cal C}_{\bullet, 2 } \ar[r]^{~~~~\delta}
      & {\cal C}_{\bullet, 2 }  \otimes \bigwedge^2 {\cal C}_{\bullet, 1 } \ar[r]^{~~~~~~\delta}&  \bigwedge^4 {\cal C}_{\bullet, 1 }  \\}
 \end{displaymath}
 
   Arguing by the induction on the depth, just like in  Theorem \ref{THw23},    the  degree $\geq 2$ components of the depth $m$ map are isomorphisms. 
 In the depth $\leq 3$ the ${\rm H}^1$  of the  top complex is zero. Therefore the  left map must be an isomorphism. 
 
 ii) It follows from i) that the degree $\geq 2$ components of the depth $4$ map above are isomorphisms. 
 The ${\rm H}^1$    of the top complex 
 is equal to $ {\rm H}_{\rm cusp}^1({\rm GL}_2(\Z), S^{\bullet-2}V_2 \otimes \varepsilon)$ by  (\ref{4.8.01.4A}) and Theorem ??.    

 On the other hand, let us analyze 
\be \la{234}
 {\rm Ker}_{(w,4)}\Bigl(\delta: {\cal C}_{\bullet,4} \lra {\cal C}_{\bullet,3} \wedge {\cal C}_{\bullet,1} \bigoplus \Lambda^2 {\cal C}_{\bullet, 2}\Bigr).
  \ee  

Denote by ${\cal G}_{\bullet}$   the graded dual of the graded Lie algebra  $\varphi_N ({\rm L}_\bullet({\rm S_N}))$ for $N=1$. Recall that ${\cal C}_{\bullet, \bullet}$ is the asscociate graded for the depth filtration ${\cal F}$ of ${\cal G}_{\bullet}$.  It is a cofree graded Lie coalgebra with the generators on degrees $3,5,7,9,...$ \cite{B11}. In particular, 
 \be \la{CCOOKK}
  {\rm Coker}_{\mbox {depth $\leq 2$}} \Bigl(\delta: {\cal G}_{\bullet} \lra \Lambda^2 {\cal G}_{\bullet}\Bigr) =0.
   \ee

 Theorem \ref{4.26.01.3} or \cite[Theorem 1.2]{G98} imply that for the $(w,2)$ part of the cokernel of the coproduct  
 \be \la{233}
 {\rm Coker}_{(w,2)}\Bigl(\delta: {\cal C}_{\bullet,2} \lra \Lambda^2 {\cal C}_{\bullet, 1}\Bigr) = {\rm H}_{\rm cusp}^1({\rm GL}_2(\Z), S^{w-2}V_2 \otimes \varepsilon).
  \ee  
  Then (\ref{CCOOKK}) implies that we must have a subspace in the depth $>2$ part of ${\cal G}_{\bullet}$,   identified with the right hand side of (\ref{233}), 
   whose coproduct lies in $\Lambda^2 {\cal F}_{\leq 1}{\cal G}_{\bullet} $ and induces an isomorphism after the projection to the left hand side of (\ref{233}).  
    It was discovered by numerical calculations in \cite{BK96}, and proved in \cite{B13}, that  this   
     subspace appears in the  depth $4$ part of ${\cal G}_{\bullet}$.      This can be interpreted as the statement that 
 $$
     {\rm dim}~(\ref{234})\geq {\rm dim}~{\rm H}_{\rm cusp}^1({\rm GL}_2(\Z), S^{w-2}V_2 \otimes \varepsilon).
$$
 But the first result implies the opposite inequality: 
$$
     {\rm dim}~(\ref{234})\leq {\rm dim}~{\rm H}_{\rm cusp}^1({\rm GL}_2(\Z), S^{w-2}V_2 \otimes \varepsilon).
$$ 
 Combining the two results, we see that   ${\rm dim}~(\ref{234})$ coincides with the ${\rm H}^1$  of the weight $w$ part of the top complex. 
 This implies that the surjective map ${\cal D}_{\bullet, 4}  \lra {\cal C}_{\bullet, 4}$ must be an isomorphism, which means that the double shuffle relations are all relations between the motivic correlators in the depth $4$. 
    \end{proof}
    
\paragraph{Remark.} 
The extra summand $\Q$ in (\ref{4.8.01.4}), combined with Theorem \ref{MTTHHa} and \cite{B13}, would mean that there is a one new relation in the depth $4$. But this is incorrect, as direct calculations in the depth $m=4$ and the weights $w=4, 6$ show.

  \section{Generalizations} 

The   dg-algebra of higher modular symbols  has a  generalization. 
Namely, consider the symmetric space for the group ${\rm GL}_m(\C)$:
\be \la{CSS}
{\Bbb H}_{{\rm GL}_m(\C)}:= {\rm GL}_m(\C)/{\rm U}_m \cdot \R^*.
\ee
For example, ${\Bbb H}_{{\rm GL}_2(\C)}$ is the three-dimensional hyperbolic space. 

Take an imaginary quadratic field $K$ with the ring of integers ${\cal O}_K$. Let ${\rm L}_m$ be a rank $m$ free ${\cal O}_K-$module.  
 The symmetric space  (\ref{CSS}) is realized as the space of positive definite Hermitian forms on the complex vector space $V_m$ dual to 
${\rm L}_m\otimes \C$. Any vector $l$ of the lattice give rise to a degenerate Hermitian form $\varphi(l):= |(l, \ast)|^2$. The convex hull of these forms when $l$ run through all 
non-zero primitive vectors of ${\rm L}_m$ is an infinite polyhedron, invariant under the action of the group ${\rm GL}_m({\cal O}_K)$. 
It gives rise to  ${\rm GL}_m({\cal O}_K)-$equivariant  Voronoi cell decomposition of the symmetric space  (\ref{CSS}).  Repeating 
the sum of the plane trivalent tree construction, we get  $(2m-2)-$chains $\varphi(e_1, ... , e_m)$ in the symmetric space, 
assigned to bases $(e_1, ..., e_m)$ of ${\rm L}_m$. The convex hull = $\ast-$product is immediately generalised to the case of symmetric spaces (\ref{CSS}). 
Combining the two constructions, we get a length $m$ complex of higher modular symbols ${\cal M}{\cal S}_{{\rm L}_m}^*$.

Next, there is an analog of the rank $m$ modular complex ${\rm M}^*_{K; (m)}$, defined by    similar generators and  the double shuffle relations, plus 
the invariance under the action of the group of units ${\cal O}_K^*$. It is a complex of ${\rm Aut}({\rm L}_m) \stackrel{\sim}{=} {\rm GL}_m({\cal O}_K)-$modules. 

Given an ideal ${\cal N}\subset {\cal O}_K$, the congruence subgroup $\Gamma_1(m; {\cal N}) \subset  {\rm GL}_m({\cal O}_K)$ stabilizes  
the vector $(0, ..., 0,1)$ mod ${\cal N}$. So we can introduce a commutative dg-algebra\footnote{Here  an abuse of notation appears:  as usual, for $m=1$ the definition needs to be  adjusted.} 
$$
{\rm M}^*_{{\cal N}}:= \bigoplus_{w \geq m\geq 1}{\rm M}^*_{K; (m)}\otimes_{\Gamma_1(m; {\cal N})}S^{w-m}V_m.
$$
Omitting the second shuffle relations, we  get the relaxed modular dg-algebra $\widehat {\rm M}^*_{{\cal N}}$. 
Consider the level ${\cal N}$ dg-algebra of higher modular cycles
$$
{\cal M}{\cal S}^*_{{\cal N}}:= \bigoplus_{w \geq m\geq 1}{\cal M}{\cal S}^*_{K; (m)}\otimes_{\Gamma_1(m; {\cal N})}S^{w-m}V_m.
$$

The   sum over plane trivalent trees construction provides a map of dg-algebras: 
$$
\widehat {\rm M}^*_{{\cal N}} \lra {\cal M}{\cal S}^*_{{\cal N}}.
$$

On the other hand, given a CM-elliptic curve $E$ with ${\rm Aut}(E) = {\cal O}_K$, one can consider an analog  of the 
 dihedral Lie algebra ${\cal D}_{\bullet, \bullet}(E[{\cal N}])$ for the group $\G := E[{\cal N}]$ of  ${\cal N}-$torsion 
points of $E$. The formal parameters $t_i$ in its definition   
lie in  the rank one ${\cal O}_K-$module  $H_1(E)$. Then the  double shuffle relations have the same form. 
We also have the invariance under the action  of the unit group ${\cal O}^*_K$  on  $E[{\cal N}]$ and   $t_i$'s, and the distribution relations. 

The proof of  Theorem \ref{4.26.01.3ab} below follows  literally the proof of its cyclotomic analog. 

\begin{theorem} \label{4.26.01.3ab} 
Pick a generator $\zeta_{\cal N}$ of the ${\cal O}_K-$module  $E[{\cal N}]$. Then there is a canonical surjective map of  dg-algebras
$$
 \gamma: {\rm M}_{\cal N}^\ast ~~\lra ~~\Lambda^*\Bigl({\cal D}_{\bullet, \bullet}(E[{\cal N}])\Bigr).
$$
When the ideals ${\cal N}$ vary, we get a projective system of maps of dg-algebras. 
\end{theorem}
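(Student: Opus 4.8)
The plan is to mimic, essentially verbatim, the proof of Theorem~\ref{4.26.01.3a} given in Section~\ref{4.26.01.3aX}, replacing the cyclotomic data $({\Bbb G}_m-\mu_N,\ \zeta_N,\ {\rm S}_N)$ by the CM-elliptic data $(E-E[{\cal N}],\ \zeta_{\cal N},\ {\cal O}_K)$, and keeping track of the extra symmetry coming from the unit group ${\cal O}_K^*$. First I would set up the analog of the comparison isomorphism of Lemma~\ref{4.26.01.5}: the group ${\rm M}^1_{K;(m)}\otimes_{{\rm GL}_m({\cal O}_K)}\bigl({\cal F}(\Gamma_1(m;{\cal N})\backslash{\rm GL}_m({\cal O}_K))\otimes_{\Q}S^{w-m}V_m\bigr)$ is identified with the space of functions $f(\alpha_0,\dots,\alpha_m\mid s_0:\dots:s_m)$, where now $\alpha_i\in {\cal O}_K/{\cal N}$ with $\alpha_0+\dots+\alpha_m=0$ and the tuple $(\alpha_1,\dots,\alpha_m)$ generates the ${\cal O}_K/{\cal N}$-module, $f$ is polynomial of degree $w-m$ in the shift-invariant variables $s_i$ valued in the rank-one ${\cal O}_K$-module $H_1(E)$, and $f$ is additionally invariant under the diagonal ${\cal O}_K^*$-action on the $\alpha_i$'s together with the corresponding twist on the $s_i$'s; this invariance is exactly what the extra relation in the definition of ${\rm M}^*_{K;(m)}$ encodes, and it matches the ${\cal O}_K^*$-invariance built into ${\cal D}_{\bullet,\bullet}(E[{\cal N}])$. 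The key point is that the translation between homogeneous and nonhomogeneous coordinates (formulae (\ref{5.26.1}), (\ref{F38a}), (\ref{F39a})) is purely formal and uses only the module structure, so it goes through unchanged over ${\cal O}_K$.

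Next I would define the map $\gamma$ on generators by the elliptic analog of (\ref{XXXb}): send
$$
\sum_{n_i\geq 0}(\alpha_1,\dots,\alpha_m)\otimes t_1^{n_1}\cdots t_m^{n_m}\otimes[v_1,\dots,v_m]\ \lms\
\{\zeta_{\cal N}^{\alpha_0},\zeta_{\cal N}^{\alpha_1},\dots,\zeta_{\cal N}^{\alpha_m}\mid 0:t_1:\dots:t_m\}\in {\cal D}_{\bullet,m}(E[{\cal N}]),
$$
where $\zeta_{\cal N}^{\alpha}$ denotes the ${\cal N}$-torsion point $\alpha\cdot\zeta_{\cal N}$, and extend multiplicatively to higher $\wedge$-degree exactly as in Part~3 of Section~\ref{4.26.01.3aX}. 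That this is well defined as a linear map is the heart: the first shuffle relations (\ref{sshh1}) in ${\rm M}^*_{K;(m)}$ must match one family of double shuffle relations (\ref{F39}) in ${\cal D}_{\bullet,\bullet}(E[{\cal N}])$, the second shuffle relations (\ref{sshh3}) match the other family, the distribution relations in the modular complex (invariance under $\alpha_i\mapsto l\alpha_i$ for $l\in{\cal O}_K$ appropriate) match the distribution relations in the dihedral Lie coalgebra, and the ${\cal O}_K^*$-invariance on both sides is tautologically compatible. Since the paper states (just before Theorem~\ref{4.26.01.3ab}) that ${\cal D}_{\bullet,\bullet}(E[{\cal N}])$ is defined by \emph{exactly the same} double shuffle and distribution relations as in the cyclotomic case — the only change being that the $t_i$ live in $H_1(E)$ and the group is $E[{\cal N}]$ — this matching is literally the same combinatorial identity already checked in the proof of Theorem~\ref{4.26.01.3}. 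That $\gamma$ is a map of complexes then follows from the elliptic analog of Lemma~\ref{LCFMC}: the coproduct formula for dihedral words (\ref{THECOP}) over $\G=E[{\cal N}]$ matches the differential (\ref{FD}) in the modular complex, again by the same computation, with the degenerate terms (those producing a factor ${\rm Log}^{\cal M}$ of an ${\cal N}$-torsion point, hence zero) dropping out just as before. Surjectivity of $\gamma$ is immediate since ${\cal D}_{\bullet,\bullet}(E[{\cal N}])$ is generated by the images of the $\zeta_{\cal N}^{\alpha}$-words.

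Finally, for the projective-system statement: a divisibility ${\cal N}\mid{\cal M}$ of ideals gives a surjection $E[{\cal M}]\twoheadrightarrow E[{\cal N}]$ of ${\cal O}_K$-modules and a compatible inclusion of congruence subgroups $\Gamma_1(m;{\cal M})\subset\Gamma_1(m;{\cal N})$, hence transition maps ${\rm M}^*_{\cal M}\to{\rm M}^*_{\cal N}$ and ${\cal D}_{\bullet,\bullet}(E[{\cal M}])\to{\cal D}_{\bullet,\bullet}(E[{\cal N}])$; one checks that these intertwine the $\gamma$'s, which is immediate from the formula on generators once a compatible system of generators $\zeta_{\cal N}$ is chosen (or one simply records the compatibility up to the evident identifications, as in the cyclotomic remark at the end of Section~\ref{Sec2aa}). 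I expect the main obstacle to be purely bookkeeping rather than conceptual: carefully stating the ${\cal O}_K^*$-equivariance in the definitions of both ${\rm M}^*_{K;(m)}$ and ${\cal D}_{\bullet,\bullet}(E[{\cal N}])$ and verifying that the isomorphism of Lemma~\ref{4.26.01.5} respects it, and making sure the polynomial coefficients $S^{w-m}V_m$ — now with $V_m$ built from an ${\cal O}_K$-lattice and $t_i\in H_1(E)$ — transform correctly under this action and under the transition maps. Since, as the paper emphasizes, all the deep input (Malkin's second shuffle relations for motivic correlators, Theorem~\ref{MSR}, and the coproduct formula (\ref{wt2})) concerns motivic correlators of points on ${\Bbb P}^1$ and is unchanged, no new motivic result is needed — the proof genuinely "follows literally," and the writeup is a matter of verifying that every formal identity used survives the substitution $\mu_N\rightsquigarrow E[{\cal N}]$, $\Q(1)\rightsquigarrow H_1(E)$.
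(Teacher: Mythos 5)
Your construction is correct and is essentially the paper's own route: the paper's entire proof of this theorem is the remark that it ``follows literally'' the cyclotomic argument, namely the purely combinatorial comparison Theorem \ref{4.26.01.3} (the identification of coinvariants with function spaces as in Lemma \ref{4.26.01.5}, generators sent to dihedral words, the two shuffle relations (\ref{sshh1})--(\ref{sshh3}) matching (\ref{F39}), and the differential (\ref{FD}) matching the coproduct (\ref{THECOP})), which is exactly what you reproduce over ${\cal O}_K$ with the extra ${\cal O}_K^*$-equivariance. One caveat: since the target is the combinatorially defined ${\cal D}_{\bullet,\bullet}(E[{\cal N}])$, none of the motivic machinery you invoke (Theorem \ref{MSR}, Lemma \ref{LCFMC}, the vanishing of ${\rm Log}^{\cal M}$ at torsion points) is needed here, and it could not be carried over ``unchanged'' anyway --- the elliptic analog of the difficult shuffle relations for motivic correlators is precisely the open problem the paper states right after the theorem --- so your argument stands only because the second shuffle relations hold in ${\cal D}_{\bullet,\bullet}(E[{\cal N}])$ by definition, not because any motivic input transfers.
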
 

The analog of $\pi_1^{\cal M}({\Bbb G}_m - \mu_N, v_0)$ is the motivic fundamental group $\pi_1^{\cal M}({E} - E[{\cal N}], v_{\rm av})$   with the averaged base point in the 
(almost) canonical tangent vectors at the ${\cal N}-$torsion points \cite{G08}.  It is so far a conjectural object, so has to be treated in realizations. 

The elliptic analog ${\cal C}_{\bullet, \bullet}({\cal N})$ of the cyclotomic Lie algebra is given by the associate graded for the depth filtration of the image of  the Galois group acting on $\pi_1^{\cal M}({E} - E[{\cal N}], v_{\rm av})$. It is 
  described by the elliptic motivic correlators at the ${\cal N}-$torsion points. 
We want to have a map of Lie coalgebras
$$
{\cal D}_{\bullet, \bullet}(E[{\cal N}]) \lra \widehat {\cal C}_{\bullet, \bullet}({\cal N}).
$$
The elliptic motivic correlators at the ${\cal N}-$torsion points provide such a map on the   generators. It satisfies  the dihedral and the first shuffle relation. 
This means that we get a map of dg-algebras
$$
\widehat \gamma_{\cal N}: \widehat {\rm M}_{\cal N}^\ast  \lra \Lambda^*\Bigl(\widehat {\cal C}_{\bullet, \bullet}({\cal N})\Bigr).
$$
The key problem is  to prove that this map descends to a map of the  dg-algebra ${\rm M}_{\cal N}^\ast $. 
This just means that one has to prove the "difficult" shuffle relations for   the elliptic  motivic correlators at   torsion points. 

For the  $w=m=2$ case,
  this program has been implemented  in \cite{G05} using  strong reciprocity laws  for elliptic curves rather than  the  elliptic motivic correlators. 
  Its most surprising consequence is that when ${\cal N}={\cal P}$ is a prime ideal in $\Z[i]$ or $\Z[\zeta_3]$, the structure of the Lie coalgebra 
  given by 
   \be \la{CP}
 {\cal C}_{2,2}({\cal P}) \oplus \widehat {\cal C}_{1,1}({\cal P})
   \ee
   is described by the geometry of the cell decomposition  of the Bianchi threefold ${\rm B}_{\cal P}:= {\cal H}_3/\Gamma(2; {\cal P})$. 
   It relates  the cohomology of the Lie coalgebra (\ref{CP}) to the cohomology of Bianchi threefold ${\rm B}_{\cal P}$. 
   
The full  depth $2$ elliptic story  has been recently developed by N. Malkin \cite{Ma19a}. 

\section{Appendix by Nikolay Malkin}

\paragraph{1.  End of the proof of   Theorem \ref{1more}a).} Namely, it remains to prove   the following result.

\begin{theorem} \la{8.1}  
The map $\widetilde \psi_6^{(4)}: {\cal M}_6^{(4)}/d{\cal M}_7^{(4)} \lra V_6^{(4)}/dV_7^{(4)}$ is injective.
\label{tps-inj}
\end{theorem}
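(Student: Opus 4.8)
The plan is to prove injectivity of $\widetilde \psi_6^{(4)}$ by a careful bookkeeping of which Voronoi $6$-cells appear in the images of the two kinds of generators of ${\cal M}_6^{(4)}$, namely the relaxed modular generators $\langle e_1,\dots,e_5\rangle$ sitting in $\widehat {\rm M}^{(4)}_6$ and the decomposable generators $\langle e_1,e_2,e_3,e_4\rangle\wedge[w]$ coming from the summands ${\cal M}^{(3)}_5({\rm L}_3)\wedge[w]$. First I would recall, from the classification of internal Voronoi cells for ${\rm GL}_4$ given just before Theorem \ref{1more}, that the $6$-dimensional internal cells fall into four ${\rm GL}_4(\Z)$-orbits, of types i)--iv). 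The point is that $\widehat\psi_6^{(4)}(\langle e_1,\dots,e_5\rangle) = {\rm Cycle}_5\,\varphi(e_1,\dots,e_5,f_{12},f_{34})$ is a sum of cells of type ii) only, while $\widetilde\psi_6^{(4)}(\langle e_1,e_2,e_3,f_{45}\rangle\wedge[e_4]) = \varphi(e_1,e_2,e_3,f_{45},f_{12},f_{23})\ast\varphi(e_4)$, by formula (\ref{1form}), is a cell of type iii). So the images of the two families of generators lie in complementary subspaces of $V_6^{(4)}$ spanned by disjoint sets of ${\rm GL}_4(\Z)$-orbits of cells.

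Given this orbit separation, the proof splits into two independent injectivity statements. For the type iii) part, I would use that $\widetilde\psi_6^{(4)}$ restricted to $\bigoplus_{({\rm L}_3,w)}{\cal M}^{(3)}_5({\rm L}_3)\wedge[w]$ is a join of $\varphi(w)$ with $\widetilde\psi^{(3)}_5$, which is injective on ${\cal M}^{(3)}_5({\rm L}_3)$ by Theorem \ref{RESA} (or Corollary \ref{3.21.01.3}); since a linear combination of joins $\varphi(v_i)\ast X_i$ with $X_i$ in distinct ${\rm GL}_3$-orbits of cells vanishes only if each $X_i$ does, and distinct $({\rm L}_3,w)$ give joins supported on disjoint cell-orbits, injectivity follows on this summand. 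For the type ii) part — injectivity of $\widehat\psi_6^{(4)}$ on $\widehat {\rm M}^{(4)}_6$ modulo $d{\cal M}^{(4)}_7$ — I would use the colored forest / tree-complex machinery: by Theorem \ref{TTCCqq} the map $w^{\bullet}_{(m)}$ identifies $\widehat {\rm M}^{(4)}_6$ with the degree-$1$ part of the rank $4$ colored forest complex, i.e. with a direct sum over unordered extended bases $\{e_0,\dots,e_4\}$ of ${\cal C}{\cal L}ie_5^\ast\{e_0,\dots,e_4\}$ (via (\ref{3.28.01.2})). The key computation is to show that the stabilizer of the cell $\varphi(e_1,\dots,e_5,f_{12},f_{34})$ in ${\rm GL}_4(\Z)$ is exactly the dihedral group $D_5$ acting on the cyclic arrangement $(e_0,e_1,e_2,e_3,e_4)$, and that the $5$ cyclic rotations appearing in ${\rm Cycle}_5$ together record precisely the image of the standard generator of ${\cal C}{\cal L}ie_5^\ast$ under $w^\ast\circ i^\ast$, as in Lemma \ref{15AQ}. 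Then the Ginzburg--Kapranov exactness (Theorem \ref{GKap}) identifies the kernel of $\widehat\psi_6^{(4)}$ with the image of $d{\cal M}^{(4)}_7$ — i.e. the only relations killed are the lifted second shuffle relations — which is exactly the assertion that $\widehat\psi^{(4)}_6$ is injective on the quotient ${\cal M}^{(4)}_6/d{\cal M}^{(4)}_7$.

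Finally I would assemble the two pieces: writing a general element of ${\cal M}^{(4)}_6/d{\cal M}^{(4)}_7$ as $x + y$ with $x\in\widehat {\rm M}^{(4)}_6/d{\cal M}^{(4)}_7$ and $y\in\bigoplus({\cal M}^{(3)}_5\wedge[\,\cdot\,])$, the condition $\widetilde\psi_6^{(4)}(x+y)=0$ in $V_6^{(4)}/dV_7^{(4)}$ forces, by the orbit-disjointness of the supports, $\widetilde\psi_6^{(4)}(x)\in dV_7^{(4)}$ and $\widetilde\psi_6^{(4)}(y)=0$ separately (after checking that $dV_7^{(4)}$ does not mix the two families of orbits — this uses the explicit list of $7$-cells and formula (\ref{m6})--(\ref{m6aa})). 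The second gives $y=0$ by the ${\rm GL}_3$ argument above; the first gives $x=0$ in the quotient by the tree-complex argument.

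\textbf{Main obstacle.} The delicate point — and the part I expect to consume most of the work — is the bookkeeping showing that the images of the two families of generators, and the boundaries $dV_7^{(4)}$, genuinely occupy disjoint sets of ${\rm GL}_4(\Z)$-orbits of $6$-cells, so that no cancellation across families can occur. This requires pinning down, for each of the relevant $6$- and $7$-cell orbits, exactly which lattice a given sub-configuration of $e$'s and $f$'s spans (as in the verifications in Lemmas \ref{3.25.01.13}, \ref{3.25.01.17}, \ref{106}), together with identifying the cell stabilizers precisely enough to rule out accidental identifications of cells appearing with opposite signs. The tree-complex input (Theorem \ref{GKap}) handles the "algebraic" kernel cleanly once the geometric separation is established, so the real labor is combinatorial-geometric rather than homological.
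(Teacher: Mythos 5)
Your handling of the $\bigoplus_{({\rm L}_3,w)}{\cal M}^{(3)}_5({\rm L}_3)\wedge[w]$ summand is essentially the paper's Lemma \ref{ops-inj} and is fine, but the other two steps have genuine gaps. Ginzburg--Kapranov exactness (Theorem \ref{GKap}) only gives injectivity of ${\cal C}{\cal L}ie_5^\ast\to{\rm T}^1_{(4)}$, i.e. into the \emph{tree} complex; it says nothing about the geometric realization into the Voronoi complex. The chains $\widehat\psi^{(4)}_6\langle e_1,\dots,e_5\rangle={\rm Cycle}_5\,\varphi(e_1,\dots,e_5,f_{12},f_{34})$, as $(e_1,\dots,e_5)$ runs over orderings of a fixed unordered extended basis, are $30$ signed tree terms supported on only $15$ distinct type ii) cells, so independence of the images is not formal: this is exactly why the paper's Lemma \ref{hps-inj} reduces to the fixed-basis piece $P^{(4)}_6$, bounds $\dim P^{(4)}_6\leq 6$ by the dihedral and first shuffle relations, and then exhibits an explicit $6\times 15$ matrix of rank $6$. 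No operadic statement replaces that computation. Moreover Theorem \ref{GKap} cannot ``identify the kernel with $d{\cal M}^{(4)}_7$'': passing to $V^{(4)}_6/dV^{(4)}_7$ is a question about which chains are boundaries of Voronoi $7$-cells, and those $7$-cells have no counterpart in the tree complex, while the lifted second shuffle relations become boundaries only after adding the correction terms (\ref{4.12.01.1}). (Also, the five cells in ${\rm Cycle}_5$ are \emph{different} cells, not a $D_5$-orbit of automorphisms of one cell, so the claimed stabilizer computation is not what is happening.)

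The assembly step is where the argument actually breaks: $dV^{(4)}_7$ \emph{does} mix the two families of orbits. Formula (\ref{m6a}) $=$ (\ref{m6}) $+$ (\ref{m6aa}) says that the image of the lifted $(1,3)$ second shuffle element of $\widehat{\rm M}^{(4)}_6$ equals, modulo boundaries of $7$-cells, a nonzero combination of type iii) cells, namely $\mp$ the image of the correction terms (\ref{4.12.01.1}) lying in the ${\cal M}^{(3)}_5\wedge[w]$ summands. So if you write a generator of $d{\cal M}^{(4)}_7$ as $x+y$ with $x\in\widehat{\rm M}^{(4)}_6$ and $y\in\bigoplus{\cal M}^{(3)}_5\wedge[w]$, then $\overline\psi^{(4)}_6(x)+\overline\psi^{(4)}_6(y)\in dV^{(4)}_7$ while $\overline\psi^{(4)}_6(y)\neq 0$; your claimed implication ``$\overline\psi^{(4)}_6(x)\in dV^{(4)}_7$ and $\overline\psi^{(4)}_6(y)=0$ separately'' is therefore false, and the cross terms it discards are precisely where the content of the theorem sits (indeed, if the separation held there would be no need to quotient by $d{\cal M}^{(4)}_7$ at all). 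The paper's appendix replaces this by proving the equality $\overline\psi^{(4)}_6(d{\cal M}^{(4)}_7)=dV^{(4)}_7\cap\overline\psi^{(4)}_6({\cal M}^{(4)}_6)$: the generators of the left side are a boundary of a type ii) $7$-cell plus signed boundaries of type i) $7$-cells, the image of $\overline\psi^{(4)}_6$ consists of $6$-cells of types ii) and iii) only, and the composition $V^{(4)}_7\to V^{(4)}_6/\langle\mbox{$6$-cells of types ii), iii), iv)}\rangle$ is injective on the span of type i) $7$-cells because the type i) $6$-cell occurring in such a boundary determines $f_{12}$ up to sign (cf. Lemma \ref{3.26.01.1}). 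You would need an argument of this kind in place of the separation claim.
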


Recall the map    $\widehat  \psi^{(4)}_6:\widehat {\rm M}_6^{(4)}\to V_6^{(4)}$ is  
  given on generators by 
$$
\langle e_1,e_2,e_3,e_4,e_5 \rangle \mapsto {\rm Cycle}_5 \varphi(e_1,e_2,e_3,e_4,e_5,f_{23},f_{45}).
$$ 
Lemma \ref{hps-inj} is the main computational part of the proof, establishing injectivity of $\widehat \psi_6^{(4)}$ on the $\widehat {\cal M}_6^{(4)}$ summand of $ {\cal M}_6^{(4)}$. The injectivity is extended to all of ${\cal M}_6^{(4)}$ in Lemma \ref{ops-inj}. The proof of Theorem \ref{8.1} is concluded by showing no nontrivial kernel is gained from passing to the quotients.

\bl
The map $\widehat \psi_6^{(4)}:\widehat {\cal M}_6^{(4)} \lra V_6^{(4)}$ is injective.
\label{hps-inj}
\el
\begin{proof}
The set of vectors $\{e_1,e_2,e_3,e_4,e_5\}$ can be uniquely recovered up to sign from the set of vertices of a summand $\{e_1,e_2,e_3,e_4,e_5,f_{23},f_{45}\}$, since they are those five elements that are part of only one set of three vectors generating a subspace of rank 2. In this way, both $\widehat {\cal M}_6^{(4)}$ and $V_6^{(4)}$ are decomposed as direct sums of components indexed by extended bases modulo sign, and $\widehat \psi_6^{(4)}$ respects this decomposition. Therefore, it suffices to check injectivity on the portion $P_6^{(4)}$ of $\widehat {\cal M}_6^{(4)}$ generated by extended bases consisting of $\pm e_1,\pm e_2,\pm e_3,\pm e_4,\pm e_5$ for fixed $e_1,e_2,e_3,e_4,e_5$, a finite-dimensional space. 

We first aim to find an upper bound on the dimension of $P_6^{(4)}$. Let $F_6^{(4)}$ be the  vector space with the basis given  by the extended bases consisting of only the $\pm e_i$, so that 
$$
P_6^{(4)}= F_6^{(4)}/\langle \mbox{dihedral, first shuffle relations} \rangle.
$$
 We claim that (the images of) the following six elements span $P_6^{(4)}$:
\begin{equation}
\{\langle e_{\sigma(1)},e_{\sigma(2)},e_{\sigma(3)},e_4,e_5\rangle:\sigma\in S_3\}.
\label{pgen}
\end{equation}
Observe that the $e_i$ in any extended basis containing only the $\pm e_i$ must be taken with the same sign. By the dihedral symmetry relations, the sign may be assumed positive. Further, it is easy to see that any extended basis is equivalent either one of the elements (\ref{pgen}) or its negative, or
\begin{equation} \label{pgen2}
\pm\{\langle e_{\sigma(1)},e_{\sigma(2)},e_4,e_{\sigma(3)},e_5\rangle:\sigma\in S_3\}.
\end{equation}
By the (1,3)-first shuffle relation, there is the following equality in $P_6^{(4)}$:
\begin{eqnarray*}
\langle e_{\sigma(3)},e_{\sigma(1)},e_{\sigma(2)},e_4,e_5\rangle+ & \\ 
\langle e_{\sigma(1)},e_{\sigma(3)},e_{\sigma(2)},e_4,e_5 \rangle+ & \\
\langle e_{\sigma(1)},e_{\sigma(2)},e_{\sigma(3)},e_4,e_5\rangle= &
-\langle e_{\sigma(1)},e_{\sigma(2)},e_4,e_{\sigma(3)},e_5\rangle.
\end{eqnarray*}
Thus an element of the form (\ref{pgen2}) can be expressed as a linear combination of elements (\ref{pgen}). This proves the claim and implies that $\dim P_6^{(4)}\leq6$.

Next, we compute the dimension of the image of the restriction of $\widehat \psi_6^{(4)}$ to $P_6^{(4)}$, which is contained in the subspace of $V_6^{(4)}$ generated by the cells  
$$
 \varphi(e_{\sigma(1)},e_{\sigma(2)},e_{\sigma(3)},e_{\sigma(4)},e_{\sigma(5)},f_{\sigma(2)\sigma(3)},f_{\sigma(4)\sigma(5)}),\;\sigma\in S_5.
$$
  The dimension of this subspace is $\frac{1}{2}\binom53\binom32=15$, with a basis indexed by the pairs of $(i,j)$ occurring as vertices $f_{ij}$. We can express the images under $\widehat \psi_6^{(4)}$ of the 6 elements (\ref{pgen}) in terms of these 15 cells, yielding a $6\times15$ matrix 
\be \nonumber
\begin{matrix}
&123&213&132&312&231&321\\
 \varphi(e_1,e_2,e_3,e_4,e_5,f_{12},f_{34}) &  1 & -1 &  0 &  0 &  0 &  0& \\ 
 \varphi(e_1,e_2,e_3,e_4,e_5,f_{12},f_{35}) &  0 &  0 &  0 & -1 &  0 &  1& \\ 
 \varphi(e_1,e_2,e_3,e_4,e_5,f_{12},f_{45}) & -1 &  1 &  0 &  1 &  0 & -1& \\
 \varphi(e_1,e_2,e_3,e_4,e_5,f_{13},f_{24}) &  0 &  0 & -1 &  1 &  0 &  0& \\ 
 \varphi(e_1,e_2,e_3,e_4,e_5,f_{13},f_{25}) &  0 &  1 &  0 &  0 & -1 &  0& \\ 
 \varphi(e_1,e_2,e_3,e_4,e_5,f_{13},f_{45}) &  0 & -1 &  1 & -1 &  1 &  0& \\ 
 \varphi(e_1,e_2,e_3,e_4,e_5,f_{14},f_{23}) &  0 &  0 &  0 &  0 & -1 &  1& \\ 
 \varphi(e_1,e_2,e_3,e_4,e_5,f_{14},f_{25}) &  0 &  0 &  0 &  0 &  1 &  0& (*)\\ 
 \varphi(e_1,e_2,e_3,e_4,e_5,f_{14},f_{35}) &  0 &  0 &  0 &  0 &  0 & -1& (*)\\ 
 \varphi(e_1,e_2,e_3,e_4,e_5,f_{15},f_{23}) &  1 &  0 & -1 &  0 &  0 &  0& \\ 
 \varphi(e_1,e_2,e_3,e_4,e_5,f_{15},f_{24}) &  0 &  0 &  1 &  0 &  0 &  0& (*)\\ 
 \varphi(e_1,e_2,e_3,e_4,e_5,f_{15},f_{34}) & -1 &  0 &  0 &  0 &  0 &  0& (*)\\ 
 \varphi(e_1,e_2,e_3,e_4,e_5,f_{23},f_{45}) &  1 &  0 & -1 &  0 & -1 &  1& \\
 \varphi(e_1,e_2,e_3,e_4,e_5,f_{24},f_{35}) &  0 &  0 &  0 &  1 &  0 &  0& (*)\\ 
 \varphi(e_1,e_2,e_3,e_4,e_5,f_{25},f_{34}) &  0 &  1 &  0 &  0 &  0 &  0& (*)
\end{matrix}
\ee
that has full rank (see the rows $(*)$), so $\dim\widehat \psi_6^{(4)}(P_6^{(4)})=6$. Together with $\dim P_6^{(4)}\leq 6$, this completes the proof that $\widehat \psi_6^{(4)}$ is injective.
\end{proof}

Recall from Section 6 the definition 
$$
{\cal M}_6^{(4)}=\widehat {\rm M}_6^{(4)}\oplus\bigoplus_{({\rm L_3},w)}{\cal M}_5^{(3)}({\rm L_3})\wedge[w].
$$ 
Here the sum is over decompositions ${\rm L_4}={\rm L_3}\oplus\{w\}$. We can extend the map $\widehat \psi_6^{(4)}:\widehat {\rm M}_6^{(4)}\to V_6^{(4)}$ to 
$\overline \psi_6^{(4)}:{\cal M}_6^{(4)}\to V_6^{(4)}$ by setting $\overline \psi_6^{(4)}(s\wedge[w])=\widetilde \psi_5^{(3)}(s)* \varphi(w)$ on the direct summand 
${\cal M}_5^{(3)}({\rm L_3})\wedge[w]$, where $\widetilde \psi_5^{(3)}$ is the map in Theorem 5.4 composed with the inclusion $V({\rm L_3})\hra V({\rm L_4})$ induced by ${\rm L_3}\hra {\rm L_4}$. Explicitly, 
\be \label{l3w-phi}
\begin{split}
\overline \psi_6^{(4)}(\langle e_1,e_2,e_3,f_{45}\rangle \wedge \langle e_4\rangle ) &=\varphi(e_1,e_2,e_3,f_{45},f_{12},f_{23})*\varphi(e_4)\nonumber\\
&=- \varphi(e_1,e_2,e_3,e_4,f_{12},f_{23},f_{45}).\\
\end{split}
\ee
Then there is a commutative diagram
\begin{equation}
\xymatrix{{\cal M}_6^{(4)}\ar[r]\ar[d]_{\overline \psi_6^{(4)}}&{\cal M}_6^{(4)}/d{\cal M}_7^{(4)}\ar[d]^{\widetilde \psi_6^{(4)}}\\ V_6^{(4)} \ar[r]&V_6^{(4)}/dV_7^{(4)}}.
\label{tilde-ol-square}
\end{equation}
Lemma \ref{hps-inj} can be strengthened to:

\bl
The map $\overline \psi_6^{(4)}: {\cal M}^{(4)}_6\lra V_6^{(4)}$ is injective.
\label{ops-inj}
\el
\begin{proof}
We see from (\ref{l3w-phi}) that 
\be \la{Mf}
\overline \psi_6^{(4)}\Bigl(\bigoplus_{({\rm L_3},w)}{\cal M}^{(3)}_6({\rm L_3})\wedge[w]\Bigr)
\ee coincides with the subspace of $V_6^{(4)}$ generated by the 6-cells of type iii). From a given 6-cell of type iii), ${\rm L_3}$ and $\langle w \rangle$ can be uniquely recovered. Indeed, in its set of vertices $\{e_1,e_2,e_3,e_4,f_{45},f_{12},f_{23}\}$, $e_4$ is the only vector that is not contained in a set of three vectors generating a sublattice of rank 2. Therefore, the $\overline \psi_6^{(4)} \Bigl({\cal M}^{(3)}_6({\rm L_3})\wedge[w]\Bigr)$ intersect trivially for distinct $({\rm L_3},w)$. As a consequence of Theorem 5.4a), $\overline \psi_6^{(4)}$ is injective on each ${\cal M}^{(4)}_6({\rm L_3})\wedge[w]$, so it is injective on their sum.

Furthermore, $ {\overline \psi_6^{(4)}}_{|\widehat {\rm M}^{(4)}_6}=\widehat \psi_6^{(4)}$ is injective by Lemma \ref{hps-inj}, and its image is contained in the subspace of $V_6^{(4)}$ generated by the 6-cells of type ii), which intersects trivially with the subspace (\ref{Mf}). It follows that $\overline \psi_6^{(4)}$ is injective on all ${\cal M}^{(4)}_6$.
\end{proof}

\begin{proof}[Proof of Theorem \ref{tps-inj}]
The left map in (\ref{tilde-ol-square}) is injective by Lemma \ref{ops-inj}. Therefore, it suffices to show that 
$$
\overline \psi_6^{(4)}\Bigl(d{\cal M}_7^{(4)}\Bigr)=dV_7^{(4)}\cap\overline \psi_6^{(4)}\Bigl(d{\cal M}_6^{(4)}\Bigr).
$$
Note that $\widetilde \psi_6^{(4)}$ being well-defined, shown in i), amounts to the inclusion ``$\subseteq$'', and it was computed in i) that the left side is generated by elements of the form 
$$
\text{(boundary of 7-cell of type ii))}+\text{(sum of $\pm$ boundaries of 7-cells of type i))}.
\label{six-one-two}
$$
So the quotient 
$$
\frac{dV_7^{(4)}\cap\overline \psi_6^{(4)}\Bigl(d{\cal M}_6^{(4)}\Bigr)}{\overline \psi_6^{(4)}\Bigl(d{\cal M}_7^{(4)}\Bigr)}
$$
 is generated by sums of boundaries of 7-cells of type i) only. On the other hand, by the definition of $\overline \psi_6^{(4)}$, it is generated by 6-cells of types ii) and iii) only.

The composition 
$$
d_1:V_7^{(4)}\stackrel{d}{\lra} dV_6^{(4)}\to V_6^{(4)}/(\text{6-cells of type ii),iii),iv)})
$$
 is an isomorphism on $\langle \text{7-cells of type i)} \rangle$, since the set of vertices of a 6-cell of type i), given by the cell $ \varphi(e_1,e_2,e_3,e_4,e_5,f_{34},f_{45})$,  uniquely determines $f_{12}$ up to sign (cf.\ Lemma \ref{3.26.01.1}). This shows that the above quotient is zero, and $\widetilde \psi_6^{(4)}$ is injective.
\end{proof}

\paragraph{2.  A shorter end of the proof of   Theorem \ref{1more}b).} Let us first restate the result. 

\bt \la{6.5b)}
The complex ${\rm Coker}~ \widetilde \psi_\bullet^{(4)}$ is acyclic.
\label{tps-qi}
\et

Recall that ${\rm L_4}={\rm L_3}\oplus  \langle w\rangle$, and  $C({\rm L_3},w)$ is the subcomplex of ${\rm Coker}~\widetilde \psi_\bullet^{(4)}$ generated by  
\begin{equation}
\text{$ \varphi(v_1,\dots,v_k,w)$ where ${\rm span}_\Z(v_1,\dots,v_k)={\rm L_3}$.}
\label{c-cells}
\end{equation}
By Lemma \ref{101}, the subcomplex $C({\rm L_3},w)$ is acyclic.
\bl \label{c-acyc}
 The subcomplex of ${\rm Coker}~ \widetilde \psi_\bullet^{(4)}$ generated by cells of type (\ref{c-cells}) is isomorphic to $\oplus_{({\rm L_3},w)}C({\rm L_3},w)$.
\el
\begin{proof}
The subcomplex of $ V_\bullet^{(4)}/dV_7^{(4)}$ generated by the cells of type (\ref{c-cells}) is generated by the 3-cells, the 4-cells of types ii) and iii), the 5-cells of types iii), and the 6-cells of type iii). The images of the 3-cells and the 4-cells of type iii) in ${\rm coker}~\widetilde \psi_\bullet^{(4)}$ are zero, since these cells are in the image of $~\widetilde \psi_\bullet^{(4)}$. For a cell $ \varphi(v_1,\dots,v_k,w)$ of the form (\ref{c-cells}) which is a 4-cell of type ii), a 5-cell of type iii), or a 6-cell of type iii), the lattice ${\rm L_3}$ and $\pm w$ are uniquely determined. This shows that the subcomplexes $C({\rm L_3},w)$ intersect trivially, so their sum is direct.
\end{proof}

Taking into account Proposition \ref{3.25.01.14},    it remains to prove

\bl
The complex $\overline{\rm Cok}~\widetilde \psi_\bullet^{(4)}$ is acyclic in degree 6.
\el
\begin{proof}
We see from the list of cells that $\overline{\rm Cok} ~\widetilde \psi_\bullet^{(4)}$ is generated by 6-cells of types i) and ii). On the other hand, in the proof of Theorem \ref{tps-inj} we saw that 
${\rm Im} ~\psi_\bullet^{(4)}$ contains elements of the form (\ref{six-one-two}), so $\overline{\rm Coker}~\widetilde \psi_6^{(4)}$ is generated by 6-cells of type ii) only. 

One has the differential of a 6-cell of type ii):
$$
\overline d:\overline \varphi(e_1,e_2,e_3,e_4,e_5,f_{12},f_{34})\mapsto\overline\varphi(e_1,e_2,e_3,e_4,e_5,f_{12})-\overline \varphi(e_1,e_2,e_3,e_4,e_5,f_{34}).
$$
It is a sum of 5-cells of type i). Indeed, each of the terms $\overline\varphi(e_1,\dots,\widehat{e_i},\dots,e_5,f_{12},f_{34})$ is zero in $\overline{\rm Cok}~\widetilde \psi_\bullet^{(4)}$, since it is a 5-cell of type iv) when $i=5$ and of type iii) when $i=1,2,3,4$.

A 5-cell of type i) $\overline \varphi(e_1,e_2,e_3,e_4,e_5,f_{12})$ determines the set $\{e_1,e_2,e_3,e_4,e_5\}$ up to sign, and there are no relations on the 5-cells of type i) in $\overline
{\rm Cok}~\widetilde \psi_5^{(4)}$. From this it is easy to see that the kernel of $\overline d$ is generated by ${\rm Cycle}_5\overline\varphi(e_1,e_2,e_3,e_4,e_5,f_{12},f_{34})$, which is in the image of $\psi_6^{(4)}$, so zero in $\overline{\rm Cok}~\widetilde \psi_6^{(4)}$. Therefore, $\overline d:\overline{\rm Cok}~\widetilde \psi_6^{(4)}\to\overline {\rm Cok}~\widetilde \psi_5^{(4)}$ is injective, as desired.
\end{proof}


\begin{thebibliography}{W}
\bibitem[AR]{AR}Ash A., Rudolph L. {\it The modular symbol and continued fractions in higher dimensions}. Inventiones math. 55, 241 - 250 (1979).
\bibitem[BD]{BD}Beilinson A.A., Deligne P.: 
{\it Motivic polylogarithms and Zagier's conjecture}.  
Manuscript, version of 1992. 
\bibitem[BMS]{BMS} Bass H., Milnor J., Serre J-P.: {\it Solution of the congruence problem for $SL_n(\Z)$ and ${\rm Sp}(2n, \Z)$.} 
Publications Math\'ematiques de l'IH\'ES, Volume 33 (1967), p. 59-137.
\bibitem[Be]{Be} Beilinson A.A.: {\it Notes on absolute Hodge cohomology}. Contemp. Math. 55, AMS, Providence (1986), p. 35-68.
\bibitem[B]{B} Borel A.: {\it Stable real cohomology of arithmetic groups}. 
Ann. Sci. Ecole Norm. Sup. (4) 7 (1974), 235--272. 
\bibitem[BW]{BW}  Borel A., Wallach N.: {\it Continuous cohomology, discrete 
subgroups, and representations of reductive groups} (Second edition), AMS 
Math. Surveys 
and Monographs, vol. 67, 
2000. 
\bibitem[B11]{B11} Brown F.: {\it Mixed Tate motives over $\Z$}. arXiv:1102.1312. 
\bibitem[B13]{B13} Brown F.: {\it Depth-graded motivic multiple zeta values}. \newblock \href{https://arxiv.org/abs/arXiv:1301.3053}{arXiv:1301.3053}.  
\bibitem[BK96]{BK96} Broadhurst D., Kreimer D.: {\it Association of multiple zeta values with positive knots via
Feynman diagrams up to 9 loops}, Phys. Lett. B 393 (1997), no. 3-4, 403-412.
\newblock \href{https://arxiv.org/abs/hep-th/9604128}{arXiv:hep-th/9609128}. 
\bibitem[D]{D}Deligne P.: {\it Le group fondamental de la droite projective moine trois points}, In: Galois groups over $\Q$. Publ. MSRI, no. 16 (1989) 79-298.   
\bibitem[D10]{D10}Deligne P.: {\it Le groupe fondamental motivique de  ${\Bbb G}_m - \mu_N$  pour  $N=2,3,4,6  ~\mbox{ou}~  8$}.   
Publications Math\'ematiques de l'IH\'ES. 112 (2010) pp. 101-141.
\bibitem[DG]{DG} P. Deligne, A. Goncharov.:  {\it Groupes fondamentaux motiviques de Tate mixte}. 
\newblock \href{https://arxiv.org/abs/math0302267}{arXiv:math.0302267}.
\bibitem[E]{E} L. Euler: "Opera Omnia," Ser. 1, Vol XV, Teubner,
Berlin 1917, 217-267.  
\bibitem[GK]{GK} Ginzburg V., Kapranov V.: {\it Koszul duality for Operads}. \newblock \href{https://arxiv.org/abs/math0709.1228}{arXiv:math.0709.1228}.
\bibitem[G91]{G91} Goncharov A.B.: {\it Polylogarithms and motivic Galois groups}, Motives (Seattle, WA, 1991), 43--96, Proc. Symp. Pure Math., 55, Part 2,
Amer. Math. Soc., Providence, RI, 1994. \href{https://users.math.yale.edu/~ag727/polylog.pdf}{https://users.math.yale.edu/~ag727/polylog.pdf}
 \bibitem[G94]{G94} Goncharov A.B.: {\it Polylogarithms in arithmetic and geometry}, 
Proc. of the ICM-94, Vol. 1, 2
(Zurich, 1994), 374--387, Birkhauser, Basel, 1995. \href{https://users.math.yale.edu/users/goncharov/icm.pdf}{https://users.math.yale.edu/users/goncharov/icm.pdf}
 \bibitem[G97]{G97}Goncharov A.B.: {\it The double logarithm and Manin's
complex for modular curves}. Math. Res. Letters,
 vol. 4. N 5 (1997), pp. 617-636. \href{https://users.math.yale.edu/~ag727/DLMC.pdf}{users.math.yale.edu/~ag727/DLMC.pdf}.
 \bibitem[G98]{G98} Goncharov A.B.: {\it Multiple polylogarithms, cyclotomy and modular complexes
},     Math. Res. Letters, 
 vol. 5. (1998), pp. 497-516.  
\bibitem[G99]{G99}Goncharov A.B.:  {\it Galois groups, geometry of modular varieties and graphs}. Arbeitstagung, June 1999. \href{https://users.math.yale.edu/~ag727}{users.math.yale.edu/~ag727}.\bibitem[G00a]{G00a}Goncharov A.B.: {\it The dihedral Lie algebras and Galois symmetries of 
$\pi_1^{(l)}({\Bbb P}^1 \backslash 0,1, \infty)$}. Duke Math. J.  (2001). 
\newblock \href{https://arxiv.org/abs/math0009121}{arXiv:math.AG/0009121}.   
\newblock A short version:   \href{https://faculty.math.illinois.edu/K-theory/0315/}{faculty.math.illinois.edu/K-theory/0315/}  (Dec 1 1998).
\bibitem[G00b]{G00b} Goncharov A.B.: {\it Multiple $\zeta$-values, Galois groups and geometry of 
modular varieties} Proc. of the Third European Congress of Mathematicians,   (2000).
 \newblock \href{https://arxiv.org/abs/math0005069}{arXiv:math.AG/0005069}.
\bibitem[G01]{G01}Goncharov A.B.: {\it Multiple polylogarithms and mixed Tate motives}. 
\newblock \href{https://arxiv.org/abs/math0103059}{arXiv:math.AG/0103059}.
\bibitem[G02a]{G02a} Goncharov A.B.: {\it Periods and mixed motives}, 
\newblock \href{https://arxiv.org/abs/math0202154}{arXiv:math.AG/0202154}.  
\bibitem[G02b]{G02b} Goncharov A.B.: {\it Galois symmetries of fundamental groupoids 
and noncommutative geometry},  \newblock \href{https://arxiv.org/abs/math0208144}{arXiv:math.AG/0208144}. 
\bibitem[G05]{G05} Goncharov A.B.: {\it Euler complexes},  \newblock \href{https://arxiv.org/abs/math0208144}{arXiv:math.AG/0208144}. 
\bibitem[G08]{G08} Goncharov A.B.: {\it Hodge correlators},  \newblock \href{https://arxiv.org/abs/arXiv:0803.0297}{arXiv:0803.0297}. 
\bibitem[GZ]{GZ} Goncharov A.B., Zhu G.: {The Galois group of the category of mixed Hodge-Tate structures}. arXiv:1705.08175.
\bibitem[Ih]{Ih}Ihara Y.: {\it Braids, Galois groups, and some arithmetic functions}. 
Proc. of the Int. Congress of Mathematicians, Vol. I, II
(Kyoto, 1990), 99--120, Math. Soc. Japan, Tokyo, 1991.
\bibitem[H03]{H03} Horozov I.: {\it Euler characteristics of arithmetic groups.}  
\newblock \href{https://arxiv.org/abs/math0311117}{arXiv:math0311117}. 
\bibitem[H06]{H06} Horozov I.: {\it Cohomology of ${\rm GL}(4,\Z)$ with Non-trivial Coefficients}. Math. Res. Lett. 21 (2014), no. 5, 1111-1136,  	
\href{https://arxiv.org/abs/arXiv:math/0611847}{arXiv:math/0611847}.
\bibitem[H19]{H19} Horozov I.: {\it Cohomology of the subgroup $\Gamma_1(4;p) \subset {\rm GL}(4,\Z)$}. To appear. 
\bibitem[KMS]{KMS} Kazhdan, D.; Mazur, B.; Schmidt, C.-G. {\it Relative modular symbols and Rankin-Selberg convolutions}. J. Reine Angew. Math. 519 (2000), 97--141.
\bibitem[K1]{K1} Kontsevich M.: {\it Formal (non)commutative symplectic 
geometry}, The Gelfand mathematical seminars, Birkhauser, 1993, p. 173-187.
\bibitem[LS]{LS} Lee, R.; Szczarba, R. H.: {\it On the torsion in $K\sb{4}({Z})$ and $K\sb{5}({Z})$}. Duke Math. J. 45 (1978), no. 1, 101--129. 
\bibitem[Ma19a]{Ma19a} Malkin, N.:  {\it Bianchi threefolds and motivic fundamental groups}. To appear.
\bibitem[Ma19b]{Ma19b} Malkin, N.: {\it Shuffle relations for motivic correlators.} To appear.
\bibitem[M]{M}Martinet J.: {\it Perfect Lattices in Euclidean Spaces}. Springer-Verlag, Grundleheren der Mathematischen 
Wissenschaften 327, Heidelberg, 2003. See also \href{https://jamartin.perso.math.cnrs.fr/Publications/prefintrocont.pdf}{https://jamartin.perso.math.cnrs.fr/Publications/prefintrocont.pdf}.
\bibitem[Ma]{Ma}Mazur B.:  {\it Arithmetic in the geometry of symmetric spaces}.  \href{http://www.math.harvard.edu/~mazur/papers}{http://www.math.harvard.edu/~mazur/papers}.
\bibitem[S]{S} Soul\'e C. {\it The cohomology of $SL_3(\Z)$}. Topology 17 (1978) 1-22.
\bibitem[V]{V} Voronoi G.: {\it Nouvelles applications des param\'etres continus \'a la th\'eorie des formes quadratiques}, I, J. Reine Angew.Math. 133 (1908), 97-178. 
\bibitem[VZ]{VZ} Vogan D., Zuckerman G,: {\it Unitary representations with non zero cohomology},
 Compositio Math 53 (1984), 51-90.
 \bibitem[Vo]{Vo} Voevodsky V.:  {\it Triangulated category of motives over a field} in 
Cycles, transfers, and motivic homology theories. Annals of Mathematics Studies,
143. Princeton University Press, Princeton, NJ, 2000. \bibitem[Z1]{Z1} Zagier D.: {\it Periods of modular forms, traces of Hecke operators, and multiple $\zeta$-values}. Research into automorphic 
forms and $L$-functions (Japanese) (Kyoto 1992), Surikaisekikenkyusho Kokyuroku No. 843 (1993), 162-170.

 

\end{thebibliography}
\end{document}